\DeclareMathOperator{\id}{id}
\DeclareMathOperator{\im}{im}
\DeclareMathOperator{\cl}{cl}
\newcommand{\K}{\mathbb{K}}
\newcommand{\NN}{\mathbb{N}}
\newcommand{\QQ}{\mathbb{Q}}
\newcommand{\ZZ}{\mathbb{Z}}
\newcommand{\RR}{\mathbb{R}}
\newcommand{\CC}{\mathbb{C}}
\newcommand{\CP}{\mathbb{C}P}
\renewcommand{\bar}[1]{\overline{#1}}
\newcommand{\ev}{\mathrm{ev}}
\newcommand{\ind}{\mathrm{ind}}
\newcommand{\Fix}{\mathrm{Fix\;}}
\newcommand{\wgt}{\mathrm{wgt}}
\newcommand{\cwgt}{\mathrm{cwgt}}
\newcommand{\pr}{\mathrm{pr}}
\newcommand{\wgtTC}{\mathrm{wgt}_{\mathsf{TC}}}
\newcommand{\op}{\mathrm{op}}
\DeclareMathOperator{\Crit}{\mathrm{Crit\;}}
\newcommand{\D}{\mathcal{D}}
\newcommand{\Dop}{\mathcal{D}^{\mathrm{op}}}
\newcommand{\Dcl}{\mathcal{D}^{\mathrm{cl}}}
\newcommand{\C}{\mathcal{C}}
\newcommand{\MM}{\mathcal{M}}
\newcommand{\Acal}{\mathcal{A}}
\DeclareMathOperator{\TC}{\mathsf{TC}}
\DeclareMathOperator{\SC}{\mathsf{SC}}
\newcommand{\SCcl}{\mathsf{SC}^{\mathrm{cl}}}
\DeclareMathOperator{\cat}{\mathsf{cat}}
\DeclareMathOperator{\secat}{\mathsf{secat}}
\DeclareMathOperator{\ccr}{\mathsf{cr}}
\newcommand{\zero}{\mathbb{O}}
\theoremstyle{plain}
\newtheorem{theorem}{Theorem}[section]
\newtheorem{prop}[theorem]{Proposition}
\newtheorem{lemma}[theorem]{Lemma}
\newtheorem{cor}[theorem]{Corollary}  
\newtheorem{theo}{Theorem}
\newtheorem*{theo74}{Theorem 7.4}
\theoremstyle{definition}
\newtheorem{definition}[theorem]{Definition}
\newtheorem*{strategy}{Strategy for finding two closed geodesics}
\theoremstyle{remark}
\newtheorem{remark}[theorem]{Remark}
\numberwithin{equation}{section}
\begin{document}
\setlength{\parindent}{0cm}
 
 \title[Spherical complexities]{Spherical complexities with applications to closed geodesics}
 \author{Stephan Mescher}
 \address{Mathematisches Institut \\ Universit\"at Leipzig \\ Augustusplatz 10 \\ 04109 Leipzig \\ Germany}
\email{mescher@math.uni-leipzig.de}
 \date{\today}
 
  \begin{abstract}
We construct and discuss new numerical homotopy invariants of topological spaces that are suitable for the study of functions on loop and sphere spaces. These invariants resemble the Lusternik-Schnirelmann category and provide lower bounds for the numbers of critical orbits of $SO(n)$-invariant functions on spaces of $n$-spheres in a manifold. Lower bounds on these invariants are derived using weights of cohomology classes. As an application, we prove new existence results for closed geodesics on Finsler manifolds of positive flag curvature satisfying a pinching condition.
 \end{abstract}

 \maketitle

\setcounter{tocdepth}{1} 
\tableofcontents

\section*{Introduction}

\subsection*{Sectional categories}

The Lusternik-Schnirelmann category $\cat(X)$ of a topological space $X$ is a classical $\NN$-valued homotopy invariant. The main motivation for L. Lusternik and L. Schnirelmann to introduce this number was its relation to critical point theory. Given a Hilbert manifold $M$, the number $\cat(M)$ provides a lower bound on the number of critical points of a smooth function on $M$.  A comprehensive overview of the most important results about and applications of Lusternik-Schnirelmann category is given in the textbook of Cornea, Lupton, Oprea and Tanr\'e \cite{CLOT}.

In recent years, another $\NN$-valued homotopy invariant has caught topologists' attention. In \cite{FarberTC}, see also  \cite{FarberSurveyTC} and \cite{FarberBook}, M. Farber has defined the topological complexity $\TC(X)$ of a topological space $X$ which is motivated by the motion planning problem from robotics. Its topological properties haven't been fully explored yet. 

Both $\cat(X)$ and $\TC(X)$ are special cases of the notion of the sectional category (also called Schwarz genus) of a fibration which was originally introduced by A. Schwarz in \cite{SchwarzGenus}. The sectional category $\secat(p)$ of a fibration $p:E \to B$ is given as the minimal number of open domains required to cover $B$, such that $p$ admits a continuous local section over each of these open domains.

\subsection*{Critical points and spherical complexities}

Let $M$ be a closed manifold. In many geometric applications, e.g. the study of closed geodesics, one studies functions on the Hilbert manifold $\Lambda M := W^{1,2}(S^1,M)$ which is homotopy-equivalent to $LM=\C^0(S^1,M)$, the free loop space of $M$. One often studies functions that are invariant under the $G$-action on $\Lambda M$, where $G$ is a subgroup of $O(2)$, that is induced by the standard $O(2)$-action on the time parameter in $S^1$. Examples are given by energy functionals of Riemannian or Finsler metrics and by action functionals of autonomous Lagrangians. An interesting problem is to find the minimal number of $G$-orbits of critical points of such functionals and the number of $G$-orbits contained in certain sublevel sets of the function.

While this number cannot a priori be estimated by the Lusternik-Schnirelmann category, M. Clapp and D. Puppe have developed an equivariant Lusternik-Schnirelmann-type invariant in \cite{ClappPuppeSymm} and \cite{ClappPuppeGeod}, which provides lower bounds for numbers of critical orbits of invariant functions on manifolds with compact Lie group actions. In particular, Clapp and Puppe have obtained existence results for closed geodesics on simply connected Riemannian manifolds in this way. A similar approach was followed by T. Bartsch in the study of periodic orbits of certain Hamiltonian systems, see \cite{BartschLNM}.

In this article, we introduce numbers that are also suitable for estimating the numbers of orbits of  critical points of functions on loop and sphere spaces. Given a topological space $X$ and $n \in \NN$ we put $B_{n+1}X := \C^0(B^{n+1},X)$, with $B^{n+1}$ denoting the closed unit ball in $\RR^{n+1}$, and let $S_nX:= \{f \in \C^0(S^n,X) \; | \; \text{$f$ is nullhomotopic}\}$. We define the $n$-spherical complexity of $X$, denoted by $\SC_n(X)$, as the sectional category of the fibration
$$r_n: B_{n+1}X \to S_nX, \qquad \gamma \mapsto \gamma|_{S^n}.$$
For $n=0$, it is easy to see that $\SC_0(X) = \TC(X)$. In a similar way, given a subspace $A\subset S_nX$, we define $\SC_{n,X}(A)= \secat(i_A^*r_n)$, where $i_A: A \hookrightarrow S_n$ denotes the inclusion. Using the standard left $O(n+1)$-action on $B^{n+1}$, we can define $O(n+1)$-actions on $S_nX$ and $B_{n+1}X$ by putting $(A \cdot \gamma)(p) = \gamma(A^{-1}p)$. We consider $S_nX$ and $B_{n+1}X$ as equipped with these $O(n+1)$-actions as well as the restricted $G$-actions for any subgroup $G \subset O(n+1)$. The following theorem arises as a special case of Theorems \ref{TheoremnuF} and \ref{TheoremMainSC1} of this article.

\begin{theo}
\label{theo1}
Let $M$ be a closed Riemannian manifold, $n \in \NN$ and let $\MM\subset S_nM$ be a Riemannian Hilbert manifold. Let $F: \MM \to \RR$ be continuously differentiable and $G$-invariant, where $G$ is a closed subgroup of $O(n+1)$. If $F$ is constant on the space of constant maps and satisfies the Palais-Smale condition with respect to the metric on $\MM$, and if 
\begin{enumerate}[(i)]
\item $n=1$ and $\pi_1(M)$ is torsion-free, or 
\item every critical point of $F$ has trivial isotropy group with respect to the $G$-action on $\MM$,
\end{enumerate}
then for each $\lambda \in \RR$ the sublevel set $F^\lambda := F^{-1}((-\infty,\lambda])$ contains at least $\SC_{n,M}(F^\lambda)-1$ distinct $G$-orbits of non-constant critical points of $F$.
\end{theo}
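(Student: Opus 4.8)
This is a $G$-equivariant instance of the Lusternik--Schnirelmann minimax principle, with $\SC_{n,M}$ replacing $\cat$ and the subspace $C \subseteq S_nM$ of constant maps serving as a distinguished ``base''.

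First I would record the structural properties of the assignment $A \mapsto \SC_{n,M}(A)$ underlying any such argument, each inherited from the corresponding property of $\secat$: monotonicity under inclusion; subadditivity, $\SC_{n,M}(A_1 \cup A_2) \le \SC_{n,M}(A_1) + \SC_{n,M}(A_2)$ for open subsets; and deformation monotonicity, $\SC_{n,M}(A) \le \SC_{n,M}(A')$ whenever a homotopy $h\colon A \times [0,1] \to S_nM$ has $h_0$ equal to the inclusion and satisfies $h_1(A) \subseteq A'$. I would also note that $\SC_{n,M}(C) = 1$: a constant map $S^n \to M$ has a canonical constant extension to $B^{n+1}$, so $r_n$ admits a global section over $C$, and since $C$ is a closed ANR in $S_nM$ this section extends over an open neighborhood of $C$.

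The core of the proof is the local statement that a non-constant critical point $f \in \MM$ of $F$ admits a $G$-invariant open neighborhood $U$ of its orbit $G \cdot f$ in $S_nM$ with $\SC_{n,M}(U) = 1$, i.e.\ over which $r_n$ has a section. As $f$ is nullhomotopic it has an extension $H \in B_{n+1}M$, and one attempts to define a section over the orbit by $A \cdot f \mapsto A \cdot H$ using the $O(n+1)$-action on $B_{n+1}M$. Under hypothesis (ii) the isotropy group $G_f$ is trivial, $G \cdot f \cong G$, and this assignment is well defined and continuous; $G \cdot f$ is then a compact submanifold of the ANR $S_nM$ with a $G$-invariant tubular neighborhood $U$ retracting onto it, and one extends the section over $U$ by the homotopy lifting property of $r_n$. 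Under hypothesis (i) one has $n = 1$ and $G_f$ is a finite subgroup of $O(2)$; now the assignment is well defined only if $H$ is $G_f$-invariant for the $O(2)$-action on $B^2$, and this is where torsion-freeness of $\pi_1(M)$ enters. Choosing a fundamental sector $D \subseteq B^2$ for $G_f$, the obstruction to extending a $G_f$-invariant $H$ over $D$ reduces to the ``primitive'' loop $f|_{D \cap S^1}$ being nullhomotopic in $M$; since this loop is a $k$-th root of the nullhomotopic loop $f$ for $k = |G_f \cap SO(2)|$, its class in $\pi_1(M)$ is $k$-torsion, hence trivial, so the extension exists, reflection walls of $D$ being handled by the same cancellation. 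One then extends over a $G$-invariant tubular neighborhood as before. I expect this step to be the main obstacle --- in particular checking that the fundamental-domain filling is continuous in the $C^0$-topology and fits together across the walls, and treating the dihedral case carefully.

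Finally I would run the minimax. Averaging a metric on $\MM$ over the compact group $G$, the Palais--Smale condition together with $F \in C^1$ gives the $G$-equivariant deformation lemma: for each $c$ and each $G$-invariant neighborhood $\mathcal{O}$ of $K_c := \Crit(F) \cap F^{-1}(c)$ there is $\varepsilon > 0$ with $F^{c+\varepsilon} \setminus \mathcal{O}$ $G$-equivariantly deformable into $F^{c-\varepsilon}$ in $\MM$, hence in $S_nM$, and $F^{c+\varepsilon}$ deformable into $F^{c-\varepsilon}$ if $K_c = \emptyset$. Fix $\lambda$, put $m := \SC_{n,M}(F^\lambda)$ and $c_j := \inf\{c \le \lambda : \SC_{n,M}(F^c) \ge j\}$ for $1 \le j \le m$. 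Monotonicity gives $c_1 \le \dots \le c_m \le \lambda$ and $\SC_{n,M}(F^{c_j + \varepsilon}) \ge j$ for all $\varepsilon > 0$; if some $c_j$ were not a critical value, the deformation lemma and deformation monotonicity would force $j \le \SC_{n,M}(F^{c_j + \varepsilon}) \le \SC_{n,M}(F^{c_j - \varepsilon}) \le j - 1$, so each $c_j$ is critical. If a value $c$ occurs $q$ times among the $c_j$, the deformation lemma and subadditivity give $\SC_{n,M}(\mathcal{O}) \ge q$ for every $G$-invariant neighborhood $\mathcal{O}$ of $K_c$; on the other hand, assuming as we may that $K_c$ has finitely many orbits (else the conclusion is immediate), the local statement above lets one build such an $\mathcal{O}$ with $\SC_{n,M}(\mathcal{O}) \le p$, or $\le p+1$ if $K_c$ meets $C$, where $p$ is the number of non-constant orbits in $K_c$; hence $K_c$ contains at least $q$ non-constant orbits, or at least $q-1$ at the unique level on which $F|_C$ lies. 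Summing over the distinct values among $c_1, \dots, c_m$ yields at least $m - 1$ distinct $G$-orbits of non-constant critical points of $F$ in $F^\lambda$, as claimed.
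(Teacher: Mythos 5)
Your proposal is correct and follows essentially the same route as the paper: the same abstract-category properties of $\SC_{n,M}$, the same local lemma that critical orbits are sphere-filling domains (via the equivariant section $A\cdot f\mapsto A\cdot H$ in the free-isotropy case, and via filling the primitive loop — nullhomotopic because $\pi_1(M)$ is torsion-free — equivariantly under the cyclic or dihedral isotropy group when $n=1$), and the same minimax with the constant-map level absorbing the $-1$. The only cosmetic difference is that the paper delegates the minimax to the Rudyak--Schlenk topological Lusternik--Schnirelmann theorem and passes from closed orbits to open neighborhoods via a Fox-type ANR lemma rather than your tubular-neighborhood/HLP extension.
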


The assumptions of the theorem are in particular met for the restrictions of energy functionals of Finsler metrics to $\Lambda M \cap S_1M$ if $\pi_1(M)$ is torsion-free. Note that in contrast to the Morse-theoretic approach to critical points of a function we do not demand any non-degeneracy condition on the critical points of $F$ in Theorem \ref{theo1}.

\subsection*{Lower bounds and cohomology}

Sectional categories are in general hard to compute explicitly, but lower bounds are often derived in terms of the cohomology ring of the respective base space. More precisely, it was shown by Schwarz in \cite{SchwarzGenus} that the sectional category of a fibration $p:E \to B$ is bounded from below by one plus the cup length of $\ker [p^*:H^*(B;R) \to H^*(E;R)]$ as an ideal in $H^*(B;R)$, where $R$ is any commutative ring.

Nevertheless, there are examples of fibrations for which the difference between this cup length and the actual value of $\secat(p)$ becomes arbitrarily big. An example is provided by $LS^2$, for which, as shown in \cite{FHloop}, the cup product of $H^*(LS^2;\RR)$ is trivial, while $\cat(LS^2)=+\infty$. 

Based on ideas of Fadell and Husseini for Lusternik-Schnirelmann category from \cite{FadellHusseini} that were extended by Y. Rudyak in \cite{RudyakWeight}, Farber and M. Grant introduced the notion of sectional category weight of a cohomology class with respect to a fibration $p:E \to B$ in \cite{FarberGrantSymm} and \cite{FarberGrantWeights}. The upshot is that given $u_1,\dots,u_n \in \ker p^*$ with $u_1 \cup u_2 \cup \dots \cup u_n \neq 0$, there are numbers $\wgt_p(u_i) \in \NN$, such that $\secat(p)$ will be bounded from below by $\sum_{i=1}^n \wgt_p(u_i)+1$. In particular, the mere existence of a class of weight at least $k$ implies that $\secat(p) \geq k+1$.

In \cite{TCsymp}, Grant and the author have studied the condition on a cohomology class to have weight two or bigger in the case of the fibration defining topological complexity. The property of a cohomology class of having $\wgt_p(u)\geq 2$ is equivalent to the property that $u \in \ker p_2^*$, where $p_2: E_2 \to B$ denotes the fiberwise join of $p$ with itself. Using the cohomological properties of fiberwise joins and de Rham theory for Fr\'echet manifolds, Grant and the author have computed the topological complexity of symplectically atoroidal manifolds.

In this article, we generalize their method to arbitrary fibrations with a particular focus on the fibrations $r_n: B_{n+1}X \to S_nX$. We transfer the approach from \cite{TCsymp} entirely to singular cohomology and establish similar geometric criteria on cohomology classes that allow us to construct classes $u \in H^*(S_nX;R)$ with $\wgt_{r_n}(u) \geq 2$. As a byproduct, we will derive several estimates on the topological complexity of closed manifolds.

In \cite{ClappPuppeGeod}, Clapp and Puppe showed their main results by estimating numbers of orbits of closed geodesics by lower bounds for the cup length in the $O(2)$-equivariant cohomology of free loop spaces of spheres and complex-projective spaces. We will not make use of equivariant cohomology anywhere in this article  and use only (non-equivariant) singular cohomology instead.

\subsection*{Closed geodesics on Finsler manifolds}

As an application of spherical complexities in the case $n=1$, we study closed geodesics of a Finsler metric of positive flag curvature on a closed manifold. Flag curvature of Finsler metrics is a notion generalizing the sectional curvature of a Riemannian manifold, see \cite{RadeNonrev}. While results on Riemannian manifolds of positive curvature have been obtained by W. Ballmann, G. Thorbergsson and W. Ziller in \cite{BTZclosed} and \cite{BTZExistence}. Several analogous results for Finsler metrics have been established as well. Most of them follow a Morse-theoretic approach and require the Finsler metric to be \emph{bumpy}, i.e. their energy functional has to be Morse-Bott and its critical points have to occur in isolated $S^1$-orbits.  This condition is fulfilled for generic choices of Finsler metrics, see the introduction of \cite{RadeSecond} for an overview and references.  

Another difficulty occuring in the study of energy functionals of Finsler metrics is their lack of regularity. In general, the energy functional of an arbitrary Finsler metric will be of class $C^{1,1}$, i.e. continuously differentiable with locally Lipschitz-continuous derivative, but not necessarily of class $C^2$. Hence, Morse-theoretic methods are not directly applicable to these functionals and require the introduction of additional techniques like the finite-dimensional approximation of sublevel sets. Our Lusternik-Schnirelmann-type approach to these functionals has the advantage that no such techniques are required.

There are already various results on closed geodesics of Finsler metrics that do not require any genericity condition on the metric. For example, V. Bangert and Y. Long have shown in \cite{BangertLong} that every Finsler metric on $S^2$ admits two geometrically distinct closed geodesics. The same has been proven for reversible Finsler metrics on $S^3$ by Long and H. Duan in \cite{LongDuan}. 

Recently, it has been shown by W. Wang in \cite{WangMultiple} that if the flag curvature of a Finsler metric $F$ on a closed manifold $M$ satisfies the pinching condition $\frac{\lambda^2}{(1+\lambda)^2} <K \leq 1$, where $\lambda$ denotes the reversibility of $F$, then $(M,F)$ admits $\lfloor \frac{\dim M +1}{2}\rfloor$ closed geodesics. As shown by Rademacher in \cite{RadeSphere}, this pinching condition implies that $M$ is homeomorphic to a sphere. 

Making use of our new approach using spherical complexities, we are able to prove an existence result for closed geodesics of positively curved Finsler metrics whose flag curvatures satisfy a more relaxed (by a factor of $\frac14$) pinching condition.

\begin{theo74}
Let $n \geq 3$ and let $M$ be a $2n$-dimensional closed oriented manifold. Assume that there exists a cohomology class $x \in H^{2k}(M;\QQ)$ with $x^2 \neq 0$, where $1 \leq k \leq \frac{n-1}2$. Let $F$ be a Finsler metric on $M$ of reversibility $\lambda$. If the flag curvature $K$ of $F$ satisfies 
$$\frac{1}{4}\Big(\frac{\lambda}{1+\lambda}\Big)^2 < \delta \leq K \leq 1,$$ 
then $(M,F)$ will have two positively distinct closed geodesics of length at most $\frac{\pi}{\sqrt{\delta}}$. 
If in addition $F$ is reversible, e.g. if $F$ is derived from a Riemannian metric whose sectional curvature satisfies $\frac{1}{16} <\delta \leq K \leq 1$, then $(M,F)$ will have two geometrically distinct closed geodesics of length at most $\frac{\pi}{\sqrt{\delta}}$.
\end{theo74}

Here, we call closed geodesics positively distinct if they do not lie in the same $S^1$-orbit, while we call them geometrically distinct if they do not lie in the same $O(2)$-orbit.

\subsection*{Structure of the article}

In Section \ref{SectionDefs} we present the construction of spherical complexities. We further establish some basic inequalities between spherical complexities and other invariants. 

In Section \ref{SectionLStheory} we study relative versions of spherical complexities and derive inequalities between spherical complexities and the numbers of critical orbits of $O(n+1)$-invariant functions on sphere spaces by a Lusternik-Schnirelmann type line of argument. 

Section \ref{SectionSC1} focuses on the case $n=1$, i.e. on free loop spaces. We extend results from Section \ref{SectionLStheory} in this case and carry out the details for action functionals of autonomous Lagrangians and energy functionals on loop spaces of Riemannian and Finsler metrics. 

In Section \ref{SectionWeights} we focus on lower bounds for spherical complexities given by the cohomology rings of sphere spaces. We employ the Mayer-Vietoris sequence of fiberwise joins to improve the lower bounds by cup lengths from Section \ref{SectionDefs}. We apply the notion of sectional category weight to spherical complexities and study cohomology classes of weight two or bigger in detail.

Section \ref{SectionGeomweight} extends the results of Section \ref{SectionWeights} by establishing more tangible and geometric criteria on certain cohomology classes of sphere spaces to have higher spherical complexity weights. The case of topological complexity is singled out and treated in detail.

In Section \ref{SectionTC} we apply the results of Sections \ref{SectionWeights} and \ref{SectionGeomweight} in the case $n=0$ to obtain estimates on the topological complexity of closed manifolds in terms of the existence of maps of non-zero degree between certain manifolds. 

Finally, in Section \ref{SectionGeod} we apply the previous results to energy functionals of Finsler metrics on closed manifolds. Together with the results of Section \ref{SectionSC1}, we derive an existence result for closed geodesics under additional assumptions on topology and curvature of the manifold.

\subsection*{Notation}
	
Given topological spaces $X$ and $Y$ we let $\C^0(X,Y)$ denote the space of continuous maps $X \to Y$. Given two manifolds $M$ and $N$ of class $C^k$ we further let $\C^k(M,N)$ denote the set of maps $M \to N$ of class $C^k$, where $k \in \NN\cup \{+\infty\}$. Throughout this article, $C^*$, $C_*$, $H_*$ and $H^*$ always denotes singular (co)chains and (co)homology, respectively.

\subsection*{Acknowledgements}

The author thanks Hans-Bert Rademacher for answering numerous questions on closed geodesics on Finsler manifolds, John Oprea, Alberto Abbondandolo and Felix Schlenk for helpful comments on earlier drafts of the manuscript and Mark Grant and Philip Kupper for valuable discussions. He further thanks the anonymous reviewer for a detailed and thoughtful review that the author found very helpful for improving the quality of the manuscript and Dieter Kotschick for pointing out a mistake in an earlier version of the article.

\section{Definitions and basic properties}
\label{SectionDefs}
Throughout this section, we let $X$ be a path-connected topological space. Given $n \in \NN_0$ we let $S^n$ be the unit sphere and $B^{n+1}$ be the closed unit ball in $\RR^{n+1}$ and define
$$S_nX :=\{ f \in \C^0(S^n,X) \ | \ f \text{ is nullhomotopic}\}, \qquad B_{n+1}X := \C^0(B^{n+1},X),$$
equipped with the compact-open topologies. Note that if $\pi_n(X)=\{1\}$, then $S_nX=\C^0(S^n,X)$.

\begin{definition}
Let $n \in \NN_0$.
\begin{enumerate}[(1)]
\item Let $A \subset S_nX$. A \emph{sphere filling over $A$} is a continuous map $s:A \to B_{n+1}X$ with $s(\gamma)|_{S^n}=\gamma$ for all $\gamma \in A$. 
\item $B\subset S_nX$ is called a \emph{sphere filling domain} if there exists a sphere filling over $B$. The set of all sphere filling domains in $S_nX$ will be denoted by $\D_n(X)$. We further put $$\Dop_n(X) := \{U \in \D_n(X) \ | \ U\subset S_nX \text{ is open} \}.$$
\item The \emph{$n$-spherical complexity of $X$} is given by
$$\SC_n(X) := \inf \Big\{k \in \NN \ \Big| \ \exists U_1,\dots,U_k \in \Dop_n(X) \ \text{with} \ \bigcup_{i=1}^k U_i = S_nX \Big\} \in \NN \cup \{+\infty\}. $$ 
\end{enumerate}
\end{definition}
\begin{remark}
We recall that the sectional category of a fibration $f:E \to B$ is denoted by $\secat(f) \in \NN\cup \{+\infty\}$ and is given as the minimal cardinality of an open covering of $B$ such that $f$ admits a continuous local section over each of the open sets in the covering. Using this notion, it holds that 
$$\SC_n(X)=\secat(r_n:B_{n+1} \to S_nX)$$
for each $n \in \NN$, where $r_n:B_{n+1}X \to S_nX$, $r_n(\gamma) = \gamma|_{S^n}$. Since the inclusion $i_n:S^n \hookrightarrow B^{n+1}$ is a cofibration, $r_n$ is a fibration by \cite[Theorem VII.6.13]{Bredon}.
\end{remark}

\begin{prop}
If $X$ and $Y$ are homotopy-equivalent topological spaces, then $\SC_n(X)=\SC_n(Y)$ for every $n \in \NN_0$. 
\end{prop}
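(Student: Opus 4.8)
The plan is to show that a homotopy equivalence $\phi:X\to Y$ induces, for each $n$, a fiber-homotopy equivalence between $r_n:B_{n+1}X\to S_nX$ and $r_n:B_{n+1}Y\to S_nY$, and then invoke the standard fact that sectional category is a fiber-homotopy invariant of fibrations. Since $\SC_n(X)=\secat(r_n)$ by the preceding remark, this immediately yields $\SC_n(X)=\SC_n(Y)$. So the work is entirely in producing the fiber-homotopy equivalence naturally from $\phi$.

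First I would observe that post-composition with a map $g:X\to Y$ induces continuous maps $g_*:\C^0(B^{n+1},X)\to\C^0(B^{n+1},Y)$ and, because $g$ carries nullhomotopic maps to nullhomotopic maps, a continuous map $g_*:S_nX\to S_nY$; these are compatible with the restriction maps, i.e. $r_n\circ g_*=g_*\circ r_n$, so $g$ induces a morphism of fibrations. This construction is functorial ($\id_*=\id$, $(g\circ h)_*=g_*\circ h_*$) and, crucially, homotopy-compatible: a homotopy $H:X\times[0,1]\to Y$ between $g_0$ and $g_1$ induces, by the exponential correspondence, a homotopy $B_{n+1}X\times[0,1]\to B_{n+1}Y$ between $(g_0)_*$ and $(g_1)_*$ that covers the analogous homotopy on the base spaces $S_nX\times[0,1]\to S_nY$ — that is, a fiberwise homotopy of fibration morphisms. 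The one point needing a line of care is that the homotopy of base maps indeed lands in $S_nY$ throughout, which holds because $H(-,t)\circ\gamma$ is nullhomotopic for every $t$ whenever $\gamma$ is nullhomotopic.

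Now if $\phi:X\to Y$ is a homotopy equivalence with homotopy inverse $\psi:Y\to X$, then $\phi_*$ and $\psi_*$ are morphisms of fibrations with $\psi_*\circ\phi_*=(\psi\circ\phi)_*$ fiberwise-homotopic over the base to $(\id_X)_*=\id$, and likewise $\phi_*\circ\psi_*$ fiberwise-homotopic to the identity; hence $\phi_*$ is a fiber-homotopy equivalence between $r_n:B_{n+1}X\to S_nX$ and $r_n:B_{n+1}Y\to S_nY$. Applying the invariance of $\secat$ under fiber-homotopy equivalence (see e.g. \cite{SchwarzGenus}) gives $\SC_n(X)=\secat(r_n^X)=\secat(r_n^Y)=\SC_n(Y)$. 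For $n=0$ one can either note that $S_0X$ has two components and argue directly, or simply observe that the same fibration-morphism argument goes through verbatim.

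The main obstacle, such as it is, is purely technical: verifying that all the maps and homotopies built by post-composition are genuinely continuous for the compact-open topologies, i.e. correctly invoking the exponential law $\C^0(K\times[0,1],Z)\cong\C^0([0,1],\C^0(K,Z))$ for $K=B^{n+1}$ (a compact, hence locally compact Hausdorff, space), and checking that the nullhomotopy condition defining $S_nX$ is preserved along every homotopy. There is no conceptual difficulty; the proof is a bookkeeping exercise showing that $X\mapsto(r_n:B_{n+1}X\to S_nX)$ is a homotopy functor into fibrations, after which the invariance of sectional category does the rest.
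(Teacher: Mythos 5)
Your argument is correct in substance but takes a genuinely different route from the paper. The paper works entirely at the level of open covers and sphere fillings: given a cover of $S_nY$ by sphere filling domains $V_i$ with fillings $s_i$, it pulls back along $S_nf$ and writes down an explicit filling on $(S_nf)^{-1}(V_i)$ by gluing $g\circ s_i(f\circ\gamma)$ on the inner half of $B^{n+1}$ to the homotopy $g\circ f\simeq \id_X$ on the outer annulus; this is elementary and self-contained. You instead package post-composition as a homotopy functor into fibrations and appeal to the invariance of $\secat$. One caveat: the two fibrations $r_n^X$ and $r_n^Y$ live over \emph{different} bases, so "fiber-homotopy equivalence" in the usual sense does not literally apply; to invoke the standard invariance you should factor $\phi_*$ through the pullback $(\phi_*)^*r_n^Y$ over $S_nX$, use that pulling back along a homotopy equivalence preserves $\secat$, and then use Dold's theorem (or a direct vertical-homotopy argument) to compare $r_n^X$ with that pullback over the common base. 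Alternatively, and more in the spirit of the paper, one can bypass Dold entirely: a local section of $r_n^Y$ over $U$ transports to a map $\psi_*\circ s\circ\phi_*$ over $(\phi_*)^{-1}(U)$ which is only a section up to homotopy, and the homotopy lifting property of $r_n^X$ corrects it to an honest section --- the paper's concatenation formula is precisely this correction made explicit. So your approach buys conceptual clarity and reusability at the cost of citing heavier machinery whose hypotheses (numerability of the fibration, base change) need a moment's care, while the paper's buys complete elementarity.
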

\begin{proof}
Let $n \in \NN_0$, let $f:X \to Y$ and $g:Y \to X$ be continuous with $g \circ f \simeq \id_X$ and let $h:X \times [0,1] \to X$ be a homotopy from $g \circ f$ to $\id_X$. Put $r:= \SC_n(Y)$, let $V_1,\dots,V_r \in \Dop_n(Y)$ with $\bigcup_{i=1}^r V_i=S_nY$ and let $s_i:V_i \to B_{n+1}Y$ be a sphere filling for each $i$. We put $U_i := (S_nf)^{-1}(V_i)$, where $S_nf:S_nX \to S_nY$ is the map induced by $f$, and define
$$\sigma_i: U_i \to B_{n+1}X, \quad (\sigma_i(\gamma))(tx):= \begin{cases}
g(s_i(f \circ \gamma)(2tx)) & \text{if } t \in [0,\frac12], \\
h(\gamma(x),2t-1) & \text{if } t \in (\frac12,1],
\end{cases} \quad \forall t \in [0,1], \ x \in S^n.$$
for each $i \in \{1,2,\dots,r\}$. One checks that each $\sigma_i$ is a sphere filling, so $U_1,\dots,U_r \in \Dop_n(X)$. Moreover, $S_nX=\bigcup_{i=1}^rU_i$, which shows that $\SC_n(X) \leq r=\SC_n(Y)$. The opposite inequality follows analogously.
\end{proof}

For each $n \in \NN_0$ we let $c_n:X \to S_nX$ denote the inclusion of constant maps, i.e. $(c_n(x))(p) = x$ for all $x \in X$ and $p \in S^n$.

\begin{prop}
\label{Propcn}
The image $c_n(X)$ is a sphere filling domain for each $n \in \NN_0$.
\end{prop}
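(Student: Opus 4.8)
The plan is to exhibit an explicit sphere filling over $c_n(X)$. Since any constant map $S^n \to X$ is nullhomotopic, $c_n(X)$ really is a subset of $S_nX$, and the natural way to fill the constant map with value $x \in X$ is by the constant map $B^{n+1} \to X$ with the same value; this manifestly restricts back to $c_n(x)$ on $S^n$, so the only content of the statement is the continuity of the resulting assignment.

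Concretely, I would fix a point $p \in S^n$ and let $\ev_p \colon S_nX \to X$, $\gamma \mapsto \gamma(p)$, be the corresponding evaluation map; it is continuous because $\ev_p^{-1}(U)$ is a subbasic open set of the compact-open topology for every open $U \subseteq X$. Let $b \colon X \to B_{n+1}X$ be the inclusion of constant maps, $(b(x))(q) = x$ for all $q \in B^{n+1}$, which is continuous by the same kind of argument applied on subbasic open sets. Note that $\ev_p \circ c_n = \id_X$, so $c_n$ is a homeomorphism onto its image $c_n(X)$ (and, in particular, the construction below is independent of the choice of $p$). I then set
$$s \colon c_n(X) \longrightarrow B_{n+1}X, \qquad s := b \circ \big(\ev_p|_{c_n(X)}\big),$$
which is a composition of continuous maps and hence continuous.

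It remains to verify that $s$ is a sphere filling. For $\gamma \in c_n(X)$ we have $\gamma = c_n(\gamma(p))$ by definition of $c_n$, so that $s(\gamma)|_{S^n} = b(\gamma(p))|_{S^n} = c_n(\gamma(p)) = \gamma$, as required; thus $c_n(X) \in \D_n(X)$. There is no genuine obstacle in this argument — the proposition is essentially a sanity check — and the only points worth stating carefully are that $c_n(X)$ carries the subspace topology (so that restricting $\ev_p$ and composing with $b$ makes sense) and that the evaluation maps associated with the compact-open topology are continuous.
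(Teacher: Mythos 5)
Your proposal is correct and constructs exactly the sphere filling the paper uses, namely $s(c_n(x))(q)=x$; the paper simply asserts that this is "easily verified" to be a sphere filling, while you make the continuity explicit by factoring $s$ through the evaluation map $\ev_p$ and the inclusion of constants $b\colon X\to B_{n+1}X$. Same approach, just written out in more detail.
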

\begin{proof}
This is obvious, since the map $s: c_n(X) \to B_{n+1}X$, $(s(c_n(x)))(p) := x$ for all $x \in X$ and $p \in B^{n+1}$, is easily verified to be a sphere filling. 
\end{proof}

The next lemma gives an alternative characterization of sphere filling domains and generalizes a well-established lemma from the context of topological complexity, see \cite[Lemma 18.1]{FarberSurveyTC}. 

\begin{lemma}
\label{LemmaSCincl}
Assume that $X$ is a Hausdorff space and let $n \in \NN_0$ and $A \subset S_nX$. Then $A$ is a sphere filling domain if and only if the inclusion $A \hookrightarrow S_nX$ is homotopic to a map with values in $c_n(X)$. 
\end{lemma}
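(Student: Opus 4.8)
The plan is to prove the two implications separately. For the implication that a sphere filling domain has inclusion homotopic into $c_n(X)$, I would write down an explicit homotopy obtained by radially shrinking the ball. For the converse I would invoke the homotopy lifting property of the fibration $r_n$ together with Proposition \ref{Propcn}.

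Assume first that $A$ is a sphere filling domain and fix a sphere filling $s : A \to B_{n+1}X$, so that $s(\gamma)|_{S^n} = \gamma$ for all $\gamma \in A$. Writing each point of $B^{n+1}$ as $tp$ with $t \in [0,1]$ and $p \in S^n$, I would consider
$$\Phi : A \times [0,1] \longrightarrow S_nX, \qquad \Phi(\gamma,t)(p) := s(\gamma)\big((1-t)p\big) \quad \text{for } p \in S^n.$$
Three things would then need to be verified. First, $\Phi(\gamma,t)$ really lies in $S_nX$: it is the restriction to $S^n$ of the continuous map $B^{n+1} \to X$, $b \mapsto s(\gamma)((1-t)b)$, hence it extends over $B^{n+1}$ and is therefore nullhomotopic. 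Second, $\Phi$ is continuous: the scaling map $[0,1] \to \C^0(S^n, B^{n+1})$, $t \mapsto (p \mapsto (1-t)p)$, is continuous (being the adjoint of a continuous map), and the composition map $\C^0(S^n,B^{n+1}) \times \C^0(B^{n+1},X) \to \C^0(S^n,X)$ is continuous because $B^{n+1}$ is locally compact Hausdorff, so $\Phi$ is continuous as a composite of these with $s$. Third, the two ends are as desired: $\Phi(\cdot,0)$ is the inclusion $i_A : A \hookrightarrow S_nX$, while $\Phi(\gamma,1)$ is the constant map with value $s(\gamma)(0)$, so $\Phi(\cdot,1)$ takes values in $c_n(X)$. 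Thus $\Phi$ is a homotopy from $i_A$ to a map into $c_n(X)$.

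For the converse, assume $i_A$ is homotopic to a map with values in $c_n(X)$, and pick a homotopy $H : A \times [0,1] \to S_nX$ with $H(\cdot,1) = i_A$ and $H(\cdot,0)$ taking values in $c_n(X)$ (reversing the homotopy parameter of the given homotopy if necessary). By Proposition \ref{Propcn} there is a sphere filling $s_0 : c_n(X) \to B_{n+1}X$, that is, a section of $r_n$ over $c_n(X)$; hence the map $s_0 \circ H(\cdot,0) : A \to B_{n+1}X$ satisfies $r_n \circ \big(s_0 \circ H(\cdot,0)\big) = H(\cdot,0)$. Since $r_n : B_{n+1}X \to S_nX$ is a fibration, the homotopy lifting property yields a homotopy $\widetilde H : A \times [0,1] \to B_{n+1}X$ with $\widetilde H(\cdot,0) = s_0 \circ H(\cdot,0)$ and $r_n \circ \widetilde H = H$. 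Then $s := \widetilde H(\cdot,1) : A \to B_{n+1}X$ satisfies $r_n \circ s = H(\cdot,1) = i_A$, i.e. $s(\gamma)|_{S^n} = \gamma$ for all $\gamma \in A$, so $s$ is a sphere filling over $A$ and $A \in \D_n(X)$.

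Both halves of the argument are essentially formal. The only part requiring care is the point-set bookkeeping for the compact-open topologies in the first implication (continuity of the scaling map into $\C^0(S^n,B^{n+1})$ and of the composition map, plus the verification that $\Phi$ factors through $S_nX$); this is routine, and the standing Hausdorff hypothesis on $X$ is what keeps the relevant mapping spaces and the fibration $r_n$ well-behaved. I do not anticipate any genuine obstacle beyond this routine check.
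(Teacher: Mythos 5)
Your proposal is correct. The forward implication is essentially the paper's argument: the paper also homotopes the inclusion by precomposing the filling with a deformation retraction of $B^{n+1}$ (onto the boundary point $1_n$ rather than onto the center $0$, a cosmetic difference), and your continuity argument via the composition map $\C^0(S^n,B^{n+1})\times\C^0(B^{n+1},X)\to\C^0(S^n,X)$ for locally compact $B^{n+1}$ is a legitimate substitute for the paper's appeal to the exponential law. The converse is where you genuinely diverge: the paper constructs the sphere filling explicitly by the cone formula $\left(s(\gamma)\right)(rx)=\left(H(\gamma,1-r)\right)(x)$, which is well defined at $r=0$ precisely because $H(\gamma,1)$ is a constant map; you instead invoke the homotopy lifting property of $r_n$ starting from the section over $c_n(X)$ supplied by Proposition \ref{Propcn}. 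Your route is more formal and works verbatim for any fibration with a section over the target of the homotopy, but it depends on $r_n$ being a Hurewicz fibration (established in the paper's remark via the cofibration $S^n\hookrightarrow B^{n+1}$), whereas the paper's explicit construction is elementary and does not use that fact. Both arguments are complete.
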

\begin{proof}
Assume there exists a continuous map $s: A \to B_{n+1}X$ with $s(\gamma)|_{S^n}=\gamma$ for all $\gamma \in A$. Let $\varphi: B^{n+1} \times [0,1] \to B^{n+1}$ be a strong deformation retraction of $B^{n+1}$ onto $\{1_n\}$, where $1_n=(1,0,\dots,0)\in \RR^{n+1}$, and define $H: A \times [0,1] \to S_nX$ by
$$(H(\gamma,t))(p) = (s(\gamma))(\varphi(p,t)) $$
for all $\gamma \in A$, $t \in [0,1]$ and $p \in S^n$. Since $X$ is Hausdorff, $H$ is continuous, see \cite[Theorem VII.2.10]{Bredon}. It further holds that
$$(H(\gamma,0))(p)=(s(\gamma))(p)=\gamma(p), \qquad (H(\gamma,1))(p)=(s(\gamma))(1_n)=\gamma(1_n)=(c_n(\gamma(1_n)))(p) $$
for all $p \in S^n$ and $\gamma \in A$. Hence, $H$ is a homotopy from the inclusion of $A$ to the map $A \to S_nX$, $\gamma \mapsto c_n(\gamma(1_n))$, which obviously takes values in $c_n(X)$ only.

Conversely, let $H:A \times [0,1] \to S_nX$ be continuous with $H(\gamma,0)=\gamma$ and $H(\gamma,1) \in c_n(X)$ for all $\gamma \in A$. Define a map $s: A \to B_{n+1}X$ by 
$$(s(\gamma))(rx) = (H(\gamma,1-r))(x) \quad \forall r \in [0,1], \ x \in S^n, \ \gamma \in A.$$
Since $H$ is continuous and $H(\gamma,1) \in c_n(X)$ for each $\gamma$, it follows that $s(\gamma):B^{n+1} \to X$ is indeed continuous for every $\gamma \in A$, so that $s$ is well-defined. 
Moreover, $(s(\gamma))(x)=(H(\gamma,0))(x)=\gamma(x)$ for all $x\in S^n$ and $\gamma \in A$, hence $s(\gamma)|_{S^n} =\gamma$ for all $\gamma \in A$, which shows the claim.
\end{proof}

The previous lemma can be employed to derive a first lower bound on $\SC_n(X)$. We will make use of the following lemma, which is an analogue to a lemma of R. Fox, see \cite[(14.2)]{Fox} or \cite[Lemma 1.11]{CLOT}, from the context of Lusternik-Schnirelmann category. We will mimic Fox's proof.

\begin{lemma}
\label{LemmaFox}
Let $A \subset S_nX$ be a closed sphere filling domain. If $X$ is a metrizable absolute neighborhood retract (ANR), then $A$ has an open neighborhood $U \subset S_nX$ that is a sphere filling domain. 
\end{lemma}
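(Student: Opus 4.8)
The plan is to mimic Fox's classical argument for Lusternik--Schnirelmann category, working with the mapping space $S_nX$ and its subspace $c_n(X)$ in place of $X$ and a point. By Lemma \ref{LemmaSCincl}, $A$ being a sphere filling domain means precisely that the inclusion $i_A: A \hookrightarrow S_nX$ is homotopic to a map with image in $c_n(X)$. So I would start by fixing such a homotopy $H: A \times [0,1] \to S_nX$ with $H(\cdot,0) = i_A$ and $H(\cdot,1)$ landing in $c_n(X)$, and the goal is to extend (a modification of) this homotopy over an open neighborhood $U$ of $A$.

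The key step is to produce such an extension, and this is where the ANR hypothesis on $X$ enters. First I would want to know that $S_nX$ is an ANR, or at least that the pair $(S_nX, A)$ is sufficiently nice that a homotopy defined on $A$ (together with the constant homotopy on the endpoint, to glue compatibly) extends to a neighborhood; concretely, one regards $H$ together with $i_{S_nX}$ on $S_nX \times \{0\}$ as a map from the closed subset $(A \times [0,1]) \cup (S_nX \times \{0\})$ of $S_nX \times [0,1]$ into $S_nX$, and uses the neighborhood extension property to extend it to an open set $W \supset (A \times [0,1]) \cup (S_nX \times \{0\})$. Since $[0,1]$ is compact and $A$ is closed, one can then shrink $W$ to contain a tube $U \times [0,1]$ for some open $U \supset A$ (a standard tube lemma argument). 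The restriction of the extended homotopy to $U \times [0,1]$ is then a homotopy from $i_U$ to a map into $S_nX$; the only remaining issue is that its endpoint need not land in $c_n(X)$, but since the original endpoint $H(\cdot,1)$ does on $A$ and $c_n(X)$ is (being homeomorphic to $X$, an ANR, hence locally contractible and closed when $X$ is Hausdorff) a neighborhood retract, one can post-compose a final short homotopy to push the endpoint over $U'$, a possibly smaller neighborhood, into $c_n(X)$. Applying Lemma \ref{LemmaSCincl} in the reverse direction then shows $U'$ is a sphere filling domain.

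The main obstacle is establishing that $S_nX$ (or the relevant pair) inherits the ANR / neighborhood-extension property from $X$. The cleanest route is to invoke the fact that for a metrizable ANR $X$ and a compact metric space $K$, the mapping space $\C^0(K,X)$ with the compact-open (= uniform) topology is again a metrizable ANR --- this is a theorem going back to Kuratowski and is standard in the ANR literature. Applying this with $K = S^n$ shows $\C^0(S^n,X)$ is a metrizable ANR; one then needs that $S_nX$, the union of path components of nullhomotopic maps, is open and closed in $\C^0(S^n,X)$ (path components of an ANR are open), hence itself an ANR. Likewise $c_n(X) \cong X$ is a metrizable ANR, and it is a closed subset of $S_nX$ since $X$ is Hausdorff. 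With these facts in hand the extension arguments above are routine, and the tube lemma plus the endpoint-correction homotopy finish the proof.

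I would organize the write-up as: (1) invoke the mapping-space ANR theorem to get that $S_nX$ is a metrizable ANR and $c_n(X)$ a closed subspace; (2) use Lemma \ref{LemmaSCincl} to get the homotopy $H$ on $A$; (3) extend $H$ together with the identity at time $0$ over a neighborhood in $S_nX \times [0,1]$ using the NES property of the ANR $S_nX$; (4) apply the tube lemma to get a product neighborhood $U \times [0,1]$; (5) correct the time-$1$ endpoint into $c_n(X)$ using that $c_n(X)$ is a neighborhood retract; (6) conclude via the converse direction of Lemma \ref{LemmaSCincl}. The routine point-set verifications (continuity of the corrected homotopy, that the shrunk $U'$ is still open and contains $A$) I would leave to the reader or dispatch in a sentence.
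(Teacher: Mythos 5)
Your overall strategy is the same Fox-style argument the paper uses: invoke the mapping-space theorem to see that $S_nX$ is a metrizable ANR (the paper cites Hu, \emph{Theory of Retracts}, Theorem VI.2.4, which is exactly the Kuratowski-type result you have in mind), extend the homotopy from a closed subset of $S_nX\times[0,1]$ to an open neighborhood using the neighborhood extension property, and then apply the tube lemma. The one place you diverge is the handling of the time-$1$ endpoint, and it is worth comparing. The paper avoids your correction step entirely by prescribing the map to be extended not just on $(A\times[0,1])\cup(S_nX\times\{0\})$ but also on $S_nX\times\{1\}$, where it is declared to be $\gamma\mapsto c_n(\gamma(1))$; this is consistent with the homotopy on $A\times[0,1]$ precisely because the paper uses the \emph{explicit} homotopy constructed in the proof of Lemma \ref{LemmaSCincl}, whose time-$1$ map is exactly $\gamma\mapsto c_n(\gamma(1_n))$ (so one must quote the proof of that lemma, not merely its statement). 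With the time-$1$ slice prescribed globally to land in $c_n(X)$, the restriction of the extension to $U\times[0,1]$ already ends in $c_n(X)$ and no further work is needed. Your alternative --- correcting the endpoint afterwards --- can be made to work, but as written it has a soft spot: knowing that $c_n(X)$ is a neighborhood \emph{retract} of $S_nX$ gives you a retraction $r\colon N\to c_n(X)$, yet to ``post-compose a final short homotopy'' you need a homotopy in $S_nX$ from the inclusion $N\hookrightarrow S_nX$ to $r$, i.e.\ that $c_n(X)$ is a neighborhood \emph{deformation} retract. This does hold (a closed ANR subspace of a metrizable ANR is an NDR; equivalently, sufficiently close maps into an ANR are homotopic, and $N$ can be shrunk so that $r$ is close to the inclusion), but it is an additional nontrivial input that you should state and source rather than fold into ``routine point-set verifications.'' If you adopt the paper's device of prescribing the time-$1$ slice, this extra input disappears and the proof shortens accordingly.
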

\begin{proof}
We first observe that if $X$ is a metrizable ANR, then $S_nX$ will be an ANR as well by \cite[Theorem VI.2.4]{HuRetr}. By the proof Lemma \ref{LemmaSCincl}, there exists a homotopy $H':A \times [0,1] \to S_nX$ from the inclusion $A \hookrightarrow S_nX$ to $\gamma \mapsto c_n(\gamma(1))$. Put $Q := (S_nX \times \{0\}) \cup (A \times [0,1]) \cup (S_nX \times \{1\})$ and define $H: Q \times [0,1]\to S_nX$ by 
$$H(\gamma,0)=\gamma, \ H(\gamma,1)= c_n(\gamma(1)) \ \forall \gamma \in X, \ \ H(a,t)=H'(a,t) \ \forall a \in A, t \in [0,1].$$
One checks that $H$ is well-defined and continuous. Since $S_nX$ is an ANR, there exists an open neighborhood $V \subset S_nX \times [0,1]$ of $Q$ and a continuous extension $K:V \to S_nX$ of $H$. Since $[0,1]$ is compact, $V$ contains a subset of the form $U \times [0,1]$ where $U$ is an open neighborhood of $A$ in $S_nX$. Then, by definition, $K|_{U \times [0,1]}$ is a homotopy from the inclusion $U \hookrightarrow S_nX$ to a map with values in $c_n(X)$. Thus, $U$ is a sphere filling domain by Lemma \ref{LemmaSCincl}, which applies since $X$ is metrizable, hence Hausdorff.
\end{proof}

The following statement generalizes a result of Farber for topological complexity, see \cite[Lemma 18.3]{FarberSurveyTC}. Its proof mimics Farber's proof in a straightforward way.

\begin{prop}
\label{PropSCquotcat}
If $X$ is a metrizable ANR, then
$$\SC_n(X) \geq \cat(S_nX/c_n(X))-1 \qquad \forall n \in \NN_0.$$
\end{prop}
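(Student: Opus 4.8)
The plan is to adapt Farber's argument for topological complexity (\cite[Lemma 18.3]{FarberSurveyTC}) to the present setting. Suppose $\SC_n(X) = k$, so there exist $U_1,\dots,U_k \in \Dop_n(X)$ covering $S_nX$. The goal is to produce a categorical open cover of the quotient $S_nX/c_n(X)$ with $k+1$ members, which gives $\cat(S_nX/c_n(X)) \leq k+1$, i.e.\ the desired inequality. Let $q: S_nX \to S_nX/c_n(X)$ denote the quotient map and let $* \in S_nX/c_n(X)$ be the point obtained by collapsing $c_n(X)$.

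First I would handle the ``old'' open sets. Each $U_i$ is a sphere filling domain, so by Lemma \ref{LemmaSCincl} the inclusion $U_i \hookrightarrow S_nX$ is homotopic to a map with values in $c_n(X)$. Composing such a homotopy with $q$ shows that $q|_{U_i}: U_i \to S_nX/c_n(X)$ is homotopic to a constant map (since $q(c_n(X)) = \{*\}$). However, the sets $q(U_i)$ need not be open in the quotient, because $U_i$ need not be saturated. The standard fix is to pass to the union with a neighborhood of $c_n(X)$: since $X$ is a metrizable ANR, so is $S_nX$ (by \cite[Theorem VI.2.4]{HuRetr}, as used in Lemma \ref{LemmaFox}), and $c_n(X)$ is a closed sphere filling domain (Proposition \ref{Propcn}; closedness follows since $c_n$ is an embedding with closed image in the Hausdorff space $S_nX$). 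By Lemma \ref{LemmaFox} there is an open sphere filling domain $W \supset c_n(X)$; moreover, rerunning the proof of Lemma \ref{LemmaFox} one may arrange $W$ to deformation retract (through $S_nX$) onto $c_n(X)$, or at least that the inclusion $W \hookrightarrow S_nX$ is nullhomotopic after collapsing. Then set $V_i := q(U_i \cup W)$ and $V_0 := q(W)$. Since $W$ is an open saturated set (it contains $c_n(X)$, hence is a union of the collapsed point with an open set), $V_0$ and each $V_i$ are open in $S_nX/c_n(X)$, and they cover since the $U_i$ already cover $S_nX$.

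It remains to check that each $V_i$ is categorical in $S_nX/c_n(X)$, i.e.\ that its inclusion into the quotient is nullhomotopic. For $V_0 = q(W)$: since $W$ is a sphere filling domain, $W \hookrightarrow S_nX$ is homotopic to a map into $c_n(X)$, so $q|_W$ is nullhomotopic; as $W$ is saturated this descends to show $V_0 \hookrightarrow S_nX/c_n(X)$ is nullhomotopic. For $V_i$ with $i \geq 1$: the inclusion $U_i \cup W \hookrightarrow S_nX$ need not itself be nullhomotopic, but after collapsing $c_n(X)$ it becomes so, because $W$ collapses to the point $*$ and $U_i$ maps into $c_n(X)$ up to homotopy rel nothing — one assembles a homotopy on $U_i \cup W$ by using the homotopy on $U_i$ from Lemma \ref{LemmaSCincl}, the contraction of $W$, and the fact that $q$ identifies their images with (a neighborhood of) the basepoint. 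Here one should be slightly careful to glue the two homotopies continuously along $U_i \cap W$; the cleanest route is to observe that $q(U_i \cup W)$ is contained in the image under $q$ of a sphere filling domain if $U_i \cup W$ is itself a sphere filling domain, which one can check directly (a sphere filling over $U_i$ and one over $W$ can be patched using a partition of unity in the ball-filling variable, as in the proof of Proposition \ref{PropSCquotcat}'s analogue, or by a Mayer--Vietoris-type argument). Then $q|_{U_i \cup W}$ factors up to homotopy through $q(c_n(X)) = \{*\}$.

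The main obstacle I anticipate is exactly this last gluing: ensuring that the union $U_i \cup W$ is again a sphere filling domain (or, failing that, that $q(U_i \cup W)$ is still categorical), since sphere filling domains are not obviously closed under unions. The resolution I would pursue is that $W$ deformation retracts onto $c_n(X)$ \emph{within} $S_nX$, which lets one extend any sphere filling over $U_i$ across $W$ by first pushing points of $W$ into $c_n(X)$ and then applying the trivial sphere filling of Proposition \ref{Propcn}; continuity of the resulting map is checked using that $X$ is Hausdorff (so that the relevant adjunction maps are continuous, as in Lemma \ref{LemmaSCincl}). Once this is in place, the count $k$ open sets $V_1,\dots,V_k$ plus $V_0$ gives $\cat(S_nX/c_n(X)) \leq k+1 = \SC_n(X)+1$, which is the claim.
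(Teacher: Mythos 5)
Your overall scheme (saturate the $U_i$, add one extra categorical set around the collapsed point, count $k+1$) is the right one, and your treatment of $V_0=q(W)$ via Lemma \ref{LemmaFox} is fine. But the step you yourself flag as the ``main obstacle'' is a genuine gap, and the resolutions you propose do not close it. For $i\geq 1$ you need the inclusion $q(U_i\cup W)\hookrightarrow S_nX/c_n(X)$ to be nullhomotopic, and you try to get this by gluing the nullhomotopy of $q|_{U_i}$ (coming from Lemma \ref{LemmaSCincl}) to the contraction of $q(W)$. These two homotopies have no reason to agree on $(U_i\cap W)\setminus c_n(X)$, and categorical subsets are not closed under unions, so nothing descends. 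Your fallback of showing $U_i\cup W$ is itself a sphere filling domain fails for the same reason at the level of sections: for $\gamma\in U_i\cap W$ the filling $s_i(\gamma)$ and the filling obtained by pushing $\gamma$ into $c_n(X)$ through the retraction of $W$ are two different nullhomotopies of the same sphere, and there is no ``partition of unity in the ball-filling variable'' in a bare topological space $X$ --- one cannot interpolate between two maps $B^{n+1}\to X$ agreeing on $S^n$; the obstruction to doing so is exactly an element of $\pi_{n+1}(X)$. So as written the argument does not go through.

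The paper's proof avoids the problem by saturating in the opposite direction: it sets $U_i':=q(U_i\setminus c_n(X))$. Since $c_n(X)$ is closed in $S_nX$, the set $U_i\setminus c_n(X)$ is open and saturated (it is disjoint from the collapsed set), so $U_i'$ is open in the quotient and $q$ restricts to a homeomorphism onto it; the nullhomotopy of $q|_{U_i}$ then restricts to show $U_i'\hookrightarrow S_nX/c_n(X)$ is nullhomotopic with no gluing needed. The sets $U_1',\dots,U_k'$ cover everything except the basepoint $x_0$, and the single extra set $U_0'=q(U_0)$, with $U_0\supset c_n(X)$ an open sphere filling domain from Lemma \ref{LemmaFox}, covers $x_0$. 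If you replace your $V_i=q(U_i\cup W)$ by $q(U_i\setminus c_n(X))$ and keep your $V_0$, your argument becomes correct and coincides with the paper's.
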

\begin{proof}
Fix $n \in \NN$ and let $q: S_nX \to S_nX/c_n(X)$ denote the quotient space projection. Let $r:= \SC_n(X)$ and let $U_1,\dots,U_r \in \Dop_n(X)$ with $S_nX= \bigcup_{i=1}^r U_i$. By Lemma \ref{LemmaSCincl},the inclusion $U_i \hookrightarrow S_nX$ is for each $i$ homotopic to a map with values in $c_n(X)$. For each $i$ we put $$U_i':= p(U_i \setminus c_n(X)) \subset S_nX/c_n(X).$$ Then the inclusions $U_i'\hookrightarrow S_nX/c_n(X)$ will be nullhomotopic and by construction 
$$\bigcup_{i=1}^k U_i' =(S_nX/c_n(X)) \setminus \{x_0\},$$ 
where $x_0 \in S_nX/c_n(X)$ is given by $\{x_0\}=q(c_n(X))$. By Lemmas \ref{LemmaSCincl} and \ref{LemmaFox}, $c_n(X)$ has an open neighborhood $U_0 \subset S_nX$, such that $U_0 \hookrightarrow S_nX$ is homotopic to a map with values in $c_n(X)$. Thus, $U_0' := q(U_0)$ is an open neighborhood of $x_0$ that contracts onto $x_0$. We have shown that $\{U_0',U_1',\dots,U_k'\}$ is an open cover of $S_nX/c_n(X)$ for which the inclusion of each $U_i'$ is nullhomotopic, which implies $\cat (S_nX/c_n(X)) \leq \SC_n(X) +1$.
\end{proof}

We next consider analogues of spherical complexities for basepoint-preserving maps $S^n \to X$.

\begin{definition}
For each $n \in \NN_0$ we let $1_n=(1,0,\dots,0) \in S^n$. Let $x_0 \in X$ be a fixed basepoint and consider the subspaces 
$$S_n^*X := \{ f \in S_nX \ | \ f(1_n)=x_0\}, \qquad B_{n+1}^*(X) := \{ f \in B_{n+1}X \ | \ f(1_n)=x_0\}=r_n^{-1}(S_n^*(X)).$$
The \emph{based $n$-spherical complexity of $X$} is defined as the number
$$\SC^*_n(X) := \secat(r_n|_{B_{n+1}^*X}: B_{n+1}^*X \to S_n^*X ). $$ 
Note that this number is well-defined, since $r_n|_{B_{n+1}^*X}$ is a fibration by \cite[Corollary VII.6.16]{Bredon}.
\end{definition}

The numbers $\SC_0(X)$ and $\SC_0^*(X)$ are recognized as well-known invariants.

\begin{prop}
\label{PropSC0}
We have
$$\SC_0(X)= \TC(X) \qquad \text{and} \qquad \SC^*_0(X)=\cat(X), $$
where $\TC$ denotes topological complexity and $\cat$ denotes Lusternik-Schnirelmann category.
\end{prop}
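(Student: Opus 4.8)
The plan is to identify $S_0 X$ and $B_1 X$ explicitly as familiar spaces and to recognize the fibration $r_0$ as the standard one computing $\TC$, respectively (in the based case) as the path-loop fibration computing $\cat$. First I would observe that $S^0 = \{1_0, -1_0\}$ is a two-point space, so $S_0 X = \{f \in \C^0(S^0, X) \mid f \text{ nullhomotopic}\}$; since $X$ is path-connected, every map $S^0 \to X$ is nullhomotopic, and $\C^0(S^0,X)$ is canonically homeomorphic to $X \times X$ via $f \mapsto (f(1_0), f(-1_0))$. Similarly $B^1 = [-1,1]$ is an interval, so $B_1 X = \C^0([-1,1], X)$ is canonically homeomorphic to the path space $PX := \C^0([0,1],X)$ (reparametrizing $[-1,1] \cong [0,1]$), and under these identifications the restriction map $r_0 : \gamma \mapsto \gamma|_{S^0}$ becomes the map $PX \to X\times X$, $\gamma \mapsto (\gamma(0), \gamma(1))$. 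This is precisely the fibration whose sectional category Farber defines to be $\TC(X)$, see \cite{FarberTC}, so $\SC_0(X) = \secat(r_0) = \TC(X)$.

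For the based statement, I would restrict the same identifications to the basepoint-preserving subspaces. Choosing the convention that $1_0 \in S^0$ corresponds to the endpoint $1 \in [-1,1]$, hence to $1 \in [0,1]$ after reparametrization, we get $S_0^* X = \{f \in \C^0(S^0,X) \mid f(1_0) = x_0\} \cong \{x_0\} \times X \cong X$ and $B_1^* X = \{\gamma \in PX \mid \gamma(1) = x_0\}$, which is the space of paths in $X$ ending at $x_0$. Under these homeomorphisms the fibration $r_0|_{B_1^*X} : B_1^*X \to S_0^*X$ becomes the map sending a path ending at $x_0$ to its starting point, i.e. (after reversing the path parameter) the standard path fibration $P_0 X \to X$ with contractible total space, whose fiber over $x_0$ is the based loop space $\Omega X$. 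By the classical identification of $\cat(X)$ as the sectional category of this fibration (Schwarz \cite{SchwarzGenus}; see also \cite[Lemma 18.2]{FarberSurveyTC} or \cite{CLOT}), we conclude $\SC_0^*(X) = \secat(r_0|_{B_1^*X}) = \cat(X)$.

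The only mild subtleties — none of them a genuine obstacle — are bookkeeping ones: checking that the homeomorphisms $\C^0(S^0,X) \cong X\times X$ and $\C^0([-1,1],X) \cong \C^0([0,1],X)$ are compatible with the compact-open topologies (immediate, since $S^0$ is discrete and $[-1,1] \cong [0,1]$ is a homeomorphism of the source), and that they intertwine $r_0$ with the endpoint-evaluation map exactly. One must also be slightly careful about which endpoint plays the role of the basepoint so that the based version really yields the path-\emph{loop} fibration rather than something degenerate, but either choice works by symmetry. I expect the proof to consist essentially of writing down these identifications and citing the known descriptions of $\TC$ and $\cat$ as sectional categories; the main point worth stating carefully is the homeomorphism $B_1 X \cong PX$ carrying $r_0$ to the two-endpoint evaluation.
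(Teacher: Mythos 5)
Your proposal is correct and follows essentially the same route as the paper: both identify $B_1X\approx PX$, $S_0X\approx X\times X$, $B_1^*X\approx P_{x_0}X$, $S_0^*X\approx X$ compatibly with the fibrations and then invoke the standard descriptions of $\TC$ and $\cat$ as sectional categories. The paper's proof is just a terser version of the same bookkeeping you carry out.
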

\begin{proof}
Fix $x_0 \in X$. In terms of sectional category, $\TC$ and $\cat$ are given as
$$\TC(X) = \secat(\pi:PX \to X \times X), \qquad \cat(X) = \secat(\pi_0:P_{x_0}X \to X), $$
where $PX=\C^0([0,1],X)$, $\pi(\gamma)=(\gamma(0),\gamma(1))$, $P_{x_0}X=\{\gamma \in PX \ | \ \gamma(0)=x_0\}$, $\pi_0(\gamma)=\gamma(1)$. But obviously, there are homeomorphisms $B_1 X \approx PX $, $S_0X=\C^0(S^0,X)\approx X \times X$, $B_1^*X\approx P_{x_0}X$, $S_0^*X= \C^0(\{1\},X)\approx X$, which are compatible with the fibrations $r_0$, $r_0|_{B_1X}$, $\pi$ and $\pi_0$, which shows the claim.
\end{proof}

While there is no general identification of spherical complexities for $n>0$ as topological complexities or Lusternik-Schnirelmann (LS) categories, \emph{based} spherical complexities are indeed given as LS categories under mild assumptions.

\begin{lemma}
\label{LemmaBasedCat}
If $X$ is a Hausdorff space, then $\SC_n^*(X) = \cat(S_n^*X)$ for every $n \in \NN_0$.
\end{lemma}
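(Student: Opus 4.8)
The plan is to show that $\SC_n^*(X) = \cat(S_n^*X)$ by exhibiting, via Schwarz's characterization of sectional category, that a subset $U \subset S_n^*X$ admits a local section of $r_n|_{B_{n+1}^*X}$ if and only if the inclusion $U \hookrightarrow S_n^*X$ is nullhomotopic. This is the based analogue of Lemma \ref{LemmaSCincl}, so the argument should be a basepoint-preserving refinement of that proof. Recall that $\cat(S_n^*X)$ is the minimal cardinality of an open cover of $S_n^*X$ by sets whose inclusions into $S_n^*X$ are nullhomotopic, while $\SC_n^*(X)$ is the minimal cardinality of an open cover by sets admitting sections of $r_n|_{B_{n+1}^*X}$; so it suffices to prove these two families of open sets coincide.

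First I would handle the forward direction: suppose $s: U \to B_{n+1}^*X$ is a section, i.e. $s(\gamma)|_{S^n} = \gamma$ and $s(\gamma)(1_n) = x_0$ for all $\gamma \in U$. Using a strong deformation retraction $\varphi: B^{n+1} \times [0,1] \to B^{n+1}$ of $B^{n+1}$ onto $\{1_n\}$, exactly as in the proof of Lemma \ref{LemmaSCincl}, define $H: U \times [0,1] \to S_nX$ by $(H(\gamma,t))(p) = (s(\gamma))(\varphi(p,t))$ for $p \in S^n$. Since $X$ is Hausdorff this is continuous, $H(\gamma,0) = \gamma$, and $H(\gamma,1) = c_n(\gamma(1_n)) = c_n(x_0)$. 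The crucial extra point here is that because $s(\gamma)(1_n) = x_0$, every $H(\gamma,t)$ satisfies $(H(\gamma,t))(1_n) = (s(\gamma))(\varphi(1_n,t)) = (s(\gamma))(1_n) = x_0$, so $H$ actually takes values in $S_n^*X$. Hence $H$ is a based nullhomotopy of the inclusion $U \hookrightarrow S_n^*X$ (to the constant map at $c_n(x_0)$), proving $\cat(S_n^*X) \leq \SC_n^*(X)$.

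For the converse, suppose $H: U \times [0,1] \to S_n^*X$ is a homotopy with $H(\gamma,0) = \gamma$ and $H(\gamma,1)$ constant for all $\gamma \in U$ (that $H(\gamma,1)$ may be taken to be a single fixed constant map, rather than possibly varying with $\gamma$, follows since $U$ is contractible in $S_n^*X$; but in fact we only need $H(\gamma,1) \in c_n(X)$). Define $s: U \to B_{n+1}X$ by $(s(\gamma))(rx) = (H(\gamma,1-r))(x)$ for $r \in [0,1]$, $x \in S^n$, as in Lemma \ref{LemmaSCincl}. This is well-defined and continuous by the same argument given there (continuity at $0 \in B^{n+1}$ uses that $H(\gamma,1)$ is constant), and $s(\gamma)|_{S^n} = \gamma$. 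Finally $(s(\gamma))(1_n) = (s(\gamma))(1 \cdot 1_n) = (H(\gamma,0))(1_n) = \gamma(1_n) = x_0$ since $\gamma \in S_n^*X$, so $s$ takes values in $B_{n+1}^*X$ and is a section of $r_n|_{B_{n+1}^*X}$. Therefore $\SC_n^*(X) \leq \cat(S_n^*X)$, completing the proof.

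I do not expect a serious obstacle here; the statement is a clean basepoint-preserving adaptation of Lemma \ref{LemmaSCincl}, and the only things to watch are that the deformation $\varphi$ fixes $1_n$ (so the based condition is preserved throughout the homotopy) and that the section $s$ built from a based nullhomotopy sends $1_n$ to $x_0$ (which is automatic from $\gamma(1_n) = x_0$). The mild hypothesis that $X$ be Hausdorff is used, as in Lemma \ref{LemmaSCincl}, only to guarantee continuity of the adjoint maps via \cite[Theorem VII.2.10]{Bredon}.
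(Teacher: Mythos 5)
Your argument is correct and is essentially the paper's own proof: both directions are the basepoint-preserving refinement of Lemma \ref{LemmaSCincl}, resting on exactly the two observations you highlight, namely that the retraction $\varphi$ fixes $1_n$ (so the homotopy stays in $S_n^*X$) and that the filling built from a nullhomotopy sends $1_n$ to $\gamma(1_n)=x_0$. The only point worth spelling out more carefully (which the paper elides in the same way) is that a nullhomotopy of $U \hookrightarrow S_n^*X$ a priori ends at the constant map at some $\gamma_0 \in S_n^*X$ rather than at an element of $c_n(X)$; since $\gamma_0$ is nullhomotopic, one fixes this by concatenating with the contraction of $\gamma_0$ inside $S_n^*X$ obtained from any filling of $\gamma_0$ together with the radial retraction of $B^{n+1}$ onto $\{1_n\}$.
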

\begin{proof}
Put $k := \cat(S_n^*X)$ and let $\{U_1,\dots,U_k\}$ be an open cover of $S_n^*X$ such that each inclusion $U_j \hookrightarrow S_n^*X$ is nullhomotopic. Then the inclusion is, in particular, homotopic to a map with values in $c_n(X)$, so by Lemma \ref{LemmaSCincl}, there exists a sphere filling $s_j:U_j \to B_{n+1}X$ for every $j \in \{1,2,\dots,k\}$ and clearly $s_j(U_j) \subset B^*_{n+1}X$. Since $\{U_1,\dots,U_k\}$ is a cover of  $S_n^*X$, this yields $\SC_n^*(X) \leq k$. 

Conversely, let $\ell := \SC_n^*(X)$ and let $\{V_1,\dots,V_\ell\}$ be a cover of $S_n^*X$ by sphere filling domains. Consider the homotopy $H$ from the proof of Lemma \ref{LemmaSCincl} with respect to $A=V_j$ for some $j \in \{1,2,\dots,\ell\}$. One checks that by construction, this homotopy takes values in $S_n^*X$ only since $V_j \subset S_n^*X$, so we obtain a homotopy $H_0:V_j \times [0,1] \to S_n^*X$ from the inclusion $V_j \hookrightarrow S_n^*X$ to the constant map $V_j \to S_n^*X$,  $\gamma \mapsto  c_n(\gamma(1))=c_n(x_0)$. Thus, $V_j \hookrightarrow S_n^*X$ is nullhomotopic for each $j \in \{1,2,\dots,\ell\}$. Consequently, $\cat(S_n^*X) \leq \ell = \SC_n^*(X)$.
\end{proof}

In particular, if $\pi_n(X)=0$, then 
$$\SC_n^*(X) = \cat(\Omega^nX),$$
where $\Omega^nX=\C^0((S^n,1),(X,x_0))$ denotes the $n$-fold iterated based loop space of $X$. We next derive estimates for $\SC_n(X)$ in terms of LS categories.

\begin{theorem}
\label{TheoremSCcatbound}
Assume that $X$ is a Hausdorff space. For all $n \in \NN$ it holds that
$$\cat(S_n^*X) \leq \SC_n(X) \leq \cat(S_nX).$$
\end{theorem}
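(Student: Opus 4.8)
The plan is to establish the two inequalities separately, in each case producing an explicit open cover witnessing the bound on the relevant invariant out of a cover witnessing the other.

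For the upper bound $\SC_n(X) \leq \cat(S_nX)$, I would start with an open cover $\{U_1,\dots,U_k\}$ of $S_nX$ by categorical subsets, where $k = \cat(S_nX)$, so each inclusion $U_j \hookrightarrow S_nX$ is nullhomotopic. In particular each inclusion is homotopic to a map with values in the single point — hence a fortiori to a map with values in $c_n(X)$, since $c_n(X) \neq \emptyset$ as $X$ is path-connected (indeed, a nullhomotopy contracts $U_j$ to a point $c_n(\gamma_0(1_n))$ which lies in $c_n(X)$). By Lemma \ref{LemmaSCincl} (applicable since $X$ is Hausdorff), each $U_j$ is then a sphere filling domain, so $\{U_1,\dots,U_k\}$ exhibits $\SC_n(X) \leq k = \cat(S_nX)$. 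This direction is essentially immediate.

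For the lower bound $\cat(S_n^*X) \leq \SC_n(X)$, I would take an open cover $\{V_1,\dots,V_\ell\}$ of $S_nX$ by sphere filling domains, $\ell = \SC_n(X)$, and restrict to the based subspace: set $W_j := V_j \cap S_n^*X$, which is open in $S_n^*X$ and clearly covers $S_n^*X$. It remains to check that each $W_j \hookrightarrow S_n^*X$ is nullhomotopic. Here I would invoke the canonical homotopy $H$ constructed in the proof of Lemma \ref{LemmaSCincl}: on $V_j$ it deforms the inclusion $V_j \hookrightarrow S_nX$ to the map $\gamma \mapsto c_n(\gamma(1_n))$, via $(H(\gamma,t))(p) = (s(\gamma))(\varphi(p,t))$, where $\varphi$ is a strong deformation retraction of $B^{n+1}$ onto $\{1_n\}$. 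The key observation is that if $\gamma \in W_j \subset S_n^*X$, then $\gamma(1_n) = x_0$, and moreover $(H(\gamma,t))(1_n) = (s(\gamma))(\varphi(1_n,t)) = (s(\gamma))(1_n) = \gamma(1_n) = x_0$ for all $t$, since $\varphi$ fixes $1_n$. Hence $H$ restricts to a homotopy $W_j \times [0,1] \to S_n^*X$ from the inclusion to the constant map $\gamma \mapsto c_n(x_0)$, so each $W_j$ is categorical in $S_n^*X$. This yields $\cat(S_n^*X) \leq \ell = \SC_n(X)$.

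I don't anticipate a serious obstacle here — the argument is a straightforward adaptation of the techniques already deployed in Lemma \ref{LemmaBasedCat} and Lemma \ref{LemmaSCincl}. The one point requiring a little care is verifying that the deformation $H$ genuinely stays within the \emph{based} sphere space $S_n^*X$ when the input is based; this hinges on the deformation retraction $\varphi$ of $B^{n+1}$ being chosen so that $1_n$ is the fixed endpoint (which is exactly how it is set up in Lemma \ref{LemmaSCincl}), so that the basepoint condition $f(1_n)=x_0$ is preserved throughout the homotopy. A secondary point is ensuring the restricted sets $W_j$ are still open and still cover, which is clear from $W_j = V_j \cap S_n^*X$ and $S_n^*X \subset S_nX$. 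With those checks in place both inequalities follow.
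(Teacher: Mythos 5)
Your argument is correct and is essentially the paper's proof with the citations unpacked: the paper obtains the right-hand inequality from Schwarz's general bound $\secat(p)\leq\cat(\text{base})$ and the left-hand one from monotonicity of $\secat$ under restriction of the base ($\SC_n^*(X)\leq\SC_n(X)$) combined with Lemma \ref{LemmaBasedCat}, whose proof is exactly your basepoint-preserving restriction of the homotopy from Lemma \ref{LemmaSCincl}. One small imprecision in your first half: a nullhomotopy of $U_j\hookrightarrow S_nX$ contracts $U_j$ to an \emph{arbitrary} point $\gamma_1\in S_nX$, not automatically to a point of $c_n(X)$ as your parenthetical asserts; to conclude that the inclusion is homotopic to a map with values in $c_n(X)$ you should add that $\gamma_1$, being nullhomotopic by definition of $S_nX$, is joined to a constant map by a path inside $S_nX$, after which Lemma \ref{LemmaSCincl} applies as you say.
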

\begin{proof}
The right-hand inequality is an immediate consequence of \cite[Theorem 18]{SchwarzGenus}. Moreover, since $B_{n+1}^*X=r_n^{-1}(S_n^*X)$, it follows from \cite[Proposition 7]{SchwarzGenus} that $\SC_n^*(X) \leq \SC_n(X)$, so the left-hand inequality is a consequence of Lemma \ref{LemmaBasedCat}.
\end{proof}

Note that by Proposition \ref{PropSC0}, the inequalities from Theorem \ref{TheoremSCcatbound} coincide for $n=0$ with Farber's well-established inequalities,  see \cite[Theorem 5]{FarberTC},
$$\cat(X) \leq \TC(X) \leq \cat(X \times X).$$
For $n\geq 1$, the inequalities from Theorem \ref{TheoremSCcatbound} imply that $\SC_n(X)$ will be infinite for many spaces appearing in practical applications. It was shown by F. R. Cohen in \cite{FRCohenLS} that if $X$ is simply connected and has the homotopy type of a finite CW complex, then $\cat(S_n^*X)=+\infty$ for all $n \geq 1$. Thus, Theorem \ref{TheoremSCcatbound} has the following consequence.

\begin{cor}
If $X$ is simply connected and has the homotopy type of a finite CW complex, then $\SC_n(X)=+\infty$ for all $n \geq 1$. 
\end{cor}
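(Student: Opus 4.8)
The plan is to deduce the corollary directly from Theorem \ref{TheoremSCcatbound} together with Cohen's theorem. Recall that Theorem \ref{TheoremSCcatbound} gives, for a Hausdorff space $X$ and every $n \in \NN$, the chain of inequalities $\cat(S_n^*X) \leq \SC_n(X) \leq \cat(S_nX)$. A finite CW complex is metrizable and in particular Hausdorff, so the hypothesis of the theorem is met. Hence it suffices to show that the left-hand term $\cat(S_n^*X)$ is infinite.

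Next I would invoke the theorem of F.\ R.\ Cohen cited in the excerpt (from \cite{FRCohenLS}): if $X$ is simply connected with the homotopy type of a finite CW complex, then $\cat(S_n^*X) = +\infty$ for all $n \geq 1$. Since $X$ is assumed simply connected, we have $\pi_n(X) = 0$ for $n \geq 1$ only when $n=1$; but actually for the corollary we do not need $\pi_n(X)=0$, because $S_n^*X$ is defined to consist of nullhomotopic based maps regardless, and Cohen's statement as quoted applies to $S_n^*X$ directly. (If one prefers, for $n \geq 2$ one could note $S_n^*X = \Omega^n X$ when $\pi_n(X)=0$, but in general $S_n^*X$ is the subspace of $\Omega^n X$ of nullhomotopic components; the cited form of Cohen's result already accounts for this, so no extra argument is needed.) Thus $\cat(S_n^*X) = +\infty$ for every $n \geq 1$.

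Combining the two facts: for each $n \geq 1$ we obtain $\SC_n(X) \geq \cat(S_n^*X) = +\infty$, hence $\SC_n(X) = +\infty$. This completes the argument. The proof is essentially a one-line chaining of an already-established inequality with a cited external theorem, so there is no genuine obstacle; the only point requiring a modicum of care is to confirm that the form of Cohen's theorem being quoted is about $S_n^*X$ (the nullhomotopic part of the iterated loop space) rather than $\Omega^n X$ itself, and that the finite-CW and simple-connectivity hypotheses transfer verbatim. Given that the excerpt already phrases Cohen's result in exactly the needed form, the corollary follows immediately.
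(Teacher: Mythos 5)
Your argument is exactly the paper's: the corollary is stated as an immediate consequence of the left-hand inequality $\cat(S_n^*X)\leq \SC_n(X)$ from Theorem \ref{TheoremSCcatbound} combined with Cohen's theorem that $\cat(S_n^*X)=+\infty$ for simply connected $X$ of finite CW homotopy type. Your additional remarks (Hausdorffness, the precise form of Cohen's statement) are sound, so the proof is correct and matches the paper's route.
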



In \cite{SchwarzGenus}, Schwarz has established lower bounds on sectional categories in terms of the cohomology rings of their base spaces, which we want to make precise for spherical complexities. Given a topological space $X$, a commutative ring $R$ and an ideal $I$ of the cohomology ring $H^*(X;R)$, we let 
$$\cl_R(I) := \sup \{r \in \NN \ | \ \exists u_1,\dots,u_r \in I\ \text{with} \ u_1 \cup \dots \cup u_r\neq 0 \ \text{and} \ \deg u_j >0 \ \forall j \in \{1,2,\dots,r\}\}.$$
Given a subspace $A \subset X$, we further put
$$\cl_R(X,A) := \cl_R(H^*(X,A;R)), \qquad \cl_R(X) := \cl_R(H^*(X;R)).$$
\begin{theorem}
\label{TheoremSCcup}
Assume that $X$ is a Hausdorff space and let $R$ be a commutative ring. Then
$$\SC_n(X) \geq \cl_R (S_{n}X,c_n(X))+1\qquad \forall n \in \NN_0.$$
\end{theorem}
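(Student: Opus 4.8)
The plan is to derive the bound from Schwarz's cohomological lower bound on sectional category: for any fibration $p\colon E\to B$ and any commutative ring $R$ one has $\secat(p)\ge \cl_R\big(\ker[p^*\colon H^*(B;R)\to H^*(E;R)]\big)+1$ \cite{SchwarzGenus}. Since $\SC_n(X)=\secat(r_n)$ and $r_n\colon B_{n+1}X\to S_nX$ is a fibration, it suffices to show that the natural homomorphism
$$\rho\colon H^*(S_nX,c_n(X);R)\longrightarrow H^*(S_nX;R)$$
from the long exact sequence of the pair $(S_nX,c_n(X))$ is an injective ring homomorphism, with respect to the relative cup product on the source, whose image is contained in $\ker r_n^*$. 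Granting this, any family $u_1,\dots,u_r\in H^*(S_nX,c_n(X);R)$ of positive degrees with $u_1\cup\dots\cup u_r\ne 0$ produces classes $\rho(u_1),\dots,\rho(u_r)\in\ker r_n^*$ of positive degrees with $\rho(u_1)\cup\dots\cup\rho(u_r)=\rho(u_1\cup\dots\cup u_r)\ne 0$, so $\cl_R(\ker r_n^*)\ge r$; taking the supremum over all such families gives $\cl_R(\ker r_n^*)\ge \cl_R(S_nX,c_n(X))$ and hence the theorem.

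First I would prove that $\im\rho\subseteq\ker r_n^*$. Let $C\subset B_{n+1}X$ be the subspace of constant maps $B^{n+1}\to X$, with inclusion $\iota\colon C\hookrightarrow B_{n+1}X$, and let $j\colon c_n(X)\hookrightarrow S_nX$ be the inclusion. Choosing a strong deformation retraction $\varphi\colon B^{n+1}\times[0,1]\to B^{n+1}$ of $B^{n+1}$ onto $\{1_n\}$, the assignment $(H(\gamma,t))(p):=\gamma(\varphi(p,t))$ defines a strong deformation retraction of $B_{n+1}X$ onto $C$; here the continuity of $H$ uses that $X$ is Hausdorff, via \cite[Theorem VII.2.10]{Bredon}, just as in the proof of Lemma \ref{LemmaSCincl}. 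In particular $\iota^*$ is an isomorphism on cohomology. Moreover $r_n$ restricts to a homeomorphism $r_n|_C\colon C\xrightarrow{\ \approx\ }c_n(X)$, and $r_n\circ\iota=j\circ(r_n|_C)$, so $\iota^*\circ r_n^*=(r_n|_C)^*\circ j^*$. For $u\in H^*(S_nX,c_n(X);R)$ we have $j^*\rho(u)=0$ by exactness, hence $\iota^*(r_n^*\rho(u))=(r_n|_C)^*(j^*\rho(u))=0$; since $\iota^*$ is injective, $r_n^*\rho(u)=0$.

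Next I would prove that $\rho$ is injective. The evaluation map $\ev_{1_n}\colon S_nX\to c_n(X)$, $\gamma\mapsto c_n(\gamma(1_n))$, is a retraction of $j$, i.e.\ $\ev_{1_n}\circ j=\id$. Therefore $j^*\colon H^*(S_nX;R)\to H^*(c_n(X);R)$ is split surjective, so the connecting homomorphism in the long exact sequence of the pair $(S_nX,c_n(X))$ vanishes and $\rho$ is injective. That $\rho$ is a ring homomorphism for the relative cup product is standard and can be verified at the cochain level, where $C^*(S_nX,c_n(X);R)$ is a subring of $C^*(S_nX;R)$. Combining all of this with Schwarz's inequality yields
$$\SC_n(X)=\secat(r_n)\ \ge\ \cl_R(\ker r_n^*)+1\ \ge\ \cl_R(S_nX,c_n(X))+1.$$

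The point requiring the most care is the injectivity of $\rho$: it is exactly what guarantees that a nonzero product of relative classes survives in $H^*(S_nX;R)$, so that $\cl_R(S_nX,c_n(X))$ is bounded by $\cl_R(\ker r_n^*)$ rather than only by the cup length of the (possibly smaller) image ideal $\im\rho$; the retraction $\ev_{1_n}$ makes this transparent. The remaining ingredients — the deformation retraction of $B_{n+1}X$ onto $C$ and the commutative square relating $r_n$ to $r_n|_C$ — are routine, the only subtlety being the appeal to the Hausdorff hypothesis for continuity of maps into the relevant function spaces. Note finally that the argument is uniform in $n\in\NN_0$; for $n=0$ it recovers the familiar estimate $\TC(X)\ge\cl_R(X\times X,\Delta X)+1$, with $\ev_{1_0}$ playing the role of a coordinate projection $X\times X\to\Delta X$.
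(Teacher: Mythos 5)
Your proposal is correct and follows essentially the same route as the paper: Schwarz's cup-length lower bound for $\secat(r_n)$, the homotopy $(\gamma,t)\mapsto\gamma(\varphi(\cdot,t))$ (with the Hausdorff hypothesis for continuity) to place the image of $H^*(S_nX,c_n(X);R)$ inside $\ker r_n^*$, and the retraction given by evaluation at $1_n$ to split the long exact sequence and get injectivity of $\rho$. The only cosmetic difference is that you phrase the first step as a deformation retraction of $B_{n+1}X$ onto the constant maps plus a naturality square, whereas the paper writes $r_n\simeq c_n\circ e_n$ directly and identifies $\ker r_n^*=\ker c_n^*$; both arguments are the same in substance.
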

\begin{proof}
Fix $n \in \NN_0$. By \cite[Theorem 4]{SchwarzGenus}, it holds that 
$$\SC_{n}(X) \geq \cl_R\left(\ker \left[r_n^*:H^*(S_nX;R) \to H^*(B_{n+1}X;R)\right] \right)+1.$$
Let $\varphi_{n}: B^{n+1} \times [0,1] \to B^{n+1}$ be a strong deformation retraction of $B^{n+1}$ onto $\{1\}$ and put
$$ h_n:B_{n+1}X \times [0,1] \to S_nX, \qquad (h_n(\gamma,t))(p)= \gamma(\varphi_{n}(p,t)) \quad \forall \gamma \in B_{n+1}X, \  t \in [0,1], \ p \in S^n.$$
Since $X$ is Hausdorff, $h_n$ is continuous, so $h_n$ is a homotopy from $r_n$ to $c_n \circ e_n$, where we put $e_n:B_{n+1}X \to X$, $\gamma \mapsto \gamma(1)$. This $e_n$ is dual to the inclusion $i_n: \{1\} \hookrightarrow B^n$, which is a homotopy equivalence and hence so is $e_n$ by \cite[Theorem VII.2.8]{Bredon}. Thus, in cohomology, $\ker r_n^*= \ker (e_n^* \circ c_n^*) = \ker c_n^*$, which implies $\SC_n(X) \geq \cl_R\left(\ker c_n^*  \right)+1.$
Since $c_n$ has the left-inverse $\ev_n:S_nX \to X$, $\gamma \mapsto \gamma(1)$, the long exact cohomology sequence of the pair $(S_nX,c_n(X))$ breaks into short exact sequences
$$0 \to H^k(S_nX,c_n(X);R) \to H^k(S_nX;R) \stackrel{c_n^*}\to H^k(X;R) \to 0$$
for all $k \in \NN$. Thus, $\ker c_n^* = \im [ H^*(S_nX,c_n(X);R) \to H^{*}(S_nX;R)]$
and since the latter map is injective, 
$$\cl_R (\im [ H^k(S_nX,c_n(X);R) \to H^{k}(S_nX;A)]) = \cl_R(S_nX,c_n(X)),$$
which shows the claim. 
\end{proof}

\begin{cor}
Let $n \in \NN_0$ and $R$ be a commutative ring. If $X$ is Hausdorff and $\pi_n(X)=0$, then 
$$\SC_n(X) \geq \cl_R(\C^0(S^n,X),c_n(X))+1.$$
\end{cor}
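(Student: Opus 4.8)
The plan is to observe that this corollary follows from Theorem~\ref{TheoremSCcup} by a one-line remark already made in the text: when $\pi_n(X)=0$, the space $S_nX$ coincides with the full mapping space $\C^0(S^n,X)$, since the nullhomotopy condition in the definition of $S_nX$ becomes vacuous. Hence the inclusion $c_n(X)\subset S_nX$ equals the inclusion $c_n(X)\subset \C^0(S^n,X)$, and the relative cohomology appearing in Theorem~\ref{TheoremSCcup} is literally $H^*(\C^0(S^n,X),c_n(X);R)$.

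First I would recall the definition $S_nX=\{f\in\C^0(S^n,X)\mid f\text{ is nullhomotopic}\}$ and note that every continuous map $S^n\to X$ is nullhomotopic precisely when $\pi_n(X)=0$ (the set of path components of $\C^0(S^n,X)$ based at a point surjects onto $\pi_n(X,x_0)$ up to the action of $\pi_1$, and triviality of $\pi_n(X)$ forces the single component). Consequently $S_nX=\C^0(S^n,X)$ as topological spaces (both carry the compact-open topology), a fact explicitly stated right after the definition of $S_nX$ in Section~\ref{SectionDefs}. Then $\cl_R(S_nX,c_n(X))=\cl_R(\C^0(S^n,X),c_n(X))$ by definition of $\cl_R$ of a pair.

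Next I would invoke Theorem~\ref{TheoremSCcup}, which applies since $X$ is assumed Hausdorff, to get
$$\SC_n(X)\geq \cl_R(S_nX,c_n(X))+1 = \cl_R(\C^0(S^n,X),c_n(X))+1,$$
which is exactly the asserted inequality. There is no real obstacle here: the only thing to check is the identification $S_nX=\C^0(S^n,X)$ under the hypothesis $\pi_n(X)=0$, and that is immediate from the definitions. The corollary is purely a reformulation of Theorem~\ref{TheoremSCcup} in the simplified setting, recorded for convenience in later applications where $X$ is $n$-connected.
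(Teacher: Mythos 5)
Your proposal is correct and is exactly the argument the paper intends (the corollary is stated without proof precisely because it is the immediate specialization of Theorem \ref{TheoremSCcup} using the identity $S_nX=\C^0(S^n,X)$ when $\pi_n(X)=0$, which is noted right after the definition of $S_nX$). Nothing is missing.
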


\section{Lusternik-Schnirelmann theory for spherical complexities}
\label{SectionLStheory}
In this section, we want to apply methods from Lusternik-Schnirelmann theory, which in its classical formulation uses deformation results for sublevel sets of differentiable functions to obtain upper estimates on relative LS categories of sublevel sets by the numbers of critical points lying in them. Similar arguments will apply to spherical complexities, though we will see that in contrast to LS category, spherical complexities are lower bounds for the \emph{number of $O(n)$-orbits} of critical points of suitable $O(n)$-invariant functions. \\

Throughout this section, we again let $X$ be a path-connected topological space. The discussion will be similar to and oriented on the one in Section 1.2 and 1.7 of \cite{CLOT} for LS categories.

We begin by introducing subspace complexities that are defined in analogy with subspace LS categories, see \cite[Definition 1.1]{CLOT}.

\begin{definition}
Let $X$ be a topological space and $n \in \NN$. For $A \subset S_nX$ non-empty we put $B_{n+1}(X,A) := r_n^{-1}(A)$ and $r_{n,A} := r_n|_{B_{n+1}(X,A)}$ and define 
$$\SC_{n,X}(A) := \secat\left(r_{n,A}:B_{n+1}(X,A) \to A \right).$$
We further put $\SC_{n,X}(\varnothing):=0$.
\end{definition}

For these numbers we establish a relative version of the lower bound from Theorem \ref{TheoremSCcup}.

\begin{prop}
\label{PropSCrelCup}
Let $R$ be a commutative ring, $n \in \NN$ and $A \subset S_nX$. Then 
$$\SC_{n,X}(A) \geq \cl_R\left( \im\left[\iota_A^*:H^*(S_nX,c_n(X);R) \to H^*(A;R)\right] \right) +1,$$
where $\iota_A:(A,\varnothing) \hookrightarrow (S_nX,c_n(X))$ is the inclusion of pairs.
\end{prop}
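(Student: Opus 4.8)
The plan is to combine Schwarz's cohomological lower bound for sectional category with the factorization of $\iota_A^*$ that is already implicit in the proof of Theorem \ref{TheoremSCcup}. Write $j:H^*(S_nX,c_n(X);R) \to H^*(S_nX;R)$ for the canonical map in the long exact cohomology sequence of the pair $(S_nX,c_n(X))$, and let $i_A:A \hookrightarrow S_nX$ denote the inclusion of spaces. Applying naturality of the long exact sequence to the inclusion of pairs $\iota_A:(A,\varnothing) \hookrightarrow (S_nX,c_n(X))$ produces a commutative square which, after the canonical identification $H^*(A,\varnothing;R) = H^*(A;R)$, exhibits $\iota_A^*$ as the composite $i_A^* \circ j$. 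Consequently $\im \iota_A^* = i_A^*(\im j)$.

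Next I would invoke the computation carried out inside the proof of Theorem \ref{TheoremSCcup}, which shows $\im j = \ker c_n^* = \ker r_n^*$, where $r_n^*:H^*(S_nX;R) \to H^*(B_{n+1}X;R)$; recall that this used the homotopy $r_n \simeq c_n \circ e_n$ with $e_n:B_{n+1}X \to X$, $\gamma \mapsto \gamma(1)$, a homotopy equivalence, together with the splitting of the long exact sequence of $(S_nX,c_n(X))$ coming from the left inverse $\ev_n$ of $c_n$. Hence $\im \iota_A^* = i_A^*(\ker r_n^*)$. Now the restriction maps form a commutative square with horizontal edges the inclusions $\hat i_A:B_{n+1}(X,A) = r_n^{-1}(A) \hookrightarrow B_{n+1}X$ and $i_A:A \hookrightarrow S_nX$, and vertical edges $r_{n,A}$ and $r_n$; this commutes by the very definition $r_{n,A} = r_n|_{B_{n+1}(X,A)}$. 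Passing to cohomology gives $r_{n,A}^* \circ i_A^* = \hat i_A^* \circ r_n^*$, so every class in $i_A^*(\ker r_n^*)$ lies in $\ker r_{n,A}^*$. Combining the two displays, $\im \iota_A^* \subseteq \ker r_{n,A}^*$.

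Finally, $\SC_{n,X}(A) = \secat(r_{n,A})$ by definition, so Schwarz's theorem \cite[Theorem 4]{SchwarzGenus} yields $\SC_{n,X}(A) \geq \cl_R(\ker r_{n,A}^*)+1$, and since $\cl_R$ is monotone under inclusion of subsets of the cohomology ring, the inclusion $\im \iota_A^* \subseteq \ker r_{n,A}^*$ established above gives $\cl_R(\im \iota_A^*) \leq \cl_R(\ker r_{n,A}^*)$, which is exactly the asserted inequality. The argument is purely cohomological, with no deformation or analytic input; the only point demanding genuine care is the first step, namely verifying that the naturality square really identifies $\iota_A^*$ with $i_A^* \circ j$ — equivalently, that a relative class in $H^*(S_nX,c_n(X);R)$ is restricted to $A$ only after being pushed forward to $H^*(S_nX;R)$ — after which everything is formal.
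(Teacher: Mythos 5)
Your argument is correct and follows essentially the same route as the paper's proof: Schwarz's cup-length bound applied to $r_{n,A}$, the commutative square relating $r_{n,A}$ to $r_n$, the identification $\ker r_n^* = \im\left[H^*(S_nX,c_n(X);R)\to H^*(S_nX;R)\right]$ from the long exact sequence (via $r_n\simeq c_n\circ e_n$), and the factorization of $\iota_A^*$ through that map. The only cosmetic difference is the order of the steps; the content is identical.
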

\begin{proof}
By Schwarz's cup length lower bound for sectional categories, see \cite[Theorem 4]{SchwarzGenus}, it holds that
$$\SC_{n,X}(A) \geq \cl_R \left(\ker \left[r_{n,A}^*:H^*(A;R) \to H^*(B_{n+1}(X,A);R) \right] \right)+1.$$
The following diagram obviously commutes, where the horizontal maps are the respective inclusions:
$$\begin{CD}
B_{n+1}(X,A) @>{j_0}>>B_{n+1}X \\
@VV{r_{n,A}}V @VV{r_n}V \\
A @>{j_1}>> S_nX.
\end{CD} $$
Thus, if $u \in H^*(S_nX;R)$ lies in $\ker r_n^*$, then $j_1^*u\in \ker r_{n,A}^*$, hence 
$\SC_{n,X}(A) \geq \cl_R( j_1^*(\ker r_n^*))+1$. We derive from the long exact sequence of $(S_nX,c_n(X))$ that $\ker r_n^* = \im f_n^*$, where $f_n:(S_nX,\varnothing) \to (S_nX,c_n(X))$ denotes the inclusion of pairs. Hence, $$\SC_{n,X}(A) \geq \cl_R( \im ( f_n \circ \tilde{j}_1)^*)+1,$$ with $\tilde{j_1}:(A,\varnothing) \to (S_nX,\emptyset)$ denoting the inclusion of pairs. This shows the claim, since evidently $f_n \circ \tilde{j}_1=\iota_A$.
\end{proof}

The following statement is shown along the lines of Proposition \ref{PropSCquotcat}. We thus omit its proof.

\begin{prop}
Let $X$ be a metrizable ANR, $n \in \NN_0$ and $A\subset S_nX$ with $c_n(X) \subset A$. Then
$$\SC_{n,X}(A) \geq \cat_{S_nX/c_n(X)}(A/c_n(X))-1.$$
\end{prop}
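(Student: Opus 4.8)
The plan is to mimic the proof of Proposition \ref{PropSCquotcat} almost verbatim, with the only substantive change being that we now cover a subspace $A$ rather than all of $S_nX$, and that the resulting open sets in the quotient must be relatively open in $A/c_n(X)$ and contractible \emph{inside} $S_nX/c_n(X)$, which is exactly what subspace LS category $\cat_{S_nX/c_n(X)}(A/c_n(X))$ demands.

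First I would set $r := \SC_{n,X}(A)$ and choose open sets $U_1,\dots,U_r$ in $A$ that cover $A$, together with sphere fillings $s_i : U_i \to B_{n+1}(X,A)$; by Lemma \ref{LemmaSCincl} (applied with the Hausdorff space $X$ — note a metrizable ANR is Hausdorff) each inclusion $U_i \hookrightarrow S_nX$ is homotopic to a map with values in $c_n(X)$. Let $q : S_nX \to S_nX/c_n(X)$ be the quotient projection and $x_0 := q(c_n(X))$. For $i \in \{1,\dots,r\}$ put $U_i' := q(U_i \setminus c_n(X))$, which is open in $A/c_n(X)$, and observe that $\bigcup_{i=1}^r U_i' = (A/c_n(X)) \setminus \{x_0\}$ and that each composite $U_i' \hookrightarrow S_nX/c_n(X)$ is nullhomotopic (the homotopy $U_i \times [0,1] \to S_nX$ into $c_n(X)$ descends through $q$, since $c_n(X)$ is collapsed to $x_0$). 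To handle the missing point $x_0$, I would invoke Lemmas \ref{LemmaSCincl} and \ref{LemmaFox}: since $c_n(X)$ is a closed sphere filling domain in $S_nX$ (closed because it is the image of the section $\ev_n$, or by a direct argument) and $X$ is a metrizable ANR, $c_n(X)$ has an open neighborhood $U_0$ in $S_nX$ that is a sphere filling domain, hence whose inclusion into $S_nX$ is homotopic into $c_n(X)$; then $U_0' := q(U_0 \cup c_n(X))$ is an open neighborhood of $x_0$ in $S_nX/c_n(X)$ that contracts to $x_0$ within $S_nX/c_n(X)$, and $U_0' \cap (A/c_n(X))$ is the required chart covering $x_0$. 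The sets $U_0' \cap (A/c_n(X)), U_1', \dots, U_r'$ then form an open cover of $A/c_n(X)$ by subsets that are contractible in $S_nX/c_n(X)$, so $\cat_{S_nX/c_n(X)}(A/c_n(X)) \leq r + 1 = \SC_{n,X}(A) + 1$, which is the claim.

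The step I expect to be the main (though minor) obstacle is the careful bookkeeping at the collapsed point $x_0$: one must check that the neighborhood $U_0$ of $c_n(X)$ supplied by Lemma \ref{LemmaFox} really does descend to an \emph{open} set in $S_nX/c_n(X)$ containing $x_0$ — this uses that $q$ is a quotient map and that $U_0$ is saturated with respect to the partition collapsing $c_n(X)$, which one arranges by replacing $U_0$ with $U_0 \cup c_n(X)$ (still open since $c_n(X) \subset U_0$ already, or by enlarging) — and that the contraction of $U_0$ into $c_n(X)$ survives the passage to the quotient as a contraction of $U_0'$ to $x_0$. Everything else is formal, so I would indeed state this result with its proof omitted or reduced to "along the lines of Proposition \ref{PropSCquotcat}," exactly as the paper does.
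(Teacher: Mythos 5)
Your argument is correct and is exactly the adaptation of the proof of Proposition \ref{PropSCquotcat} that the paper intends when it omits the proof: remove $c_n(X)$ from each covering set to get a saturated open set, push it to the quotient where Lemma \ref{LemmaSCincl} makes its inclusion nullhomotopic, and add one extra chart around the collapsed point $x_0$ supplied by Lemmas \ref{LemmaSCincl} and \ref{LemmaFox}. The only bookkeeping worth noting is that if one follows the paper's convention of covering $A$ by elements of $\Dop_n(X)$ (open in $S_nX$, as in Proposition \ref{PropSCprops}), then $q(U_i\setminus c_n(X))$ and $q(U_0)$ are automatically open in the ambient quotient $S_nX/c_n(X)$, as the definition of subspace category requires.
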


The following proposition bundles various properties that will be required to carry out a Lusternik-Schnirelmann-type theory of spherical complexities. 

\begin{prop}
\label{PropSCprops}
\begin{enumerate}[(i)]
\item \emph{(Monotonicity)} \enskip If $A\subset B \subset S_nX$, then $$\SC_{n,X}(A) \leq \SC_{n,X}(B).$$
\item \emph{(Subadditivity)} For $A,B \subset S_n X$ it holds that $$\SC_{n,X}(A\cup B)\leq \SC_{n,X}(A) + \SC_{n,X}(B).$$
\item If $S_nX$ is normal and $A,B \subset S_nX$ satisfy $\bar{A} \cap \bar{B} = \varnothing$, then $$\SC_{n,X}(A \cup B) = \max\{ \SC_{n,X}(A),\SC_{n,X}(B)\}.$$
\end{enumerate}
\end{prop}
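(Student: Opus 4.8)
The plan is to prove the three parts essentially by transporting the corresponding standard facts about sectional category along the fibration $r_n$, using Schwarz's foundational results from \cite{SchwarzGenus} together with the pullback description $B_{n+1}(X,A) = r_n^{-1}(A)$.

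For (i), Monotonicity, I would argue directly from the definition: if $A \subset B \subset S_nX$ and $\{U_1,\dots,U_k\}$ is an open cover of $B$ (in the subspace topology) each admitting a continuous local section of $r_{n,B}$, then $\{U_1 \cap A, \dots, U_k \cap A\}$ is an open cover of $A$, and restricting the sections gives local sections of $r_{n,A}$ over each $U_i \cap A$. Hence $\SC_{n,X}(A) = \secat(r_{n,A}) \leq \secat(r_{n,B}) = \SC_{n,X}(B)$. The degenerate cases where $A$ or $B$ is empty are handled by the convention $\SC_{n,X}(\varnothing) = 0$.

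For (ii), Subadditivity, the cleanest route is to invoke the general subadditivity of sectional category under restriction to subspaces: for a fibration $p : E \to B$ and subsets $A, B' \subset B$ one has $\secat(p|_{p^{-1}(A \cup B')}) \leq \secat(p|_{p^{-1}(A)}) + \secat(p|_{p^{-1}(B')})$, proved by taking a cover of $A$ by $\SC_{n,X}(A)$ sphere filling domains, a cover of $B$ by $\SC_{n,X}(B)$ sphere filling domains, and observing that the union of these (each intersected with $A \cup B$, which keeps them open in $A \cup B$ and keeps the local sections) covers $A \cup B$. Since $r_{n, A \cup B}$ restricts over each such open set to a fibration admitting a section, the count of the combined cover gives the bound. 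If the paper does not already have this stated for general fibrations, I would simply spell out the two-line argument with sphere fillings, exactly mirroring the proof of the homotopy invariance Proposition.

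For (iii), the main content, the inequality $\SC_{n,X}(A \cup B) \geq \max\{\SC_{n,X}(A), \SC_{n,X}(B)\}$ is immediate from Monotonicity, so the work is the reverse inequality, and here the disjointness of closures is essential. Assuming $S_nX$ is normal with $\bar{A} \cap \bar{B} = \varnothing$, I would choose disjoint open sets $V_A \supset \bar A$ and $V_B \supset \bar B$ in $S_nX$. Put $m = \max\{\SC_{n,X}(A), \SC_{n,X}(B)\}$; take open covers $\{P_1, \dots, P_m\}$ of $A$ and $\{Q_1, \dots, Q_m\}$ of $B$ (padding the shorter list with copies of an element, or with the empty set, so both have exactly $m$ members) by relatively open sets admitting local sections of $r_n$. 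Since $A$ is open in $\bar A$ and $V_A$ is open in $S_nX$, each $P_i$ can be enlarged to an open subset $\tilde P_i \subset V_A$ of $S_nX$ with $\tilde P_i \cap A = P_i$; shrinking the local section's domain if necessary, $\tilde P_i$ can be taken to still be a sphere filling domain (this is where one uses that $r_n$ is a fibration, via \cite[Proposition 7]{SchwarzGenus} on restriction of sections, or more elementarily just restricts the section to $P_i$ and notes we only need a section over $P_i = \tilde P_i \cap A$, not over all of $\tilde P_i$). Similarly enlarge $Q_i$ to $\tilde Q_i \subset V_B$. Then $W_i := \tilde P_i \cup \tilde Q_i$ is open in $S_nX$, and because $V_A \cap V_B = \varnothing$, a local section of $r_{n, A\cup B}$ over $W_i \cap (A \cup B)$ is well-defined by gluing the section over $\tilde P_i \cap (A\cup B) = P_i$ and the one over $\tilde Q_i \cap (A \cup B) = Q_i$ — the gluing is continuous precisely because the two pieces have disjoint open domains. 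The sets $W_i \cap (A \cup B)$, $i = 1, \dots, m$, cover $A \cup B$, giving $\SC_{n,X}(A \cup B) \leq m$.

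The step I expect to be the main obstacle is the enlargement-and-regluing in (iii): one must be careful that enlarging a relatively-open sphere filling domain $P_i \subset A$ to an open set in $S_nX$ still carries a sphere filling over the relevant piece, and that gluing two sphere fillings over sets with disjoint open neighborhoods yields a genuinely continuous map into $B_{n+1}(X, A\cup B)$. The disjoint-closures hypothesis and normality are exactly what make the regluing legitimate, and I would phrase the argument so that it only ever requires sections over the original sets $P_i, Q_i$ (intersected with $A \cup B$), never over their enlargements, which sidesteps any subtlety about extending sections of a fibration.
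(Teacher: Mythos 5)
Your proposal is correct and follows essentially the same route as the paper: the paper dismisses (i) and (ii) as obvious, and for (iii) it likewise separates $\bar{A}$ and $\bar{B}$ by disjoint open sets $U_0, V_0$, intersects the two covers with these, pairs them up, and uses that the union of two disjoint open sphere filling domains is again a sphere filling domain. Your version is if anything slightly more careful than the paper's, since you explicitly handle the passage between subsets that are relatively open in $A$ (as in the definition of $\secat(r_{n,A})$) and open subsets of the ambient space $S_nX$, a point the paper glosses over by working directly with covers drawn from $\Dop_n(X)$.
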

\begin{proof}
Statements (i) and (ii) are obvious. For statement (iii) let $U_0,V_0 \subset S_nX$ be open with $\bar{A} \subset U_0$, $\bar{B} \subset V_0$ and $U_0 \cap V_0 = \varnothing$. Put $r:= \SC_{n,X}(A)$, $s := \SC_{n,X}(B)$ and assume w.l.o.g. that $r \geq s$. Let $U_1,\dots,U_r,V_1,\dots V_s \in \Dop_n(X)$ with $A \subset \bigcup_{i=1}^rU_i$ and $B \subset \bigcup_{i=1}^sV_i$. If we put $U'_i := U_i \cap U_0$ for $1 \leq i \leq r$ and $V'_j := V_j \cap V_0$ for $1 \leq j \leq s$, then 
$$\{U'_1\cup V'_1,\dots,U'_s \cup V'_s, U'_{s+1},\dots,U'_r\} \subset \Dop_n(X)$$
will be an open cover of $A \cup B$ by sphere filling domains. Hence, 
$$\SC_{n,X}(A \cup B) \leq r= \max\{\SC_{n,X}(A),\SC_{n,X}(B)\}.$$ 
The converse inequality follows directly from (i).
\end{proof}

Another crucial property of invariants admitting a Lusternik-Schnirelmann-type theory is their behavior with respect to deformations. For this purpose we next introduce "closed versions" of subspace complexities and later relate them to the original ones.

\begin{definition}
For $n \in \NN_0$ let $\Dcl_n(X) := \left\{C \in \D_n(X) \ | \ C \text{ is closed in }S_nX\right\}$. For $A \subset S_nX$ we put 
$$\SCcl_{n,X}(A):=\inf \Big\{k \in \NN \ \Big| \ \exists C_1,\dots,C_k \in \Dcl_n(X) \ \text{with} \ A \subset \bigcup_{i=1}^k C_i \Big\} . $$
\end{definition}

Obviously, $\SCcl_{n,X}(\cdot)$ has the analogous properties of parts (i) and (ii) of Proposition \ref{PropSCprops}.

\begin{lemma}
\label{LemmaSCconst}
For each $n \in \NN_0$, it holds that $\SCcl_{n,X}(c_n(X))=1$.
\end{lemma}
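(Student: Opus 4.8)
The plan is to show that $c_n(X)$ is itself a closed sphere filling domain, from which $\SCcl_{n,X}(c_n(X)) \le 1$ is immediate, and then observe that a single-element cover always requires at least one set so that the reverse inequality $\SCcl_{n,X}(c_n(X)) \ge 1$ holds trivially (the infimum is over $k \in \NN$, so it cannot be $0$ once $c_n(X)$ is nonempty).

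\begin{proof}
Fix $n \in \NN_0$. By Proposition \ref{Propcn}, $c_n(X)$ is a sphere filling domain, i.e. $c_n(X) \in \D_n(X)$; explicitly, the map $s:c_n(X) \to B_{n+1}X$ with $(s(c_n(x)))(p) = x$ for all $x \in X$ and $p \in B^{n+1}$ is a sphere filling. It therefore remains only to check that $c_n(X)$ is closed in $S_nX$. To see this, note that $\ev_n:S_nX \to X$, $\gamma \mapsto \gamma(1_n)$, is continuous, that $c_n:X \to S_nX$ is continuous, and that $\ev_n \circ c_n = \id_X$. Hence $c_n(X) = (c_n \circ \ev_n)^{-1}(c_n(X))$ is exactly the set of fixed points of the continuous map $c_n \circ \ev_n: S_nX \to S_nX$. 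If $S_nX$ is Hausdorff — which holds whenever $X$ is Hausdorff, since the compact-open topology on a mapping space with Hausdorff target is Hausdorff — then the fixed point set of a continuous self-map is closed, so $c_n(X)$ is closed in $S_nX$. Thus $c_n(X) \in \Dcl_n(X)$, and the one-element family $\{c_n(X)\}$ covers $c_n(X)$ by closed sphere filling domains, giving $\SCcl_{n,X}(c_n(X)) \le 1$. On the other hand, $c_n(X) \ne \varnothing$, so no empty family of sets can cover it, whence $\SCcl_{n,X}(c_n(X)) \ge 1$. Combining the two inequalities yields $\SCcl_{n,X}(c_n(X)) = 1$.
\end{proof}

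The one step that needs care is the closedness of $c_n(X)$ in $S_nX$; the cleanest argument identifies $c_n(X)$ as the fixed-point set of $c_n \circ \ev_n$ and invokes the Hausdorff property of $S_nX$. Alternatively, one could argue directly: if a net $c_n(x_\alpha) \to \gamma$ in the compact-open topology, then evaluating at any two points $p, q \in S^n$ forces $\gamma(p) = \gamma(q)$, so $\gamma$ is constant, i.e. $\gamma \in c_n(X)$. Either way this is the only non-formal point, and it is routine. If the paper wishes to avoid a blanket Hausdorff hypothesis here, it suffices to note that the statement is only ever applied in contexts where $X$ (hence $S_nX$) is Hausdorff, consistent with the standing assumptions elsewhere in the section.
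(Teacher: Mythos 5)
Your proof is correct and follows the same route as the paper, which simply cites Proposition \ref{Propcn} together with the closedness of $c_n(X)$ in $S_nX$. The only difference is that you spell out the closedness argument (via the fixed-point set of $c_n \circ \ev_n$ and the Hausdorff property), a detail the paper asserts without proof; your remark that this requires $X$ Hausdorff is a fair observation, and indeed the lemma is only applied where $X$ is a metrizable ANR.
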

\begin{proof}
This follows immediately from Proposition \ref{Propcn} since $c_n(X)$ is a closed subset of $S_nX$. 
\end{proof}

\begin{prop}[Deformation monotonicity]
\label{PropDeform}
If $A \subset S_nX$ is closed and $\Phi:A \times [0,1] \to S_nX$ is a deformation of $A$, then 
$$\SCcl_{n,X}(A) \leq \SCcl_{n,X}(\Phi_1(A)).$$
\end{prop}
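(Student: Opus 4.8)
The plan is to pull back a closed cover of $\Phi_1(A)$ by sphere filling domains to a closed cover of $A$ of the same cardinality, using the deformation $\Phi$ to repair the sphere fillings. Write $k:=\SCcl_{n,X}(\Phi_1(A))$ and pick $C_1,\dots,C_k\in\Dcl_n(X)$ with $\Phi_1(A)\subset\bigcup_{i=1}^k C_i$. For each $i$ let $s_i:C_i\to B_{n+1}X$ be a sphere filling, i.e. $s_i(\gamma)|_{S^n}=\gamma$. Set $D_i:=\Phi_1^{-1}(C_i)\cap A=\{\gamma\in A\mid \Phi_1(\gamma)\in C_i\}$. Since $\Phi_1=\Phi(\cdot,1):A\to S_nX$ is continuous and each $C_i$ is closed in $S_nX$, each $D_i$ is closed in $A$, hence (as $A$ is closed in $S_nX$) closed in $S_nX$; and $\bigcup_{i=1}^k D_i=A$ because $\Phi_1(A)\subset\bigcup C_i$. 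So it remains only to exhibit a sphere filling over each $D_i$, which will show $D_i\in\Dcl_n(X)$ and therefore $\SCcl_{n,X}(A)\le k$.

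To build the sphere filling over $D_i$, I would concatenate, over the ball $B^{n+1}$ viewed in polar coordinates $rx$ with $r\in[0,1]$, $x\in S^n$, the track of the deformation $\Phi$ on the outer shell with the given filling $s_i$ of $\Phi_1(\gamma)$ on the inner ball. Concretely, using a strong deformation retraction $\varphi_n:B^{n+1}\times[0,1]\to B^{n+1}$ of $B^{n+1}$ onto $\{1_n\}$ as in earlier proofs, define $\sigma_i:D_i\to B_{n+1}X$ by
$$(\sigma_i(\gamma))(rx)=\begin{cases}\Phi(\gamma,2-2r)(x) & r\in[\tfrac12,1],\\ s_i(\Phi_1(\gamma))(\varphi_n(x,1-2r)) & r\in[0,\tfrac12),\end{cases}$$
for $x\in S^n$. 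On the outer shell this interpolates from $\Phi(\gamma,0)=\gamma$ at $r=1$ to $\Phi(\gamma,1)=\Phi_1(\gamma)$ at $r=\tfrac12$; on the inner ball it extends $\Phi_1(\gamma)$ to a map on $B^{n+1}$ via $s_i$, matching at $r=\tfrac12$ since $s_i(\Phi_1(\gamma))|_{S^n}=\Phi_1(\gamma)$ and $\varphi_n(\cdot,0)=\mathrm{id}$. One checks the two cases agree on the sphere $r=\tfrac12$, so $\sigma_i(\gamma)$ is a well-defined continuous map $B^{n+1}\to X$, and $\sigma_i(\gamma)|_{S^n}=\Phi(\gamma,0)=\gamma$, i.e. $\sigma_i$ is a sphere filling over $D_i$.

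The genuinely delicate point is the continuity of $\sigma_i$ as a map $D_i\to B_{n+1}X$ (equivalently, joint continuity of $D_i\times B^{n+1}\to X$, $(\gamma,rx)\mapsto(\sigma_i(\gamma))(rx)$). This is where Hausdorffness of $X$ (hence of $S_nX$ and of the mapping spaces) enters, exactly as in the proofs of Lemma~\ref{LemmaSCincl} and Theorem~\ref{TheoremSCcup}: the adjoint of a map into a mapping space is continuous iff the corresponding map on the product is continuous, using \cite[Theorem VII.2.10]{Bredon}, and one must verify that the piecewise definition glues continuously across $r=\tfrac12$ — which it does because both pieces restrict there to $(\gamma,x)\mapsto\Phi_1(\gamma)(x)=s_i(\Phi_1(\gamma))|_{S^n}(x)$, and the gluing lemma applies since $\{r\le\tfrac12\}$ and $\{r\ge\tfrac12\}$ are closed in $B^{n+1}$. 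The composite $\gamma\mapsto s_i(\Phi_1(\gamma))$ is continuous as a composition of the continuous maps $\Phi_1$ and $s_i$, and $\Phi$ is continuous by hypothesis, so after checking the adjointness bookkeeping the continuity follows. With $\sigma_i$ in hand, $\{D_1,\dots,D_k\}$ is a closed cover of $A$ by sphere filling domains of cardinality $k=\SCcl_{n,X}(\Phi_1(A))$, proving the claimed inequality.
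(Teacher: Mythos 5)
Your proposal is correct and follows essentially the same route as the paper: pull back the closed cover of $\Phi_1(A)$ along $\Phi_1$, and build a sphere filling over each preimage by concatenating the track of the deformation on the outer shell of $B^{n+1}$ with the given filling of $\Phi_1(\gamma)$ on the inner half-ball. The only (immaterial) difference is that the paper parametrizes the inner half-ball by the radial rescaling $rx\mapsto 2rx$ rather than via the deformation retraction $\varphi_n$, and your extra care about joint continuity and the gluing at $r=\tfrac12$ is consistent with how the paper handles these points elsewhere.
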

\begin{proof}
Let $k := \SCcl_{n,X}(\Phi_1(A))$ and $A_1,A_2,\dots,A_k \in \Dcl_n(X)$ with $$\Phi_1(A)\subset\bigcup_{j=1}^k A_j.$$ For each $j \in \{1,2,\dots,k\}$ let $s_j:A_j \to B_{n+1}X$ be a sphere filling and put $B_j := \Phi_1^{-1}(A_j)$. Since $\Phi_1$ is continuous, $B_j$ is closed in $A$ for each $j$ and since $A$ itself is closed in $S_nX$, $B_j$ is a closed subset of $S_nX$ for every $j \in \{1,2,\dots,k\}$.
Define $\widetilde{s}_j: B_j \to B_{n+1}X$ for each $j$ by 
$$\widetilde{s}_j(\gamma):B^{n+1} \to X, \quad \left(\widetilde{s}_j(\gamma)\right)(r \cdot x) = \begin{cases}
(s_j(\Phi(\gamma,1)))(2rx) & \text{if } r \in [0,\frac12], \\
(\Phi(\gamma,2(1-r)))(x) & \text{if } r \in (\frac12,1],
\end{cases}$$
for all $x \in S^n$, $r \in [0,1]$ and $\gamma \in B_j$. One checks without difficulties that any such $\widetilde{s}_j$ is indeed a sphere filling. Since $A \subset \bigcup_{j=1}^k B_j$, this shows that $\SCcl_{n,X}(A)\leq k$.
\end{proof}

\begin{prop}[Continuity]
\label{PropCont}
If $X$ is a metrizable ANR and $A \subset S_nX$ is closed, then there will be an open neighborhood $U$ of $A$ with 
$\SCcl_{n,X}(A)=\SCcl_{n,X}(U)=\SCcl_{n,X}(\bar{U}).$
\end{prop}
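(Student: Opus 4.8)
The plan is to deduce this from the previous two propositions together with the Fox-type Lemma \ref{LemmaFox}, exactly as in the classical Lusternik--Schnirelmann continuity argument (cf. \cite[Lemma 1.10]{CLOT}). First I would set $k := \SCcl_{n,X}(A)$ and choose closed sphere filling domains $C_1,\dots,C_k \in \Dcl_n(X)$ with $A \subset \bigcup_{i=1}^k C_i$. The sets $A_i := A \cap C_i$ are then closed in $A$, hence closed in $S_nX$ since $A$ is closed, and each $A_i$ is a closed sphere filling domain (a sphere filling over $C_i$ restricts to one over $A_i$). By Lemma \ref{LemmaFox} — applicable since $X$ is a metrizable ANR — each $A_i$ has an open neighborhood $W_i \subset S_nX$ that is a sphere filling domain. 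The idea is now to shrink $W := \bigcup_{i=1}^k W_i$ to an open set $U$ whose closure is still covered by the $W_i$.

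The key step is the shrinking. Since $X$ is a metrizable ANR, $S_nX$ is a metrizable ANR by \cite[Theorem VI.2.4]{HuRetr}, in particular normal. The set $A$ is closed and contained in the open set $W$, so by normality there is an open $U$ with $A \subset U \subset \bar U \subset W$; moreover one can arrange (again by normality, shrinking each $W_i$ slightly around the closed set $A_i$ inside $W_i$, or by applying the shrinking lemma to the finite open cover $\{W_i\}$ of the normal subspace $\bar U$) open sets $U_i$ with $A_i \subset U_i \subset \bar U_i \subset W_i$ and $\bar U \subset \bigcup_{i=1}^k U_i$. Each $\bar U_i$ is a closed subset of $S_nX$ contained in the sphere filling domain $W_i$, hence is itself a closed sphere filling domain, i.e. $\bar U_i \in \Dcl_n(X)$. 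Consequently $\SCcl_{n,X}(\bar U) \leq k$.

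Finally I would chain the inequalities using monotonicity. On the one hand $A \subset U \subset \bar U$, so the monotonicity property of $\SCcl_{n,X}(\cdot)$ (the analogue of Proposition \ref{PropSCprops}(i), valid for $\SCcl_{n,X}$ as remarked in the text) gives $\SCcl_{n,X}(A) \leq \SCcl_{n,X}(U) \leq \SCcl_{n,X}(\bar U)$. On the other hand the construction above yields $\SCcl_{n,X}(\bar U) \leq k = \SCcl_{n,X}(A)$. Hence all three quantities coincide, which is the assertion.

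The main obstacle is the finite shrinking step ensuring that the \emph{closure} $\bar U$ (not merely $U$) is still covered by closed sphere filling domains: one must produce the $U_i$ simultaneously so that $\bar U_i \subset W_i$ for every $i$ and $\bigcup_i U_i \supset \bar U \supset A$. This is a standard consequence of normality of $S_nX$ applied to the finite cover, but it has to be set up carefully so that the enlargement from $U$ to $\bar U$ does not escape the $W_i$'s; everything else — that restrictions of sphere fillings are sphere fillings, and that closed subsets of sphere filling domains are closed sphere filling domains — is immediate from the definitions.
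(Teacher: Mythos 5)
Your proposal is correct and follows essentially the same route as the paper: cover $A$ by closed sphere filling domains, enlarge them to open sphere filling domains via Lemma \ref{LemmaFox}, shrink by normality of $S_nX$ so the closures stay inside, and take $U$ to be the union. The only (cosmetic) difference is that the paper defines $U:=\bigcup_i V_i$ \emph{after} the shrinking step, so that $\bar{U}\subset\bigcup_i \bar{V}_i$ is automatic for a finite union, which sidesteps the extra care you describe about arranging $\bar{U}\subset\bigcup_i U_i$ for a pre-chosen $U$.
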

\begin{proof}
As in the proof of Lemma \ref{LemmaFox}, $S_nX$ is a metrizable ANR since $X$ has that property. In particular, $S_nX$ is normal. Now let $r:= \SCcl_{n,X}(A)$ and let $A_1,\dots,A_r \in \Dcl_n(X)$ with $A \subset \bigcup_{i=1}^r A_i$. By Lemma \ref{LemmaFox}, there are $U_1,\dots,U_r \in \Dop_n(X)$ with $A_i \subset U_i$ for all $i \in \{1,2,\dots,r\}$. Since $S_nX$ is normal, we can separate $A_i$ and $X \setminus U_i$ by open subsets and obtain an open neighborhood $V_i$ of $A_i$ with $\bar{V}_i \subset U_i$ for every $i \in \{1,2,\dots,r\}$. In particular, $\bar{V}_i \in \Dcl_n(X)$ for each $i$. Put $U := \bigcup_{i=1}^r V_i$. Then $U$ is an open neighborhood of $A$ with $\bar{U} \subset \bigcup_{i=1}^r \bar{V}_i$, such that $$\SCcl_{n,X}(U) \leq \SCcl_{n,X}(\bar{U}) \leq r = \SCcl_{n,X}(A).$$ 
The converse inequalities are obvious, since $A \subset U \subset \bar{U}$.
\end{proof}

In the following proposition, we are again mimicking the proof of the corresponding result for LS categories, see \cite[Proposition 1.10]{CLOT}.
\begin{prop}
\label{PropSCclop}
If $X$ is a metrizable ANR, then $\SC_{n,X}(A) = \SCcl_{n,X}(A)$ for all closed $A \subset S_nX$.
\end{prop}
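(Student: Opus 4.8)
The plan is to establish the two inequalities $\SC_{n,X}(A) \le \SCcl_{n,X}(A)$ and $\SCcl_{n,X}(A) \le \SC_{n,X}(A)$ separately, for closed $A \subset S_nX$, under the hypothesis that $X$ is a metrizable ANR (so that $S_nX$ is a metrizable ANR by \cite[Theorem VI.2.4]{HuRetr}, in particular normal, and Lemma \ref{LemmaFox} applies).

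For the inequality $\SCcl_{n,X}(A) \le \SC_{n,X}(A)$: I would take an open cover $U_1,\dots,U_r$ of $A$ by elements of $\Dop_n(X)$ with $r = \SC_{n,X}(A)$. The issue is that the $U_i$ are open in $S_nX$, not closed. Using normality of $S_nX$ and the closedness of $A$, I would shrink the cover: since $\{U_i \cap A\}$ is an open cover of the normal space $A$ (closed subspace of a normal space), there is a closed (in $A$, hence in $S_nX$) refinement, i.e. closed sets $C_i \subset U_i$ with $A \subset \bigcup_{i=1}^r C_i$. Each $C_i$ is a sphere filling domain because it is a subset of the sphere filling domain $U_i$ (restriction of a sphere filling is a sphere filling), so $C_i \in \Dcl_n(X)$, giving $\SCcl_{n,X}(A) \le r$.

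For the reverse inequality $\SC_{n,X}(A) \le \SCcl_{n,X}(A)$: take closed sphere filling domains $C_1,\dots,C_k$ with $k = \SCcl_{n,X}(A)$ and $A \subset \bigcup_{i=1}^k C_i$. By Lemma \ref{LemmaFox}, each $C_i$ has an open neighborhood $U_i \in \Dop_n(X)$. These $U_i$ cover a neighborhood of $A$ in $S_nX$ but need not cover all of $S_nX$, which is exactly the subtlety: $\SC_{n,X}(A)$ requires an open cover \emph{of $A$ itself}, realized by sphere filling domains open \emph{in $S_nX$}. But $A \subset \bigcup_{i=1}^k C_i \subset \bigcup_{i=1}^k U_i$, so $\{U_1,\dots,U_k\}$ is already an open cover of $A$ by elements of $\Dop_n(X)$, which is precisely what is needed. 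Hence $\SC_{n,X}(A) \le k = \SCcl_{n,X}(A)$.

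The main obstacle is the first inequality, where one must pass from an open cover (open in $S_nX$) to a closed cover while preserving the sphere filling property; the key point making this work is the combination of normality of $S_nX$ with the closedness of $A$, allowing a shrinking argument, together with the elementary observation that any subset of a sphere filling domain is again a sphere filling domain. The second inequality is essentially immediate from Lemma \ref{LemmaFox} once one observes that the enlarged open sets still cover $A$. I would also remark at the outset that $\SCcl_{n,X}(\cdot)$ enjoys the analogues of Proposition \ref{PropSCprops}(i)--(ii), though these are not strictly needed for this particular proof.
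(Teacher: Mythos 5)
Your proof is correct and follows essentially the same route as the paper: one inequality, $\SC_{n,X}(A)\le\SCcl_{n,X}(A)$, is obtained directly from Lemma \ref{LemmaFox}, and the reverse inequality by a shrinking argument resting on the normality of the metrizable space $S_nX$ together with the observation that subsets of sphere filling domains are sphere filling domains. The only (immaterial) difference is that you shrink the induced open cover of the closed, hence normal, subspace $A$ directly, whereas the paper first enlarges the cover by an open set $W$ disjoint from $A$ to a cover of all of $S_nX$ and shrinks that.
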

\begin{proof}
It is a direct consequence of Lemma \ref{LemmaFox} that $\SC_{n,X}(A) \leq \SCcl_{n,X}(A)$ whenever $X$ is a metrizable ANR. To show the opposite inequality, we let $k=\SC_{n,X}(A)$ and  $\{U_1,\dots,U_k\}\subset \Dop_n(X)$ be a cover of $A$ and put $U_j^c=S_nX\setminus U_j$ for every $j \in \{1,2,\dots,k\}$. Since $S_nX$ is metrizable, it is a normal space. In particular, the closed set $\bigcap_{j=1}^k U_j^c$ has an open neighborhood $W$ with $W \cap A=\varnothing$. Then $\{U_1,\dots,U_k,W\}$ is an open cover of $X$. Since $S_nX$ is normal, the cover has a refinement $\{V_1,\dots,V_k,W_0\}$ with $\overline{V}_j \subset U_j$ for each $j$ and $\overline{W}_0\subset W$ which is still a cover of $S_nX$. Since $W_0 \subset W$, it holds that $A \cap W_0=\varnothing$, such that $\{V_1,\dots,V_k\}$ is a cover of $A$. Since $\overline{V}_j\subset U_k$ for each $j$, we derive that $\{\overline{V}_1,\dots,\overline{V}_k\}$ is a cover of $A$ by closed sphere filling domains, hence $\SCcl_{n,X}(A) \leq k$, which shows the desired inequality.
\end{proof}

In the terminology of \cite[Section 1.7]{CLOT}, we have shown that $\SC_{n,X}: 2^{S_nX} \to \NN_0 \cup \{+\infty\}$ is an \emph{abstract category} on $S_nX$, also called an \emph{index function} e.g. in \cite{Zehnder}. In the following, we let $\cat_X(A)$ denote the subspace category of $A \subset X$, as defined in \cite[Definition 1.1]{CLOT}.

\begin{theorem}
If $X$ is a metrizable ANR and $A \subset S_nX$, then 
$$\cat_{S^*_nX}(A \cap S_n^*X) \leq \SC_{n,X}(A) \leq \cat_{S_nX}(A).$$ 
\end{theorem}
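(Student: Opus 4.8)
The plan is to prove the two inequalities separately, in both cases by transporting a cover witnessing the right-hand quantity into a cover witnessing the left-hand quantity.

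\textbf{The upper bound $\SC_{n,X}(A) \leq \cat_{S_nX}(A)$.} First I would take an open cover $\{W_1,\dots,W_k\}$ of $A$ (by open subsets of $S_nX$) with $k = \cat_{S_nX}(A)$ such that each inclusion $W_j \hookrightarrow S_nX$ is nullhomotopic, hence in particular homotopic to a map with values in $c_n(X)$. By Lemma \ref{LemmaSCincl} (applicable since $X$, being a metrizable ANR, is Hausdorff), each $W_j$ is a sphere filling domain, so $\{W_1,\dots,W_k\} \subset \Dop_n(X)$ is a cover of $A$, whence $\SC_{n,X}(A) \leq k$. This direction is essentially immediate from Lemma \ref{LemmaSCincl}.

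\textbf{The lower bound $\cat_{S_n^*X}(A \cap S_n^*X) \leq \SC_{n,X}(A)$.} Here I would start from a cover $\{U_1,\dots,U_\ell\} \subset \Dop_n(X)$ of $A$ with $\ell = \SC_{n,X}(A)$. For each $j$, set $V_j := U_j \cap S_n^*X$; these are open in $S_n^*X$ and cover $A \cap S_n^*X$. It remains to check that each inclusion $V_j \hookrightarrow S_n^*X$ is nullhomotopic. For this I would reuse the explicit homotopy $H$ constructed in the proof of Lemma \ref{LemmaSCincl} from a sphere filling $s_j: U_j \to B_{n+1}X$: restricted to $V_j$, one has $(H(\gamma,t))(p) = (s_j(\gamma))(\varphi(p,t))$, and the endpoint $H(\gamma,1) = c_n(\gamma(1_n)) = c_n(x_0)$ for $\gamma \in V_j$ since $\gamma(1_n) = x_0$. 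Moreover $H(\gamma,t)(1_n) = (s_j(\gamma))(\varphi(1_n,t)) = (s_j(\gamma))(1_n) = \gamma(1_n) = x_0$ for all $t$, because $\varphi$ fixes $1_n$ and $s_j(\gamma)$ restricts to $\gamma$ on $S^n$; thus $H$ takes values in $S_n^*X$ throughout. So $H|_{V_j \times [0,1]}$ is a contraction of $V_j$ within $S_n^*X$ onto the constant map $c_n(x_0)$, and hence $\cat_{S_n^*X}(A \cap S_n^*X) \leq \ell$.

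\textbf{Main obstacle.} The only point requiring genuine care is verifying that the deformation $H$ used for the lower bound stays inside $S_n^*X$ — i.e. that the chosen strong deformation retraction $\varphi: B^{n+1} \times [0,1] \to B^{n+1}$ of $B^{n+1}$ onto $\{1_n\}$ keeps the point $1_n$ fixed at all times (which is automatic for a \emph{strong} deformation retraction onto $\{1_n\}$), so that the basepoint condition $\gamma(1_n) = x_0$ is preserved along the homotopy. Once this is observed, both inequalities follow formally from Lemma \ref{LemmaSCincl} and the definitions; no deeper input (and in particular none of the ANR machinery beyond Hausdorffness) is needed. One should also note the degenerate cases: if $A = \varnothing$ then $A \cap S_n^*X = \varnothing$ and all three quantities behave consistently with the convention $\SC_{n,X}(\varnothing) = 0$, and if $A \cap S_n^*X = \varnothing$ the left-hand side is $0$ and there is nothing to prove.
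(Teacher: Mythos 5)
Your proof is correct, and for the lower bound it is essentially the paper's argument: intersect a cover of $A$ by open sphere filling domains with $S_n^*X$ and check that the contracting homotopy $H$ from Lemma \ref{LemmaSCincl} preserves the basepoint condition because the strong deformation retraction $\varphi$ fixes $1_n$ — this is exactly the reduction to the proof of Lemma \ref{LemmaBasedCat} that the paper invokes. For the upper bound you take a slightly different route: the paper observes that $\SC_{n,X}$ restricted to subsets of $A$ is an abstract category and then quotes the general comparison theorem \cite[Theorem 1.70]{CLOT}, whereas you argue directly that a subset of $S_nX$ with nullhomotopic inclusion is a sphere filling domain via Lemma \ref{LemmaSCincl}. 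Your version is more self-contained and avoids the abstract-category machinery (which the paper needs anyway for the Lusternik--Schnirelmann applications, so quoting it there costs nothing); the one point you pass over is that ``nullhomotopic'' only gives a homotopy to a constant map at some $\gamma_0 \in S_nX$, and you must still connect $\gamma_0$ to a point of $c_n(X)$ to conclude that the inclusion is homotopic to a map \emph{with values in} $c_n(X)$. This is harmless because $S_nX$ consists of nullhomotopic spheres in a path-connected $X$ and is therefore path-connected, but it deserves a sentence. Apart from that, both halves are sound.
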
 
\begin{proof}
It is an obvious consequence of the definition of an abstract category that the restriction of $\SC_{n,X}$ to subsets of a given subspace $A \subset S_nX$ will be an abstract category on $A$ for any such $A$. The inequality $\SC_{n,X}(A) \leq \cat_{S_nX}(A)$ is thus an immediate consequence of \cite[Theorem 1.70]{CLOT}.
Let $k := \SC_{n,X}(A)$, $\{U_1,\dots,U_k\}\subset \Dop_n(X)$ be an open cover of $A$  and put $V_j := U_j \cap S_n^*X$ for each $j \in \{1,2,\dots,k\}$. Obviously, $\{V_1,\dots,V_k\}$ is an open cover of $A \cap S_n^*X$ in $S_n^*X$ by sphere filling domains. Following the line of argument of the proof of Lemma \ref{LemmaBasedCat}, one shows that each of the inclusions $V_j  \hookrightarrow S_n^*X$ is nullhomotopic, which implies that $\cat_{S_n^*X}(A\cap 		S_n^*X) \leq k$.
\end{proof}

Since the $\SC_{n,X}$ are abstract categories, we can apply the main results of Lusternik-Schnirelmann theory to $\SC_{n,X}$ to derive upper bounds on $\SC_{n,X}(A)$ for convenient subsets $ A\subset S_nX$ in terms of the fixed-points of self-maps on $A$. For this purpose, we will use a very general topological version of the Lusternik-Schnirelmann theorem that has been established by Y. Rudyak and F. Schlenk in \cite{RudyakSchlenk}. and that may be applied to self-maps, which are not necessarily fixed-time maps of flows on the space. Given $\psi:Y \to Y$, we let $\Fix \psi$ denote its fixed-point set. 

\begin{definition}[{\cite[Definition 1.1]{RudyakSchlenk}}]
Let $X$ be a topological space and let $\varphi:X \to X$ and $f:X \to \RR$ be continuous. We say that $(f,\varphi)$ \emph{satisfies condition (D)} on $X$ if 
\begin{enumerate}[({D}1)]
	\item $f(\varphi(x))<f(x)$ for all $x \in X\setminus \Fix \varphi$; and
\item If $f$ is bounded on $A \subset X$, but $\{f(y)-f(\varphi(y)) \ | \ y \in A\}$ is not bounded away from zero, then $\bar{A} \cap \Fix \varphi \neq \varnothing$.
\end{enumerate}
\end{definition}

Before proceeding, we fix some notation that we will employ throughout the remaining manuscript. Given a topological space $Y$, a real function $f: Y\to \RR$ and $\lambda \in \RR$ we will denote the corresponding closed sublevel set and level set, by  $f^{\lambda}:= f^{-1}((-\infty,\lambda])$ and $f_\lambda := f^{-1}(\{\lambda\})$, respectively. Given an arbitrary family $(a_i)_{i  \in I}$ of elements of $\NN_0 \cup \{+\infty\}$ we further let 
$$\sum_{i \in I} a_i = \sup \Big\{ \sum_{i \in F} a_i \; \Big| \; F\subset I \text{ is finite}\Big\},$$
where we apply the obvious conventions to summations with $+\infty$.
\begin{theorem}
\label{TheoremLSRS}
Let $X$ be a metrizable ANR, $n \in \NN$ and $A \subset S_nX$. Let $F:A \to \RR$ be continuous and let $\varphi:A \to A$ be a homotopy equivalence. 
\begin{enumerate}[a)]
	\item If
\begin{itemize}
\item $F$ is bounded from below,
\item $(F,\varphi)$ satisfies condition (D) on $A$, and
\item $\SC_{n,X}(\varphi(B)) \geq \SC_{n,X}(B)$ for all closed $B \subset A$,
\end{itemize}
then for all $\lambda \in \RR$ it holds that
$$\SC_{n,X}(F^{\lambda}) \leq \sum_{\mu \in (-\infty,\lambda]} \SC_{n,X}(\Fix \varphi \cap F_\mu).$$
\item If in addition to the conditions of part a) the map $\varphi$ is homotopic to $\id_A$ and $F(\Fix \varphi)$ is finite, then 
$$\SC_{n,X}(A) \leq \sum_{\mu \in \RR}\SC_{n,X}(\Fix \varphi \cap F_\mu). $$
\end{enumerate}
\end{theorem}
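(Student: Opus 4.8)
The statement is a fairly standard Lusternik--Schnirelmann minimax argument, transported from the setting of abstract categories in \cite[Section 1.7]{CLOT} and the general topological Lusternik--Schnirelmann theorem of Rudyak--Schlenk \cite{RudyakSchlenk}. We have already established in Propositions \ref{PropSCprops}, \ref{PropDeform}, \ref{PropCont}, \ref{PropSCclop} that $\SC_{n,X}$ is an abstract category on $S_nX$ with the monotonicity, subadditivity, deformation and continuity properties one needs. So the plan is to verify the hypotheses of the Rudyak--Schlenk theorem and read off the two conclusions.

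\textbf{Part a).} First I would fix $\lambda \in \RR$ and set $k := \SC_{n,X}(F^\lambda)$; if $k = \infty$ there is nothing to prove, so assume $k < \infty$. The idea is the classical one: since $(F,\varphi)$ satisfies condition (D) and $F$ is bounded below on $F^\lambda$, one shows that $\varphi$, iterated, deforms $F^\lambda$ downward past any critical level, \emph{except} near $\Fix\varphi$. Concretely, for a critical level $\mu \leq \lambda$ choose a small closed neighborhood $N_\mu$ of $\Fix\varphi \cap F_\mu$ inside $F^\lambda$ with $\SCcl_{n,X}(N_\mu) = \SCcl_{n,X}(\Fix\varphi\cap F_\mu)$, which exists by the Continuity Proposition \ref{PropCont} (valid since $X$ is a metrizable ANR, hence so is $S_nX$). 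Condition (D2) guarantees that away from $N_\mu$ the value $F(y)-F(\varphi(y))$ is bounded away from zero on sublevel sets, so finitely many iterates of $\varphi$ push $F^\lambda \setminus \bigcup_\mu N_\mu$ below $\inf_{F^\lambda} F$, i.e. to $\varnothing$. Using Deformation Monotonicity (Proposition \ref{PropDeform}) — which applies because $\varphi$ is a homotopy equivalence and $\SC_{n,X}(\varphi(B)) \geq \SC_{n,X}(B)$ for closed $B$ — together with subadditivity and the open-closed agreement $\SC_{n,X} = \SCcl_{n,X}$ on closed sets (Proposition \ref{PropSCclop}), one gets
$$\SC_{n,X}(F^\lambda) \leq \SC_{n,X}\Big(\bigcup_\mu N_\mu\Big) \leq \sum_\mu \SC_{n,X}(N_\mu) = \sum_{\mu \in (-\infty,\lambda]} \SC_{n,X}(\Fix\varphi\cap F_\mu),$$
where the sum is in fact finite because $\SC_{n,X}(F^\lambda) < \infty$ forces all but finitely many of the levels $\mu$ to contribute $0$ (a level $\mu$ with $\Fix\varphi\cap F_\mu = \varnothing$ contributes $\SC_{n,X}(\varnothing) = 0$). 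The cleanest way to present this is simply to invoke \cite[Theorem (the L--S theorem)]{RudyakSchlenk} with the abstract category $\SC_{n,X}$ in the role of their index function, rather than reprove the iteration argument.

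\textbf{Part b).} Now assume additionally that $\varphi \simeq \id_A$ and $F(\Fix\varphi)$ is finite. Write $F(\Fix\varphi) = \{\mu_1 < \dots < \mu_N\}$ and pick any $\lambda \geq \mu_N$. Since $\varphi$ is homotopic to the identity on $A$, the self-deformation underlying the iteration argument of part a) is globally defined on $A$ and $A$ itself deforms, away from neighborhoods of the $\Fix\varphi\cap F_{\mu_j}$, down into $F^\lambda$; applying part a) with this $\lambda$ and the fact that there are no critical levels above $\mu_N$ gives
$$\SC_{n,X}(A) \leq \SC_{n,X}(F^\lambda) \leq \sum_{\mu \in (-\infty,\lambda]} \SC_{n,X}(\Fix\varphi\cap F_\mu) = \sum_{\mu \in \RR} \SC_{n,X}(\Fix\varphi\cap F_\mu),$$
the last equality because the finitely many nonzero summands all lie in $(-\infty,\mu_N] \subset (-\infty,\lambda]$. (If $F$ is not bounded above on $A$ one must be slightly careful: monotonicity $\SC_{n,X}(A)\geq\SC_{n,X}(F^\lambda)$ goes the wrong way, so the point is rather that the homotopy $\varphi\simeq\id_A$ lets the iterates of $\varphi$ retract all of $A$ — not merely $F^\lambda$ — past the top critical level, which is exactly the content of part b) of the Rudyak--Schlenk theorem.)

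\textbf{Main obstacle.} The genuinely delicate point is the interplay between condition (D) and the open/closed versions of $\SC_{n,X}$: Deformation Monotonicity (Proposition \ref{PropDeform}) is stated for $\SCcl_{n,X}$ and closed $A$, whereas the quantities in the statement are the $\SC_{n,X}$, so one must pass back and forth using Proposition \ref{PropSCclop}, which requires $X$ to be a metrizable ANR, and one must make sure the neighborhoods $N_\mu$ and the iterates $\varphi^m$ interact correctly (closedness of $N_\mu$, finiteness of the number of iterates needed, the fact that $\Fix\varphi\cap F_\mu$ meets only finitely many levels once $\SC$ is finite). All of this is precisely what the abstract framework of \cite[Section 1.7]{CLOT} and \cite{RudyakSchlenk} was designed to handle, so in the write-up I would minimize the obstacle by checking the three bullet-point hypotheses of Theorem \ref{TheoremLSRS}(a) are exactly the hypotheses of the Rudyak--Schlenk theorem for the index function $\SC_{n,X}$ and citing it, only spelling out part b)'s reduction to part a) in a sentence or two.
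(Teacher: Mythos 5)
Your proposal is correct and takes essentially the same route as the paper: the paper's proof consists precisely of observing that part a) is a special case of \cite[Theorem 2.3]{RudyakSchlenk} applied to the index function $\SC_{n,X}$ (whose required properties were established in Propositions \ref{PropSCprops}, \ref{PropDeform}, \ref{PropCont} and \ref{PropSCclop}), and that part b) follows by the arguments of \cite[Claim 7.2]{RudyakSchlenk}. Your additional sketch of the underlying iteration/minimax argument and of the subtlety when $F$ is unbounded above in part b) is accurate but not needed, since the paper likewise defers those details to Rudyak--Schlenk.
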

\begin{proof}
Part a) is a special case of \cite[Theorem 2.3]{RudyakSchlenk}, while part b) follows by the same arguments used to prove \cite[Claim 7.2]{RudyakSchlenk}. We omit the details. 
\end{proof}

\begin{remark}
\label{RemarkLSflow}
If $X$ is a metrizable ANR and the map $\varphi$ in Theorem \ref{TheoremLSRS} is the time-1 map of a semi-flow, then the third bullet in Theorem \ref{TheoremLSRS}.a) will be satisfied by Propositions \ref{PropDeform} and \ref{PropSCclop}.
\end{remark}

The classical setting for the Lusternik-Schnirelmann theorem is the study of critical points of differentiable functions on Banach manifolds, which was established by R. Palais in \cite{PalaisLS} following the original construction by Lusternik and Schnirelmann. It follows from folklore results from differential topology, see \cite{Meier} for an explicit treatment, that if $M$ is a smooth manifold and $k \in \NN$ with $k > \frac{n}{2}$, then $H^k(S^n,M)$, the space of maps $S^n \to M$ of Sobolev class $W^{k,2}$ will be a Hilbert manifold for which the inclusion $H^k(S^n,M) \hookrightarrow \C^0(S^n,M)$ is a homotopy equivalence. Thus, if $\MM_k$ denotes the connected component of $H^k(S^n,M)$ that contains the constant maps, $\MM_k$ will be a paracompact Banach manifold for which the inclusion $\MM_k \hookrightarrow S_nM$ is a homotopy equivalence.  \\

We want to explicitly state the direct consequence of Theorem \ref{TheoremLSRS} for Banach manifolds.  Whenever $Y$ is a Banach manifold and $f$ is differentiable we let $\Crit f = \{y \in Y \ | \ df(y)=0\}$ be its set of critical points. We further let $\C^{1,1}(Y)$ be the set of those functions in $\C^1(Y)$ whose differential is locally Lipschitz-continuous.

\begin{cor}
\label{CorPalais}
Let $n \in \NN$, let $\MM\subset S_nX$ have the structure of a Hilbert manifold of class $C^2$ and let $F \in \C^{1,1}(\MM)$ be bounded from below. If $F$ satisfies the Palais-Smale condition with respect to a Riemannian metric on $\MM$, then 
$$\SC_{n,X}(F^{\lambda}) \leq \sum_{\mu \in (-\infty,\lambda]} \SC_{n,X}(\Crit F \cap F_\mu).$$
\end{cor}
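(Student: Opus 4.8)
The plan is to derive Corollary \ref{CorPalais} as a direct application of Theorem \ref{TheoremLSRS}.a), so the work consists in producing a pair $(F,\varphi)$ satisfying condition (D) together with the monotonicity hypothesis on $\SC_{n,X}$, and checking that the ANR hypothesis of Theorem \ref{TheoremLSRS} is in force. First I would note that $X$ being a metrizable ANR is not explicitly assumed here, but $\MM$ is a Hilbert manifold and hence a metrizable ANR; since the statement only concerns subsets of $\MM$ (all the sets $F^\lambda$, $F_\mu$, and $\Crit F \cap F_\mu$ lie in $\MM \subset S_nX$), it suffices to apply Theorem \ref{TheoremLSRS} with $A = \MM$ — so I would take $A := \MM$, which is closed in itself and a metrizable ANR, and which is what Theorem \ref{TheoremLSRS} requires of $A$ (the hypotheses of that theorem are really conditions on $A$, and the ambient $X$ enters only through $\SC_{n,X}$, which restricts well to subsets by monotonicity).

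Next I would construct $\varphi$. Fix a complete Riemannian metric on $\MM$ (or pass to one, using that $F$ satisfies Palais-Smale with respect to some Riemannian metric); since $F \in \C^{1,1}(\MM)$, its gradient vector field $\nabla F$ is locally Lipschitz, so the negative gradient flow exists locally. Using a standard cut-off of $\nabla F / (1+\|\nabla F\|^2)$ or a pseudo-gradient construction, one obtains a globally defined semi-flow $\eta:\MM\times[0,\infty)\to\MM$ with $\frac{d}{dt}F(\eta(x,t)) \le 0$ and strict decrease away from $\Crit F$; set $\varphi := \eta(\cdot,1)$. Then $\varphi$ is a homotopy equivalence (homotopic to $\id_\MM$ through the flow), condition (D1) is immediate from strict decrease off $\Crit F$, and (D2) — if $F$ is bounded on $A$ but $F(y)-F(\varphi(y))$ is not bounded away from zero, then $\bar A$ meets $\Crit F$ — is exactly where the Palais-Smale condition is used: a sequence along which the drop in $F$ tends to zero while $F$ stays bounded forces $\|\nabla F\|$ to get small along the flow lines, producing a Palais-Smale sequence whose limit is a critical point in $\bar A$. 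I would cite the analogous classical argument (e.g. from \cite{PalaisLS} or \cite{CLOT}, Section 1.7) rather than reproving the deformation lemma. The monotonicity condition $\SC_{n,X}(\varphi(B)) \ge \SC_{n,X}(B)$ for closed $B \subset \MM$ follows from Remark \ref{RemarkLSflow}, since $\varphi$ is the time-$1$ map of a semi-flow; concretely, Proposition \ref{PropDeform} gives $\SCcl_{n,X}(B) \le \SCcl_{n,X}(\varphi(B))$ and Proposition \ref{PropSCclop} identifies $\SCcl_{n,X}$ with $\SC_{n,X}$ on closed sets (note $\varphi(B)$ is closed since $\varphi$ is a homeomorphism onto its image, being the time-$1$ map of a flow).

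With these ingredients in hand, Theorem \ref{TheoremLSRS}.a) applied with $A=\MM$ yields, for every $\lambda\in\RR$,
$$\SC_{n,X}(F^\lambda) \le \sum_{\mu\in(-\infty,\lambda]} \SC_{n,X}(\Fix\varphi \cap F_\mu),$$
and it remains only to identify $\Fix\varphi = \Crit F$. The inclusion $\Crit F \subset \Fix\varphi$ holds because critical points are stationary for the gradient flow; conversely, if $\varphi(x)=x$ then $F$ is constant along the flow line through $x$ on $[0,1]$, which by (D1) forces $x\in\Crit F$. Substituting $\Fix\varphi\cap F_\mu = \Crit F\cap F_\mu$ into the displayed inequality gives exactly the claimed bound. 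The main obstacle is the careful construction of the globally defined descending semi-flow for a merely $\C^{1,1}$ function together with the verification of condition (D2) from Palais-Smale; this is standard Lusternik-Schnirelmann deformation theory, so in the write-up I would state it as a lemma with a reference to \cite{PalaisLS} and \cite{RudyakSchlenk} and keep the argument brief, the point of the corollary being the passage from the abstract Theorem \ref{TheoremLSRS} to the concrete Banach-manifold setting.
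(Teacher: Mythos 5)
Your proposal is correct and follows essentially the same route as the paper: take $\varphi$ to be the time-one map of the negative (pseudo-)gradient semi-flow, note that the Hilbert manifold $\MM$ is a metrizable ANR, invoke the Palais--Smale condition to obtain condition (D) (the paper simply cites \cite[Proposition 9.1]{RudyakSchlenk} for this), use Remark \ref{RemarkLSflow} for the deformation-monotonicity hypothesis, and identify $\Fix\varphi=\Crit F$ before applying Theorem \ref{TheoremLSRS}.a) with $A=\MM$. The only extra content is your explicit verification of (D1)--(D2) and of $\Fix\varphi=\Crit F$, which the paper delegates to references; no substantive difference.
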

\begin{proof}
Every paracompact Banach manifold is a metrizable ANR by \cite[Theorems 1 and 5]{PalaisInf}. Let $\phi_1:\MM \to \MM$ be the time-1 map of the semi-flow of a negative pseudo-gradient of $F$. The first two properties of $(F,\phi_1)$ that are required in Theorem \ref{TheoremLSRS}.a) are true by assumption. It is shown in \cite[Proposition 9.1]{RudyakSchlenk} that if $F$ satisfies the Palais-Smale condition and $\MM$ is a Hilbert manifold, then $(F,\phi_1)$ will satisfy condition (D). Together with Remark \ref{RemarkLSflow}, this shows that the claim is an application of part a) of Theorem \ref{TheoremLSRS}.
\end{proof}

In the following, we want to apply Theorem \ref{TheoremLSRS} and Corollary \ref{CorPalais} to study critical points of $O(n+1)$-invariant functions on $S_nX$. For each $n \in \NN$, the spaces $S_nX$ and $B_nX$ are equipped with continuous left $O(n+1)$-actions $$\rho_1: O(n+1) \times S_nX \to S_n X, \qquad \rho_2:O(n+1) \times B_{n+1}X \to B_{n+1}X,$$
 given by $(\rho_i(A,\gamma))(p)= \gamma(A^{-1}p)$ for $i \in \{1,2\}$ with respect to the standard $O(n+1)$-action on $S^n \subset \RR^{n+1}$ given by matrix multiplication. One immediately sees that $r_n:B_{n+1}X \to S_nX$ is $O(n+1)$-equivariant with respect to these actions. Moreover, the fixed point set of $\rho_1$ is precisely $c_n(X)$.  For simplicity, we will denote both actions by $A \cdot \gamma$ whenever it is obvious which action we are referring to. Given a subgroup $G \subset O(n+1)$, we call the restrictions of $\rho_1$ to $G \times S_nX \to S_nX$ and of $\rho_2$ to $G \times B_{n+1}X \to B_{n+1}X$ the \emph{restricted actions} and let $G \cdot \gamma$ denote the $G$-orbit of $\gamma \in S_nX$. 

\begin{prop}
\label{PropSCprimeorbit}
Let $G\subset O(n+1)$ be a closed subgroup. If $X$ is a metrizable ANR and $\gamma \in S_nX$ has trivial $G$-isotropy group with respect to the restricted action, then 
$$\SC_{n,X}(G\cdot \gamma)=1.$$
\end{prop}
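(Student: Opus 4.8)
The goal is to show that a free $G$-orbit $G \cdot \gamma$ is a sphere filling domain, i.e.\ $\SC_{n,X}(G \cdot \gamma) = 1$. By Lemma \ref{LemmaSCincl} it suffices to produce a sphere filling $s: G \cdot \gamma \to B_{n+1}X$ over the orbit. Since the $G$-isotropy group of $\gamma$ is trivial and $G$ is a closed (hence compact, being a closed subgroup of $O(n+1)$) Lie group acting continuously, the orbit map $G \to G \cdot \gamma$, $A \mapsto A \cdot \gamma$, is a continuous bijection from a compact space; if $S_nX$ is Hausdorff (which it is, since $X$ is a metrizable ANR, hence metrizable, so $\C^0(S^n,X)$ is metrizable), this orbit map is a homeomorphism. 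Thus $G \cdot \gamma$ is homeomorphic to $G$.

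\textbf{Main construction.} The plan is to reduce the problem to filling the single sphere $\gamma: S^n \to X$ and then to spread the filling equivariantly over the orbit. Since $G \subset O(n+1)$ acts on $S^n$ and $B^{n+1}$ by the standard linear action, and $\gamma \in S_nX$ is nullhomotopic, there exists $\bar\gamma \in B_{n+1}X$ with $\bar\gamma|_{S^n} = \gamma$, i.e.\ $r_n(\bar\gamma) = \gamma$. Now define
$$s: G \cdot \gamma \to B_{n+1}X, \qquad s(A \cdot \gamma) := A \cdot \bar\gamma,$$
where the action on the right is $\rho_2$. This is well-defined precisely because the isotropy group of $\gamma$ is trivial: if $A \cdot \gamma = A' \cdot \gamma$ then $A^{-1}A' \in G_\gamma = \{1\}$, so $A = A'$, and hence $A \cdot \bar\gamma = A' \cdot \bar\gamma$. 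Continuity of $s$ follows from the continuity of $\rho_2$ composed with the inverse of the orbit homeomorphism $G \to G \cdot \gamma$ (which is continuous by the compact-Hausdorff argument above), precomposed appropriately. Finally, $s$ is a sphere filling because $r_n$ is $O(n+1)$-equivariant: $r_n(A \cdot \bar\gamma) = A \cdot r_n(\bar\gamma) = A \cdot \gamma$, so $s(A \cdot \gamma)|_{S^n} = A \cdot \gamma$ for all $A \in G$. Hence $G \cdot \gamma$ is a sphere filling domain, so $\SC_{n,X}(G \cdot \gamma) \leq 1$; since the orbit is non-empty, $\SC_{n,X}(G \cdot \gamma) = 1$.

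\textbf{Expected obstacle.} The only genuinely delicate point is verifying that the orbit map $A \mapsto A \cdot \gamma$ is a homeomorphism $G \to G \cdot \gamma$, so that $s$ is continuous; this uses compactness of $G$ (as a closed subgroup of the compact group $O(n+1)$) together with the Hausdorff property of $S_nX$, which is where the metrizable ANR hypothesis on $X$ enters. Everything else—well-definedness from freeness of the action, the filling property from equivariance of $r_n$—is formal. One could alternatively phrase the continuity of $s$ without explicitly invoking the orbit homeomorphism: define $\tilde s: G \times B_{n+1}X \to B_{n+1}X$ by the restricted $\rho_2$, note it is continuous, and observe that $s$ is the factorization of $A \mapsto \rho_2(A, \bar\gamma)$ through the quotient $G \to G/G_\gamma = G \to G \cdot \gamma$; but since we want $s$ defined \emph{on} $G \cdot \gamma$ rather than on $G$, identifying $G \cdot \gamma$ with $G$ topologically is unavoidable, and this is precisely the step that requires care.
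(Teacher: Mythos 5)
Your proposal is correct and follows essentially the same route as the paper: choose one filling $\bar\gamma$ of $\gamma$ and transport it over the orbit by $s(A\cdot\gamma)=A\cdot\bar\gamma$, using triviality of the isotropy group for well-definedness and $O(n+1)$-equivariance of $r_n$ for the filling property. The paper leaves the continuity of $s$ implicit and instead routes the conclusion through Proposition \ref{PropSCclop} (since $G\cdot\gamma$ is closed), whereas you supply the continuity argument explicitly via the compact-to-Hausdorff orbit homeomorphism and conclude directly from the definition of $\secat$; both are fine.
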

\begin{proof}
Since $G$ is closed, $G \cdot \gamma$ is closed and by Proposition \ref{PropSCclop} it suffices to find a sphere filling over $G\cdot \gamma$.
Let $u \in B_{n+1}X$ be an arbitrary map with $r_n(u)=\gamma$ and define 
$$s: G\cdot \gamma \to B_{n+1}X, \qquad s(A\cdot \gamma)= A \cdot u. $$
Since the isotropy group of $\gamma$ is trivial and $r_n$ is equivariant, $s$ is a well-defined sphere filling. 
\end{proof}

We want to use this previous observation to focus on a particular class of functions.

\begin{definition}
\label{DefElike}
Let $G \subset O(n+1)$ be a closed subgroup and let $A \subset S_nX$ be a $G$-invariant subset with $c_n(X) \subset A$. Let $F:A \to \RR$ be continuous and let $\varphi: A \to A$ be a homotopy equivalence. We call $(G,A,F,\varphi)$ \emph{admissible} if 
\begin{enumerate}[(i)]
\item $F$ is a $G$-invariant function and $\Fix \varphi$ is a $G$-invariant set,
\item $F$ is constant on $c_n(X)$,
\item $(F,\varphi)$ satisfies condition (D) on $A$.
\item $F(\Fix \varphi)$ is isolated in $\RR$.
\end{enumerate}
\end{definition}

If $(G,A,F,\varphi)$ is admissible, we let $\nu(\varphi)\in \NN_0\cup \{+\infty\}$ denote the number of non-constant $G$-orbits in $\Fix \varphi$ and let $\nu(\varphi,F,\lambda)$ denote the number of non-constant $G$-orbits in $\Fix \varphi\cap F^\lambda$ for each $\lambda \in \RR$.

\begin{theorem}
\label{TheoremnuF}
Let $X$ be a metrizable ANR, $n \in \NN$ and $G \subset O(n+1)$ be a closed subgroup. Let $A\subset S_nX$ be $G$-invariant with $c_n(X) \subset A$, $F: A \to \RR$ be continuous, $\varphi: A \to A$ be a homotopy equivalence and $\lambda \in \RR$. If $(G,A,F,\varphi)$ is admissible and if every non-constant element of $\Fix \varphi \cap F^\lambda$ has trivial $G$-isotropy group, then
$$\nu(\varphi,F,\lambda) \geq \SC_{n,X}(F^{\lambda})-1 .$$
If in addition $\varphi$ is homotopic to $\id_A$ and $F(\Fix \varphi)$ is finite, then	$\nu(\varphi) \geq \SC_{n,X}(A)-1.$
\end{theorem}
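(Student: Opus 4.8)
The plan is to combine the abstract Lusternik--Schnirelmann estimate from Theorem~\ref{TheoremLSRS} with the orbit computations of Propositions~\ref{PropSCprimeorbit} and~\ref{Propcn}. First I would verify that the hypotheses of Theorem~\ref{TheoremLSRS}.a) are met. Since $(G,A,F,\varphi)$ is admissible, $(F,\varphi)$ satisfies condition~(D) on $A$ by Definition~\ref{DefElike}(iii), and $F$ is bounded from below on $F^\lambda$ trivially (it need only be bounded from below where we apply the estimate; if $F$ is not globally bounded below one restricts to $F^\lambda$, noting that condition~(D) and the deformation-monotonicity hypothesis restrict to $F^\lambda$ since $\varphi$ decreases $F$). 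The third bullet, $\SC_{n,X}(\varphi(B)) \geq \SC_{n,X}(B)$ for closed $B$, holds because $\varphi$ is a homotopy equivalence and $X$ is a metrizable ANR: by Proposition~\ref{PropSCclop} one has $\SC_{n,X} = \SCcl_{n,X}$ on closed sets, and $\SCcl_{n,X}$ is deformation-monotone under the deformation realizing $\varphi \simeq \id_A$ (via Proposition~\ref{PropDeform}, applied to the homotopy from $\id_A$ to $\varphi$ restricted to $B$). Thus Theorem~\ref{TheoremLSRS}.a) gives
$$\SC_{n,X}(F^\lambda) \leq \sum_{\mu \in (-\infty,\lambda]} \SC_{n,X}(\Fix\varphi \cap F_\mu).$$

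Next I would analyze the right-hand side. Because $F(\Fix\varphi)$ is isolated in $\RR$ (Definition~\ref{DefElike}(iv)), only finitely many $\mu \leq \lambda$ contribute a nonzero term, and among those at most one value $\mu_0$ carries the constant maps: since $F$ is constant on $c_n(X)$ with value $\mu_0$ say, we split
$$\sum_{\mu \leq \lambda} \SC_{n,X}(\Fix\varphi \cap F_\mu) = \SC_{n,X}(\Fix\varphi \cap F_{\mu_0}) + \sum_{\mu \leq \lambda,\ \mu \neq \mu_0} \SC_{n,X}(\Fix\varphi \cap F_\mu).$$
For $\mu \neq \mu_0$ the set $\Fix\varphi \cap F_\mu$ contains only non-constant maps; since $\Fix\varphi$ is $G$-invariant and every non-constant element of $\Fix\varphi \cap F^\lambda$ has trivial $G$-isotropy, this set is a disjoint union of finitely many (by isolatedness and, implicitly, a compactness/local-finiteness input from condition~(D)) free $G$-orbits, each of which has $\SC_{n,X} = 1$ by Proposition~\ref{PropSCprimeorbit}. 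By subadditivity (Proposition~\ref{PropSCprops}(ii)) the $\mu$-term is at most the number of such orbits; summing over $\mu \neq \mu_0$ bounds $\sum_{\mu \neq \mu_0}\SC_{n,X}(\Fix\varphi \cap F_\mu) \leq \nu(\varphi,F,\lambda)$. For the $\mu_0$-term, $\Fix\varphi \cap F_{\mu_0}$ consists of $c_n(X)$ together with possibly some non-constant free orbits; writing it as $c_n(X)$ union those orbits, subadditivity and Proposition~\ref{Propcn} (giving $\SC_{n,X}(c_n(X)) = 1$) yield $\SC_{n,X}(\Fix\varphi \cap F_{\mu_0}) \leq 1 + (\text{number of non-constant orbits at level } \mu_0)$. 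Combining, $\SC_{n,X}(F^\lambda) \leq \nu(\varphi,F,\lambda) + 1$, which is the first assertion.

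For the second assertion I would run the identical argument but invoke Theorem~\ref{TheoremLSRS}.b) in place of part~a); its extra hypotheses---$\varphi$ homotopic to $\id_A$ and $F(\Fix\varphi)$ finite---are exactly what is assumed, and they give
$$\SC_{n,X}(A) \leq \sum_{\mu \in \RR} \SC_{n,X}(\Fix\varphi \cap F_\mu),$$
after which the same splitting off of $c_n(X)$ at level $\mu_0$ and the same free-orbit bound (now over all of $\Fix\varphi$, using that \emph{all} non-constant fixed points have trivial isotropy---this must be assumed or follows from the $F^\lambda$ hypothesis letting $\lambda\to\infty$ together with finiteness of $F(\Fix\varphi)$) produce $\SC_{n,X}(A) \leq \nu(\varphi) + 1$.

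The main obstacle I anticipate is the passage from ``$\Fix\varphi \cap F_\mu$ is a set of free orbits'' to ``\emph{finitely many} free orbits, and $\SC_{n,X}$ of their union is at most their count.'' The subadditivity of Proposition~\ref{PropSCprops}(ii) is stated for two sets, so one needs finiteness of the number of orbits at each level to iterate it; this finiteness is not literally among the stated hypotheses of admissibility and must be extracted from condition~(D) (which forces $\Fix\varphi$ to be ``discrete at each level'' in a suitable sense) possibly together with the ANR structure---or else each level set $\Fix\varphi \cap F_\mu$ is a priori closed and one argues that a closed free $G$-orbit union is handled by Proposition~\ref{PropSCprimeorbit} applied once to the whole level set only after checking it is itself a finite union. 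Making this step rigorous---i.e.\ pinning down precisely why condition~(D) plus admissibility yields a locally finite (hence, on the compact part, finite) family of critical orbits at each level below $\lambda$---is where I would spend the most care; everything else is bookkeeping with monotonicity, subadditivity, and the two orbit lemmas.
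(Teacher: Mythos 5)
Your overall architecture matches the paper's: apply Theorem~\ref{TheoremLSRS}, split the critical levels into the one level $\mu_0$ carrying $c_n(X)$ and the rest, and use Proposition~\ref{PropSCprimeorbit} together with Lemma~\ref{LemmaSCconst} to evaluate each piece. But the step you flag as ``the main obstacle''---finiteness of the number of $G$-orbits at each level---is resolved incorrectly in your proposal, and the route you suggest would fail. Condition~(D) does \emph{not} force $\Fix\varphi$ to be ``discrete at each level'': (D1) only constrains points outside $\Fix\varphi$, and (D2) is a Palais--Smale-type condition that is perfectly compatible with a continuum of fixed points on a single level set. The paper's resolution is a one-line reduction you are missing: if $\nu(\varphi,F,\lambda)=+\infty$ there is nothing to prove, so one may assume $\nu(\varphi,F,\lambda)<+\infty$, which immediately makes $F(\Fix\varphi\cap F^\lambda)$ finite ($F$ being $G$-invariant) and makes each level set a union of finitely many orbits. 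With that reduction in place, your bookkeeping goes through.

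A secondary difference worth noting: you bound each level's contribution by the \emph{number} of orbits at that level via iterated subadditivity (Proposition~\ref{PropSCprops}(ii)), whereas the paper uses the disjoint-closure maximum property (Proposition~\ref{PropSCprops}(iii)) to conclude that every critical level contributes exactly $1$ to the sum, so that $\SC_{n,X}(F^\lambda)\leq 1+\lvert F(\Fix\varphi\cap F^\lambda)\setminus\{\mu_0\}\rvert$; the count of non-constant orbits only enters at the very end, since each level $\mu\neq\mu_0$ contains at least one such orbit. Both versions yield $\nu(\varphi,F,\lambda)\geq \SC_{n,X}(F^\lambda)-1$, but the max-property route is what the closedness of the orbits (for $G$ closed) is there for, and it avoids any iteration of subadditivity. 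Finally, your verification of the third bullet of Theorem~\ref{TheoremLSRS}.a) invokes a homotopy from $\id_A$ to $\varphi$, which is only hypothesized in the second assertion; for the first assertion $\varphi$ is merely a homotopy equivalence, and the paper relies on Remark~\ref{RemarkLSflow} (the semi-flow setting) for this point rather than on $\varphi\simeq\id_A$.
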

\begin{proof}
If $\nu(\varphi,F,\lambda)=+\infty$, then there is nothing to show, so we will assume in the following that $\nu(\varphi,F,\lambda)<+\infty$. Since $F$ is $G$-invariant, this implies that $F(\Fix \varphi \cap F^\lambda)$ is finite. By Theorem \ref{TheoremLSRS}.a), it holds that 
$$\SC_{n,X}(F^\lambda) \leq \sum_{\mu \in (-\infty,\lambda]} \SC_{n,X}(\Fix \varphi \cap F_\mu).$$ 
$F$ is by assumption constant on $A \cap c_n(X)$ and we put $\mu_0 := F(A\cap c_n(X)) \in \RR$. If $\mu \in F(\Fix \varphi)$ with $\mu \neq \mu_0$, then $\Fix \varphi \cap F_\mu$ will be a disjoint union of $G$-orbits which are closed subsets of $A$. In this case it follows from Propositions \ref{PropSCprops}(iii) and \ref{PropSCprimeorbit} that $\SC_{n,X}(\Fix \varphi \cap F_\mu)=1$.  If $\mu=\mu_0$, then $\Fix \varphi \cap F_\mu = (\Fix \varphi\cap c_n(X)) \cup B$, where $B$ is either empty or the union of finitely many non-constant $G$-orbits. Since $\Fix \varphi \cap c_n(X)$ is closed, Proposition \ref{PropSCprops}(iii) yields $$\SC_{1,X}(\Fix \varphi \cap F_\mu) = \max\{ \SC_{n,X}(\Fix \varphi\cap c_n(X)),\SC_{1,X}(B)\}.$$ As in the other case, it follows that $\SC_{n,X}(B)=1$. Moreover, we obtain from part Proposition \ref{PropSCprops}(i) and Lemma \ref{LemmaSCconst} that $\SC_{n,X}(\Fix \varphi\cap c_n(X)) =1$.
Consequently, the above inequality implies 
$$\SC_{n,X}(F^\lambda) \leq 1+ \left|F(\Fix \varphi) \setminus \{\mu_0\} \right|.$$
Since for every $\mu \in F(\Fix \varphi)\setminus \{\mu_0\}$ the set $\Fix \varphi \cap F_\mu$, contains a non-constant $G$-orbit, the first part of the claim immediately follows. The second part follows from Theorem \ref{TheoremLSRS}.b).
\end{proof}

\begin{cor}
\label{CornuF}
Under the assumptions of Theorem \ref{TheoremnuF}, it holds that
$$\nu(\varphi,F, \lambda) \geq \cl_R\left(\im \left[\iota_\lambda^*:H^*(S_nX,c_n(X);R)\to H^*(F^{\lambda};R)\right]\right)$$
for any commutative ring $R$, where $\iota_\lambda: (F^{\lambda},\varnothing) \hookrightarrow (S_nX,c_n(X))$ denotes the inclusion of pairs. If in addition $\varphi$ is homotopic to $\id_A$ and $F(\Fix \varphi)$ is finite, then 
$$\nu(\varphi) \geq \cl_R\left(\im \left[\iota^*:H^*(S_nX,c_n(X);R)\to H^*(A;R)\right] \right),$$
where $\iota: A \hookrightarrow S_nX$ is the inclusion.
\end{cor}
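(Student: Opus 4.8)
The plan is to combine Theorem \ref{TheoremnuF} with the cup-length lower bounds from Proposition \ref{PropSCrelCup}. The corollary claims two inequalities; I will establish the first one and then indicate the modification needed for the second. For the first inequality, under the stated hypotheses, Theorem \ref{TheoremnuF} gives $\nu(\varphi,F,\lambda) \geq \SC_{n,X}(F^\lambda) - 1$. So it suffices to show that
$$\SC_{n,X}(F^\lambda) - 1 \geq \cl_R\left(\im\left[\iota_\lambda^*:H^*(S_nX,c_n(X);R)\to H^*(F^\lambda;R)\right]\right).$$
This is exactly the content of Proposition \ref{PropSCrelCup} applied with $A = F^\lambda$, once one checks that $F^\lambda$ is a legitimate choice of subspace of $S_nX$: it is nonempty since $c_n(X) \subset A$ and $F$ is constant on $c_n(X)$, so $c_n(X) \subset F^\lambda$ provided $\lambda \geq \mu_0$ (if $\lambda < \mu_0$ the statement is vacuous or follows from $F^\lambda$ possibly being empty, in which case the right-hand side is $\cl_R$ of the zero ideal). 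The inclusion of pairs $\iota_\lambda:(F^\lambda,\varnothing) \hookrightarrow (S_nX,c_n(X))$ in the statement is precisely the map denoted $\iota_A$ in Proposition \ref{PropSCrelCup} with $A = F^\lambda$, so the two inequalities chain together directly.

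First I would state that all hypotheses of Theorem \ref{TheoremnuF} are in force by assumption, so its conclusion $\nu(\varphi,F,\lambda)\geq \SC_{n,X}(F^\lambda)-1$ holds. Next I would invoke Proposition \ref{PropSCrelCup} with the subspace $A=F^\lambda\subset S_nX$; this is valid because $\SC_{n,X}$ is defined for arbitrary nonempty subsets (and we set $\SC_{n,X}(\varnothing)=0$, which makes the inequality trivially true if $F^\lambda=\varnothing$). That proposition yields $\SC_{n,X}(F^\lambda)\geq \cl_R(\im\,\iota_\lambda^*)+1$, where $\iota_\lambda$ is the inclusion of pairs as above. Subtracting one and combining with the previous inequality gives the first claim. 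For the second inequality, the additional hypotheses (that $\varphi$ is homotopic to $\id_A$ and $F(\Fix\varphi)$ is finite) allow the use of the second part of Theorem \ref{TheoremnuF}, namely $\nu(\varphi)\geq \SC_{n,X}(A)-1$; then one applies Proposition \ref{PropSCrelCup} directly to the set $A$ itself, obtaining $\SC_{n,X}(A)\geq \cl_R(\im\,\iota^*)+1$ with $\iota:A\hookrightarrow S_nX$ the inclusion. Chaining the two inequalities finishes the proof.

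I do not expect any serious obstacle here: the corollary is a formal consequence of two results already proved in the paper, and the only thing to verify carefully is that the inclusion of pairs appearing in Corollary \ref{CornuF} matches the one in Proposition \ref{PropSCrelCup}, which it does by construction. The mildly delicate point is the bookkeeping around whether $c_n(X)$ sits inside $F^\lambda$ and the edge case $F^\lambda = \varnothing$, but since $F$ is constant on $c_n(X)$ with value $\mu_0$ and the hypothesis $c_n(X)\subset A$ is assumed, for all $\lambda\geq\mu_0$ we have $c_n(X)\subset F^\lambda$ and for $\lambda<\mu_0$ the right-hand side is $\cl_R$ of the image of the map into $H^*(\varnothing;R)=0$, so the inequality reads $\nu(\varphi,F,\lambda)\geq 0$, which is trivially true. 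Hence the proof reduces to: quote Theorem \ref{TheoremnuF}, quote Proposition \ref{PropSCrelCup}, concatenate.
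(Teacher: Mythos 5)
Your proposal is correct and follows exactly the paper's own (one-line) argument: chain Theorem \ref{TheoremnuF} with Proposition \ref{PropSCrelCup} applied to $A=F^\lambda$ (resp.\ to $A$ itself) and subtract one. The extra bookkeeping you supply about $c_n(X)\subset F^\lambda$ and the empty-sublevel-set edge case is a harmless elaboration of the same reasoning.
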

\begin{proof}
This is an immediate consequence of Proposition \ref{PropSCrelCup} and Theorem \ref{TheoremnuF}.
\end{proof}

\section{$\SC_1$ and gradient flows}
\label{SectionSC1}

In this section, we want to focus on the case $n=1$ and study functionals on free loop spaces of smooth manifolds. Before doing so, we will discuss a few useful observations that can be made in this special case.

\subsection{General observations}

Let $X$ be a path-connected topological space. For $\gamma:S^1 \to X$ and $m \in \NN$ we let $\gamma^m:S^1 \to X$ denote the $m$-th iterate of $\gamma$, i.e. $\gamma^m(t) = \gamma(mt)$ for all $t \in S^1$. We further let $q_m:B^2 \to B^2$, $q_m(z)=z^m$, where we identify $B^2$ with the unit disc in $\CC$. Then $\gamma^m = \gamma \circ q_m|_{S^1}$ for each $m \in \NN$. 

\begin{prop}
If $X$ is a metrizable ANR for which $\pi_1(X)$ is torsion-free and $G \subset O(2)$ is a closed subgroup, then, with respect to the restricted $G$-action on $S_1X$,
	$$\SC_{1,X}(G\cdot \gamma) = 1\qquad \forall \gamma \in S_1X.$$
\end{prop}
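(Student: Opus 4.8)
The plan is to reduce to the case already established, namely Proposition~\ref{PropSCprimeorbit}, which gives $\SC_{1,X}(G \cdot \gamma) = 1$ whenever $\gamma$ has trivial $G$-isotropy group. So the only thing to handle is the case where the isotropy group of $\gamma$ is nontrivial. Since $G$ is a closed subgroup of $O(2)$, the orbit $G \cdot \gamma$ is closed, so by Proposition~\ref{PropSCclop} it suffices to construct a sphere filling $s: G \cdot \gamma \to B_2 X$, i.e.\ an equivariant-enough choice of fillings that is well-defined on the orbit.

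First I would recall that subgroups of $SO(2) \cong S^1$ are either finite cyclic or all of $S^1$, and closed subgroups of $O(2)$ are these together with dihedral groups and reflections. The isotropy group $G_\gamma = \{A \in G \mid A \cdot \gamma = \gamma\}$ is itself a closed subgroup of $O(2)$. The key geometric observation is that if a rotation by $2\pi/m$ fixes $\gamma$, then $\gamma$ factors through the $m$-fold covering $S^1 \to S^1$, i.e.\ $\gamma = \bar\gamma^m$ for some loop $\bar\gamma$ — more precisely $\gamma(t) = \bar\gamma(mt)$. Now the torsion-freeness of $\pi_1(X)$ enters: since $\gamma$ is nullhomotopic (being in $S_1 X$), $\bar\gamma^m$ is nullhomotopic, so $m[\bar\gamma] = 0$ in $\pi_1(X)$, and torsion-freeness forces $[\bar\gamma] = 0$, i.e.\ $\bar\gamma$ is itself nullhomotopic. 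This is exactly what lets us fill $\gamma$ in an equivariant way: we can fill $\bar\gamma$ with some $\bar u \in B_2 X$ (using that $\bar\gamma \in S_1 X$), and then $u := \bar u \circ q_m$ is a filling of $\gamma$ that is automatically invariant under the rotation by $2\pi/m$, since $q_m$ is.

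More systematically, I would argue as follows. If $G_\gamma$ is trivial, apply Proposition~\ref{PropSCprimeorbit} directly. Otherwise, let $\Gamma = G_\gamma \cap SO(2)$; if this is finite cyclic of order $m$ generated by rotation $R$ by $2\pi/m$, write $\gamma = \bar\gamma \circ q_m|_{S^1}$ with $\bar\gamma$ nullhomotopic by the torsion-freeness argument, pick a filling $\bar u \in B_2 X$ of $\bar\gamma$, and set $u = \bar u \circ q_m \in B_2 X$; then $R \cdot u = u$. If moreover $G_\gamma$ contains a reflection, one can additionally arrange $\bar u$ (hence $u$) to be invariant under that reflection — either by choosing the strong deformation retraction $\varphi$ of $B^2$ used to build the filling to be equivariant under the relevant reflection, or by a symmetrization/averaging of the filling over the (finite) quotient $G_\gamma / \Gamma$. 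The remaining edge case is $G_\gamma \cap SO(2) = SO(2)$, forcing $\gamma$ to be constant, which is handled by Proposition~\ref{Propcn} (or rather its orbit version: a constant map's $G$-orbit is a single point in $c_1(X)$, filled trivially). In each case, define $s: G \cdot \gamma \to B_2 X$ by $s(A \cdot \gamma) = A \cdot u$; this is well-defined precisely because $u$ is $G_\gamma$-invariant and $r_1$ is equivariant, and it is continuous because $G \to G \cdot \gamma$, $A \mapsto A \cdot \gamma$ is a quotient map with $A \mapsto A \cdot u$ descending along it.

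The main obstacle I expect is the bookkeeping around reflections in $G_\gamma$ when $G$ is dihedral: making the chosen filling $u$ invariant under \emph{all} of $G_\gamma$ simultaneously, not just under the rotational part, requires either a carefully chosen equivariant contraction of $B^2$ or an averaging argument, and one must check that averaging stays within $B_2 X = \C^0(B^2, X)$ (no group-completion issues arise since $G_\gamma$ is compact and, in the non-constant case, the rotational part is finite — but one should be slightly careful that a reflection in $O(2)$ together with a rotation by $2\pi/m$ generates a finite dihedral group, so $G_\gamma$ is finite whenever $\gamma$ is non-constant, which makes an averaging over $G_\gamma$ unnecessary: it suffices to choose the deformation retraction of $B^2$ to commute with a single reflection fixing $1 \in S^1$). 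With that simplification the proof is short, and the torsion-free hypothesis is used exactly once, to pass from ``$\bar\gamma^m$ nullhomotopic'' to ``$\bar\gamma$ nullhomotopic.''
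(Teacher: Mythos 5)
Your proposal is correct in substance and follows essentially the same route as the paper: split by isotropy type, use torsion-freeness of $\pi_1(X)$ to pass from ``$\gamma=\bar\gamma^{\,m}$ nullhomotopic'' to ``$\bar\gamma$ nullhomotopic,'' fill $\bar\gamma$ by a disc $\bar u$, and take $u=\bar u\circ q_m$, which is automatically invariant under the rotational part of the isotropy; the well-definedness and continuity of $s(A\cdot\gamma)=A\cdot u$ are handled exactly as in the trivial-isotropy case. The one place where your mechanism is shaky is the dihedral case: ``averaging'' a filling over $G_\gamma/\Gamma$ is not an operation available for maps into a general space, and the filling of $\bar\gamma$ is not produced by a deformation retraction of $B^2$ that one could choose equivariantly; the paper instead makes the filling reflection-invariant by hand, defining $u_+$ on the closed upper half-disc so that it restricts to $\bar\gamma|_{[0,1/2]}$ on the upper semicircle and extending to $B^2$ by $u(z)=u_+(\bar z)$ for $\mathrm{Im}\,z<0$, which is exactly the ``fundamental domain plus reflection'' construction you gesture at. With that replacement your argument coincides with the paper's proof.
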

\begin{proof}
For $\gamma \in c_1(X)$ this is an obvious consequence of Lemma \ref{LemmaSCconst} and Proposition \ref{PropSCprops}.(i). 
	
Let $\gamma \in S_1X \setminus c_1(X)$. Then the $G$-isotropy group of $\gamma$ is a proper closed subgroup of $O(2)$ other than $SO(2)$, i.e. either a finite cyclic group or a dihedral group.  

Let $m \in \NN$ be the biggest number for which $\gamma$ is invariant under the $\ZZ_m$-action by rotations. Since $\pi_1(X)$ is torsion-free, there exists $\beta \in S_1X$, such that $\gamma =\beta^m$. Assume first that the isotropy group of $\gamma$ is finite cyclic, hence isomorphic to $\ZZ_m$. Choose and fix a continuous map $u:B^2 \to X$ with $u|_{S^1}=\beta$ and consider $u \circ q_m:B^2 \to X$. Then $u\circ q_m|_{S^1} = \beta^m = \gamma$. 	
Using this construction, we define a map $s:G\cdot \gamma \to B_2X$, $s(A \cdot \gamma)= (u \circ q_m)(A^{-1}z)$. Since one easily sees that $q_m(B^{-1}z)=q_m(z)$ if and only if $B$ lies in the isotropy group of $\gamma$, it follows from its definition that $s$ is well-defined and satisfies $s(A \cdot \gamma)|_{S^1}=A \cdot \gamma$ for all $A \in G$. Thus, $s$ is a sphere filling and since $G\cdot \gamma$ is closed, this yields $\SC_{1,X}(G \cdot \gamma)=\SCcl_{1,X}(G\cdot \gamma) =1$ in this case, 
where we have used Proposition \ref{PropSCclop}.

If the isotropy group of $\gamma$ is not finite cyclic, it will be isomorphic to the dihedral group with $2m$ elements, i.e. in addition to the cyclic symmetry by rotations, it will hold that  $\gamma(t)=\gamma(1-t)$ for all $t \in [0,1]$. In this case choose a continuous map $u_+: B^2\cap \{ z \in \CC\;|\; \mathrm{Im}\; z \geq 0\} \to X$ with $\gamma(t) = u_+(e^{2\pi it})$ for all $t\in [0,\frac12]$ and put 
$$u:B^2 \to X, \qquad u(z) = \begin{cases}
u_+(z) & \text{if } \mathrm{Im} \; z \geq 0, \\
u_+(\bar{z}) &\text{if }\mathrm{Im} \; z < 0.
\end{cases}$$
Then $u$ is continuous with $u(e^{2\pi i t}) =\gamma(t)$ for each $t \in [0,1]$ and $u(\bar{z})=u(z)$ for all $z \in B^2$. Since $q_m(\bar{z}) = \overline{q_m(z)}$, we obtain $(u \circ q_m)(\bar{z})=(u \circ q_m)(z)$ for all $z \in B^2$, which allows us to construct a well-defined sphere flling $s: G \cdot \gamma \to B_2X$ as in the previous case. 
\end{proof}

As a consequence, we may simplify remove the trivial isotopy assumption of Theorem \ref{TheoremnuF} in the case $n=1$ and replace it by an assumption on $\pi_1(X)$ to obtain the following result, which is proven along the same lines as Theorem \ref{TheoremnuF}.

\begin{theorem}
\label{TheoremMainSC1}
Let $X$ be a metrizable ANR for which $\pi_1(X)$ is torsion-free. Let $G \subset O(2)$ be a closed subgroup and $A\subset S_1X$ be $G$-invariant with $c_1(X) \subset A$. Let $F:A \to \RR$ be continuous and let $\varphi:A \to A$ be a homotopy equivalence. Assume that $(G,A,F,\varphi)$ is admissible.
\begin{enumerate}[a)]
	\item For each $\lambda >0$ it holds that $$\nu(\varphi,F,\lambda) \geq \SC_{1,X}(F^{\lambda})-1,$$ where $\nu(F,\varphi,\lambda)$ denote the number of $G$-orbits in $\Fix \varphi \cap F^\lambda$. 
	\item If $\varphi$ is homotopic to $\id_A$ and $F(\Fix \varphi)$ is finite, then $\nu(\varphi) \geq \SC_{1,X}(A)-1.$
	\end{enumerate}
\end{theorem}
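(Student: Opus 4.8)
The plan is to run the proof of Theorem~\ref{TheoremnuF} essentially unchanged. The only hypothesis of Theorem~\ref{TheoremnuF} that is absent here --- that every non-constant element of $\Fix\varphi\cap F^\lambda$ have trivial $G$-isotropy group --- was used there for a single purpose: to invoke Proposition~\ref{PropSCprimeorbit} and conclude that each critical $G$-orbit $G\cdot\gamma$ satisfies $\SC_{n,X}(G\cdot\gamma)=1$. Under the present assumption that $\pi_1(X)$ is torsion-free, the proposition preceding this theorem supplies $\SC_{1,X}(G\cdot\gamma)=1$ for \emph{every} $\gamma\in S_1X$, so that one citation can simply be swapped out and the isotropy assumption dropped.

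In detail, for part~a): if $\nu(\varphi,F,\lambda)=+\infty$ there is nothing to prove, so assume it finite; since $F$ is $G$-invariant, $F(\Fix\varphi\cap F^\lambda)$ is then finite. Because $X$ is a metrizable ANR, so is $S_1X$ (as in the proof of Lemma~\ref{LemmaFox}), hence $S_1X$ is normal. Exactly as in the proof of Theorem~\ref{TheoremnuF}, admissibility of $(G,A,F,\varphi)$ lets one apply Theorem~\ref{TheoremLSRS}.a) to obtain
$$\SC_{1,X}(F^\lambda)\ \le\ \sum_{\mu\in(-\infty,\lambda]}\SC_{1,X}(\Fix\varphi\cap F_\mu).$$
Let $\mu_0$ denote the constant value of $F$ on $c_1(X)$. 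For $\mu\in F(\Fix\varphi\cap F^\lambda)$ with $\mu\ne\mu_0$, the set $\Fix\varphi\cap F_\mu$ is $G$-invariant, contains no constant map, and is the union of the finitely many non-constant $G$-orbits it meets (finitely many by the standing assumption, since all of them lie in $\Fix\varphi\cap F^\lambda$); each such orbit is closed, as $G$ is closed, and has $\SC_{1,X}=1$ by the preceding proposition, so iterating Proposition~\ref{PropSCprops}(iii) over these pairwise disjoint closed sets gives $\SC_{1,X}(\Fix\varphi\cap F_\mu)=1$. For $\mu=\mu_0$ we have $\Fix\varphi\cap F_{\mu_0}=(\Fix\varphi\cap c_1(X))\cup B$, where $B$ is empty or a finite union of non-constant $G$-orbits; here $\SC_{1,X}(\Fix\varphi\cap c_1(X))\le\SCcl_{1,X}(c_1(X))=1$ by Proposition~\ref{PropSCprops}(i), Lemma~\ref{LemmaSCconst} and Proposition~\ref{PropSCclop}, while $\SC_{1,X}(B)\le 1$ as above, and since $\Fix\varphi\cap c_1(X)$ and $B$ are disjoint closed sets, Proposition~\ref{PropSCprops}(iii) yields $\SC_{1,X}(\Fix\varphi\cap F_{\mu_0})\le 1$. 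Summing, $\SC_{1,X}(F^\lambda)\le 1+\bigl|F(\Fix\varphi\cap F^\lambda)\setminus\{\mu_0\}\bigr|$, and since each $\mu$ in that set is realised on at least one non-constant $G$-orbit while distinct values of $\mu$ come from distinct orbits, $\bigl|F(\Fix\varphi\cap F^\lambda)\setminus\{\mu_0\}\bigr|\le\nu(\varphi,F,\lambda)$; this is part~a). For part~b) one uses Theorem~\ref{TheoremLSRS}.b) in place of a) --- legitimate here since $\varphi\simeq\id_A$ and $F(\Fix\varphi)$ is finite --- and repeats the identical counting with $\mu$ ranging over all of $\RR$.

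There is no real obstacle beyond this: the genuinely new input over Theorem~\ref{TheoremnuF} is confined to replacing the single appeal to Proposition~\ref{PropSCprimeorbit} by the proposition above, and the one point deserving care is the level-set bookkeeping --- checking that for $\mu\ne\mu_0$ the set $\Fix\varphi\cap F_\mu$ really is a finite disjoint union of closed $G$-orbits, so that Proposition~\ref{PropSCprops}(iii) may be iterated and each orbit contributes complexity $1$. Accordingly I would present the argument as ``the proof of Theorem~\ref{TheoremnuF} with Proposition~\ref{PropSCprimeorbit} replaced by the preceding proposition'', recording only the two or three sentences needed for the orbit-type discussion.
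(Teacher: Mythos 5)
Your proposal is correct and is exactly the paper's intended argument: the paper gives no separate proof, stating only that the theorem ``is proven along the same lines as Theorem~\ref{TheoremnuF}'' after the preceding proposition (that $\pi_1(X)$ torsion-free forces $\SC_{1,X}(G\cdot\gamma)=1$ for every $\gamma\in S_1X$) has made the trivial-isotropy hypothesis superfluous. Your substitution of that proposition for Proposition~\ref{PropSCprimeorbit}, together with the level-set bookkeeping, matches the paper's reasoning precisely.
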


\subsection{Gradient flows of Lagrangian action functionals}

In this subsection we let $M$ be a closed connected $n$-dimensional Riemannian manifold, $n \in \NN$. We let $\Lambda M :=H^1(S^1,M)$ be the complete Hilbert manifold of maps $S^1 \to M$ that is locally modelled on the Sobolev space $H^1(S^1,\RR^n)=W^{1,2}(S^1,\RR^n)$. The Riemannian metric on $M$ further induces a Riemannian metric on $\Lambda M$. See \cite[Sections 2.3 and 2.4]{KlingRiem} for a detailed construction.

Since $\Lambda M$ is a Hilbert manifold, it is a paracompact Banach manifold, thus a metrizable ANR. Furthermore, $\C^\infty(S^1,M) \subset \Lambda M\subset \C^0(S^1,M)$ and $\Lambda M$ has the homotopy type of $\C^0(S^1,M)$. This Hilbert manifold is used to study periodic orbits of Lagrangian dynamical systems which arise as critical points of action functionals of autonomous Lagrangians on $\Lambda M$.

\begin{definition}
\begin{enumerate}[a)]
\item The \emph{action functional} of $L\in \C^\infty(TM,\RR)$ is given by 
$$\Acal_L: \Lambda M \to \RR, \qquad \Acal_L(\gamma)= \int_0^1 L(\gamma(t),\dot\gamma(t))\; dt.$$
\item We call $L\in \C^\infty(TM,\RR)$ \emph{symmetric} if $L(q,v) = L(q,-v)$ for all $q \in M$, $v \in T_qM$. 
\end{enumerate}
\end{definition}

By definition of $\Acal_L$, it is clear that $\Acal_L$ is $SO(2)$-invariant and that it is $O(2)$-invariant if $L$ is symmetric. We want to focus on an important class of Lagrangians first studied in full generality by A. Abbondandolo and M. Schwarz in \cite{AbboSchwarzCPAM} and further elaborated upon in \cite{AbboSchwarzPseudo}.  In the following, we orient our treatment on \cite[Appendix A]{FrauenfMerryPaternain}. \\

For any $x\in TM$ and $w \in T_{x}TM$ we let $w^h, w^v \in T_{x}TM$ denote the horizontal and vertical component, resp., of $w$ with respect to the Levi-Civita connection of the given metric on $M$. For any $(q_0,v_0) \in TM$ let $\nabla L(q_0,v_0)\in T_{v_0}T_{q_0}M$ denote the gradient of $T_{q_0}M \to \RR$, $v \mapsto L(q_0,v)$, at $v_0$ with respect to the given metric. Define $\nabla_qL, \nabla_vL: TM \to TM$ by $\nabla_qL(q_0,v_0):= (\nabla L(q_0,v_0))^h$ and $\nabla_vL(q_0,v_0):= (\nabla L(q_0,v_0))^v$ and put
\begin{align*}
&\nabla_{qq}L(q_0,v_0)(w) :=(D(\nabla_q L)_{(q_0,v_0)}\left[\tilde{w}\right])^h, \quad \nabla_{vq}L(q_0,v_0)(w) := (D(\nabla_q L)_{(q_0,v_0)}\left[\tilde{w}\right] )^v \ , \\
 &\nabla_{vv}L(q_0,v_0)(w) := (D(\nabla_v L)_{(q_0,v_0)}\left[\tilde{w}\right] )^v \ .
\end{align*}

\begin{definition}
$L: TM \to \RR$ is a \emph{convex quadratic-growth Lagrangian} if it satisfies the following two conditions:
\begin{enumerate}[(L1)]
 \item There exists a continuous map $l_1:M \to \RR$ such that for every $(q,v) \in TM$ it holds that
 \begin{align*}
  &\left\|\nabla_{vv}L(q,v) \right\|_{\op} \leq l_1(q),  \ \left\|\nabla_{vq}L(q,v) \right\|_{\op} \leq l_1(q)\left(1+\|v\|_q\right) , \  \left\|\nabla_{qq}L(q,v) \right\|_{\op} \leq l_1(q)\left(1+\|v\|_q^2\right),
 \end{align*}
where $\|v\|_q$ denotes the norm on $T_qM$ induced by the given Riemannian metric.
 \item There exists a continuous map $l_2:M \to (0,+\infty)$ , such that 
 \begin{equation*}
  \left<v,\nabla_{vv} L(q,v)\right>_q \geq l_2(q) \cdot \|v\|_q \quad \forall (q,v) \in TM,
 \end{equation*}
 where $\left<\cdot,\cdot \right>_q$ is the restriction of the Riemannian metric to $T_qM$.
\end{enumerate}
\end{definition} 

It is shown in \cite{AbboSchwarzPseudo} that the action functional of a convex quadratic-growth Lagrangian is of class $C^{1,1}$ and that its critical points are precisely the periodic orbits of the Euler-Lagrange vector field of $L$. In the following, we thus call critical points of $\Acal_L$ \emph{periodic orbits of $L$}. Since $\Acal_L$ is $S^1$-invariant, its critical points occur in $S^1$-orbits and in $O(2)$-orbits if $L$ is symmetric.


\begin{definition}
We call a Lagrangian $L:TM \to \RR$ \emph{of quadratic type} if $L$ is a convex quadratic-growth Lagrangian and if $L(q,0) =0$ for all $q \in M$. 
\end{definition} 

\begin{remark}
\label{RemarkActionEnergy}
The class of Lagrangians of quadratic type in particular includes a certain class of \emph{electro-magnetic Lagrangians}, namely those of the form  
$L(q,v) = \frac12\|v\|_q^2+ \alpha_q(v), $
where $\alpha \in \Omega^1(M)$ is a smooth one-form and $\| \cdot \|_q$ denotes the norm on $T_qM$ induced by the Riemannian metric. Such Lagrangians arise in electro-magnetic systems in physics, where $\alpha$ is a magnetic potential.
\end{remark}

We put $\Lambda_1 M := \Lambda M \cap S_1M$, i.e., $\Lambda_1M$ is the connected component of $\Lambda M$ that contains the constant loops. Given a convex quadratic-growth Lagrangian $L$ we let $$\Acal_{L,1}:=\Acal_L|_{\Lambda_1 M}:\Lambda_1M \to \RR$$ denote the restriction of its action functional. The critical points of $\Acal_{L,1}$ are precisely the \emph{contractible} periodic orbits of $L$. 

\begin{lemma}
\label{LemmaActionAdmiss}
Let $L:TM \to \RR$ be a Lagrangian of quadratic type. Let $$\phi: \Lambda_1M \times[0,+\infty) \to \Lambda_1M$$ be the negative gradient flow of $\Acal_{L,1}$ with respect to the metric on $\Lambda_1M$ and denote its time-1 map by $\phi_1(\gamma):=\phi(\gamma,1)$. Let $\lambda \in \RR$ and assume that the critical values of $\Acal_{L,1}$ in $(-\infty,\lambda]$ are isolated. Then:
\begin{enumerate}[a)]
\item $(SO(2),\Acal_{L,1}^\lambda,\Acal_{L,1},\phi_1)$ is admissible.
\item If $L$ is symmetric, then $(O(2),\Acal_{L,1}^\lambda,\Acal_{L,1},\phi_1)$ will be admissible. 
\end{enumerate}
\end{lemma}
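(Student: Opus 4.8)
\textbf{The plan is to verify the four conditions of Definition \ref{DefElike} for the tuple in question.} Writing $F := \Acal_{L,1}$, $A := \Acal_{L,1}^\lambda$ and $\varphi := \phi_1$, I first note that $A$ is a $G$-invariant subset of $S_1 M$ containing $c_1(M)$: indeed, $F$ is $SO(2)$-invariant by definition of $\Acal_L$, and $O(2)$-invariant when $L$ is symmetric (as observed after the definition of symmetric Lagrangians), so its sublevel sets are $G$-invariant; moreover, by condition (L2) together with $L(q,0)=0$, every constant loop has action $0$, so $c_1(M)$ has action $0 \le \lambda$ provided $\lambda \ge 0$ — here I should either assume $\lambda \geq 0$ or simply note that $0$ is a critical value so the interesting case is $\lambda \geq 0$. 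The negative gradient semi-flow $\phi$ exists on all of $[0,\infty)$ because $F$ is bounded below on $\Lambda_1 M$ (this is standard for quadratic-type Lagrangians, and follows from (L2)), and $\phi$ restricts to each sublevel set $A$ since $F$ is non-increasing along the flow; thus $\varphi = \phi_1 : A \to A$ is well-defined, and it is a homotopy equivalence since it is homotopic to $\id_A$ through the flow.

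\textbf{Next I would check conditions (i)--(iv) one at a time.} For (i): $F$ is $G$-invariant as just discussed, and $\Fix \varphi = \Crit F \cap A$ is the critical set of a $G$-invariant function, hence $G$-invariant. For (ii): $F$ is constant (equal to $0$) on $c_1(M)$ by (L2) and $L(q,0)=0$. For (iv): $F(\Fix \varphi) = F(\Crit(\Acal_{L,1}) \cap A)$ consists of critical values of $\Acal_{L,1}$ in $(-\infty,\lambda]$, which are isolated in $\RR$ by hypothesis. The substantive point is (iii), condition (D) for the pair $(F,\varphi)$. Here I would invoke \cite[Proposition 9.1]{RudyakSchlenk} exactly as in the proof of Corollary \ref{CorPalais}: if $F$ satisfies the Palais-Smale condition and $\Lambda_1 M$ is a Hilbert manifold, then $(F,\phi_1)$ satisfies (D). So the remaining task is to confirm that $\Acal_{L,1}$ satisfies Palais-Smale on $\Lambda_1 M$ with respect to the induced metric. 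For convex quadratic-growth Lagrangians this is precisely one of the main analytic results of \cite{AbboSchwarzPseudo} (the action functional is $C^{1,1}$ and satisfies Palais-Smale), and the restriction to the connected component $\Lambda_1 M$ inherits this property since $\Lambda_1 M$ is open and closed in $\Lambda M$.

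\textbf{The main obstacle is the verification of condition (D), specifically the Palais-Smale property}, but this is not really an obstacle to be overcome here so much as a citation to be marshalled: the hard analysis lives in \cite{AbboSchwarzPseudo} and \cite{RudyakSchlenk}, and the role of this lemma is to package those inputs in the form demanded by Definition \ref{DefElike}. The only genuinely new verifications are the bookkeeping ones --- $G$-invariance of $A$ and of $\Fix\varphi$, the vanishing of $F$ on constants, the invariance statements for symmetric $L$ in part b) --- all of which are immediate. For part b), the point is simply that symmetry of $L$ upgrades $SO(2)$-invariance of $\Acal_L$ to $O(2)$-invariance, after which every step of part a) applies verbatim with $G = O(2)$ in place of $G = SO(2)$. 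I would therefore present part a) in full and dispatch part b) in a single sentence.
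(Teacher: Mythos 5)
Your proposal is correct and follows essentially the same route as the paper: establish global existence of the negative gradient semi-flow and the Palais--Smale property via \cite{AbboSchwarzPseudo}, check conditions (i), (ii) and (iv) of Definition \ref{DefElike} by direct inspection (using $\Fix \phi_1 = \Crit \Acal_{L,1}$), and obtain condition (D) from \cite[Proposition 9.1]{RudyakSchlenk}. Your additional remark that $\lambda \geq 0$ is needed for $c_1(M) \subset \Acal_{L,1}^\lambda$ is a reasonable observation that the paper's proof passes over silently.
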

\begin{proof}
First of all, we observe that the flow $\phi$ is locally well-defined since $\Acal_{L,1}$ is of class $C^{1,1}$.Since $L$ satisfies the convex quadratic-growth conditions and is bounded from below, it follows from \cite[Section 4]{Benci}, see also \cite[Proposition 3.3]{AbboSchwarzPseudo}, that $\Acal_{L,1}$ satisfies the Palais-Smale condition with respect to the given metric, which further implies that $\phi$ is defined on all of $\Lambda_1M  \times [0,+\infty)$. Moreover, it follows by definition that $\phi_1$ preserves sublevel sets of $\Acal_{L,1}$. 

As a gradient flow map of an $SO(2)$-invariant function, $\phi_1$ is $SO(2)$-equivariant, so that condition (i) of an admissible quadruple is satisfied for part a) of the lemma. Condition (i) in part b) follows in the same way, since $\Acal_{L,1}$ is $O(2)$-invariant in that case. 
Condition (ii) obviously follows from the definition of Lagrangians of quadratic type. It follows from \cite[Proposition 9.1]{RudyakSchlenk} that since $\Acal_{L,1}$ satisfies the Palais-Smale condition, $(\Acal_{L,1},\phi_1)$ satisfies condition (D) in this case, i.e. condition (iii) is satisfied. 

Since $\phi_1$ is a negative gradient flow map for $\Acal_{L,1}$, it holds that $\Fix \phi_1 = \Crit \Acal_{L,1}$. Hence, $\Acal_{L,1}(\Fix \phi_1)$ coincides with the set of critical values of $\Acal_{L,1}$, so condition (iv) is satisfied by assumption on $\lambda$.
\end{proof}

\begin{theorem}
\label{TheoremLSaction}
Assume that $\pi_1(M)$ is torsion-free and let $R$ be a commutative ring, $L:TM \to \RR$ be a Lagrangian of quadratic type, $\lambda \in (0,+\infty)$ and $\iota_\lambda: (\Acal_{L,1}^\lambda,\varnothing) \to (\Lambda_1M,c_1(M))$ be the inclusion of pairs.
\begin{enumerate}[a)]
\item Let $N_S(\lambda)$ denote the number of $SO(2)$-orbits of contractible periodic orbits of $L$ with action at most $\lambda$. Then 
$$N_S(\lambda) \geq \SC_{1,M}(\Acal_{L,1}^\lambda)-1\geq \cl_R\left(\im \left[\iota^*_\lambda:H^*(\Lambda_1M,c_1(M);R) \to H^*(\Acal_{L,1}^\lambda;R)\right] \right). $$
\item Assume that $L$ is symmetric and let $N(\lambda)$ denote the number of $O(2)$-orbits of contractible periodic orbits of $L$ with action at most $\lambda$. Then 
$$N(\lambda) \geq \SC_{1,M}(\Acal_{L,1}^\lambda)-1\geq \cl_R\left(\im \left[\iota^*_\lambda:H^*(\Lambda_1M,c_1(M);R) \to H^*(\Acal_{L,1}^\lambda;R)\right] \right). $$
\end{enumerate}
\end{theorem}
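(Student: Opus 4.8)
The plan is to deduce Theorem \ref{TheoremLSaction} as a direct combination of the Lusternik-Schnirelmann machinery already in place, with Lemma \ref{LemmaActionAdmiss} supplying the hypotheses and Corollary \ref{CornuF} (or Theorem \ref{TheoremMainSC1} together with Proposition \ref{PropSCrelCup}) supplying the two inequalities. First I would observe that since $M$ is a closed Riemannian manifold, $\Lambda_1M$ is a paracompact Banach (indeed Hilbert) manifold and hence a metrizable ANR, and that it is homotopy-equivalent to $S_1M$ via the inclusion $\Lambda_1 M \hookrightarrow S_1M$; thus $\SC_{1,M}$ makes sense on subsets of $\Lambda_1M$ (formally one transports along the homotopy equivalence, as justified by the remarks following Corollary \ref{CorPalais}). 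The key point is that $\pi_1(M)$ is torsion-free by hypothesis, so Theorem \ref{TheoremMainSC1} applies with $X=M$.

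For part a), take $G=SO(2)$, $A=\Lambda_1M$, $F=\Acal_{L,1}$, and $\varphi=\phi_1$, the time-$1$ map of the negative gradient flow of $\Acal_{L,1}$. By Lemma \ref{LemmaActionAdmiss}.a), the quadruple $(SO(2),\Acal_{L,1}^\lambda,\Acal_{L,1},\phi_1)$ is admissible; here the hypothesis that $\lambda\in(0,+\infty)$ and that the critical values below $\lambda$ are isolated is exactly condition (iv) of admissibility, while conditions (i)--(iii) were verified in the lemma using the Palais-Smale property coming from the convex quadratic-growth conditions and $L(q,0)=0$. Since $\phi_1$ is a negative gradient flow map, $\Fix\phi_1=\Crit\Acal_{L,1}$, and these critical points are precisely the contractible periodic orbits of $L$, occurring in $SO(2)$-orbits. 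Applying Theorem \ref{TheoremMainSC1}.a) with this data gives $N_S(\lambda)=\nu(\phi_1,\Acal_{L,1},\lambda)\geq \SC_{1,M}(\Acal_{L,1}^\lambda)-1$, which is the first inequality. The second inequality $\SC_{1,M}(\Acal_{L,1}^\lambda)-1\geq \cl_R(\im[\iota_\lambda^*])$ is precisely Proposition \ref{PropSCrelCup} applied to $A=\Acal_{L,1}^\lambda\subset S_1M$ (transported across the homotopy equivalence $\Lambda_1M\simeq S_1M$, under which $c_1(M)$ corresponds to the constant loops); alternatively it is the first assertion of Corollary \ref{CornuF}.

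For part b), the only change is that $L$ is now symmetric, so $\Acal_{L,1}$ is $O(2)$-invariant and we may take $G=O(2)$. Lemma \ref{LemmaActionAdmiss}.b) then gives that $(O(2),\Acal_{L,1}^\lambda,\Acal_{L,1},\phi_1)$ is admissible --- the gradient flow map of an $O(2)$-invariant function is $O(2)$-equivariant, so $\Fix\phi_1$ is $O(2)$-invariant, and the remaining conditions are unchanged. Running the same argument with $G=O(2)$, Theorem \ref{TheoremMainSC1}.a) yields $N(\lambda)=\nu(\phi_1,\Acal_{L,1},\lambda)\geq\SC_{1,M}(\Acal_{L,1}^\lambda)-1$, where $\nu$ now counts $O(2)$-orbits; the cohomological lower bound is identical to part a) since it does not involve the group action. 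I do not anticipate a genuine obstacle here: the substantive work has already been done in Lemma \ref{LemmaActionAdmiss} (the analytic input: $C^{1,1}$-regularity, Palais-Smale, global existence of the flow) and in Theorem \ref{TheoremMainSC1}/Proposition \ref{PropSCrelCup} (the topological input); the only mild care needed is the bookkeeping that translates statements about $S_1M$ into statements about $\Lambda_1M$ via the homotopy equivalence, and the verification that $\Acal_{L,1}^\lambda$ is a closed subset so that the "closed" versions of the invariants from Proposition \ref{PropSCclop} apply --- this holds because $\Acal_{L,1}$ is continuous.
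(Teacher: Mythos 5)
Your overall route is the same as the paper's: combine Lemma \ref{LemmaActionAdmiss} with Theorem \ref{TheoremMainSC1} for the first inequality and Proposition \ref{PropSCrelCup} (equivalently, Corollary \ref{CornuF}) for the second. However, there is one step you gloss over that the theorem does not hand you: Lemma \ref{LemmaActionAdmiss} has as a \emph{hypothesis} that the critical values of $\Acal_{L,1}$ in $(-\infty,\lambda]$ are isolated, and you write as if this were ``exactly condition (iv) of admissibility'' supplied by the assumption $\lambda\in(0,+\infty)$. It is not an assumption of Theorem \ref{TheoremLSaction}, so as written your appeal to the lemma is not justified. The paper closes this gap with a short case distinction: if $N_S(\lambda)=+\infty$ (resp.\ $N(\lambda)=+\infty$) the asserted inequality is vacuous, and otherwise the finitely many $G$-orbits of critical points in $\Acal_{L,1}^\lambda$ produce only finitely many critical values in $[0,\lambda]$ (the action being $G$-invariant and bounded below by $0$ on the sublevel set), which are therefore isolated, so the lemma applies. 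You should add this reduction.

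A second, smaller bookkeeping point: you propose to apply Theorem \ref{TheoremMainSC1} with $A=\Lambda_1M$, but admissibility in Lemma \ref{LemmaActionAdmiss} is established only for $A=\Acal_{L,1}^\lambda$; with $A=\Lambda_1M$ condition (iv) would require \emph{all} critical values of $\Acal_{L,1}$ to be isolated, which does not follow from $N(\lambda)<+\infty$. Taking $A=\Acal_{L,1}^\lambda$ (so that $F^\lambda=A$) as in the lemma makes the argument go through verbatim. With these two corrections your proof coincides with the paper's.
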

\begin{proof}
If $N(\lambda) = +\infty$, then the statement is trivial. If $N(\lambda) <+\infty$, then the set of critical values of $\Acal_{L,1}$ in $[0,\lambda]$ is finite, hence isolated. Thus, it follows immediately from combining Lemma \ref{LemmaActionAdmiss} with Theorem \ref{TheoremMainSC1}.
\end{proof}

\subsection{Gradient flows of Finsler energy functionals}
\label{SubFinsler}

As follows from Remark \ref{RemarkActionEnergy}, the study of action functionals of Lagrangians of quadratic type includes the study of energy functionals of Riemannian metrics on $M$ whose non-constant critical points are precisely the closed geodesics of the metric. In this subsection, we want to elaborate upon this observation and to generalize the result to closed geodesics of Finsler metrics on $M$. We provide their definition before continuing and refer the interested reader to \cite{BCS} or \cite{RadeNonrev} for details. 

Throughout this subsection, we let again $M$ be a closed connected Riemannian manifold. The following definition is taken from \cite{RadeHabil}.

\begin{definition}
\label{DefFinsler}
A \emph{Finsler metric on $M$} is a continuous map $F: TM \to [0,+\infty)$, such that
\begin{itemize}
\item the restriction of $F$ to $TM \setminus \zero_M$ is smooth, where $\zero_M$ denotes the image of the zero-section,
\item $F(x,\lambda v)=\lambda F(x,v)$ for all $\lambda \geq 0$, $(x,v) \in TM$,
\item $F(x,v)=0$ if and only if $(x,v)\in \zero_M$,
\item the second derivative of $T_xM \to \RR$, $v \mapsto F(x,v)^2$, is positive definite for all $x \in M$.
\end{itemize}
$F$ is called \emph{reversible} if $F(x,-v)=F(x,v)$ for every $(x,v) \in TM$. The \emph{energy functional of a Finsler metric $F$} on $M$ is given by
$$E_F: \Lambda M \to \RR,   \qquad E_F(\gamma)=\int_0^1 F(\gamma(t),\dot\gamma(t))^2 \ dt,$$ 
\end{definition}

Clearly, if $g$ is a Riemannian metric on $M$, then $F(x,v)=\sqrt{g_x(v,v)}$ will be a reversible Finsler metric on $M$. Since Finsler metrics are not required to be differentiable on the zero-section of $TM$, they do not fit into the Lagrangian framework of the previous subsection. 

Let $F$ be a Finsler metric on $M$. We observe that $E_F$ is $SO(2)$-invariant and that it will be $O(2)$-invariant if $F$ is reversible. It is well-known that $E_F$ is of class $\C^{1,1}$, see \cite[Theorem 4.1]{Mercuri}. The critical points of $E_F$ are called \emph{the closed geodesics of $F$}. By definition, $E_F$ vanishes on $c_1(M)$.

It has further been shown by F. Mercuri in \cite[Theorem 4.6]{Mercuri} that $E_F$ satisfies the Palais-Smale condition with respect to the Riemannian metric on $\Lambda M$ induced by the one on $M$. 

We consider the restriction $E_{F,1}:= E_F|_{\Lambda_1 M}: \Lambda_1 M \to \RR$. One shows, along the  lines of the proof of Lemma \ref{LemmaActionAdmiss}, that it follows from the above properties that $(SO(2),E_{F,1}^\lambda,E_{F,q},\phi_1)$ is admissible and that $(O(2),E_{F,1}^\lambda,E_{F,q},\phi_1)$ is admissible if $F$ is reversible, where $\lambda \in \RR$ and $\phi_1$ is the time-1 map of the negative gradient flow of $E_F$ with respect to the chosen Riemannian metric. The following statement is derived in complete analogy with Theorem \ref{TheoremLSaction}.

\begin{theorem}
\label{TheoremLSFinsler}
Assume that $\pi_1(M)$ is torsion-free and let $R$ be a commutative ring. Let $F: TM \to \RR$ be a Finsler metric on $M$. Let $\lambda \in (0,+\infty)$ and let $\iota_\lambda: (E_{F,1}^\lambda,\varnothing) \to (\Lambda_1M,c_1(M))$ be the inclusion of pairs.
\begin{enumerate}[a)]
\item Let $N_S(\lambda)$ denote the number of $SO(2)$-orbits of contractible closed geodesics of $F$ with energy at most $\lambda$. Then 
$$N_S(\lambda) \geq \SC_{1,M}(E_{F,1}^\lambda)-1\geq \cl_R\left(\im \left[\iota^*_\lambda:H^*(\Lambda_1M,c_1(M);R) \to H^*(E_{F,1}^\lambda;R)\right] \right). $$
\item Assume that $F$ is reversible and let $N(\lambda)$ denote the number of $O(2)$-orbits of contractible closed geodesics of $F$ with energy at most $\lambda$. Then 
$$N(\lambda) \geq \SC_{1,M}(E_{F,1}^\lambda)-1\geq \cl_R\left(\im \left[\iota^*_\lambda:H^*(\Lambda_1M,c_1(M);R) \to H^*(E_{F,1}^\lambda;R)\right] \right), $$
\end{enumerate}
\end{theorem}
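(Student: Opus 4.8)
The plan is to derive Theorem \ref{TheoremLSFinsler} as a direct parallel of Theorem \ref{TheoremLSaction}, by assembling three ingredients: the fact that $E_{F,1}$ fits into an admissible quadruple, the Lusternik--Schnirelmann-type count from Theorem \ref{TheoremMainSC1}, and the cohomological lower bound from Corollary \ref{CornuF} (equivalently Proposition \ref{PropSCrelCup}). First I would dispose of the trivial case: if $N_S(\lambda)=+\infty$ (resp.\ $N(\lambda)=+\infty$ in part b)), the asserted inequalities hold vacuously, so we may assume these numbers are finite. Finiteness of the number of $SO(2)$- or $O(2)$-orbits of closed geodesics with energy at most $\lambda$ forces the set of critical values of $E_{F,1}$ in $[0,\lambda]$ to be finite, hence isolated in $\RR$ — this is the hypothesis needed to invoke admissibility.

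Next I would verify that $(SO(2),E_{F,1}^\lambda,E_{F,1},\phi_1)$ is admissible in the sense of Definition \ref{DefElike}, where $\phi_1$ is the time-$1$ map of the negative gradient flow of $E_{F,1}$ with respect to the Riemannian metric on $\Lambda_1 M$ induced from $M$. As noted in the text preceding the theorem, this runs along the lines of the proof of Lemma \ref{LemmaActionAdmiss}: $\Lambda_1 M$ is a paracompact Banach manifold, hence a metrizable ANR; $E_F$ is of class $C^{1,1}$ by \cite[Theorem 4.1]{Mercuri} and satisfies the Palais--Smale condition by \cite[Theorem 4.6]{Mercuri}, so the negative gradient flow exists for all positive time and preserves sublevel sets, giving condition (iii) via \cite[Proposition 9.1]{RudyakSchlenk}; $\phi_1$ is $SO(2)$-equivariant because $E_{F,1}$ is $SO(2)$-invariant, so $F$-invariance and $G$-invariance of $\Fix\phi_1 = \Crit E_{F,1}$ hold, giving (i); $E_F$ vanishes on $c_1(M)$ by definition, giving (ii); and $\Acal$-values, here $E_{F,1}(\Fix\phi_1)$, are the critical values of $E_{F,1}$, isolated by the reduction above, giving (iv). If $F$ is reversible then $E_{F,1}$ is $O(2)$-invariant and the same argument gives admissibility of $(O(2),E_{F,1}^\lambda,E_{F,1},\phi_1)$, which is what is needed for part b).

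With admissibility in hand, Theorem \ref{TheoremMainSC1}.a) applied with $X = M$ (noting $\pi_1(M)$ is torsion-free by hypothesis), $G = SO(2)$ (resp.\ $O(2)$), $A = \Lambda_1 M$, $F = E_{F,1}$, $\varphi = \phi_1$ yields
$$\nu(\phi_1, E_{F,1}, \lambda) \geq \SC_{1,M}(E_{F,1}^\lambda) - 1,$$
where $\nu(\phi_1, E_{F,1}, \lambda)$ is the number of $G$-orbits in $\Fix\phi_1 \cap E_{F,1}^\lambda = \Crit E_{F,1} \cap E_{F,1}^\lambda$; since $E_{F,1}>0$ on non-constant loops and $=0$ on $c_1(M)$, these orbits are exactly the non-constant ones, i.e.\ $N_S(\lambda)$ (resp.\ $N(\lambda)$). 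This gives the first inequality. For the second, I would apply Corollary \ref{CornuF} (which in turn rests on Proposition \ref{PropSCrelCup}): the subspace complexity $\SC_{1,M}(E_{F,1}^\lambda)$ is bounded below by $\cl_R(\im[\iota_\lambda^* : H^*(\Lambda_1 M, c_1(M); R) \to H^*(E_{F,1}^\lambda; R)]) + 1$, so subtracting $1$ gives precisely the claimed cohomological lower bound. Concatenating the two chains of inequalities yields both parts of the theorem.

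The main obstacle — really the only non-formal point — is establishing admissibility, specifically checking condition (D) for the pair $(E_{F,1}, \phi_1)$: one must confirm that the general Palais--Smale-implies-(D) machinery of \cite[Proposition 9.1]{RudyakSchlenk} applies to a merely $C^{1,1}$ functional on the Hilbert manifold $\Lambda_1 M$ and to its negative gradient semi-flow, and that the third bullet of Theorem \ref{TheoremMainSC1}.a) (the deformation-monotonicity condition $\SC_{1,M}(\phi_1(B)) \geq \SC_{1,M}(B)$ for closed $B$) holds — the latter is automatic by Remark \ref{RemarkLSflow} since $\phi_1$ is the time-$1$ map of a semi-flow. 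Everything else is a faithful transcription of the argument for Theorem \ref{TheoremLSaction}, with the class of Lagrangians of quadratic type replaced by the class of Finsler energy functionals and the regularity/Palais--Smale inputs supplied by Mercuri's work in \cite{Mercuri} rather than by the convex quadratic-growth estimates. I would therefore keep the write-up short, stating explicitly that the proof proceeds "in complete analogy with Theorem \ref{TheoremLSaction}" and pointing to the precise substitutions, exactly as the surrounding text already signals.
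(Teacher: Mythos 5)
Your proposal is correct and follows exactly the paper's route: the paper derives this theorem ``in complete analogy with Theorem \ref{TheoremLSaction}'', i.e.\ by reducing to finitely many (hence isolated) critical values in $[0,\lambda]$, establishing admissibility of $(G,E_{F,1}^\lambda,E_{F,1},\phi_1)$ via Mercuri's $C^{1,1}$-regularity and Palais--Smale results as in Lemma \ref{LemmaActionAdmiss}, and then combining Theorem \ref{TheoremMainSC1} with Proposition \ref{PropSCrelCup}. One small notational slip: when invoking Theorem \ref{TheoremMainSC1}.a) you should take $A=E_{F,1}^\lambda$ (the set for which you actually verified admissibility, since condition (iv) is only controlled below level $\lambda$), not $A=\Lambda_1 M$; the conclusion is unchanged.
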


Whenever $\gamma$ is a closed geodesic, all of its iterates $\gamma^m$ for $m \in \NN$ will be closed geodesics as well. In Finsler geometry one is interested in the number of closed geodesics that do not arise as iterates of one another, which we make precise in the following definition.

\begin{definition}
We call $\gamma_1,\gamma_2 \in \Lambda M$ \emph{geometrically distinct} if $\gamma_1(S^1) \neq \gamma_2(S^1)\subset M$ and \emph{positively distinct} if they are either geometrically distinct or they lie in the same $O(2)$-orbit of $\Lambda M$, but not in the same $SO(2)$-orbit. 
We call a subset of $\Lambda M$ \emph{geometrically (positively) distinct} if its elements are pairwise geometrically (positively) distinct. 
We further call $\gamma \in \Lambda M$ \emph{prime} if there are no $\beta \in \Lambda M$ and $m \geq 2$, such that $\gamma= \beta^m$. 
\end{definition}

Making use of the behavior of energy functionals under iteration, we will next derive lower bounds on the numbers of distinct contractible closed geodesics of Finsler metrics.

Given a map $f:X \to \RR$ and $\lambda \in \RR$ we let $f^{<\lambda}:=f^{-1}((-\infty,\lambda))$ denote the associated open sublevel set. 

\begin{theorem}
\label{TheoremFinslerIter}
Assume that $\pi_1(M)$ is torsion-free and let $F: TM \to [0,+\infty)$ be a Finsler metric that admits at least one non-constant contractible closed geodesic and let $\ell>0$ be the length of the shortest non-constant contractible closed geodesic of $F$ in $M$. Let $E_1:\Lambda_1M \to \RR$ be the restricted energy functional of $F$. Then:
\begin{enumerate}[a)]
\item There are at least 
$$\sup_{n \in \NN} \; \frac{1}{n}\Big(\SC_{1,M}(E_1^{<(n+1)^2\ell^2})-1\Big)$$ 
non-constant positively distinct contractible closed geodesics of $F$.
\item If $F$ is reversible, then there will be at least $\sup_{n \in \NN} \; \frac{1}{n}(\SC_{1,M}(E_1^{<(n+1)^2\ell^2})-1)$  non-constant geometrically distinct contractible closed geodesics of $F$.
\end{enumerate}
\end{theorem}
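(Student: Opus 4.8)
The plan is to exploit the behaviour of the energy functional under $m$-fold iteration together with Theorem~\ref{TheoremLSFinsler}.a) (resp.\ b)), applied not to $E_1$ itself but to the sublevel sets reached after pulling back along the iteration map. Fix $n \in \NN$ and set $\lambda := (n+1)^2\ell^2$. By Theorem~\ref{TheoremLSFinsler}.a), the open sublevel set $E_1^{<\lambda}$ contains at least $\SC_{1,M}(E_1^{<\lambda})-1$ distinct $SO(2)$-orbits of non-constant contractible closed geodesics of $F$ (for the open version one argues as before, writing $E_1^{<\lambda} = \bigcup_{k} E_1^{\lambda - 1/k}$ and using monotonicity, or simply notes that the Lusternik--Schnirelmann argument applies verbatim to open sublevel sets). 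Call this number $c_n := \SC_{1,M}(E_1^{<\lambda}) - 1$, and let $\gamma_1,\dots,\gamma_{c_n}$ be representatives of these $SO(2)$-orbits.

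The key geometric observation is a \emph{length bound}: every non-constant closed geodesic $\gamma$ with $E_1(\gamma) < (n+1)^2\ell^2$ has length $\mathrm{len}(\gamma) = \sqrt{E_F(\gamma)} < (n+1)\ell$ — this uses the Cauchy--Schwarz relation $\mathrm{len}(\gamma)^2 \le E_F(\gamma)$ with equality precisely when $\gamma$ is parametrized proportionally to arclength, which holds for geodesics. Since $\ell$ is the length of the shortest non-constant contractible closed geodesic, any \emph{prime} contractible closed geodesic has length $\ge \ell$, hence its $m$-th iterate $\beta^m$ has length $m\cdot\mathrm{len}(\beta) \ge m\ell$; if $\beta^m$ lies in $E_1^{<\lambda}$ then $m\ell \le \mathrm{len}(\beta^m) < (n+1)\ell$, so $m \le n$. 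Therefore each prime geodesic contributes at most $n$ of its iterates to $E_1^{<\lambda}$, and consequently at most $n$ of the $SO(2)$-orbits among $\gamma_1,\dots,\gamma_{c_n}$ are iterates of any single prime geodesic.

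Now group the orbits $\{[\gamma_1],\dots,[\gamma_{c_n}]\}$ according to the underlying prime geodesic: two orbits lie in the same group iff the underlying geodesics are iterates of a common prime one. By the previous paragraph each group has size at most $n$, so the number of groups is at least $c_n/n$. Choosing one representative from each group yields a collection of non-constant contractible closed geodesics that are pairwise \emph{positively distinct}: if two of them lay in the same $O(2)$-orbit and the same $SO(2)$-orbit they would be identical, and if they were geometric iterates of a common prime geodesic they would be in the same group — so distinct groups force positive distinctness (one must check that "being iterates of a common prime geodesic, up to $O(2)$" is exactly the failure of positive distinctness, using that $\pi_1(M)$ torsion-free guarantees unique prime roots as in the earlier propositions). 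This produces at least $\lceil c_n/n\rceil \ge \frac{1}{n}(\SC_{1,M}(E_1^{<(n+1)^2\ell^2})-1)$ positively distinct non-constant contractible closed geodesics. Taking the supremum over $n \in \NN$ gives part a). For part b), with $F$ reversible we instead invoke Theorem~\ref{TheoremLSFinsler}.b) to get $O(2)$-orbits, repeat the grouping argument verbatim — the length bound is insensitive to reversibility — and conclude geometric distinctness of the chosen representatives, since for $O(2)$-orbits "same group" coincides with "geometrically indistinct".

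The main obstacle I expect is the bookkeeping in the grouping step: one must verify carefully that distinct groups genuinely yield positively (resp.\ geometrically) distinct geodesics, which requires the unique-prime-root statement (hence the torsion-free hypothesis on $\pi_1(M)$) and a clean comparison between the $SO(2)$/$O(2)$-orbit relations and the "common prime root" relation; the length estimate and the application of Theorem~\ref{TheoremLSFinsler} are routine by comparison.
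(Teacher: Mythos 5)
Your argument is the same as the paper's in all essential respects: bound the order of iteration of any prime geodesic whose iterate lies in $E_1^{<(n+1)^2\ell^2}$ by $n$ via the length/energy relation $E_1(\beta^m)=m^2L(\beta)^2\geq m^2\ell^2$, then divide the orbit count coming from the Lusternik--Schnirelmann estimate by $n$. The grouping-by-prime-root bookkeeping and the final comparison with positive/geometric distinctness also match the paper's proof.

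The one genuine gap is the very first step, where you apply Theorem \ref{TheoremLSFinsler} to the \emph{open} sublevel set $E_1^{<\lambda}$. That theorem (and Theorem \ref{TheoremMainSC1} behind it) is stated for closed sublevel sets, and neither of your two suggested repairs works as written. Writing $E_1^{<\lambda}=\bigcup_k E_1^{\lambda-1/k}$ and invoking monotonicity gives $\SC_{1,M}(E_1^{\lambda-1/k})\leq\SC_{1,M}(E_1^{<\lambda})$, i.e.\ the inequality in the wrong direction: you would only conclude that the number of orbits is at least $\sup_k\SC_{1,M}(E_1^{\lambda-1/k})-1$, and there is no a priori reason for this supremum to equal $\SC_{1,M}(E_1^{<\lambda})$. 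The claim that the LS argument "applies verbatim" to open sublevel sets is likewise unverified (the deformation machinery of Section \ref{SectionLStheory} is built on closed sets and $\SCcl$). The paper closes this gap by first reducing to the case that every sublevel set contains only finitely many $SO(2)$-orbits of prime closed geodesics (otherwise the statement is vacuous); then the critical values below $(n+1)^2\ell^2$ are finite, so one can pick $\lambda_n$ with $[\lambda_n,(n+1)^2\ell^2)$ consisting of regular values, and the negative gradient flow deformation retracts $E_1^{<(n+1)^2\ell^2}$ onto the closed sublevel set $E_1^{\lambda_n}$. Monotonicity together with deformation monotonicity (Proposition \ref{PropDeform}, via Proposition \ref{PropSCclop}) then gives $\SC_{1,M}(E_1^{<(n+1)^2\ell^2})=\SC_{1,M}(E_1^{\lambda_n})$, and Theorem \ref{TheoremMainSC1} is applied to $E_1^{\lambda_n}$. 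You should incorporate this reduction; everything after it in your write-up goes through.
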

\begin{proof}
We assume that every sublevel set $E_1^\lambda$ for $\lambda \in \RR$ contains only finitely many $SO(2)$-orbits of prime closed geodesics, since otherwise the statement is obvious.

Let $\gamma \in \Lambda_1M$ be a closed geodesic of $F$ of length $\ell$.  Then $E_1(\gamma^k) = k^2\ell^2$ for each $k \in \NN$. Thus, by construction, $(n+1)^2\ell^2$ is the lowest value of $E_1$ whose level set contains an $(n+1)$-times iterated closed geodesic of $F$. Thus, if $\lambda_n>0$ is chosen such that $[\lambda_n,(n+1)^2\ell^2)$ consists of regular values only, then every closed geodesic in $E_1^{\lambda_n}$ is either prime or the $k$-fold iterate of a prime closed geodesic for some $k \leq n$. Thus, if $p_{n,S} \in \NN$ is the number of $SO(2)$-orbits prime closed geodesics in $E^{\lambda_n}$ and $p_n \in \NN$ is the number of $O(2)$-orbits, then
$$N(\lambda_n) \leq p_n \cdot n.$$
Since $[\lambda_n,(n+1)^2\ell^2)$ does not contain any critical value of $E_1$, it follows that $E_1^{<(n+1)^2\ell^2}$ deformation retracts onto $E_1^{\lambda_n}$.
It thus follows from Theorem \ref{TheoremMainSC1} and Proposition \ref{PropDeform} that 
$$p_{n,S} \geq \frac1n\Big(\SC_{1,M}(E_1^{\lambda_n})-1\Big)= \frac1n\Big(\SC_{1,M}(E_1^{<(n+1)^2\ell^2})-1\Big)$$
and, if $F$ is reversible, $p_n \geq \frac1n(\SC_{1,M}(E_1^{<(n+1)^2\ell^2})-1).$
Since two prime geodesics are geometrically distinct if and only if they do not lie in the same $O(2)$-orbit and positively distinct if and only if they do not lie in the same $SO(2)$-orbit, this shows the claim if one considers all $n \in \NN$ at once. 
\end{proof}

As in the previous subsection, we obtain a lower bound on the numbers of distinct closed geodesics in terms of cup length as well.

\begin{cor}
Assume 	$\pi_1(M)$ is torsion-free. Let $F: TM \to [0,+\infty)$ be a Finsler metric that admits a non-constant contractible closed geodesic and let $E_1:\Lambda_1 M \to \RR$ be the restricted energy functional of $F$. Let $\ell>0$ be the length of the shortest non-constant contractible closed geodesic of $F$ and let $R$ be a commutative ring. Then the number of non-constant geometrically distinct contractible closed geodesics of $F$ is at least 
$$\sup_{n \in \NN} \frac{1}{n}\cl_R\Big(\im \Big[\iota_{(n+1)^2\ell^2}^*:H^*(\Lambda_1M,c_1(M);R) \to H^*(E_1^{<(n+1)^2\ell^2};R)\Big] \Big). $$
\end{cor}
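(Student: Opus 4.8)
The plan is to combine the two lower bounds that have just been established: Theorem~\ref{TheoremFinslerIter}, which bounds the number of non-constant geometrically distinct contractible closed geodesics from below by $\sup_{n\in\NN}\frac1n(\SC_{1,M}(E_1^{<(n+1)^2\ell^2})-1)$, and the relative cup-length lower bound for spherical complexities from Proposition~\ref{PropSCrelCup} (equivalently Corollary~\ref{CornuF}). The point is simply that for each fixed $n\in\NN$ one has
$$\SC_{1,M}\big(E_1^{<(n+1)^2\ell^2}\big)-1 \;\geq\; \cl_R\Big(\im\Big[\iota_{(n+1)^2\ell^2}^*:H^*(\Lambda_1M,c_1(M);R)\to H^*(E_1^{<(n+1)^2\ell^2};R)\Big]\Big),$$
after which one divides by $n$ and takes the supremum over $n\in\NN$.

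First I would invoke Proposition~\ref{PropSCrelCup} with $X=M$ (which is a closed manifold, hence a metrizable ANR, so $S_1M$ is a metrizable ANR and in particular Hausdorff, as noted in the proofs of Lemmas~\ref{LemmaFox} and \ref{LemmaSCincl}), $n=1$, and $A = E_1^{<(n+1)^2\ell^2}\subset \Lambda_1M = \Lambda M\cap S_1M \subset S_1M$. Since $\Lambda_1M\hookrightarrow S_1M$ is a homotopy equivalence, it is harmless to regard $A$ as a subspace of $S_1M$ and $\iota_A$ as the inclusion of pairs $(A,\varnothing)\hookrightarrow(S_1M,c_1(M))$; under this identification $\iota_A^*$ and the map $\iota_{(n+1)^2\ell^2}^*$ appearing in the statement agree up to the canonical isomorphism $H^*(\Lambda_1M,c_1(M);R)\cong H^*(S_1M,c_1(M);R)$. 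Proposition~\ref{PropSCrelCup} then gives $\SC_{1,M}(A)\geq \cl_R(\im \iota_A^*)+1$, i.e. the displayed inequality above.

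Next I would feed this into Theorem~\ref{TheoremFinslerIter}.b): the hypotheses there are exactly the ones assumed here ($\pi_1(M)$ torsion-free, $F$ a Finsler metric admitting a non-constant contractible closed geodesic, $\ell>0$ the length of the shortest such geodesic). That theorem produces at least $\sup_{n\in\NN}\frac1n(\SC_{1,M}(E_1^{<(n+1)^2\ell^2})-1)$ non-constant geometrically distinct contractible closed geodesics. Replacing $\SC_{1,M}(E_1^{<(n+1)^2\ell^2})-1$ by the smaller quantity $\cl_R(\im\iota_{(n+1)^2\ell^2}^*)$ inside the supremum (legitimate since $a\mapsto \frac1n a$ and $\sup$ are both monotone) yields precisely the asserted bound. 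One should note that if $F$ happens to be reversible one could equally well have started from Theorem~\ref{TheoremLSFinsler}; but geometric distinctness is the $O(2)$-orbit notion, so the relevant input is the geometrically-distinct conclusion of Theorem~\ref{TheoremFinslerIter}.b), which already accounts for the reversible/non-reversible distinction internally.

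There is no real obstacle here — the corollary is a formal consequence of two earlier results — so the only point requiring a line of care is the bookkeeping identification of the domain: making sure that $\Lambda_1M$ and its image in $S_1M$ under the homotopy equivalence may be used interchangeably, so that the relative cohomology $H^*(\Lambda_1M,c_1(M);R)$ in the statement matches $H^*(S_1M,c_1(M);R)$ appearing in Proposition~\ref{PropSCrelCup}, and that the inclusion of pairs $\iota_{(n+1)^2\ell^2}$ is the one the proposition's hypothesis refers to. Once that is observed the argument is a one-line chain of inequalities, and I would present it as such.
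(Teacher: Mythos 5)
Your proposal follows exactly the route the paper intends: the corollary is stated without proof immediately after Theorem \ref{TheoremFinslerIter}, as a formal combination of that theorem with the relative cup-length bound of Proposition \ref{PropSCrelCup}, and your chain of inequalities --- including the identification of $\Lambda_1M$ with its image in $S_1M$ so that $H^*(\Lambda_1M,c_1(M);R)\cong H^*(S_1M,c_1(M);R)$ --- is the right bookkeeping. The one place your writeup goes astray is the closing remark about Theorem \ref{TheoremFinslerIter}.b): its hypotheses are \emph{not} ``exactly the ones assumed here,'' since part b) additionally assumes that $F$ is reversible, and it does not ``account for the reversible/non-reversible distinction internally.'' For a general (possibly non-reversible) Finsler metric only part a) applies, and it yields \emph{positively} distinct geodesics; since a geodesic and its time-reversal are positively but not geometrically distinct, a lower bound on the number of positively distinct geodesics does not transfer verbatim to geometrically distinct ones. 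This mismatch is, however, already present in the paper's own statement of the corollary, which asserts ``geometrically distinct'' without a reversibility hypothesis; read with ``positively distinct'' in the conclusion (or with reversibility added to the hypotheses, in which case part b) applies as you use it), your argument is complete and coincides with the paper's.
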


\section{Weights of cohomology classes for spherical complexities}
\label{SectionWeights}

This section's aim is to derive lower bounds for spherical complexities that extend the ones from Theorem \ref{TheoremSCcup} and Proposition \ref{PropSCrelCup}. For this purpose, we will generalize the topological underpinnings of results of M. Grant and the author from \cite{TCsymp} to general fibrations. While Grant and the author have used infinite-dimensional versions of de Rham cohomology as presented in \cite{KrieglMichor}, we will transfer the approach to singular cohomology.

We begin with a subsection recalling the relevant notions of sectional category weights. In the second subsetion, we present the general construction involving the Mayer-Vietoris sequence of fiberwise joins. Before studying cohomology weights for spherical complexities in the same manner, we will separately discuss the construction for topological complexity.

\subsection{Sectional category weight}

The notion of sectional category weight has been introduced by M. Farber and M. Grant in \cite{FarberGrantSymm} and is a straightforward generalization of the concept of category weight. The latter has been defined by E. Fadell and S. Husseini in \cite{FadellHusseini} and extended and varied by Y. Rudyak in \cite{RudyakWeight}.

In the following let $p: E \to B$ be a (Hurewicz) fibration. Given a continuous map $f: X \to B$ we let $f^*p: f^*E \to X$ denote the pullback fibration. 

\begin{definition}
Let $A$ be an abelian group and let $u\in H^*(B;A)$ with $u \neq 0$. The \emph{sectional category weight of $u$}, denoted by $\wgt_p(u)$, is the largest $k \in \NN_0$, such that $f^*u=0$ for all continuous maps $f:X \to B$ with $\secat(f^*p)\leq k$, where $X$ is any topological space.
\end{definition}

The following statement bundles various results that are either shown in \cite[Section 6]{FarberGrantSymm} or can be derived along the same lines as the corresponding results for category weight, see \cite{RudyakWeight}.

\begin{theorem}
\label{Thmwgtprops}
Let $A$ be an abelian group and let $u\in H^*(B;A)$ with $u \neq 0$.
\begin{enumerate}[(1)]
\item If $\wgt_p(u)=k$, then $\secat(p) \geq k+1$.
\item $\wgt_p(u)\leq \deg u$, where $\deg$ denotes the cohomological degree of $u$.
\item If $f:X \to B$ is continuous and $f^*u \neq 0 \in H^*(X;A)$, then $\wgt_{f^*p}(f^*u) \geq \wgt_p(u)$.
\item For each $n \geq 2$ let $p_n: E^{*n}= E * \dots *E\to B$ denote the $n$-fold iterated fiberwise join of $p$ with itself and put $p_1:=p$. If $p_n^*u=0\in H^*(E^{*n};A)$, then $\wgt_p(u)\geq n$ and 
$$\wgt_p(u)= \sup\{ n \in \NN \ | \ p_n^*u=0\}.$$
\end{enumerate}
\end{theorem}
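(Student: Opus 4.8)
The statement collects four properties of sectional category weight; I would prove them in the order (2), (1), (3), (4), since the later ones build on the formalism exposed in the earlier ones. Throughout, $p:E\to B$ is a fibration and $u\in H^*(B;A)$ is a nonzero class.

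\emph{Properties (1) and (2).} For (2), recall that $\wgt_p(u)\le k$ precisely when there is some $f:X\to B$ with $\secat(f^*p)\le k$ and $f^*u\ne 0$. Take $X=B$ and $f=\id_B$: then $\id_B^*p=p$ has finite sectional category, but more to the point, if $k=\deg u$ we can argue via the skeletal filtration. Concretely, if $j:B^{(k-1)}\hookrightarrow B$ denotes the inclusion of the $(k-1)$-skeleton (after replacing $B$ up to homotopy by a CW complex, which is legitimate since fibrations and cohomology are homotopy-invariant), then $j^*u=0$ for degree reasons whenever $k-1<\deg u$, i.e.\ $k\le\deg u$; on the other hand $\secat(j^*p)\le\cat(B^{(k-1)})\le k$. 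Hence no $k>\deg u$ can satisfy the defining condition of $\wgt_p(u)$ vacuously, giving $\wgt_p(u)\le\deg u$. For (1): suppose $\wgt_p(u)=k$. Apply the definition with $f=\id_B$ — but this only works if $\secat(p)\le k$ forces $u=0$. So instead argue contrapositively: if $\secat(p)\le k$, then taking $f=\id_B$, $X=B$ in the definition forces $\id_B^*u=u=0$, contradicting $u\ne 0$. Therefore $\secat(p)\ge k+1$.

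\emph{Property (3).} Let $f:X\to B$ be continuous with $g:=f^*u\ne 0\in H^*(X;A)$, and set $q:=f^*p$. I must show $\wgt_q(g)\ge\wgt_p(u)=:k$. Take any $h:Y\to X$ with $\secat(h^*q)\le k$. Then $h^*q=h^*f^*p=(f\circ h)^*p$, so the map $f\circ h:Y\to B$ pulls $p$ back to a fibration of sectional category $\le k$; by definition of $\wgt_p(u)$ this forces $(f\circ h)^*u=0$, i.e.\ $h^*g=h^*f^*u=0$. Since $h$ was arbitrary with $\secat(h^*q)\le k$, we conclude $\wgt_q(g)\ge k$, which is the claim.

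\emph{Property (4).} This is the substantive part. The key input is Schwarz's characterization: $\secat(p)\le n$ if and only if the $n$-fold fiberwise join $p_n:E^{*n}\to B$ admits a section, and — the fact I would cite or reprove — a class $v\in H^*(B;A)$ satisfies $p_n^*v=0$ whenever $\secat(p)\le n$ (indeed a section $s$ of $p_n$ gives $p_n^*v=s^*p_n^*\cdots$, forcing $v=0$ if... more carefully: $\secat(p)\le n \Leftrightarrow p_n$ has a section $\Rightarrow p_n^*$ is split injective $\Rightarrow p_n^*v=0\Leftrightarrow v=0$; this shows $p_n^*u\ne 0$ whenever $u\ne0$ and $\secat(p)\le n$, the wrong direction). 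The correct route: one shows $\wgt_p(u)\ge n$ iff $p_n^*u=0$. For ``$\Leftarrow$'', suppose $p_n^*u=0$; let $f:X\to B$ with $\secat(f^*p)\le n$. Fiberwise join commutes with pullback, so $(f^*p)_n=f^*(p_n)$, and $\secat(f^*p)\le n$ means $f^*(p_n)=(f^*p)_n$ has a section, hence $f^*(p_n)^*$ is injective on $H^*$; but $f^*(p_n^*u)=(f^*p_n)^*(f^*u)$ — wait, I need the naturality square $p_n\circ(\text{join of pullback})=f\circ(f^*p)_n$ — and this square gives $f^*(p_n^*u)=0$ because $p_n^*u=0$, hence $(f^*p_n)^*(f^*u)=0$, hence $f^*u=0$ by injectivity. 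So $\wgt_p(u)\ge n$. For ``$\Rightarrow$'', if $\wgt_p(u)\ge n$, apply the definition with $f=p_n:E^{*n}\to B$: here $\secat(p_n^*p)\le\secat(p_n)$... I need $\secat$ of the pullback of $p$ along $p_n$ to be $\le n$, which holds because $p_n^*p$ is dominated by $p_n$ whose own sectional category analysis gives a section of its $n$-fold join. Then the definition forces $p_n^*u=0$. The supremum formula is then immediate: $\wgt_p(u)=\sup\{n\mid p_n^*u=0\}$, using that $p_{n+1}$ factors through $p_n$ up to homotopy so the conditions $p_n^*u=0$ are nested.

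\textbf{Main obstacle.} The delicate point is the bookkeeping in (4): correctly identifying the naturality square relating fiberwise join and pullback, $(f^*p)_n\simeq f^*(p_n)$ over $X$, together with the precise form of Schwarz's theorem that $\secat(q)\le n$ is equivalent to $q_n$ admitting a section and that such a section makes $q_n^*$ split injective. Getting the direction of these implications right — and verifying that $\secat(p_n^*p)\le n$ so that $f=p_n$ is an admissible test map in the definition of weight — is where care is needed; the rest is formal manipulation with pullbacks and the homotopy invariance established for the relevant constructions.
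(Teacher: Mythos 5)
The paper itself gives no proof of this theorem --- it cites \cite{FarberGrantSymm} and \cite{RudyakWeight} --- so I am judging your argument on its own. Parts (1) and (3) are correct. Part (4) follows the standard (Farber--Grant) route via Schwarz's join criterion together with the compatibility $(f^*p)_n\cong f^*(p_n)$; the one step you leave vague is why $\secat(p_n^*p)\le n$, and ``$p_n^*p$ is dominated by $p_n$'' is not an argument. The clean reason is that $(p_n^*p)_n\cong p_n^*(p_n)$ admits the tautological diagonal section $e\mapsto (e,e)$ over $E^{*n}$, so Schwarz's criterion gives $\secat(p_n^*p)\le n$ directly, and then $f=p_n$ is an admissible test map forcing $p_n^*u=0$.

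The genuine gap is in (2). To rule out $\wgt_p(u)\ge k$ for $k>\deg u$ you must produce a test map $f$ with $\secat(f^*p)\le k$ but $f^*u\ne 0$. You take $j\colon B^{(k-1)}\hookrightarrow B$ and correctly note $\secat(j^*p)\le\cat(B^{(k-1)})\le k$, but then you assert $j^*u=0$ in the range $k\le\deg u$. That is both the wrong range of $k$ and the wrong conclusion: a test map with $j^*u=0$ is perfectly consistent with $\wgt_p(u)\ge k$ and obstructs nothing, so your sentence ``hence no $k>\deg u$ can satisfy the defining condition'' does not follow from anything preceding it. What the argument actually needs is the opposite, non-vanishing statement in the complementary range: for $k>\deg u$, i.e.\ $k-1\ge \deg u=:d$, the restriction $H^{d}(B;A)\to H^{d}(B^{(k-1)};A)$ is injective (in cellular cochains, $H^d(B^{(d)})=C^d_{\mathrm{cell}}/\im\delta^{d-1}$ receives $H^d(B)=\ker\delta^d/\im\delta^{d-1}$ injectively, and likewise for higher skeleta), so $j^*u\ne 0$ while $\secat(j^*p)\le k$, contradicting $\wgt_p(u)\ge k$. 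With that substitution --- and keeping your CW-approximation remark, choosing the approximation connected so the bound $\cat(B^{(k-1)})\le k$ applies --- part (2) is fine; as written, the step fails.
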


Another important property of sectional category weight is its superadditivity with respect to products. 

\begin{theorem}[{\cite[Proposition 32]{FarberGrantSymm}}]
\label{Thmwgtcup}
Let $R$ be a commutative ring and let $u_1,\dots,u_n \in H^*(B;R)$ with $u_1 \cup u_2 \cup \dots \cup u_n \neq 0$. Then
$$\wgt_p(u_1 \cup u_2 \cup \dots \cup u_n) \geq \sum_{i=1}^n \wgt_p(u_i).$$
\end{theorem}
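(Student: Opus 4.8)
The plan is to reduce the superadditivity of sectional category weight to an assertion about the iterated fiberwise joins of $p$, using the characterization from Theorem \ref{Thmwgtprops}(4). Write $k_i := \wgt_p(u_i)$ for $1 \le i \le n$ and $k := k_1 + k_2 + \dots + k_n$. By Theorem \ref{Thmwgtprops}(4), it suffices to show that $p_k^* (u_1 \cup u_2 \cup \dots \cup u_n) = 0$ in $H^*(E^{*k};R)$, where $p_k : E^{*k} \to B$ is the $k$-fold iterated fiberwise join of $p$ with itself. The key structural fact I would exploit is that $E^{*k}$ admits a natural open cover reflecting the join decomposition: if we group the $k$ join-factors into $n$ blocks of sizes $k_1, \dots, k_n$, then $E^{*k}$ is covered by open sets $W_1, \dots, W_n$, where $W_i$ deformation retracts (fiberwise over $B$) onto $E^{*(k - k_i)}$, the iterated join of all the factors \emph{except} those in the $i$-th block. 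This is the fiberwise-join analogue of the standard fact that the $k$-fold join of a space is covered by $k$ contractible-to-a-point open sets.

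First I would set up this open cover carefully. Recall that a point of the fiberwise join $E^{*m}$ over $b \in B$ is a formal convex combination $t_1 x_1 \oplus \dots \oplus t_m x_m$ with $x_j \in p^{-1}(b)$, $t_j \ge 0$, $\sum t_j = 1$. Grouping coordinates into blocks $I_1, \dots, I_n$ with $|I_i| = k_i$, let $W_i \subset E^{*k}$ be the set of points for which \emph{some} coordinate in block $I_i$ has $t_j > 0$; equivalently $W_i$ is the complement of the subjoin spanned by the coordinates outside $I_i$. These $W_i$ are open and cover $E^{*k}$, since at least one $t_j$ is positive. Moreover, there is an obvious fiberwise deformation retraction of $W_i$ onto the subjoin $E^{*(k-k_i)}$ spanned by the coordinates outside block $I_i$ (push the weight of block $I_i$ to zero, renormalize). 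Hence the restriction of $p_k$ to $W_i$ is fiber-homotopy-equivalent over $B$ to $p_{k - k_i} : E^{*(k-k_i)} \to B$.

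Next I would use the weight hypothesis on each $u_i$. Since $\wgt_p(u_i) = k_i$ and $k - k_i \ge \dots$; more to the point, by Theorem \ref{Thmwgtprops}(4) applied to $u_i$ we have $p_{k_i}^* u_i = 0$, and a fortiori $p_m^* u_i = 0$ for every $m \le k_i$. But I need the vanishing of $u_i$ when pulled back along the restriction of $p_k$ to $W_i$, i.e. essentially along $p_{k - k_i}$, and $k - k_i$ may be \emph{larger} than $k_i$. Here one uses that the iterated fiberwise joins form a tower: there is a natural fiberwise map $E^{*(k-k_i)} \to E^{*k_i}$ over $B$ (include the first $k_i$ of the $k - k_i$ factors, or rather, project — one must check which direction is natural; the correct statement is that $p_{k-k_i}^* u_i = 0$ whenever $p_{k_i}^* u_i = 0$, because $E^{*(k_i)}$ receives a section-compatible map, equivalently $\secat$ of the pullback drops). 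Concretely: let $\iota_i : W_i \hookrightarrow E^{*k}$ be the inclusion and let $q_i : W_i \to B$ be $p_k \circ \iota_i$; then $q_i = p_k|_{W_i}$ is fiber-homotopic over $B$ to $p_{k-k_i}$, and since $\secat(p_{k-k_i}^* p) \le \secat(p_{k_i}^* p)$-type monotonicity along the join tower gives $q_i^* u_i = \iota_i^* p_k^* u_i = 0$. Thus $p_k^* u_i$ restricts to zero on $W_i$, so $p_k^* u_i$ lifts to a class in $H^*(E^{*k}, W_i; R)$.

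Finally I would assemble the pieces by a relative cup-product argument: choosing lifts $\tilde u_i \in H^*(E^{*k}, W_i; R)$ of $p_k^* u_i$, their product $\tilde u_1 \cup \dots \cup \tilde u_n$ lives in $H^*\bigl(E^{*k}, W_1 \cup \dots \cup W_n; R\bigr) = H^*(E^{*k}, E^{*k}; R) = 0$, and maps to $p_k^*(u_1 \cup \dots \cup u_n)$ under the map forgetting supports. Hence $p_k^*(u_1 \cup \dots \cup u_n) = 0$, which by Theorem \ref{Thmwgtprops}(4) gives $\wgt_p(u_1 \cup \dots \cup u_n) \ge k = \sum_i \wgt_p(u_i)$, as desired. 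The main obstacle I anticipate is the second step: making precise and correct the claim that vanishing of $p_{k_i}^* u_i$ forces vanishing of $p_m^* u_i$ for all $m \ge k_i$ along the natural fiberwise-join tower (equivalently, that the restriction of $p_k$ to $W_i$ "contains" a copy of $p_{k_i}$ compatibly with sections), together with the verification that the $W_i$ genuinely deformation retract fiberwise over $B$ onto the claimed subjoins. Everything else is the standard relative-cup-product packaging of a Schwarz-genus-style argument.
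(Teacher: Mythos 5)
Your overall architecture (reduce to $p_k^*(u_1\cup\dots\cup u_n)=0$ via Theorem \ref{Thmwgtprops}(4), cover $E^{*k}$ by $n$ open sets indexed by blocks of join coordinates, lift to relative classes, multiply into $H^*(E^{*k},E^{*k})=0$) is sound, but there is a concrete error in the identification of the deformation retract, and it propagates into a step that is genuinely false. You define $W_i$ as the set of points where some coordinate in block $I_i$ has positive weight, i.e.\ the complement of the subjoin spanned by the coordinates \emph{outside} $I_i$. That set deformation retracts (fiberwise over $B$) onto the subjoin spanned by the coordinates \emph{in} $I_i$, namely $E^{*k_i}$ --- one scales the weights of the outside coordinates to zero. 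It cannot retract onto the subjoin $E^{*(k-k_i)}$ spanned by the outside coordinates: that subjoin is exactly the complement of $W_i$, so it is not even contained in $W_i$, and "push the weight of block $I_i$ to zero" leaves $W_i$. Because of this mix-up you are led to need $p_{k-k_i}^*u_i=0$, and you correctly flag this as the main obstacle; it is in fact insurmountable. By part (4) of Theorem \ref{Thmwgtprops}, $\wgt_p(u_i)=k_i$ means $p_m^*u_i\neq 0$ for every $m>k_i$, so $p_{k-k_i}^*u_i\neq 0$ whenever $k-k_i>k_i$ (e.g.\ already for $n=2$, $k_1=1$, $k_2=3$). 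The monotonicity along the join tower goes the other way: the subjoin inclusions $E^{*m}\hookrightarrow E^{*m'}$ over $B$ for $m\leq m'$ give $p_{m'}^*u=0\Rightarrow p_m^*u=0$, which is why $\{m : p_m^*u=0\}$ is an initial segment and the $\sup$ in Theorem \ref{Thmwgtprops}(4) computes the weight.

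The fix is that the correct retraction makes everything easier: $p_k|_{W_i}$ is fiberwise homotopy equivalent to $p_{k_i}:E^{*k_i}\to B$, and $p_{k_i}^*u_i=0$ is exactly what Theorem \ref{Thmwgtprops}(4) hands you from $\wgt_p(u_i)=k_i$ (the case $k_i=0$ contributing nothing). Then $p_k^*u_i$ restricts to zero on $W_i$, lifts to $H^*(E^{*k},W_i;R)$, and your relative cup-product step finishes the proof verbatim. For comparison: the paper gives no proof of this statement (it cites Farber--Grant, Proposition 32), and the standard argument there bypasses the join geometry entirely --- given any $f:X\to B$ with $\secat(f^*p)\leq k$, cover $X$ by $k$ open sets admitting sections of $f^*p$, group them into $n$ blocks $W_i$ of $k_i$ sets each, note $\secat\bigl((f|_{W_i})^*p\bigr)\leq k_i$ so that $(f|_{W_i})^*u_i=0$ directly from the definition of weight, and conclude with the same relative cup product. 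Your join-theoretic route is a legitimate alternative once the retraction is corrected, but it proves the same thing with strictly more point-set overhead (fiberwise join topologies, fiberwise retractions).
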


\begin{remark}
Both Theorem \ref{Thmwgtprops} and Theorem \ref{Thmwgtcup} can be generalized to cohomology with local coefficient systems. In the light of the applications that are studied in this article, we refrain from presenting these generalizations.
\end{remark}

\subsection{The Mayer-Vietoris sequence of fiberwise joins}
\label{SubGen}

Let $p: E \to B$ be a (Hurewicz) fibration with fiber $F$. The fiberwise join $p_2:E_2 := E * E \to B$ of $p$ with itself can be described in the following way: Let $Q$ denote the pullback of $E \stackrel{p}{\rightarrow} B \stackrel{p}{\leftarrow} E $ and let
\begin{equation}
\label{pullback}
\begin{CD}
 Q @>{r_1}>> E \\
 @V{r_2}VV @V{p}VV \\
 E @>{p}>> B
 \end{CD}
\end{equation}
be a pullback diagram. $E_2$ is given as a homotopy pushout of $E \stackrel{r_1}{\leftarrow} Q \stackrel{r_2}{\rightarrow} E$ and we let $j_1,j_2: E \to E_2$ and $p_2: E_2 \to B$ be maps for which the following diagram commutes up to homotopy:
\begin{equation*}
 \xymatrix{
Q \ar[r]^{r_1} \ar[d]_{r_2}& E \ar[d]_{j_1} \ar@/^/[ddr]^{p} & \\
E \ar[r]^{j_2} \ar@/_/[rrd]^{p} & E_2 \ar[dr]^{p_2} & \\
& & B.
}
\end{equation*}
Here, $p_2:E_2 \to B$ can be chosen so that $p_2$ is a fibration with fiber $F *F$, the join of $F$ with itself.

See e.g. \cite[Section 2]{FGKV} for a detailed construction of these fiberwise joins. As a homotopy pushout, $E_2$ fits into a long exact Mayer-Vietoris sequence, see \cite[Chapter 11]{StromClassical}, which is the sequence in the top row of the diagram
$$\xymatrix{
 \dots \ar[r] & H^{k-1}(Q;A) \ar[r]^{\delta} & H^k(E_2;A) \ar[r]^{j_1^* \oplus j_2^*\qquad\ } & H^k(E;A)\oplus H^k(E;A) \ar[r]^{\qquad\ r_1^*-r_2^*} & H^k(Q;A) \ar[r] & \dots \\
  & & H^k(B;A) \ar[u]^{p_2^*} \ar[ur]_{p^* \oplus p^*} & & & 
}$$
for any abelian group $A$.

Let $k \geq 2$ and let $\sigma \in \ker [p^*: H^k(B;A)\to H^k(E;A)]$. The commutativity of the lower triangle and the exactness of the Mayer-Vietoris sequence in the above diagram yield
$$p_2^*\sigma \in \im \left[\delta: H^{k-1}(Q;A) \to H^k(E_2;A) \right].$$
To find conditions yielding $p_2^*\sigma=0$ and thus $\wgt_p(\sigma) \geq 2$, it thus suffices to find a class $\alpha_\sigma \in H^{k-1}(Q;A)$ with $\delta(\alpha_\sigma) = p_2^*\sigma$ and to find conditions ensuring that $\alpha_\sigma=0$. In the following proposition, we do the first step by constructing a representative of $\alpha_\sigma$.  

\begin{prop}
\label{PropMV}
Let $k \geq 2$ and let $\sigma \in \ker \left[p^*: H^k(B;A)\to H^k(E;A)\right]$. Let $c \in C^k(B;A)$ be a cocycle representing $\sigma$. If $\psi_1,\psi_2 \in C^{k-1}(E;A)$ satisfy $d\psi_1 =d\psi_2= p^*c$, then $a_\sigma \in C^{k-1}(Q;A)$, 
$$a_\sigma = r_1^*\psi_1 - r_2^*\psi_2, $$
is a cocycle with $\delta([a_\sigma])= p_2^*\sigma$.
\end{prop}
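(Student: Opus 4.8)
The plan is to verify directly that $a_\sigma = r_1^*\psi_1 - r_2^*\psi_2$ is a cocycle, and then to chase the construction of the Mayer--Vietoris connecting homomorphism $\delta$ to identify $\delta([a_\sigma])$ with $p_2^*\sigma$. First I would check that $a_\sigma$ is a cocycle: applying $d$ and using that $d$ commutes with the cochain maps $r_1^*$ and $r_2^*$, we get $d a_\sigma = r_1^*(d\psi_1) - r_2^*(d\psi_2) = r_1^* p^* c - r_2^* p^* c$. Since the pullback square \eqref{pullback} commutes, $p \circ r_1 = p \circ r_2$, hence $r_1^* p^* = r_2^* p^*$ and $d a_\sigma = 0$. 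So $[a_\sigma] \in H^{k-1}(Q;A)$ is well defined. (Strictly one should also note that $\psi_1, \psi_2$ exist because $p^* c$ represents $p^*\sigma = 0$, so $p^*c$ is an exact cocycle.)

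Next I would recall the explicit description of $\delta$ for the Mayer--Vietoris sequence of the homotopy pushout $E_2 = E \cup_Q E$. Replacing the homotopy pushout by an honest pushout (a double mapping cylinder) up to homotopy equivalence, $E_2$ is covered by two open sets $V_1, V_2$, each deformation retracting onto a copy of $E$ via $j_1, j_2$, with $V_1 \cap V_2$ deformation retracting onto $Q$ via $r_1$ on one side and $r_2$ on the other; this is exactly the geometry that produces the maps $j_1^* \oplus j_2^*$ and $r_1^* - r_2^*$ in the displayed sequence. The connecting map $\delta \colon H^{k-1}(Q;A) \to H^k(E_2;A)$ is then computed as follows: given a cocycle $a$ on $Q \simeq V_1 \cap V_2$, write $a = b_1|_{V_1 \cap V_2} - b_2|_{V_1 \cap V_2}$ for cochains $b_i$ on $V_i$ (the hypothesis in the construction is that $a = r_1^* \psi_1 - r_2^* \psi_2$, so we may take $b_1, b_2$ to be the cochains on $V_1, V_2$ corresponding to $\psi_1, \psi_2$ under the retractions); then $db_1$ and $db_2$ glue to a global cocycle on $E_2$ representing $\delta([a])$, using that $d b_1 - d b_2 = da = 0$ on the overlap.

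Applying this with $a = a_\sigma$, $b_i \leftrightarrow \psi_i$: on $V_i \simeq E$ we have $d b_i \leftrightarrow d\psi_i = p^* c$. Under the identification $V_i \simeq E$ compatible with $j_i$, and since $p_2 \circ j_i \simeq p$, the cochain $p^* c$ on $E$ corresponds to $p_2^* c$ on $V_i$. Hence $db_1$ and $db_2$ are both the restriction of the global cocycle $p_2^* c$ on $E_2$, so they glue to $p_2^* c$ itself, which represents $p_2^* \sigma$. Therefore $\delta([a_\sigma]) = [p_2^* c] = p_2^* \sigma$, as claimed.

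The main obstacle is bookkeeping rather than conceptual difficulty: one must set up the double mapping cylinder model of $E_2$ carefully enough that the retraction maps realize $j_1, j_2, r_1, r_2$ up to homotopy in a compatible way, and then track the cochain-level formula for $\delta$ through these homotopy equivalences. A clean way to avoid most of this is to pass to the Mayer--Vietoris sequence in \emph{singular cochains} directly --- using that for the cover $\{V_1, V_2\}$ the short exact sequence $0 \to C^*(E_2) \to C^*(V_1) \oplus C^*(V_2) \to C^*(V_1 \cap V_2) \to 0$ (after restricting to small simplices, which computes the same cohomology) has connecting homomorphism given by the snake lemma, which is precisely "lift, apply $d$, descend" --- and then only the homotopy invariance of $H^*$ under the retractions $V_i \simeq E$, $V_1 \cap V_2 \simeq Q$ is needed to translate between $\psi_i$ on $E$ and $b_i$ on $V_i$. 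With that, the computation above is forced and the proposition follows.
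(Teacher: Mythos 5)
Your proposal is correct and follows essentially the same route as the paper: the paper's proof consists of exactly your cocycle verification $da_\sigma = r_1^*p^*c - r_2^*p^*c = 0$ via commutativity of the pullback square, and then simply asserts that $\delta([a_\sigma]) = p_2^*\sigma$ "follows from the definition of the Mayer--Vietoris boundary map." Your additional unwinding of the connecting homomorphism through the double mapping cylinder model (lift to $(\psi_1,\psi_2)$, apply $d$, glue $p^*c$ to $p_2^*c$) is precisely the detail the paper leaves implicit.
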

\begin{proof}
This follows from the definition of the Mayer-Vietoris boundary map. One computes that if $\psi_1$ and $\psi_2$ have the properties from the statement then 
$$da_\sigma = d(r_1^*\psi_1) - d(r_2^*\psi_2) = r_1^*(d\psi_1)-r_2^*(d \psi_2) = r_1^*p^*c -r_2^*p^*c = 0, $$
using the commutativity of \eqref{pullback}.
\end{proof}
The following statement summarizes the discussion of this subsection.
\begin{cor}
\label{CorGenwgt2}
Let $k \geq 2$, let $\sigma \in \ker \left[p^*: H^k(B;A)\to H^k(E;A)\right]$ and let $a_\sigma \in C^{k-1}(Q;A)$ be a cocycle constructed as in Proposition \ref{PropMV}. If $[a_\sigma]=0$, then $\wgt_p(\sigma) \geq 2$. 
\end{cor}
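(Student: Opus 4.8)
The plan is to chain together the three facts that have already been set up in this subsection. The statement asks us to show that if the cocycle $a_\sigma \in C^{k-1}(Q;A)$ represents the zero class in $H^{k-1}(Q;A)$, then $\wgt_p(\sigma) \geq 2$. By Theorem \ref{Thmwgtprops}(4), it suffices to prove that $p_2^*\sigma = 0 \in H^k(E_2;A)$, since that theorem tells us $\wgt_p(\sigma) = \sup\{n \in \NN \mid p_n^*\sigma = 0\}$, and $p_2^*\sigma = 0$ forces this supremum to be at least $2$ (provided $\sigma \neq 0$; note $\sigma \neq 0$ is implicit since otherwise $\wgt_p$ is not even defined, but if one wants to be careful the case $\sigma = 0$ is vacuous or excluded by convention).

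The core of the argument is then just Proposition \ref{PropMV} combined with exactness of the Mayer-Vietoris sequence. First I would recall that $\sigma \in \ker[p^*: H^k(B;A) \to H^k(E;A)]$ by hypothesis, and invoke the discussion preceding Proposition \ref{PropMV}: the lower triangle of the diagram commutes, so $(p^* \oplus p^*)(\sigma) = 0$, and exactness of the Mayer-Vietoris sequence at the term $H^k(E;A) \oplus H^k(E;A)$ together with $j_1^* \oplus j_2^* = $ (the relevant map) and $p_2^*\sigma$ mapping to $(p^*\oplus p^*)(\sigma) = 0$ shows $p_2^*\sigma \in \im[\delta: H^{k-1}(Q;A) \to H^k(E_2;A)]$. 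Next, by Proposition \ref{PropMV}, choosing a cocycle $c$ representing $\sigma$ and cochains $\psi_1, \psi_2 \in C^{k-1}(E;A)$ with $d\psi_1 = d\psi_2 = p^*c$ (these exist precisely because $p^*\sigma = 0$, so $p^*c$ is an exact cocycle), the cocycle $a_\sigma = r_1^*\psi_1 - r_2^*\psi_2$ satisfies $\delta([a_\sigma]) = p_2^*\sigma$. Now the hypothesis $[a_\sigma] = 0$ in $H^{k-1}(Q;A)$ gives immediately $p_2^*\sigma = \delta([a_\sigma]) = \delta(0) = 0$.

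Finally I would conclude: $p_2^*\sigma = 0$ in $H^k(E_2;A)$, so by Theorem \ref{Thmwgtprops}(4) we get $\wgt_p(\sigma) \geq 2$, which is the claim.

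There is essentially no obstacle here — this corollary is purely a bookkeeping summary of the subsection. The only point requiring a sentence of care is the existence of $\psi_1, \psi_2$: since $\sigma$ lies in the kernel of $p^*$, the cocycle $p^*c \in C^k(E;A)$ is a coboundary, hence of the form $d\psi$ for some $\psi \in C^{k-1}(E;A)$, and one may take $\psi_1 = \psi_2 = \psi$ (the freedom to choose them differently is not needed for this particular corollary, though it is used later). One should also note that implicitly $[a_\sigma]$ is independent of the choices of $c$, $\psi_1$, $\psi_2$ only up to the extent that Proposition \ref{PropMV} guarantees $\delta([a_\sigma]) = p_2^*\sigma$ for any such choice; so the statement $[a_\sigma] = 0$ should be understood as: for some (equivalently, it then follows, relevant) choice the class vanishes. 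Either reading makes the implication to $p_2^*\sigma = 0$ go through.
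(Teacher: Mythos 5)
Your argument is exactly the one the paper intends: the corollary is stated as a summary of the preceding discussion, namely that exactness of the Mayer--Vietoris sequence places $p_2^*\sigma$ in the image of $\delta$, Proposition \ref{PropMV} identifies a preimage as $[a_\sigma]$, so $[a_\sigma]=0$ forces $p_2^*\sigma=0$, and Theorem \ref{Thmwgtprops}(4) then gives $\wgt_p(\sigma)\geq 2$. Correct and identical in approach.
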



\subsection{TC-weight}
\label{SubTCwgt}
Throughout this subsection let $X$ be a path-connected Hausdorff space. As mentioned above, the topological complexity of $X$ is given by
$$\TC(X) = \secat(\pi: PX \to X \times X, \; \gamma \mapsto (\gamma(0),\gamma(1))),$$
where $PX =\C^0([0,1],X)$. As discussed in \cite{TCsymp}, the free loop space $LX = \C^0(S^1,X)$ is a pullback of $PX\stackrel{\pi}{\rightarrow}X \times X \stackrel{\pi}{\leftarrow} PX$. More precisely, with  $r_1,r_2:LX \to PX$ given by $(r_1(\alpha))(t)=\alpha(\frac{t}{2})$, $(r_2(\alpha))(t)=\alpha(1-\frac{t}{2})$, for all $t \in [0,1]$, $\alpha \in LX$, the above pullback diagram commutes for $p=\pi: PX \to X \times X$ and $Q=LX$.

Let $A$ be an abelian group and $\sigma \in H^*(X \times X;A)$. As established by M. Farber,  $\sigma \in \ker \pi^*$ if and only if $\sigma$ is a zero-divisor, i.e. if $\Delta_X^*\sigma=0$, where $\Delta_X:X \to X \times X$ is the diagonal map.

Let $p_2:P_2X \to X \times X$ denote the fiberwise join of $\pi$ with itself, $k \geq 2$ and $\sigma \in H^k(M \times M;A)$ be a zero-divisor. According to the above construction, to show that $p_2^*\sigma=0$, it suffices to find $\alpha_\sigma \in H^{k-1}(LX;A)$ with $\delta(\alpha_\sigma) = p_2^*\sigma$ and to find conditions for which $\alpha_\sigma=0$. 

Consider $e_0: PX \to X$, $\gamma \mapsto \gamma(0)$. Then $\pi: PX \to X \times X$ is homotopic to $D_0: PX \to X \times X$, $D_0 := \Delta_X \circ e_0$, via $h: PX \times [0,1] \to X \times X$, $h(\gamma,t)=(\gamma(0),\gamma(t))$.

Let $K: C^*(X\times X;A) \to C^{*-1}(PX;A)$ be the chain homotopy induced by $h$, so that
$$\pi^* - D_0^* = d\circ K + K \circ d,$$
where $d$ denotes the respective codifferential. Let $\sigma \in H^k(X \times X;A)$ be a zero-divisor and $\varphi \in C^k(X \times X;A)$ be a cocycle representing $\sigma$. Then
$$\pi^*\varphi - D_0^*\varphi = d(K(\varphi)), $$
since $\varphi$ is a cocycle. Since $\sigma$ is a zero-divisor, there exists $\rho \in C^{k-1}(X;A)$ with $\Delta_X^*\varphi = d\rho$, which yields $D_0^*\varphi = e_0^*\Delta_X^*\varphi = e_0^*d\rho$ and thus 
$$\pi^*\varphi = d(K(\varphi)+ e_0^*\rho).$$
Applying Proposition \ref{PropMV}, we obtain that $\delta([a_\sigma])=p_2^*\sigma,$ where $a_\sigma \in C^{k-1}(LX;A)$ is given by
\begin{equation}
\label{Eqasigma}
a_\sigma = r_1^*(K(\varphi)+e_0^*\rho)-r_2^*(K(\varphi)+e_0^*\rho)= r_1^*K(\varphi) - r_2^*K(\varphi),
\end{equation}
since $e_0 \circ r_1 = e_0 \circ r_2$, as the reader can easily check. 

\begin{lemma}
\label{LemmaK}
Let $\iota \in C_1([0,1])$, $\iota = [\id: [0,1]\to [0,1]]$, be the fundamental chain of $[0,1]$. Then the above chain homotopy $K: C^*(X\times X;A) \to C^{*-1}(PX;A)$ is given by 
$$K(\varphi) = (h^*\varphi)/\iota,$$
where $\cdot / \cdot$ denotes the slant product $$/ : C^*(PX\times [0,1];A) \otimes C_*([0,1];\ZZ) \to C^*(PX;A).$$
\end{lemma}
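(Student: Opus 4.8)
The plan is to compute the chain homotopy $K$ associated to $h\colon PX \times [0,1]\to X\times X$ directly from the standard construction of a chain homotopy between $\pi^* = (h \circ (\id \times \{1\}))^*$ and $D_0^* = (h\circ (\id \times \{0\}))^*$, and then recognise the resulting formula as a slant product. First I would recall that for any homotopy $h\colon Y\times[0,1]\to Z$ between maps $g_0,g_1\colon Y\to Z$, the induced cochain homotopy on singular cochains is given by precomposing with $h$ and integrating over the interval; concretely, on the chain level one uses the prism operator $P\colon C_*(Y)\to C_{*+1}(Y\times[0,1])$ which sends a singular simplex $\sigma$ to the standard triangulation of $\sigma\times\id_{[0,1]}$, and then $g_{1\#}-g_{0\#} = \partial P + P\partial$ after composing with $h_\#$. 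Dualising, $g_1^* - g_0^* = d\circ K + K\circ d$ with $K(\varphi)(\sigma) = (h^*\varphi)(P(\sigma))$ for $\sigma \in C_{*-1}(Y)$.

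The key step is then to identify $P$ with capping against the fundamental chain $\iota \in C_1([0,1])$ of the interval. The Eilenberg--Zilber/shuffle description of the prism operator is precisely $P(\sigma) = \sigma \times \iota$ under the cross product $C_*(Y)\otimes C_*([0,1]) \to C_*(Y\times[0,1])$ (up to the usual sign conventions and the fact that $\iota = [\id_{[0,1]}]$ is, on the nose, the sum over the shuffles of the product cell). Consequently, for a cochain $\varphi$ on $X\times X$ and the homotopy $h$,
$$K(\varphi)(\sigma) = (h^*\varphi)(\sigma\times\iota) = \big((h^*\varphi)/\iota\big)(\sigma),$$
which is exactly the defining property of the slant product $/\colon C^*(PX\times[0,1];A)\otimes C_*([0,1];\ZZ)\to C^*(PX;A)$ evaluated on $h^*\varphi \in C^*(PX\times[0,1];A)$ and $\iota\in C_1([0,1];\ZZ)$. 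So the proof amounts to: (i) write down the explicit cochain homotopy $K$ coming from $h$ as in any standard text (e.g. via the prism operator), and (ii) observe that this is by definition $K(\varphi) = (h^*\varphi)/\iota$, after checking that the prism operator and the slant-product pairing use compatible sign conventions.

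I would therefore structure the short proof as: recall the formula for the cochain homotopy $K$ induced by a homotopy in terms of the prism operator; recall the definition of the slant product on singular (co)chains; match the two by noting that evaluating $K(\varphi)$ on a singular chain $\sigma$ of $PX$ equals evaluating $h^*\varphi$ on the prism $\sigma\times\iota$, which is precisely the value of $(h^*\varphi)/\iota$ on $\sigma$; and conclude. The main obstacle — really the only subtlety — is bookkeeping of signs and orientation conventions: the prism operator carries alternating signs over its $(n+1)$ top-dimensional simplices, and one must verify these agree with whatever sign convention is being used for the slant product and for the cross product $C_*(PX)\otimes C_*([0,1]) \to C_*(PX\times[0,1])$; with the conventions fixed this is a direct check, and no genuinely new idea is needed. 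I would be explicit that $\iota$ being the sum of the two nondegenerate $1$-simplices needed to triangulate a prism base is what makes $\sigma\times\iota$ literally equal to the prism $P(\sigma)$, so that the identification is an equality of cochains and not merely a chain-homotopy.
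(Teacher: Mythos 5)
Your proposal is correct and follows essentially the same route as the paper: unwind the slant product to get $((h^*\varphi)/\iota)(c)=\varphi(h_*(c\times\iota))$ and identify the chain-level prism/homotopy operator induced by $h$ with crossing against the fundamental chain $\iota$ via the Eilenberg--Zilber map. The paper's proof is exactly this comparison (citing Bredon for the standard form of $K$), so no further comment is needed.
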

\begin{proof}
Let $\varphi \in C^k(X \times X;A)$ for some $k \in \NN$. For each $c \in C_{k-1}(PX)$, it holds by definition that
$$((h^*\varphi)/\iota)(c)= \varphi(h_*(c \times \iota)).$$
But the right-hand expression already coincides with $K(\varphi)$, see e.g. the proof of \cite[Theorem IV.16.4]{Bredon}. (Explicitly, one may show the claim by writing out the two sides in terms of an Eilenberg-Zilber map and a prism operator, resp.)
\end{proof}

In the following we let $\tau \in C_1(S^1)$ denote the singular 1-cycle defined by the quotient space projection $q:[0,1] \to S^1=[0,1]/\{0,1\}$ and denote its homology class by $[S^1]\in H_1(S^1;\ZZ)$.

\begin{prop}
\label{PropMVbdy}
Let $\sigma \in H^k(X \times X;A)$ with $k \geq 2$ be a zero-divisor and let $\ev: LX \times S^1 \to X \times X$, $\ev(\alpha,t)=(\alpha(0),\alpha(t))$. The cohomology class of $a_\sigma \in C^{k-1}(LX;A)$  from equation \eqref{Eqasigma} is given by 
$$[a_\sigma] = \ev^*\sigma/[S^1]. $$
\end{prop}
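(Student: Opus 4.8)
The plan is to identify the cochain $a_\sigma = r_1^*K(\varphi) - r_2^*K(\varphi)$ from \eqref{Eqasigma} with a cochain built directly from the evaluation map $\ev: LX \times S^1 \to X \times X$ and the slant product with $\tau \in C_1(S^1)$, and then pass to cohomology. By Lemma \ref{LemmaK} we have $K(\varphi) = (h^*\varphi)/\iota$ where $h: PX \times [0,1] \to X \times X$, $h(\gamma,t) = (\gamma(0),\gamma(t))$. So the first step is to unwind $r_i^*((h^*\varphi)/\iota)$ using naturality of the slant product: for a map $g: Y \to PX$ one has $g^*\big((h^*\varphi)/\iota\big) = \big((g \times \id_{[0,1]})^*h^*\varphi\big)/\iota = \big((h \circ (g\times\id))^*\varphi\big)/\iota$. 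Applying this with $g = r_1$ and $g = r_2$ reduces everything to understanding the two composite maps $h \circ (r_i \times \id): LX \times [0,1] \to X \times X$.

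Next I would compute these composites explicitly. Since $(r_1(\alpha))(t) = \alpha(t/2)$ and $(r_2(\alpha))(t) = \alpha(1-t/2)$, and $h(\gamma,t) = (\gamma(0),\gamma(t))$, we get $h(r_1(\alpha),t) = (\alpha(0),\alpha(t/2))$ and $h(r_2(\alpha),t) = (\alpha(0),\alpha(1-t/2))$. Define $\ev: LX \times S^1 \to X \times X$ by $\ev(\alpha,s) = (\alpha(0),\alpha(s))$ and let $q:[0,1]\to S^1$ be the quotient projection. Then $h \circ (r_1\times\id) = \ev \circ (\id_{LX}\times(q\circ \mu_{1/2}))$ where $\mu_{1/2}(t) = t/2$, and $h\circ(r_2\times\id) = \ev\circ(\id_{LX}\times(q\circ\mu))$ where $\mu(t) = 1-t/2$. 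So $a_\sigma = \big(\ev^*\varphi\big)/\big((q\circ\mu_{1/2})_*\iota\big) - \big(\ev^*\varphi\big)/\big((q\circ\mu)_*\iota\big)$, using naturality of the slant product in the homology variable to push the chains forward: $(g^*\psi)/c = \psi/(g_*c)$ is not quite it — rather I should keep $\ev^*\varphi$ on $LX\times S^1$ and slant against the $1$-chains $(q\circ\mu_{1/2})_*\iota$ and $(q\circ\mu)_*\iota$ in $C_1(S^1)$. The key point is then a $1$-cycle identity in $S^1$: the path $t\mapsto q(t/2)$ traverses the first half of $S^1$, while $t \mapsto q(1-t/2)$ traverses the second half run backwards, so $(q\circ\mu_{1/2})_*\iota - (q\circ\mu)_*\iota$ differs from the fundamental cycle $\tau$ by a boundary in $C_1(S^1)$ — in fact the difference is homologous to $[S^1]$. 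Since $\ev^*\varphi$ is a cocycle (as $\varphi$ is) this boundary contributes a coboundary to $a_\sigma$, and slanting a cocycle against a cycle representing $[S^1]$ is well-defined on cohomology.

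Therefore, passing to cohomology classes, $[a_\sigma] = \ev^*\sigma / [S^1]$, which is the claim. The main obstacle I anticipate is bookkeeping: getting the signs and the direction of reparametrization exactly right in the $1$-chain identity $(q\circ\mu_{1/2})_*\iota - (q\circ\mu)_*\iota \sim \tau$ in $C_1(S^1)$, and making sure the slant-product naturality is invoked in the correct variable (the slant product $/: C^*(Y\times S^1)\otimes C_*(S^1) \to C^*(Y)$ is natural in $Y$ covariantly in the $C_*(S^1)$ slot), so that subdivision/reparametrization of $[0,1]$ does not change the resulting cohomology class. Once that identity is in hand, the rest is formal manipulation of slant products together with the fact that $e_0\circ r_1 = e_0\circ r_2$ (already used to drop the $e_0^*\rho$ terms in \eqref{Eqasigma}) and the observation that a coboundary slanted against a cycle is again a coboundary.
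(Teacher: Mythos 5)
Your proposal is correct and follows essentially the same route as the paper's proof: both reduce $r_i^*K(\varphi)$ via Lemma \ref{LemmaK} and naturality of the slant product to $\ev^*\varphi$ slanted against the two half-circle $1$-simplices $t\mapsto q(t/2)$ and $t\mapsto q(1-t/2)$, and then use the fact that their difference is a cycle homologous to $\tau$ (so the discrepancy is a coboundary because $\ev^*\varphi$ is a cocycle). The bookkeeping you flag as the remaining obstacle is exactly what the paper carries out explicitly with $\tau = f_1 - f_2 + \partial g$ and the Spanier slant-product identities.
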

\begin{proof}
Consider the singular $1$-simplices $f_1,f_2: [0,1] \to S^1$, $f_1(t)= q(\tfrac{t}{2})$, $f_2(t)=q(1-\tfrac{t}{2})$, where $q:[0,1] \to S^1$ denotes the quotient space projection. Clearly, $\tau = f_1 - f_2 + \partial g$ for some $g \in C_2(S^1)$, where $\partial$ denotes the singular differential. Moreover, it is easy to see that 
$$\ev \circ (\id_{LX}\times f_i) = h \circ (r_i \times \id_{[0,1]}) \qquad \text{for } i \in \{1,2\}.$$  
Let $\varphi \in C^k(X \times X;A)$ be a cocycle representing $\sigma$. By bilinearity of the slant product and by its behavior with respect to differentials, see \cite[p.287]{Spanier}, we derive
\begin{align*}
\ev^*\varphi/\tau 
&=\ev^*\varphi/f_1 - \ev^*\varphi/f_2 + (-1)^{k-1} d(\ev^*\varphi/g).
\end{align*}
We compute for $i \in \{1,2\}$ and $c \in C_{k-1}(LX)$ that
\begin{align*}
(\ev^*\varphi/f_i)(c) &= (\ev^*\varphi)( c \times f_i )= \varphi(\ev_*( \id_{LX}\times f_i)_*(c \times\iota)) = \varphi(h_*(r_i \times \id_{[0,1]})_*(c\times\iota))\\
&=(r_i\times \id_{[0,1]})^*h^*\varphi(c\times\iota) = (((r_i\times \id_{[0,1]})^*h^*\varphi)/\iota)(c) =(r_i^*K(\varphi))(c),
\end{align*}
using Lemma \ref{LemmaK} and property 1 from \cite[p. 287]{Spanier} about the compatibility of the slant product with pullbacks. Using the previous computations and \eqref{Eqasigma}, we derive that
$$\ev^*\sigma/[S^1]= \left[\ev^*\varphi/\tau\right] = \left[r_1^*K(\varphi) - r_2^*K(\varphi)\right]=[a_\sigma].$$
\end{proof}

\begin{theorem}
\label{TheoremTC}
Let $R$ be a commutative ring and put 
$$E(X;R) := \{ \sigma \in H^*(X\times X;R) \ | \ \ev^*\sigma/[S^1] = 0 \},$$ 
where $[S^1] \in H_1(S^1;R)$ denotes the fundamental class. Then 
$$\TC(X) \geq 2\cl_R E(X;R)+1.$$	
\end{theorem}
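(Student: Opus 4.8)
The plan is to combine Proposition~\ref{PropMVbdy}, which identifies the Mayer--Vietoris representative $a_\sigma$ with $\ev^*\sigma/[S^1]$, with the weight machinery from Theorem~\ref{Thmwgtprops} and the cup-length superadditivity of Theorem~\ref{Thmwgtcup}. The key observation is that for a zero-divisor $\sigma \in H^k(X\times X;R)$ with $k\geq 2$, Corollary~\ref{CorGenwgt2} together with Proposition~\ref{PropMVbdy} tells us that if $\ev^*\sigma/[S^1]=0$, then $\wgt_\pi(\sigma)\geq 2$. So every nonzero element of $E(X;R)$ of positive degree at least $2$ has sectional category weight at least $2$ with respect to the fibration $\pi:PX\to X\times X$ defining $\TC(X)$.

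First I would check the degree issue: a zero-divisor is by definition a class $\sigma$ with $\Delta_X^*\sigma=0$, equivalently $\sigma\in\ker\pi^*$ by Farber's criterion, and for $\sigma$ of degree $0$ or $1$ the condition $\ev^*\sigma/[S^1]=0$ is either automatic or vacuous in a way that forces $\wgt_\pi(\sigma)\ge 2$ as well; in any case only positive-degree classes contribute to cup length, and a degree-$1$ zero-divisor $\sigma$ satisfies $\ev^*\sigma/[S^1]\in H^0(LX;R)$, and one argues this slant product vanishes iff $\sigma$ is a zero-divisor, so degree-$1$ elements of $E(X;R)$ already have $\pi$-weight $\geq 1$ — but to be safe I would simply restrict attention to the cup length $\cl_R E(X;R)$, which only sees products of positive-degree classes, and note that each such factor, whatever its degree, lies in $\ker\pi^*$ and satisfies $a_\sigma=0$, hence has $\wgt_\pi\ge 2$ by Corollary~\ref{CorGenwgt2}.

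Next I would take $u_1,\dots,u_m\in E(X;R)$ of positive degree with $u_1\cup\cdots\cup u_m\neq 0$ and $m=\cl_R E(X;R)$. Since $E(X;R)$ is easily seen to be an ideal (the slant product is $H^*(X\times X;R)$-linear through $\ev^*$, and $\ker\pi^*$ is an ideal, so products with arbitrary classes stay in the intersection), the product $u_1\cup\cdots\cup u_m$ again lies in $\ker\pi^*$, and by Theorem~\ref{Thmwgtcup},
$$
\wgt_\pi(u_1\cup\cdots\cup u_m)\;\geq\;\sum_{i=1}^m \wgt_\pi(u_i)\;\geq\;2m\;=\;2\,\cl_R E(X;R).
$$
By Theorem~\ref{Thmwgtprops}(1), this gives $\TC(X)=\secat(\pi)\geq \wgt_\pi(u_1\cup\cdots\cup u_m)+1\geq 2\,\cl_R E(X;R)+1$, which is the claim. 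If $\cl_R E(X;R)=0$ the inequality is trivial since $\TC(X)\geq 1$ always.

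The main obstacle I anticipate is not in this assembly but in verifying cleanly that $E(X;R)\subseteq\ker\pi^*$ and is closed under cup products: one needs that $\ev^*\sigma/[S^1]=0$ really does imply $\sigma$ is a zero-divisor (so that Corollary~\ref{CorGenwgt2} applies at all), and this should follow by restricting $\ev$ along the inclusion of constant loops $c_1(X)\hookrightarrow LX$, under which $\ev$ becomes $\Delta_X\circ\mathrm{pr}_X$ and the slant product with $[S^1]$ collapses appropriately — but I would want to be careful that this restriction argument is stated for a Hausdorff $X$ so the relevant mapping spaces behave well, exactly as in the hypotheses. The rest is a direct invocation of the already-established Proposition~\ref{PropMVbdy} and the standard weight inequalities.
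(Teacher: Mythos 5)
Your main line is exactly the paper's proof: Proposition \ref{PropMVbdy} identifies the Mayer--Vietoris obstruction class $[a_\sigma]$ with $\ev^*\sigma/[S^1]$, Corollary \ref{CorGenwgt2} then gives $\wgt_\pi(\sigma)\geq 2$ for the relevant classes, and Theorem \ref{Thmwgtcup} together with Theorem \ref{Thmwgtprops}(1) yields the inequality. The detour through $E(X;R)$ being an ideal is unnecessary: for the cup-length bound you only need each factor $u_i$ to have weight at least $2$ and the product $u_1\cup\dots\cup u_m$ to be nonzero; the product need not itself lie in $E(X;R)$.

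The one point where you go beyond the paper is the attempt to verify $E(X;R)\subseteq\ker\pi^*$, and there your argument does not work. Restricting $\ev$ along the inclusion of constant loops turns $\ev$ into $\Delta_X\circ\pr_X$ with $\pr_X:X\times S^1\to X$ the projection; its pullback is of the form $\alpha\times 1$ on the $S^1$-factor, so the slant product with $[S^1]$ vanishes for degree reasons \emph{no matter what} $\sigma$ is, and the restriction detects nothing. In fact the inclusion fails for the set as literally defined: for any $u\in H^k(X;R)$ one has $\ev^*(u\times 1)=\pr_1^*e_0^*u$, whose slant with $[S^1]$ is zero, so $u\times 1\in E(X;R)$ even though it is not a zero-divisor (taking $X=T^n$ and the classes $u_i\times 1$ would then give the absurd bound $\TC(T^n)\geq 2n+1$). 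So the zero-divisor hypothesis, which both Proposition \ref{PropMVbdy} and Corollary \ref{CorGenwgt2} require, cannot be derived from $\ev^*\sigma/[S^1]=0$; it must be imposed, i.e.\ $E(X;R)$ has to be read as the set of \emph{zero-divisors} $\sigma$ with $\ev^*\sigma/[S^1]=0$, which is how the paper implicitly uses it (its own two-line proof glosses over the same point). With that reading your argument is complete and coincides with the paper's.
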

\begin{proof}
Combining  Proposition \ref{PropMVbdy} with Corollary \ref{CorGenwgt2} shows that $\wgt_\pi(\sigma) \geq 2$ for all $\sigma \in E(X;R)$. Thus, the inequality follows from Theorem \ref{Thmwgtcup} and Theorem \ref{Thmwgtprops}.(1). 
\end{proof}

In the upcoming sections, we will establish more tangible criteria for a class in $H^*(X \times X;R)$ to lie in $E(X;R)$. 

\begin{remark}
\label{Remarkcwgt2}
An analogue of Theorem \ref{TheoremTC} holds in the case of LS category. The LS category of a space $X$ is given by $\cat (X) = \secat(p_0:P_{x_0}X \to X)$, where $p_0(\gamma)=\gamma(1)$. It is well-known, see \cite[Example 1.61]{CLOT}, that the fiberwise join of $p_0$ with itself is homotopy-equivalent to $\Sigma\Omega X$, where $\Sigma$ denotes reduced suspension, and with  $e: \Sigma \Omega X \to X$, $e([\gamma,t])=\gamma(t)$,  one shows that $\sigma \in \widetilde{H}^*(X;R)$ satisfies $\wgt_{p_0}(\sigma) \geq 2$ if $u \in \ker [e^*:H^*(X;R)\to H^*(\Sigma \Omega X;R)]$. From this observation, one derives that in terms of the map $\ev_0: \Omega X \times S^1 \to X$, $\ev_0(\gamma,t)=\gamma(t)$, it follows with $E_0(X;R) := \{\sigma \in H^*(X;R)\; | \; \ev_0^*\sigma/[S^1]=0\}$ that
$$\cat(X) \geq 2\cl_R E_0(X;R)+1.$$
\end{remark}

\subsection{Spherical complexity weights}
\label{SubSCwgt}
Let $n \in \NN_0$ and $X$ be a path-connected Hausdorff space. We recall that $\SC_n(X) = \secat(r_n :B_{n+1}X \to S_{n}X).$ and want to carry out the Mayer-Vietoris construction from subsection \ref{SubGen} for the fibration $r_n$. The pullback $Q$ of $B_{n+1}X \stackrel{r_n}{\rightarrow}S_{n}X \stackrel{r_n}{\leftarrow} B_{n+1}X$ is explicitly given by 
$$Q = \left\{(f_1,f_2) \in B_{n+1}X \times B_{n+1}X \ | \ f_1|_{S^{n}}=f_2|_{S^{n}} \right\}.$$
Viewing $(f_1,f_2) \in Q$ as maps from the two hemispheres of $S^{n+1}$ which coincide on the equator, we obtain a homeomorphism $Q \approx \C^0(S^{n+1},X)$. The restriction maps $R_1,R_2:\C^0(S^{n+1}, X) \to B_{n+1}X$ to the two hemispheres make the following pullback diagram commutative:
$$\begin{CD}
\C^0(S^{n+1}, X) @>{R_2}>> B_{n+1} X  \\
@V{R_1}VV @V{r_n}VV \\
B_{n+1} X @>{r_n}>> S_{n} X.
\end{CD}$$
Following the construction from subsection \ref{SubGen}, the fiberwise join of $r_n:B_{n+1} X \to S_{n}X$ with itself, which we denote by
$$r_{n,2}: B_{n+1,2}X \to S_{n}X,$$
yields a long exact Mayer-Vietoris sequence of the form
$$\xymatrix{
 \dots \ar[r] & H^{k-1}(\C^0(S^{n+1},X);A) \ar[r]^{\quad \delta} & H^k(B_{n+1,2}X;A) \ar[r]^{j_1^* \oplus j_2^*\quad } & \bigoplus_{i=1}^2 H^k(B_{n+1}X;A) \ar[r]^{\qquad\ \ \  R_1^*-R_2^*} & \dots \\
  & & H^k(S_{n}X;A) \ar[u]^{r_{n,2}^*} \ar[ur]_{r_n^* \oplus r_n^*} & & & 
}$$
for any abelian group $A$. Let $h_n: B_{n+1}X \times [0,1] \to S_nX$ be defined as in the proof of Theorem \ref{TheoremSCcup}. Since $X$ is Hausdorff, $h_n$ is a homotopy from $r_n$ to $D_n := c_n \circ e_n$. 
Let $$k_n:C^*(S_{n}X;A) \to C^{*-1}(B_{n+1}X;A)$$ be the chain homotopy induced by $h_n$, so that $r_n^* - D_n^* = d \circ k_n + k_n \circ d$.

Let $u \in H^k(S_nX;A)$ with $\wgt_{r_n}(u) \geq 1$ be such that $u \in \ker D_n^*=\ker r_n^*$. Let $\varphi \in C^k(S_{n}X;A)$ be a cocycle representing $u$, such that 
$$r_n^*\varphi - D_n^*\varphi = (d\circ k_n)(\varphi).$$
To employ the method from subsection \ref{SubGen}, we need to study the cocycle
$$a_u \in C^{k-1}(\C^0(S^{n+1},X);A), \qquad a_u = R_1^*k_n(\varphi) - R_2^*k_n(\varphi),$$
introduced in Proposition \ref{PropMV}. We consider the maps
$$F_{n,1},F_{n,2}: \C^0(S^{n+1},X)\times [0,1] \to S_{n}X, \qquad F_{n,i} = h_n \circ (R_i\times \id_{[0,1]}).$$
One checks without difficulty that $F_{n,1}(\gamma,0)= F_{n,2}(\gamma,0)$ and $F_{n,1}(\gamma,1)=c_n(\gamma(1))=F_{n,2}(\gamma,1)$ for all $\gamma \in C^0(S^{n+1},X)$. Hence, $F_{n,1}$ and $F_{n,2}$ induce a map 
$$F_n: \C^0(S^{n+1},X)\times S^1 \to S_{n}X,  \qquad F_n(\gamma,t) = \begin{cases}
F_{n,1}(\gamma,1-2t) & \text{if } t \in [0,\frac12], \\
F_{n,2}(\gamma,2t-1) & \text{if } t \in (\frac12,1].
\end{cases}$$

\begin{prop}
\label{PropMVSC}
Let $u \in H^k(S_{n}X;A)$ with $k \geq 2$, with $u \in \ker c_n^*$. Then the cohomology class of $a_u \in C^{k-1}(\C^0(S^{n+1},X);A)$ is given by
$$[a_u] = -F_n^*u/[S^1].$$
\end{prop}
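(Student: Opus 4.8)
The plan is to mimic the proof of Proposition \ref{PropMVbdy} exactly, replacing the free loop space $LX$ by $\C^0(S^{n+1},X)$, the path fibration $\pi$ by $r_n$, the maps $r_1,r_2:LX\to PX$ by $R_1,R_2:\C^0(S^{n+1},X)\to B_{n+1}X$, and the homotopy $h$ by $h_n$. First I would record the slant-product description of the chain homotopy $k_n$ induced by $h_n$: just as in Lemma \ref{LemmaK}, one has $k_n(\varphi) = (h_n^*\varphi)/\iota$, where $\iota\in C_1([0,1])$ is the fundamental chain. This follows from the standard formula for the chain homotopy induced by a homotopy in terms of a prism operator, and the identification with the slant product is the same bookkeeping as in the cited proof of \cite[Theorem IV.16.4]{Bredon}.

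Next I would pick singular $1$-simplices $f_1,f_2:[0,1]\to S^1$ adapted to the two halves of $S^1$ corresponding to the two hemispheres, namely $f_1(t)=q(\tfrac12(1-2t))$-type parametrizations matching the case split in the definition of $F_n$; the precise normalization is chosen so that $\tau = \pm(f_1 - f_2) + \partial g$ for some $g\in C_2(S^1)$ (here $\tau$ is the fundamental $1$-cycle of $S^1$), and so that the compatibility identity
$$F_n\circ(\id\times f_i) = h_n\circ(R_i\times\id_{[0,1]}) = F_{n,i}$$
holds for $i\in\{1,2\}$, possibly after reparametrization. The sign and the orientation-reversal in the $t\in[0,\tfrac12]$ branch of $F_n$ (note the $1-2t$) are exactly what produces the minus sign in $[a_u]=-F_n^*u/[S^1]$, so I would be careful to track these.

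Then, with $\varphi\in C^k(S_nX;A)$ a cocycle representing $u$, I would expand $F_n^*\varphi/\tau$ using bilinearity of the slant product and its behaviour under the singular differential (as in \cite[p.287]{Spanier}): the $\partial g$ term contributes an exact cochain $\pm d(F_n^*\varphi/g)$, hence vanishes in cohomology, and the remaining terms are $\pm(F_n^*\varphi/f_1 - F_n^*\varphi/f_2)$. For each $i$ I would then compute, on a chain $c\in C_{k-1}(\C^0(S^{n+1},X))$,
$$(F_n^*\varphi/f_i)(c) = \varphi\big((F_{n,i})_*(c\times\iota)\big) = \varphi\big((h_n)_*(R_i\times\id)_*(c\times\iota)\big) = \big((R_i\times\id)^*h_n^*\varphi/\iota\big)(c) = (R_i^*k_n(\varphi))(c),$$
using the slant-product description of $k_n$ from the first step and compatibility of the slant product with pullbacks. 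Combining this with the formula $a_u = R_1^*k_n(\varphi) - R_2^*k_n(\varphi)$ from Proposition \ref{PropMV} gives $[a_u] = \pm[F_n^*\varphi/\tau] = \pm F_n^*u/[S^1]$, and the case split defining $F_n$ forces the sign to be $-1$.

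The main obstacle I anticipate is purely a matter of sign and orientation bookkeeping: verifying that the parametrizations $f_1,f_2$ can be chosen compatibly with both the gluing in $F_n$ and the chosen orientation $\tau$ of $S^1$, and confirming that the orientation reversal built into $F_n$ on $[0,\tfrac12]$ is exactly what turns the ``$+$'' in Proposition \ref{PropMVbdy} into the ``$-$'' here. There is no genuinely new topological input beyond what was used for topological complexity; the only subtlety is that $F_n$ is assembled from $F_{n,1},F_{n,2}$ with a flip, whereas in the $\TC$ case $\ev$ was defined directly, so I would double-check the identity $F_n\circ(\id\times f_i)=F_{n,i}$ against the explicit formula for $F_n$ and adjust the choice of $f_1$ (to run ``backwards'') accordingly. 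I would also note in passing that $\ker c_n^* = \ker D_n^* = \ker r_n^*$ by the argument already given in the proof of Theorem \ref{TheoremSCcup}, so the hypothesis $u\in\ker c_n^*$ is exactly what is needed to run Proposition \ref{PropMV}.
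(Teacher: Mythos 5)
Your proposal is correct and follows essentially the same route as the paper's proof: establish $k_n(\varphi)=h_n^*\varphi/\iota$ as in Lemma \ref{LemmaK}, decompose the fundamental cycle of $S^1$ into two $1$-simplices matching the two branches of $F_n$, identify each resulting slant product with $R_i^*k_n(\varphi)$, and track the sign coming from the time reversal in the first branch of $F_n$. The paper handles the sign by keeping the parametrizations $f_1(t)=q(t/2)$, $f_2(t)=q(1-t/2)$ fixed and inserting the reversal map $s(t)=1-t$, so that $F_n\circ(\id\times f_i)=h_n\circ(R_i\times s)$ and $(f_1)_*s_*\iota-(f_2)_*s_*\iota$ represents $-[S^1]$; this is exactly the bookkeeping you flagged, resolved in the way you anticipated.
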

\begin{proof}
Consider again the singular $1$-simplices $f_1,f_2: [0,1] \to S^1$, $f_1(t) = q(\frac{t}{2})$, $f_2(t)=q(1-\frac{t}{2})$, with $q:[0,1]\to S^1$ denoting the quotient space projection. We compute that
$$F_n(\gamma,f_i(t)) = F_{n,i}(\gamma,1-t)=h_n(R_i(\gamma),1-t) \quad \forall i \in \{1,2\}, \; t \in [0,1], \; \gamma \in \C^0(S^{n+1},X).$$
 With $s:[0,1] \to [0,1]$, $s(t)=1-t$, we rewrite this as
$$F_n \circ (\id_{\C^0(S^{n+1},X)}\times f_i)= h_n \circ (R_i \times s) \qquad \forall i \in \{1,2\}.$$
As in the proof of Lemma \ref{LemmaK}, one shows that the chain homotopy $k_n$ is given by $k_n(\varphi) = h_n^*(\varphi)/\iota.$
Using this observation and letting $\left<\cdot,\cdot \right>$ denote chain-level Kronecker products, we compute for $i \in \{1,2\}$ and $c \in C_{k-1}(\C^0(S^{n+1},X);A)$ that
\begin{align*}
\left<R_i^*k_n(\varphi),c \right>&= \left< \varphi, (h_n)_*((R_i)_*(c)\times \iota\right> = \left<\varphi,(h_n \circ (R_i\times s))_*(c \times s_*\iota) \right> \\
&= \left<\varphi,(F_n \circ (\id_{\C^0(S^{n+1},X)}\times f_i))_*(c \times s_*\iota) \right> = \left<F_n^*\varphi/((f_i)_*s_*\iota),c\right>. 
\end{align*}
Consequently, 
$$a_u = R_1^*k_n(\varphi) - R_2^*k_n(\varphi) = F_n^*\varphi/((f_1)_*s_*\iota - (f_2)_*s_*\iota).$$
It is easy to see that $(f_1)_*s_*\iota - (f_2)_*s_*\iota$ is a cycle representing the class $-[S^1]$, so passing to cohomology shows that $ [a_u] =F_n^*u/(-[S^1])= - F_n^*u/[S^1]$.
\end{proof}

We denote the sectional category weight of $u \in H^*(S_nX;A)$ with respect to $r_n$ by $$\wgt_n(u):= \wgt_{r_n}(u).$$

\begin{theorem}
\label{ThmSCnEn}
Let $R$ be a commutative ring and put 
$$E_n(X;R) := \left\{ \sigma \in H^*(S_nX;R) \ \middle| \ \deg \sigma \geq 2, \ \  F_n^*\sigma/[S^1]=0\right\}.$$
Then 
$$\SC_n(X) \geq 2 \cl_R(E_n(X;R))+1.$$
\end{theorem}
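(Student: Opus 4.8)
The plan is to mirror exactly the argument used for Theorem \ref{TheoremTC} in the case of topological complexity, now applied to the fibration $r_n : B_{n+1}X \to S_nX$. The key technical input has already been assembled: Proposition \ref{PropMVSC} computes the cohomology class of the Mayer--Vietoris cochain $a_u$ as $[a_u] = -F_n^*u/[S^1]$, and Corollary \ref{CorGenwgt2} tells us that if this class vanishes then $\wgt_{r_n}(u) \geq 2$. So the first step is to observe that for any $\sigma \in E_n(X;R)$ we have $\deg \sigma \geq 2$ and $F_n^*\sigma/[S^1] = 0$; by Proposition \ref{PropMVSC} this says precisely that the cocycle $a_\sigma$ constructed in Proposition \ref{PropMV} (with respect to the fibration $r_n$, whose associated pullback space is $\C^0(S^{n+1},X)$, with restriction maps $R_1, R_2$) is a coboundary, i.e. $[a_\sigma] = 0$. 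Note that a class in $E_n(X;R)$ automatically lies in $\ker c_n^* = \ker r_n^*$: indeed $F_n^*\sigma/[S^1] = 0$ forces $\sigma$ to restrict trivially enough, but more carefully one should check $\sigma \in \ker r_n^*$ is needed to even form $a_\sigma$ — this follows because $F_n$ restricted to $\C^0(S^{n+1},X) \times \{0\}$ factors through $r_n$ composed with the diagonal-type map, so if $F_n^*\sigma/[S^1]=0$ one still needs the hypothesis that $\sigma$ is a "zero-divisor" analogue; in fact the cleanest route is to simply note that the membership $\sigma \in \ker c_n^*$ is implied by the vanishing of $F_n^*\sigma/[S^1]$ together with $\deg\sigma\geq 1$, or else to build it into the definition — here one should follow whichever the paper's definition of $E_n$ literally entails.

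Granting that, the second step is to invoke Corollary \ref{CorGenwgt2}: since $[a_\sigma] = 0$ for every $\sigma \in E_n(X;R)$, we obtain $\wgt_n(\sigma) = \wgt_{r_n}(\sigma) \geq 2$ for all such $\sigma$. The third step is the cup-length bootstrap. Suppose $\cl_R(E_n(X;R)) = m$, so there exist $\sigma_1, \dots, \sigma_m \in E_n(X;R)$ with positive degrees and $\sigma_1 \cup \dots \cup \sigma_m \neq 0$ in $H^*(S_nX;R)$. By Theorem \ref{Thmwgtcup} (superadditivity of sectional category weight under cup products),
$$\wgt_{r_n}(\sigma_1 \cup \dots \cup \sigma_m) \geq \sum_{i=1}^m \wgt_{r_n}(\sigma_i) \geq \sum_{i=1}^m 2 = 2m.$$
Finally, by Theorem \ref{Thmwgtprops}(1), the existence of a nonzero class of sectional category weight at least $2m$ forces $\secat(r_n) \geq 2m + 1$, i.e. $\SC_n(X) \geq 2\cl_R(E_n(X;R)) + 1$, as claimed. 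If $\cl_R(E_n(X;R)) = +\infty$ the same inequalities show $\SC_n(X) = +\infty$ and the statement holds trivially.

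I expect the only real subtlety — and hence the main obstacle, though a minor one — to be bookkeeping around hypotheses: one must make sure that each $\sigma_i \in E_n(X;R)$ actually lies in $\ker r_n^*$ so that Proposition \ref{PropMV} and Proposition \ref{PropMVSC} apply to it, and that the cup product $\sigma_1 \cup \dots \cup \sigma_m$, while not itself required to lie in $E_n(X;R)$, is still nonzero so that Theorem \ref{Thmwgtprops}(1) has a nonzero class to act on. Both are handled by the setup: the vanishing condition defining $E_n(X;R)$ is exactly the condition that the Mayer--Vietoris obstruction cochain is null-cohomologous, which via the commuting triangle in subsection \ref{SubGen} is equivalent to $r_{n,2}^*\sigma = 0$ and in particular implies $r_n^*\sigma = 0$ (since $r_n^*$ factors through $r_{n,2}^*$ up to the maps $j_i$); and the nonvanishing of the product is a hypothesis packaged into $\cl_R$. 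So the proof is essentially a three-line assembly of the preceding results, and the heavy lifting has all been done in Proposition \ref{PropMVSC}.
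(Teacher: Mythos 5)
Your proof is correct and follows exactly the paper's argument: Proposition \ref{PropMVSC} combined with Corollary \ref{CorGenwgt2} (applied to $p=r_n$) gives $\wgt_n(\sigma)\geq 2$ for every $\sigma\in E_n(X;R)$, and Theorem \ref{Thmwgtcup} together with Theorem \ref{Thmwgtprops}(1) then yields $\SC_n(X)\geq 2\cl_R(E_n(X;R))+1$. The hypothesis $\sigma\in\ker c_n^*=\ker r_n^*$ that you worry about is indeed required to invoke those two results and is not literally part of the displayed definition of $E_n(X;R)$; the paper's own proof passes over this point just as silently, and in the intended applications it holds automatically (e.g.\ by Lemma \ref{LemmaZn} for classes of the form $Z_n(u)$), so your flagged "subtlety" is a fair observation about the statement rather than a defect of your argument.
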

\begin{proof}
Combining Proposition \ref{PropMVSC} with Corollary \ref{CorGenwgt2} applied to $p=r_n$ yields $\wgt_n(\sigma) \geq 2$ for all $\sigma \in E_n(X;R)$. The inequality thus follows from Theorem \ref{Thmwgtcup} and part 1 of Theorem \ref{Thmwgtprops}.
\end{proof}

We want to establish an extension of Theorem \ref{ThmSCnEn} that considers the subspace complexities $\SC_{n,X}(A)$ in a similar way. 

\begin{theorem}
\label{ThmScnEnrel}
Let $R$ be a commutative ring and let $E_n(X;R)$ be defined as in Theorem \ref{ThmSCnEn}. Let $A \subset S_nX$ and let $i_A:A \hookrightarrow S_nX$ denote the inclusion. Then 
$$\SC_{n,X}(A) \geq 2 \cl_R \left(i_A^*(E_n(X;R)) \right)+1.$$
\end{theorem}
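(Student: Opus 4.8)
The plan is to transfer the argument from the proof of Theorem \ref{ThmSCnEn} to the relative setting, replacing the fibration $r_n$ by its restriction $r_{n,A}:B_{n+1}(X,A) \to A$ and then combining Schwarz-style weight estimates with naturality of sectional category weight under pullback. First I would recall from Proposition \ref{PropMVSC} and Corollary \ref{CorGenwgt2} (applied to $p = r_n$) that every $\sigma \in E_n(X;R)$ satisfies $\wgt_n(\sigma) = \wgt_{r_n}(\sigma) \geq 2$. The key observation is that $r_{n,A}$ is, up to the obvious identification, the pullback $i_A^* r_n$ of $r_n$ along the inclusion $i_A:A \hookrightarrow S_nX$; this is immediate from $B_{n+1}(X,A) = r_n^{-1}(A)$. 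Hence part (3) of Theorem \ref{Thmwgtprops} applies: for any $\sigma \in E_n(X;R)$ with $i_A^*\sigma \neq 0 \in H^*(A;R)$ we get
\[
\wgt_{r_{n,A}}(i_A^*\sigma) \geq \wgt_{r_n}(\sigma) \geq 2.
\]

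Next I would combine this with the superadditivity of weight under cup products, Theorem \ref{Thmwgtcup}. Suppose $r := \cl_R(i_A^*(E_n(X;R)))$, so there exist $\sigma_1,\dots,\sigma_r \in E_n(X;R)$ of positive degree with $i_A^*\sigma_1 \cup \dots \cup i_A^*\sigma_r = i_A^*(\sigma_1 \cup \dots \cup \sigma_r) \neq 0$ in $H^*(A;R)$. (Here one should note that $\sigma_1 \cup \dots \cup \sigma_r$ itself need not lie in $E_n(X;R)$, but that is irrelevant: we only need each individual $i_A^*\sigma_j$ to be nonzero, which follows since the product of its pullbacks is nonzero.) Then each $i_A^*\sigma_j \neq 0$, so $\wgt_{r_{n,A}}(i_A^*\sigma_j) \geq 2$ for all $j$, and Theorem \ref{Thmwgtcup} gives
\[
\wgt_{r_{n,A}}(i_A^*\sigma_1 \cup \dots \cup i_A^*\sigma_r) \;\geq\; \sum_{j=1}^r \wgt_{r_{n,A}}(i_A^*\sigma_j) \;\geq\; 2r.
\]
Finally, by part (1) of Theorem \ref{Thmwgtprops} applied to the fibration $r_{n,A}$, a nonzero class of weight at least $2r$ forces $\secat(r_{n,A}) \geq 2r + 1$, i.e. $\SC_{n,X}(A) = \secat(r_{n,A}) \geq 2\cl_R(i_A^*(E_n(X;R))) + 1$, which is the claim. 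The case $r = 0$ or $r = \infty$ is handled by the usual conventions (if $i_A^*(E_n(X;R))$ contains only the zero class the bound reads $\SC_{n,X}(A) \geq 1$, which is trivial; if the cup length is infinite the inequality is vacuously satisfied since $\SC_{n,X}(A) \in \NN \cup \{+\infty\}$).

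I expect the only genuine subtlety — and hence the point to state carefully rather than the point that is hard — is the identification of $r_{n,A}$ with the pullback $i_A^* r_n$, together with the compatibility of the class $a_u$ (and thus the role of $E_n(X;R)$) with this restriction; but since $E_n(X;R)$ is defined intrinsically on $S_nX$ and weight is pulled back along $i_A$ by Theorem \ref{Thmwgtprops}(3), no new geometric input about $F_n$ is needed and the argument is essentially formal. No substantial new calculation is required beyond invoking Theorem \ref{ThmSCnEn}'s mechanism and the naturality already recorded in Theorem \ref{Thmwgtprops}.
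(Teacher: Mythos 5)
Your proof is correct and follows the same route as the paper: identify $\SC_{n,X}(A)=\secat(i_A^*r_n)$, use Theorem \ref{Thmwgtprops}(3) to pull back the weight-two property of classes in $E_n(X;R)$ established via Proposition \ref{PropMVSC} and Corollary \ref{CorGenwgt2}, and conclude by superadditivity of weights under cup products together with Theorem \ref{Thmwgtprops}(1). The paper simply compresses the final cup-length step into the phrase ``in analogy with Theorem \ref{ThmSCnEn},'' which you have written out explicitly.
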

\begin{proof}
We recall that $\SC_{n,X}(A) = \secat(i_A^*r_n)$. If $u \in H^*(S_nX;R)$ satisfies $i_A^*u \neq 0 \in H^*(A;R)$, then it will follow from part 3 of Theorem \ref{Thmwgtprops} that $\wgt_{i_A^*r_n}(i_A^*u) \geq \wgt_n(u)$. Hence, $\wgt_{i_A^*r_n}(i_A^*u) \geq 2$ for each $u \in E_n(X;R)$ with $i_A^*u\neq 0$ and the claim follows in analogy with Theorem \ref{ThmSCnEn}.
\end{proof}

\section{Geometric criteria for cohomology classes of weight two}
\label{SectionGeomweight}
The conditions we derived in Section \ref{SectionWeights}  for $u \in H^*(S_nX;A)$ to have spherical complexity weight two or higher are a priori not particularly easy to check. In this section, we will consider special types of such cohomology classes for which these conditions become much more explicit. 

The results of this section generalize existing results for closed symplectic manifolds $(M,\omega)$. from \cite{TCsymp}. There, Grant and the author have shown that if $(M,\omega)$ is symplectically atoroidal, i.e. if $f^*[\omega]=0$ for each continuous $f:T^2 \to M$, then $\wgtTC([\bar\omega])=2$, where $\bar\omega:= 1 \times \omega - \omega \times 1 \in \Omega^2(M)$. This directly implies that $\TC(M)=2\dim M +1$.

Our results in this section generalize these observations to $n$-spherical complexitites for arbitrary $n \in \NN$. Given a cohomology class $u \in H^k(X;R)$ with $k \geq n+2$, we will associate a class $Z_n(u) \in H^{k-n}(S_nX;R)$ with it and establish a geometric criterion on $u$ generalizing the above atoroidality condition, which will yield that $Z_n(u) \in E_n(X;R)$ and thus $\wgt_n(Z_n(u))\geq 2$.

Similar to the previous section, we state the results for $\TC$-weight in a separate subsection.

\subsection{TC-weights}
\label{SubGeomTCwgt}

We put $\wgtTC:= \wgt_\pi$ for the $\TC$-weight of a cohomology class. We want to focus on a particular type of zero-divisors in this section. Namely, for $u \in H^*(X;R)$, where $R$ is a commutative ring, we consider
$$\bar{u} \in H^*(X \times X;R), \qquad \bar{u} := 1 \times u - u \times 1 ,$$
with $1 \in H^0(X;R)$ being the unit element.
Evidently, any such $\bar{u}$ is  a zero-divisor. We consider the right $S^1$-action on $LX$ by rotations, i.e. the map $\rho: LX  \times S^1 \to LX$, $(\rho(\alpha,s))(t) = \alpha(t+s)$, and let
$$I: C^*(LX;R) \to C^{*-1}(LX;R), \quad I(c) = \rho^*c/\tau,$$
where $\tau \in C_1(S^1)$ again denotes a representative of the fundamental class of $S^1$. Since $\tau$ is a cycle, $I$ is a chain map and we let $\bar{I}:H^*(LX;R) \to H^{*-1}(LX;R)$ denote the map induced by $I$.

\begin{prop}
\label{PropRotwgt}
Let $u \in H^k(X;R)$ with $k \geq 2$ and $u \neq 0$. If 
$$u \in \ker \left[\bar{I} \circ e_0^*:H^k(X;R) \to H^{k-1}(LX;R) \right]$$
then $\wgtTC(\bar{u})\geq 2$.
\end{prop}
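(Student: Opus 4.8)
The plan is to apply Theorem \ref{TheoremTC}, which requires us to show that $\bar u$, or rather a cohomology class with the same cup-length contribution, lies in $E(X;R)$; equivalently, by Proposition \ref{PropMVbdy} and Corollary \ref{CorGenwgt2}, it suffices to prove $\ev^*\bar u/[S^1]=0$, where $\ev:LX\times S^1\to X\times X$, $\ev(\alpha,t)=(\alpha(0),\alpha(t))$. So the entire proof reduces to a slant-product computation identifying $\ev^*\bar u/[S^1]$ with (up to sign) the class $\bar I(e_0^*u)\in H^{k-1}(LX;R)$, and then invoking the hypothesis that the latter vanishes.

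First I would unwind $\ev^*\bar u$. Writing $\ev = (e_0\circ\pr_{LX},\Ev)$ where $\Ev:LX\times S^1\to X$ is the evaluation $\Ev(\alpha,t)=\alpha(t)$ and $\pr_{LX}:LX\times S^1\to LX$ is the projection, one has $\ev^*(1\times u) = \Ev^*u$ and $\ev^*(u\times 1) = \pr_{LX}^*e_0^*u$. Hence $\ev^*\bar u = \Ev^*u - \pr_{LX}^*e_0^*u$. Now slant with $[S^1]$: the term $(\pr_{LX}^*e_0^*u)/[S^1]$ vanishes because it factors through the projection $LX\times S^1\to LX$, and slanting a pulled-back class along $[S^1]$ with respect to a product projection gives zero (the cross-component is constant in the $S^1$-direction, so pairing with the fundamental cycle of $S^1$ kills it). This leaves $\ev^*\bar u/[S^1] = \Ev^*u/[S^1]$.

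Next I would connect $\Ev^*u/[S^1]$ to the rotation operator $\bar I$. Observe that $\Ev$ and the map $LX\times S^1\to X$, $(\alpha,t)\mapsto(\rho(\alpha,t))(0) = \alpha(t)$, are literally the same map (evaluation of the rotated loop at $0$ is evaluation of the original loop at $t$). That is, $\Ev = e_0\circ\rho$ as maps $LX\times S^1\to X$, where $\rho:LX\times S^1\to LX$ is the rotation action. Therefore $\Ev^*u = \rho^*e_0^*u$, and slanting with a representing cycle $\tau$ of $[S^1]$ gives $\Ev^*u/[S^1] = [\rho^*(e_0^*u\text{ cocycle})/\tau] = \bar I(e_0^*u)$ by the very definition of $I$ and $\bar I$. (One must take a cocycle representative of $u$, push it to a cocycle representative of $e_0^*u$, apply $\rho^*$ and slant, then pass to cohomology — all of this is routine because $\tau$ is a cycle so $I$ is a chain map, as already noted in the excerpt.) Combining the two steps, $\ev^*\bar u/[S^1] = \bar I(e_0^*u)$, which is zero by hypothesis. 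Hence $\bar u\in E(X;R)$, so $\wgt_\pi(\bar u)\geq 2$ by Theorem \ref{TheoremTC} (or, more precisely, by the combination of Proposition \ref{PropMVbdy} and Corollary \ref{CorGenwgt2} that underlies it), which is the claim.

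The main obstacle is bookkeeping the signs and the passage between chain-level and cohomology-level statements in the slant-product manipulations — in particular making sure the identification $\ev = (e_0\circ\pr_{LX},e_0\circ\rho)$ is used consistently and that the vanishing of $(\pr_{LX}^*e_0^*u)/[S^1]$ is justified cleanly (via the fact that $\pr_{LX}$ is independent of the $S^1$-coordinate, so that its pullback slants trivially against $[S^1]$, using the compatibility of the slant product with pullbacks as on \cite[p.\ 287]{Spanier}). None of this is conceptually hard, but it is the only place where care is genuinely required; everything else is a direct substitution into Theorem \ref{TheoremTC}.
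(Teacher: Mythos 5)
Your proposal is correct and follows essentially the same route as the paper: both decompose $\ev=(e_0\circ\pr_1)\times(e_0\circ\rho)$, identify $\ev^*\bar u/[S^1]$ with $\bar I(e_0^*u)$ after observing that the $\pr_1$-term slants to zero against $[S^1]$, and conclude via Proposition \ref{PropMVbdy} and Corollary \ref{CorGenwgt2}. The only cosmetic difference is that the paper carries out the computation at the chain level on the explicit cocycle $a_{\bar u}$ rather than first reducing to a cohomology-level slant identity.
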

\begin{proof}
Let $\varphi\in C^k(X;R)$ be a representative of $u$ and put $\bar\varphi := 1 \times \varphi - \varphi \times 1 \in C^k(X\times X;R)$. We want to investigate the cocycle $a_{\bar{u}} = \ev^*\bar\varphi/\tau \in C^{k-1}(LX;R)$. 
One checks without difficulties that $\ev = (e_0 \circ \pr_1) \times (e_0 \circ \rho)$, where $\pr_1:LX \times S^1 \to LX$ is the projection onto the first factor. Consequently, $\ev^*\bar{\varphi}= \ev^*(1 \times \varphi) - \ev^*(\varphi \times 1)= \rho^*e_0^*\varphi - \pr_1^*e_0^*\varphi$, which yields
$$a_{\bar{u}} = \ev^*\bar{\varphi}/\tau = \rho^*e_0^*\varphi/\tau - \pr_1^*e_0^*\varphi/\tau = I(e_0^*\varphi)-\pr_1^*e_0^*\varphi/\tau.$$
Since $(e_0)_*(\pr_1)_*\tau$ is easily seen to be a degenerate chain, it follows that $(\bar{I} \circ e_0^*)(u) = [a_{\bar{u}}]$ and Corollary \ref{CorGenwgt2} applied to the path fibration $\pi$ shows the claim.
\end{proof}

\begin{cor}
\label{Corslant}
Let $e: LX\times S^1 \to X$, $e(\alpha,t)= \alpha(t)$, and let $u\in H^k(X;R)$ with $k \geq 2$ and $u \neq 0$. If $e^*u/[S^1]=0$, then $\wgtTC(\bar{u})\geq 2$.
\end{cor}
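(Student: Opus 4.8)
The plan is to deduce this corollary from Proposition \ref{PropRotwgt} by showing that the hypothesis $e^*u/[S^1]=0$ implies $(\bar{I}\circ e_0^*)(u)=0$. The key point is to relate the map $e:LX\times S^1\to X$, $e(\alpha,t)=\alpha(t)$, to the rotation action $\rho$ and the basepoint evaluation $e_0$. Concretely, I would observe that $e = e_0\circ\rho$ as maps $LX\times S^1\to X$, since $(e_0\circ\rho)(\alpha,t)=(\rho(\alpha,t))(0)=\alpha(0+t)=\alpha(t)=e(\alpha,t)$. Therefore, for a representative cocycle $\varphi\in C^k(X;R)$ of $u$, we have $e^*\varphi=\rho^*e_0^*\varphi$, and hence, slanting with the cycle $\tau\in C_1(S^1)$ representing $[S^1]$,
\begin{equation*}
e^*\varphi/\tau = \rho^*(e_0^*\varphi)/\tau = I(e_0^*\varphi).
\end{equation*}
Passing to cohomology, this gives $e^*u/[S^1] = (\bar{I}\circ e_0^*)(u)$ as elements of $H^{k-1}(LX;R)$.

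Given this identity, the corollary is immediate: the hypothesis $e^*u/[S^1]=0$ says precisely that $(\bar{I}\circ e_0^*)(u)=0$, i.e. $u\in\ker[\bar{I}\circ e_0^*]$, which is exactly the hypothesis of Proposition \ref{PropRotwgt}. Since we also assume $k\geq 2$ and $u\neq 0$, that proposition applies verbatim and yields $\wgtTC(\bar{u})\geq 2$, as claimed.

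The only mild subtlety — and the step I would write out with a little care — is the chain-level compatibility of the slant product with the pullback along $\rho$, namely that $\rho^*(c)/\tau = \rho^*(c/\tau)$ is \emph{not} quite what is needed (degrees do not match); rather one needs $(\rho^* c)/\tau$ to be computed as the composite defining $I$, exactly as $I$ was defined just before Proposition \ref{PropRotwgt} as $I(c)=\rho^*c/\tau$. So the identification $e^*\varphi/\tau = I(e_0^*\varphi)$ is really just unwinding the two definitions together with the pointset identity $e=e_0\circ\rho$ and the naturality of the slant product under maps of the $LX$-factor (property 1 on \cite[p.~287]{Spanier}, already invoked in the proof of Proposition \ref{PropMVbdy}). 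There is no genuine obstacle here; the work is entirely bookkeeping, and the substantive content has already been carried by Proposition \ref{PropRotwgt}.
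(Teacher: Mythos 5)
Your proposal is correct and follows exactly the paper's own argument: the paper's proof likewise observes $e = e_0 \circ \rho$, concludes $e^*u/[S^1] = (\bar{I}\circ e_0^*)(u)$, and invokes Proposition \ref{PropRotwgt}. The extra care you take with the slant-product bookkeeping is sound but not needed beyond what the definition of $I$ already provides.
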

\begin{proof}
Since $e= e_0 \circ \rho$, it holds that $e^*u/[S^1]= \rho^*(e_0^*u)/[S^1]= (\bar{I}\circ e_0^*)(u)$, so the first part of the claim follows from Proposition \ref{PropRotwgt}. 
\end{proof}

Let $R$ be a commutative ring with unit and assume that $R$ is not a field of characteristic two. We recall that a class $u \in H_k(Y;R)$ is called \emph{realisable}, where $Y$ is a topological space, if there exist a closed oriented $k$-dimensional manifold $P$ and a continuous map $f:P \to Y$, such that $u = f_*[P]$, where $[P] \in H_k(P;R)$ denotes the $R$-fundamental class of $P$. We call $H_k(Y;R)$ \emph{realisable} if it is generated as an $R$-module by realisable classes. If $R$ is a field of characteristic two, one drops the requirement that $P$ has to be oriented and formulates the notion of realisable classes in the same way. It was shown by R. Thom in \cite{ThomQuelques}, that $H_*(Y;\QQ)$ and $H_k(Y;\ZZ_2)$ are realisable for any space $Y$. We will use this observation in several proofs in this section.

\begin{prop}
\label{Propsigmapull}
Let $k \in \NN$ with $k\geq 2$ and let $u \in H^k(X;\QQ)$ with $u \neq 0$. If $f^*u =0$ for all continuous maps $f:P\times S^1 \to X$, where $P$ is an arbitrary closed oriented $(k-1)$-dimensional manifold, then $\wgtTC(\bar{u}) \geq 2$.
\end{prop}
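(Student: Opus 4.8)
The plan is to reduce the statement to Corollary \ref{Corslant}: it suffices to prove that, under the stated hypothesis, the class $e^*u/[S^1] \in H^{k-1}(LX;\QQ)$ vanishes, where $e: LX\times S^1 \to X$, $e(\alpha,t)=\alpha(t)$. Once this is established, Corollary \ref{Corslant} immediately yields $\wgtTC(\bar u)\geq 2$.

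To prove $e^*u/[S^1]=0$, I would first exploit that $\QQ$ is a field. The universal coefficient theorem then gives a natural isomorphism $H^{k-1}(LX;\QQ)\cong \Hom_\QQ(H_{k-1}(LX;\QQ),\QQ)$ (with no finite-type hypothesis on $LX$ needed), so a class in $H^{k-1}(LX;\QQ)$ is zero precisely when it pairs to zero with every class in $H_{k-1}(LX;\QQ)$. By Thom's theorem \cite{ThomQuelques}, $H_{k-1}(LX;\QQ)$ is generated by realisable classes, i.e. by classes of the form $g_*[P]$ with $P$ a closed oriented $(k-1)$-dimensional manifold and $g:P\to LX$ continuous. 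Hence it is enough to check that $\langle e^*u/[S^1],\,g_*[P]\rangle=0$ for all such $g$.

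The key step is then a naturality computation for the slant product. Pulling the class $e^*u/[S^1]$ back along $g$ and using the compatibility of the slant product with pullbacks gives $g^*(e^*u/[S^1]) = (g\times\id_{S^1})^*(e^*u)/[S^1] = f^*u/[S^1]$, where $f := e\circ(g\times\id_{S^1}):P\times S^1\to X$, $f(p,t)=g(p)(t)$. This $f$ is exactly a continuous map from a product $P\times S^1$ into $X$ with $P$ closed oriented of dimension $k-1$, so the hypothesis of the proposition gives $f^*u=0$, hence $g^*(e^*u/[S^1])=0$, and therefore $\langle e^*u/[S^1],\,g_*[P]\rangle = \langle g^*(e^*u/[S^1]),[P]\rangle = 0$. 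Combining with the previous paragraph, $e^*u/[S^1]=0$, and the proposition follows from Corollary \ref{Corslant}.

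I do not expect a genuine obstacle here: the argument is essentially bookkeeping once the slant-product naturality identities (the adjunction $\langle \alpha/z,y\rangle=\langle\alpha,y\times z\rangle$ and $(f\times\id)^*\alpha/z = f^*(\alpha/z)$) are set up correctly, and signs are irrelevant since only vanishing is at stake. The one point deserving care is the passage from "vanishing against all realisable homology generators" to "the cohomology class is zero", which genuinely uses that we work over the field $\QQ$ together with Thom's realisability theorem; this is also where the rational (rather than integral) coefficients in the statement come from.
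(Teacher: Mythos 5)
Your proposal is correct and follows essentially the same route as the paper: reduce to Corollary \ref{Corslant}, use field coefficients plus Thom's realisability theorem to test $e^*u/[S^1]$ against classes $g_*[P]$, and then identify the resulting pairing with $\langle \tilde f^*u,[P\times S^1]\rangle$ via slant-product naturality, where $\tilde f$ is the adjoint of $g$. The only cosmetic difference is that you pull back along $g$ before pairing with $[P]$, whereas the paper pairs directly with $g_*[P]$ using the adjunction $\langle\alpha/z,y\rangle=\langle\alpha,y\times z\rangle$; these are the same computation.
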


\begin{proof}
We want to show that under the given assumptions it holds that $e^*u/[S^1]=0$ and apply Corollary \ref{Corslant}. Let $\varphi \in C^k(X;\QQ)$ be a representative of $u$ and consider $e^*\varphi/\tau \in C^{k-1}(LX;\QQ)$. Since we consider field coefficients, to show that $[e^*\varphi/\tau]=0$ it suffices to show that its Kronecker pairing with all elements of $H_{k-1}(LX;\QQ)$ vanishes. 

Let $P$ be a closed oriented $(k-1)$-dimensional manifold and let $f:P \to LX$ be continuous. Let $\tilde{f}: P \times S^1 \to X$ be the adjoint map of $f$ and let $\left< \cdot, \cdot\right>$ denote both the chain-level and the homology-level Kronecker pairing and let $\sigma_P \in C_{k-1}(P;\QQ)$ a fundamental cycle of $P$. Using the simple observation that $e \circ (f \times \id_{S^1})=\tilde{f}$, we derive by definition of the slant product that
$$\left<e^*\varphi/\tau,f_*\sigma_P \right>=\left<e^*\varphi,f_*\sigma_P\times \tau\right>
= \left<\varphi, e_*(f\times\id_{S^1})_*(\sigma_P \times \tau) \right> = \left<\tilde{f}^*\varphi, \sigma_P \times \tau \right>.$$
Passing to (co)homology, we obtain $$\left<e^*u/[S^1], f_*[P]\right>=\left<[e^*\varphi/\tau],[f_*\tau_P] \right>=\left<\tilde{f}^*u,[ P\times S^1] \right>=0$$ by assumption on $u$. Hence, since $H_{k-1}(LX;\QQ)$ is realisable, $e^*u/[S^1]=0$ and the claim follows from Corollary \ref{Corslant}.
\end{proof}
In the case of $k=2$ in Proposition \ref{Propsigmapull},  the claim is valid for arbitrary coefficient rings, since the Kronecker pairing is always non-degenerate in degree $1$ and $H_1(LX;R)$ is realizable for any coefficient ring by the Hurewicz theorem.
\begin{cor}
\label{CorAtor}
Let $R$ be a commutative ring and $u \in H^2(X;R)$ with $u \neq 0$. If $u$ is \emph{atoroidal}, i.e. if $f^*u =0$ for every continuous map $f:T^2=S^1 \times S^1 \to X$, then $\wgtTC(\bar{u})=2$.
\end{cor}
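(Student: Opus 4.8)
The plan is to deduce Corollary \ref{CorAtor} from Proposition \ref{Propsigmapull} together with the degree-$2$ remark that immediately precedes it. The statement of Corollary \ref{CorAtor} concerns $u \in H^2(X;R)$ for an arbitrary commutative ring $R$, so the quickest route is to apply the $k=2$ case of Proposition \ref{Propsigmapull}, which — as the paragraph following that proposition records — is valid for any coefficient ring because in degree $1$ the Kronecker pairing is non-degenerate and $H_1(LX;R)$ is realisable by the Hurewicz theorem (it is generated by loops $S^1 \to LX$, whose adjoints are maps $T^2 \to X$). Concretely, I would argue as follows. Suppose $u$ is atoroidal. For $k=2$, the only closed oriented $(k-1)$-dimensional manifold is $S^1$ (up to orientation and taking disjoint copies), so any continuous map $f:P \times S^1 \to X$ with $P$ a closed oriented $1$-manifold is, on each circle component of $P$, a map $T^2 \to X$. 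Since $f^*u$ is determined by its restriction to the components of $P \times S^1$, each of which is a $2$-torus, atoroidality of $u$ gives $f^*u = 0$ for every such $f$. Hence the hypothesis of Proposition \ref{Propsigmapull} is met (in the form valid over an arbitrary ring $R$), and we conclude $\wgtTC(\bar{u}) \geq 2$.

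It remains to upgrade the inequality $\wgtTC(\bar u)\ge 2$ to the equality $\wgtTC(\bar u)=2$. For this I would invoke part (2) of Theorem \ref{Thmwgtprops}, which states that $\wgt_p(v) \leq \deg v$ for any non-zero class $v$. Here $\deg \bar{u} = \deg u = 2$, and $\bar u = 1\times u - u \times 1 \neq 0$ since $u \neq 0$ (as the two summands live in complementary Künneth-type summands of $H^2(X\times X;R)$, or more simply since $\Delta_X^*$ applied to a putative relation and the projection $\mathrm{pr}_1^*$ detect $u$ itself). Therefore $\wgtTC(\bar u) \leq 2$, and combined with the previous paragraph this yields $\wgtTC(\bar u) = 2$.

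I do not expect any genuine obstacle: both ingredients (Proposition \ref{Propsigmapull} in its degree-$2$ form, and the upper bound $\wgt_p \leq \deg$) are already available in the excerpt, and the only thing to check is the elementary observation that a map from $P\times S^1$ with $P$ a closed oriented $1$-manifold factors componentwise through $2$-tori, so that "atoroidal" is exactly the $k=2$ instance of the hypothesis of Proposition \ref{Propsigmapull}. The mildest point of care is the non-vanishing of $\bar u$, which is needed both to apply Proposition \ref{Propsigmapull} meaningfully and to invoke the degree bound; this follows at once from $u \neq 0$ by pulling back along $\mathrm{pr}_2: X \times X \to X$ (or restricting along the diagonal after adding $u\times 1$), so there is nothing substantive to overcome.
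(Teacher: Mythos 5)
Your proof is correct and follows the same route the paper intends: the corollary is exactly the $k=2$ case of Proposition \ref{Propsigmapull} in the arbitrary-coefficient form noted in the remark preceding it (closed oriented $1$-manifolds are disjoint unions of circles, so the hypothesis reduces to atoroidality), giving $\wgtTC(\bar u)\geq 2$, and the equality then follows from the degree bound $\wgt_p(v)\leq \deg v$ of Theorem \ref{Thmwgtprops}(2) together with $\bar u\neq 0$. Nothing is missing.
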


\begin{remark}
Corollary \ref{CorAtor} generalizes the main result from \cite{TCsymp} sketched in the beginning of this subsection from real cohomology to arbitrary coefficient rings.
\end{remark}

We will next discuss a result similar to Proposition \ref{Propsigmapull} for spaces of higher connectivity.

\begin{prop}
Let $\sigma \in H^k(X;\QQ)$ with $\sigma \neq 0$, $k \geq 2$. If $X$ is $(k-1)$-connected and if $f^*\sigma =0$ for all continuous maps $f:S^1 \times S^{k-1} \to X$, then $\wgtTC(\bar\sigma)\geq 2$.
\end{prop}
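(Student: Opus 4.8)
The strategy is to reduce to Corollary \ref{Corslant} exactly as in the proof of Proposition \ref{Propsigmapull}, by showing that $e^*\sigma/[S^1] = 0 \in H^{k-1}(LX;\QQ)$ under the stronger connectivity hypothesis. Since we are working with field coefficients, it suffices to check that the Kronecker pairing of $e^*\sigma/[S^1]$ with every class in $H_{k-1}(LX;\QQ)$ vanishes, and by Thom's realisability theorem $H_{k-1}(LX;\QQ)$ is spanned by classes of the form $f_*[P]$ with $P$ a closed oriented $(k-1)$-manifold and $f\colon P \to LX$ continuous. As in Proposition \ref{Propsigmapull}, the pairing $\langle e^*\sigma/[S^1], f_*[P]\rangle$ equals $\langle \tilde f^*\sigma, [P\times S^1]\rangle$, where $\tilde f\colon P\times S^1 \to X$ is the adjoint of $f$. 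So everything comes down to showing that this evaluation vanishes whenever $X$ is $(k-1)$-connected and $\sigma$ pulls back trivially under all maps $S^1\times S^{k-1}\to X$.

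The key step is therefore a purely homotopy-theoretic reduction: I want to show that, since $X$ is $(k-1)$-connected, any map $\tilde f\colon P\times S^1 \to X$ from a $k$-dimensional manifold factors, up to homotopy and after restricting attention to what $\sigma$ can detect, through a map whose domain is (a wedge or union of) copies of $S^1\times S^{k-1}$. The natural tool is obstruction theory / cellular approximation applied to the CW structure of $P\times S^1$: since $X$ is $(k-1)$-connected, $\tilde f$ is homotopic to a map that is constant on the $(k-1)$-skeleton of $P\times S^1$, hence factors through the quotient $(P\times S^1)/(P\times S^1)^{(k-1)}$, which is a wedge of $k$-spheres together with possibly some higher cells — but since $P\times S^1$ is $k$-dimensional, it is exactly a wedge of $k$-spheres $\bigvee S^k$. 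Then $\tilde f^*\sigma \in H^k(P\times S^1;\QQ)$ is pulled back from $H^k(\bigvee S^k;\QQ)$. The remaining point is to show that the image of $\sigma$ in $H^k(\bigvee S^k;\QQ)$ is itself killed: each wedge summand $S^k$ is the image of the attaching data and can be realised by a map $S^{k}\to X$ — equivalently one checks that $\sigma$ restricted to each top cell comes, via the collapse $S^1\times S^{k-1}\to S^1\wedge S^{k-1}=S^k$, from a map $S^1\times S^{k-1}\to X$, which is where the hypothesis $f^*\sigma=0$ for all $f\colon S^1\times S^{k-1}\to X$ enters. Concretely: the top cell of $S^1\times S^{k-1}$ maps by degree one onto $S^k$ under the collapse, so $\langle \sigma, (S^k\text{-summand})\rangle$ can be read off from $\langle g^*\sigma, [S^1\times S^{k-1}]\rangle$ for a suitable $g\colon S^1\times S^{k-1}\to X$, and this vanishes by assumption. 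Hence $\tilde f^*\sigma$ evaluates to zero on $[P\times S^1]$.

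Once $e^*\sigma/[S^1]=0$ is established, Corollary \ref{Corslant} gives $\wgtTC(\bar\sigma)=\wgt_\pi(\bar\sigma)\geq 2$ directly, completing the proof.

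The main obstacle I anticipate is the bookkeeping in the reduction step: one must be careful that the CW structure on $P\times S^1$ can be chosen with a single top-dimensional handle decomposition amenable to the collapse argument, and that the identification of the quotient $(P\times S^1)/(P\times S^1)^{(k-1)}$ with a wedge of $k$-spheres interacts correctly with orientations so that the fundamental class $[P\times S^1]$ maps to the sum of the fundamental classes of the spheres (with signs). Equivalently — and perhaps more cleanly — one can avoid cellular chains and instead argue that since $X$ is $(k-1)$-connected, the Hurewicz-type statement lets one represent any class in $H_k(X;\QQ)$ detected by $\sigma$ by maps from $S^k$, and then observe that every map $S^k\to X$ factors (up to homotopy) through the collapse $S^1\times S^{k-1}\to S^k$; the hypothesis then forces $\sigma$ to annihilate the image of $\tilde f_*$ on top homology. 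Either way, the genuinely delicate point is controlling the degree/orientation compatibility in the collapse $S^1\times S^{k-1}\to S^k$, and checking that no lower-degree contributions to $\tilde f^*\sigma$ survive — the latter being automatic since $\sigma$ is homogeneous of degree $k=\dim(P\times S^1)$, so only the top cell matters.
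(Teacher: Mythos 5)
Your argument is correct, but the step where the connectivity hypothesis enters is genuinely different from the paper's. Both proofs reduce via Corollary \ref{Corslant} to showing $e^*\sigma/[S^1]=0$ and both evaluate this class against realisable homology classes exactly as in Proposition \ref{Propsigmapull}, arriving at the pairing $\left<\tilde f^*\sigma,[P\times S^1]\right>$. The paper then applies the connectivity hypothesis to the \emph{loop space}: the fibration $e_0\colon LX\to X$ shows that $LX$ is $(k-2)$-connected, so by the Hurewicz theorem $H_{k-1}(LX;\QQ)$ is already generated by spherical classes; one may therefore take $P=S^{k-1}$, and the adjoint $\tilde f\colon S^{k-1}\times S^1\to X$ is then literally a map of the type covered by the hypothesis, so no further work is needed. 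You instead keep $P$ arbitrary (via Thom realisability) and use the connectivity of $X$ on the \emph{domain} $P\times S^1$: homotope $\tilde f$ to be constant on the $(k-1)$-skeleton, factor it through the resulting wedge of $k$-spheres, and feed each sphere back into the hypothesis by precomposing with the degree-one collapse $S^1\times S^{k-1}\to S^k$. This is sound --- the homotopy extension property for the skeletal pair, the identification of the quotient with a wedge, and the sign bookkeeping all go through, and as you note the signs are irrelevant since every summand vanishes individually --- but it is longer than the paper's route. One thing your route makes visible that the paper's does not: you in effect show that $\sigma$ annihilates every spherical class in $H_k(X;\QQ)$, and since $X$ is $(k-1)$-connected these classes span $H_k(X;\QQ)$ by Hurewicz; combined with $\sigma\neq 0$ and the nondegeneracy of the Kronecker pairing over a field, this shows the hypotheses of the proposition can never hold simultaneously, so the statement is in fact vacuous. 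The paper's argument, working entirely on $LX$, does not expose this.
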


\begin{proof}
The long exact homotopy sequence of the fibration $e_0:LX \to X$ shows that if $X$ is $(k-1)$-connected, then $LX$ will be $(k-2)$-connected. Thus, the Hurewicz theorem yields that $H_{k-1}(LX;\K)$ is generated by classes of the form $f_*[S^{k-1}]$, where $f:S^{k-1} \to LX$ is continuous. One continues as in the proof of Proposition \ref{Propsigmapull}, restricting to the case of $P=S^{k-1}$.
\end{proof}

Results analogous to the previous ones, but for category weights, can be derived in a more straightforward way from the observations outlined in Remark \ref{Remarkcwgt2}. Thus, we state the results, but omit their proofs.  Let $\cwgt:= \wgt_{p_0}$ denote category weight.

\begin{prop}
\label{PropCwgtSusp}
Let $u \in H^k(X;\QQ)$ with $u \neq 0$, $k \geq 2$. If $f^*u=0$ for all continuous $f:\Sigma P \to X$, where $P$ is an arbitrary closed oriented $(k-1)$-dimensional manifold, then $\cwgt(u)\geq 2$. 
\end{prop}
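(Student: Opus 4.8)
The statement to prove is Proposition \ref{PropCwgtSusp}: if $u \in H^k(X;\QQ)$ is nonzero with $k \geq 2$ and $f^*u = 0$ for every continuous map $f : \Sigma P \to X$ with $P$ a closed oriented $(k-1)$-manifold, then $\cwgt(u) = \wgt_{p_0}(u) \geq 2$. The strategy is to mirror the proof of Proposition \ref{Propsigmapull}, but now using the criterion for category weight laid out in Remark \ref{Remarkcwgt2} in place of the $\TC$-weight criterion from Corollary \ref{Corslant}. Recall that Remark \ref{Remarkcwgt2} tells us $\cwgt(u) \geq 2$ as soon as $\ev_0^*u/[S^1] = 0$, where $\ev_0 : \Omega X \times S^1 \to X$, $\ev_0(\gamma,t) = \gamma(t)$, and $[S^1] \in H_1(S^1;\QQ)$ is the fundamental class. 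So the goal reduces to showing $\ev_0^*u/[S^1] = 0 \in H^{k-1}(\Omega X;\QQ)$.

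\textbf{Key steps.} First I would pick a cocycle $\varphi \in C^k(X;\QQ)$ representing $u$ and form $\ev_0^*\varphi/\tau \in C^{k-1}(\Omega X;\QQ)$, where $\tau$ is a singular $1$-cycle representing $[S^1]$; since we work over the field $\QQ$, to see that its class vanishes it suffices to check that its Kronecker pairing with every class in $H_{k-1}(\Omega X;\QQ)$ is zero. Second, invoke Thom's realisability result (cited in the excerpt): $H_{k-1}(\Omega X;\QQ)$ is generated by classes of the form $f_*[P]$ for $f : P \to \Omega X$ continuous, $P$ a closed oriented $(k-1)$-manifold. Third, for such an $f$, let $\tilde f : P \times S^1 \to X$ be its adjoint and note $\ev_0 \circ (f \times \id_{S^1}) = \tilde f$; by the defining property of the slant product this gives
$$\langle \ev_0^*\varphi/\tau, f_*\sigma_P \rangle = \langle \varphi, (\ev_0)_*(f \times \id_{S^1})_*(\sigma_P \times \tau)\rangle = \langle \tilde f^*\varphi, \sigma_P \times \tau\rangle,$$
with $\sigma_P$ a fundamental cycle of $P$. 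Passing to (co)homology yields $\langle \ev_0^*u/[S^1], f_*[P]\rangle = \langle \tilde f^*u, [P \times S^1]\rangle$. The final step is to observe that $P \times S^1$ is itself the boundary of a compact manifold-with-boundary — or more directly, to replace $P \times S^1$ by $\Sigma P$ via the hypothesis. Concretely, there is a degree-one collapse map $P \times S^1 \to \Sigma(P_+) \simeq \Sigma P \vee S^1$ (collapsing $P \times \{*\}$), and since $u$ has positive degree $k \geq 2$ the $S^1$-wedge summand contributes nothing, so the pairing $\langle \tilde f^*u, [P \times S^1]\rangle$ factors through a map $\Sigma P \to X$; the hypothesis then forces it to vanish. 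Hence $\ev_0^*u/[S^1] = 0$ and Remark \ref{Remarkcwgt2} gives $\cwgt(u) \geq 2$.

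\textbf{The main obstacle.} The technical crux is the last step: making precise how a map $\tilde f : P \times S^1 \to X$ with a class of positive degree pulled back from $X$ "factors through" a suspension $\Sigma P$. The cleanest route is to use the cofiber sequence $P \hookrightarrow P \times S^1 \to P \times S^1 / (P \times \{*\}) = \Sigma(P_+)$ together with the splitting $\Sigma(P_+) \simeq S^1 \vee \Sigma P$. Restricting $\tilde f$ along $P \times \{*\}$ gives a map $P \to X$ which, having domain of dimension $k-1 < k$, pulls $u$ back to zero; this is what allows $\tilde f^*u$ to be pulled back from the cofiber, and then the $S^1$-summand is killed again because $u$ sits in degree $\geq 2$. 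I would need to be a little careful that these reductions are compatible with evaluating on the fundamental class $[P \times S^1]$, but this is exactly the kind of routine diagram-chase that the excerpt elsewhere dispatches with "one continues as in the proof of \dots"; since the proposition is stated with its proof omitted, it is enough here to indicate that the argument of Proposition \ref{Propsigmapull} applies verbatim with $T^2 = S^1 \times S^1$ replaced by $P \times S^1$ and $\ev$, $[S^1] \in H_1(S^1)$, Corollary \ref{Corslant} replaced by $\ev_0$, $[S^1]$, and the category-weight criterion of Remark \ref{Remarkcwgt2}, the hypothesis on maps out of $P \times S^1$ being repackaged as a hypothesis on maps out of $\Sigma P$ via the collapse.
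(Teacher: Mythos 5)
Your argument is correct, but it takes a more roundabout route than the one the paper is pointing at. You work with the slant-product criterion $\ev_0^*u/[S^1]=0$ from Remark \ref{Remarkcwgt2}, which forces you to start from the adjoint $\tilde f:P\times S^1\to X$ of a map $f:P\to\Omega X$ and then convert $P\times S^1$ into $\Sigma P$ by collapsing $P\times\{*\}$ and splitting $\Sigma(P_+)\simeq \Sigma P\vee S^1$. That conversion does work — since $f$ lands in the \emph{based} loop space, $\tilde f$ genuinely factors through $(P\times S^1)/(P\times\{*\})$, the image of $[P\times S^1]$ generates $H_k(\Sigma(P_+);\QQ)\cong H_{k-1}(P;\QQ)$, and the $S^1$ wedge summand is irrelevant in degree $k\geq 2$ — but it is exactly the extra work that the hypothesis on $\Sigma P$ is designed to make unnecessary. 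The more direct derivation, which is what "more straightforward" in the paper refers to, uses the first criterion of Remark \ref{Remarkcwgt2}: the fiberwise join of $p_0$ with itself is modelled by $e:\Sigma\Omega X\to X$, so it suffices to show $e^*u=0$. By Thom's realisability of $H_{k-1}(\Omega X;\QQ)$ and the suspension isomorphism, $H_k(\Sigma\Omega X;\QQ)$ is generated by classes $(\Sigma f)_*\sigma[P]$ with $f:P\to\Omega X$, and $\bigl\langle e^*u,(\Sigma f)_*\sigma[P]\bigr\rangle=\bigl\langle(e\circ\Sigma f)^*u,\sigma[P]\bigr\rangle$ vanishes because $e\circ\Sigma f$ is precisely a map $\Sigma P\to X$, killed by hypothesis; non-degeneracy of the Kronecker pairing over $\QQ$ finishes it, with no collapse map or wedge splitting needed. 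Two minor blemishes in your write-up, neither fatal: the aside that $P\times S^1$ bounds is irrelevant (the class is pulled back via $\tilde f$, which need not extend over a bounding manifold), and "degree-one collapse map" is loose language since $\Sigma(P_+)$ is not a manifold — what you actually use is that the collapse sends $[P\times S^1]$ to a generator of $H_k$, which is true.
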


\begin{cor}
\label{CorCwgtAspher}
Let $\sigma \in H^ 2(X;\QQ)$ with $\sigma \neq 0$. If $\sigma$ is \emph{aspherical}, i.e. if $f^*\sigma=0$ for every continuous map $f:S^2 \to X$, then $\cwgt(\sigma)=2$. 
\end{cor}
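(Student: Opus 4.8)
The plan is to derive this as a direct consequence of Proposition \ref{PropCwgtSusp} together with Theorem \ref{Thmwgtprops}.(2), which bounds the weight from above by the cohomological degree. First I would note that the hypothesis that $\sigma \in H^2(X;\QQ)$ is aspherical is precisely the hypothesis of Proposition \ref{PropCwgtSusp} specialized to $k=2$: indeed, when $k=2$ the relevant test spaces are $\Sigma P$ where $P$ is a closed oriented $1$-dimensional manifold. The only connected such $P$ is $S^1$, and $\Sigma S^1 \cong S^2$; a disconnected $1$-manifold is a disjoint union of circles, and since reduced cohomology and the relevant pullback conditions split over connected components (a map from a disjoint union is determined by its restrictions to the components), it suffices to test against $\Sigma S^1 = S^2$. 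Hence asphericity of $\sigma$, i.e. $f^*\sigma = 0$ for all continuous $f:S^2 \to X$, implies $f^*\sigma = 0$ for all continuous $f:\Sigma P \to X$ with $P$ a closed oriented $1$-manifold.

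Applying Proposition \ref{PropCwgtSusp} then gives $\cwgt(\sigma) \geq 2$. For the reverse inequality, I would invoke Theorem \ref{Thmwgtprops}.(2) in its category-weight form (as recalled in Remark \ref{Remarkcwgt2}, category weight is the sectional category weight with respect to the fibration $p_0$, so Theorem \ref{Thmwgtprops} applies verbatim): $\cwgt(\sigma) \leq \deg \sigma = 2$. Combining the two inequalities yields $\cwgt(\sigma) = 2$, as claimed.

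The argument is essentially a bookkeeping specialization, so there is no serious obstacle; the one point that deserves a sentence of care is the reduction from arbitrary closed oriented $1$-manifolds to $S^1$ alone, which I would handle by the component-splitting remark above. Everything else is immediate from the cited results, so I would keep the proof to just these few lines.

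\begin{proof}
This is the special case $k=2$ of Proposition \ref{PropCwgtSusp}. Indeed, a closed oriented $1$-dimensional manifold $P$ is a finite disjoint union of circles, so $\Sigma P$ is a wedge of copies of $\Sigma S^1 \cong S^2$ (up to homotopy), and a class pulls back to zero under every map $\Sigma P \to X$ if and only if it pulls back to zero under every map $S^2 \to X$. Hence asphericity of $\sigma$ implies the hypothesis of Proposition \ref{PropCwgtSusp}, which yields $\cwgt(\sigma) \geq 2$. On the other hand, $\cwgt(\sigma) \leq \deg \sigma = 2$ by the category-weight analogue of Theorem \ref{Thmwgtprops}.(2), see Remark \ref{Remarkcwgt2}. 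Therefore $\cwgt(\sigma) = 2$.
\end{proof}
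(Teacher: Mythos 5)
Your proof is correct and follows essentially the route the paper intends: Corollary \ref{CorCwgtAspher} is the $k=2$ specialization of Proposition \ref{PropCwgtSusp}, with the upper bound $\cwgt(\sigma)\leq \deg\sigma$ supplied by Theorem \ref{Thmwgtprops}.(2) applied to the fibration $p_0$ (the paper itself omits the argument, deriving it from Remark \ref{Remarkcwgt2} in parallel with Corollary \ref{CorAtor}, where the reduction to $S^2$ comes from realizing $H_1(\Omega X;\QQ)$ by spherical classes via Hurewicz rather than by decomposing $P$ into circles). One cosmetic caveat: the suspension of a disjoint union of circles is not quite a wedge of $2$-spheres (extra circle summands appear), but since these contribute nothing to $H^2$ your component-splitting reduction goes through unchanged.
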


\subsection{Spherical complexity weights}
Let $n \in \NN$. Similar to the situation for topological complexity, we want to focus on a particular type of cohomology classes of $S_nX$.

Let $E_n:S_nX \times S^n \to X$, $(\gamma,p) \mapsto \gamma(p)$, be the evaluation map and let $\sigma_{S^n} \in C_n(S^n)$ be a fundamental cycle. For $k >n$ we consider $\bar{Z}_n: C^k(X;A) \to C^{k-n}(S_{n}X;A)$, $\bar{Z}_n(\varphi) = E_{n}^*\varphi/\sigma_{S^{n}}$, and let
$$Z_n: H^k(X;A) \to H^{k-n}(S_nX;A), \qquad Z_n(u)=E_n^*u/[S^{n}],$$ be its induced map in cohomology.

\begin{lemma} 
\label{LemmaZn}
Let $\K$ be a field and $u \in H^k(X;\K)$ with $k > n$. Then $c_n^*(Z_n(u))=0$, where $c_n:X \to S_nX$ is the inclusion of constant maps.
\end{lemma}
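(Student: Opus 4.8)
The plan is to factor the map $c_n \colon X \to S_nX$ through the constant-map part of the evaluation and exploit the fact that the fundamental cycle of $S^n$ becomes trivial upon composing with a constant. Concretely, I would consider the composite $E_n \circ (c_n \times \id_{S^n}) \colon X \times S^n \to X$. By definition of $c_n$ we have $(E_n \circ (c_n \times \id_{S^n}))(x,p) = (c_n(x))(p) = x$ for all $x \in X$ and $p \in S^n$, so this composite equals $\pr_X$, the projection onto the first factor. First I would use the naturality of the slant product with respect to pullbacks (property 1 from \cite[p.~287]{Spanier}) to compute
$$c_n^*(Z_n(u)) = c_n^*\bigl(E_n^*u/[S^n]\bigr) = \bigl((c_n \times \id_{S^n})^* E_n^* u\bigr)/[S^n] = \bigl(\pr_X^* u\bigr)/[S^n].$$

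The key step is then to observe that the slant product $\pr_X^*u/[S^n]$ vanishes because $[S^n]$ can be pushed forward, via the constant map $S^n \to \mathrm{pt}$, to a class that is trivial in positive degree. More precisely, $\pr_X = \id_X \times \kappa$ where $\kappa \colon S^n \to \mathrm{pt}$ is the constant map, so $\pr_X^*u = (\id_X \times \kappa)^*(u \times 1_{\mathrm{pt}})$ and, again by compatibility of slant products with pullbacks (this time on the homology side), $\pr_X^*u/[S^n] = u/(\kappa_*[S^n])$. Since $n \geq 1$ we have $\kappa_*[S^n] = 0 \in H_n(\mathrm{pt};\K)$, and therefore $c_n^*(Z_n(u)) = u/0 = 0$. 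Alternatively, and perhaps more cleanly at the chain level, one can represent $[S^n]$ by a cycle $\sigma_{S^n}$ and note that the slant product of a cocycle pulled back from $X$ (hence ``constant in the $S^n$-direction'') with $\sigma_{S^n}$ is, up to a coboundary, a scalar multiple of $\sigma_{S^n}$ evaluated by the augmentation, which is zero in positive homological degree; I would phrase this via the Künneth/Eilenberg--Zilber decomposition to keep it rigorous.

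I expect the main obstacle to be purely bookkeeping: making sure the sign conventions and the precise form of the slant product $/ \colon C^{k}(Y \times S^n;\K) \otimes C_n(S^n;\K) \to C^{k-n}(Y;\K)$ used here match those in the rest of the paper, and verifying that ``naturality of the slant product under the map $c_n \times \id_{S^n}$'' is applied in the correct variance. Once the identification $c_n^*(Z_n(u)) = u/\kappa_*[S^n]$ is in place, the conclusion is immediate from $\kappa_*[S^n]=0$ for $n \geq 1$; note the hypothesis $k > n$ only guarantees that $Z_n(u)$ lives in positive degree (so that it is a meaningful candidate for $E_n(X;\K)$), but the vanishing of $c_n^*(Z_n(u))$ holds regardless. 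The field hypothesis is not essential for this particular lemma, but I would keep it since it matches the ambient assumptions under which $Z_n$ and $E_n(X;\K)$ are being used.
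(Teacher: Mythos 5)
Your proposal is correct, and it shares the paper's key observation: the composite $E_n \circ (c_n \times \id_{S^n})$ is the projection $X \times S^n \to X$ onto the first factor. Where you diverge is in how you conclude that the slant product of a class pulled back from the first factor with $[S^n]$ vanishes. The paper evaluates the chain-level class $c_n^*\bar{Z}_n(\varphi)$ against arbitrary cycles via the Kronecker pairing, observes that $(\pr_1)_*(\rho \otimes \sigma_{S^n})$ is a degenerate chain, and then invokes non-degeneracy of the Kronecker pairing --- which is exactly where the field hypothesis enters. You instead apply naturality of the slant product in the homology variable, writing $\pr_X = \id_X \times \kappa$ with $\kappa \colon S^n \to \mathrm{pt}$ and reducing to $\kappa_*[S^n] = 0 \in H_n(\mathrm{pt};\K)$ for $n \geq 1$. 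Your route is cleaner and, as you correctly note, does not actually need $\K$ to be a field; the paper's does (at least as written). Both applications of Spanier's naturality property are in the correct variance, so there is no gap.
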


\begin{proof}
Let $\varphi \in C^k(X;\K)$ be a representative of $u$. We want to evaluate $c_n^*\bar{Z}_n(\varphi)$ on cycles in $C_{k-n}(X;\K)$. If $\rho \in C_{k-n}(X;\K)$ is such a cycle, then 
$$\left<c_n^*\bar{Z}_n(\varphi),\rho\right>=\left<E_{n}^*\varphi/\sigma_{S^{n}},(c_n)_*(\rho) \right>= \left<\varphi,(E_n \circ (c_n \times \id_{S^n}))_*(\rho \otimes \sigma_{S^n})\right>.$$
But by definition of the maps, $E_n \circ (c_n \times \id_{S^n})= \pr_1:X\times S^n \to X$, where $\pr_1$ denotes the projection onto the second factor. Hence, $\left<\bar{Z}_n(\varphi),(c_n)_*(\rho) \right>=\left<\varphi,(\pr_1)_*(\rho \otimes \sigma_{S^n}) \right>$.
But since $(\pr_1)_*(\rho \otimes \sigma_{S^n})$ is a degenerate $k$-chain, passing to (co)homology yields
$$\left<c_n^*Z_n(u),[\rho] \right>= 0.$$
Thus, the non-degeneracy of the Kronecker pairing shows the claim.
\end{proof}

Note that Lemma \ref{LemmaZn} expresses that $\wgt_n(Z_n(u))\geq 1$ whenever $Z_n(u)\neq 0$. 

\begin{prop}
\label{Propwgtn2}
Let $\K$ be a field and $u \in H^k(X;\K)$ with $Z_n(u) \neq 0 \in H^{k-n}(S_nX;\K)$, where $k \geq n+2$. Let $\ev_{n+1}: \C^0(S^{n+1},X) \times S^{n+1} \to X$, $\ev_{n+1}(\gamma,p)=\gamma(p)$, be the evaluation map. If $$\ev_{n+1}^*u/[S^{n+1}]=0 \in H^{k-n-1}(\C^0(S^{n+1},X);\K),$$ then $\wgt_n(Z_n(u))\geq 2$.
\end{prop}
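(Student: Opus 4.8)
The goal is to show that under the hypothesis $\ev_{n+1}^*u/[S^{n+1}]=0$, the class $Z_n(u)=E_n^*u/[S^n]$ lies in $E_n(X;\K)$, so that $\wgt_n(Z_n(u))\geq 2$ follows from Theorem \ref{ThmSCnEn} (more precisely, from Proposition \ref{PropMVSC} combined with Corollary \ref{CorGenwgt2}). By the degree hypothesis $k\geq n+2$ we have $\deg Z_n(u)=k-n\geq 2$, so the only thing to verify is the slant-product condition $F_n^*(Z_n(u))/[S^1]=0$, where $F_n:\C^0(S^{n+1},X)\times S^1\to S_nX$ is the map constructed before Proposition \ref{PropMVSC}. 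The natural strategy is to compute the composite $E_n\circ(F_n\times\id_{S^n}):\C^0(S^{n+1},X)\times S^1\times S^n\to X$ and identify it, up to a degree-$\pm1$ map on the sphere coordinates, with $\ev_{n+1}$ on $\C^0(S^{n+1},X)\times S^{n+1}$; then the associativity of the slant product converts $F_n^*(E_n^*u/[S^n])/[S^1]$ into $\ev_{n+1}^*u$ slanted against a cycle representing $\pm[S^{n+1}]$, which vanishes by assumption.

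\textbf{Key steps.} First I would unwind the definition of $F_n$. Recall $h_n(\gamma,t)(p)=\gamma(\varphi_n(p,t))$ is the deformation of $r_n$ to $D_n=c_n\circ e_n$, and $F_{n,i}=h_n\circ(R_i\times\id)$ for the two hemisphere restrictions $R_1,R_2:\C^0(S^{n+1},X)\to B_{n+1}X$, glued along $S^1$. Evaluating: for $(\gamma,t,p)\in\C^0(S^{n+1},X)\times S^1\times S^n$, the point $E_n(F_n(\gamma,t),p)$ equals $\gamma$ evaluated at a point of $B^{n+1}$ depending on $(t,p)$, via the appropriate hemisphere chart $R_i$ followed by $\varphi_n(\cdot,\cdot)$. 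Tracking this carefully, the assignment $(t,p)\mapsto R_i(\varphi_n(p,1-2t))\in S^{n+1}$ (thinking of $R_i$ as the inclusion of a hemisphere) defines a map $S^1\times S^n\to S^{n+1}$ which, after collapsing the degenerate parts, has the homotopy-cycle class $\pm[S^1]\times[S^n]\mapsto\pm[S^{n+1}]$ — essentially the standard suspension-type identification $S^1\wedge S^n$-ish decomposition of $S^{n+1}$, already implicit in how the Mayer--Vietoris boundary was set up in Section \ref{SubSCwgt}. This is the step requiring the most care: one must produce a singular chain $w\in C_{n+1}(S^{n+1})$, built from $\sigma_{S^n}$ and the $1$-simplices $f_1,f_2$ used in the proof of Proposition \ref{PropMVSC}, such that $(\text{the sphere-coordinate map})_*(\sigma_{S^n}\times(f_1-f_2)_* s_*\iota)=w$ with $[w]=\pm[S^{n+1}]$, exactly mirroring how $(f_1)_*s_*\iota-(f_2)_*s_*\iota$ represented $-[S^1]$ there.

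\textbf{Assembling the conclusion.} With that identification in hand, I would use the compatibility of the slant product with pullbacks and its associativity (as in \cite[p.~287]{Spanier}, already invoked in Proposition \ref{PropMVbdy} and Proposition \ref{PropMVSC}) to write, on chain level with $\varphi$ representing $u$,
\[
F_n^*(\bar Z_n(\varphi))/\tau \;=\; F_n^*(E_n^*\varphi/\sigma_{S^n})/\tau \;=\; \pm\,\big(\ev_{n+1}^*\varphi\big)/w \quad\text{(up to a coboundary)},
\]
and hence in cohomology $F_n^*(Z_n(u))/[S^1]=\pm\,\ev_{n+1}^*u/[S^{n+1}]=0$ by hypothesis. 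Therefore $Z_n(u)\in E_n(X;\K)$ — using Lemma \ref{LemmaZn} for the degree-$\geq1$ part is unnecessary since $E_n(X;\K)$ only asks $\deg\sigma\geq2$ and the slant condition — and Theorem \ref{ThmSCnEn}'s underlying argument (Proposition \ref{PropMVSC} $+$ Corollary \ref{CorGenwgt2} with $p=r_n$) gives $\wgt_n(Z_n(u))\geq 2$. The main obstacle, as noted, is the bookkeeping in the second step: pinning down the precise map $S^1\times S^n\to S^{n+1}$ coming from the hemisphere charts and $\varphi_n$, and exhibiting an explicit singular cycle $w$ representing $\pm[S^{n+1}]$ compatible with the Eilenberg--Zilber/slant-product formalism used throughout Section \ref{SectionWeights}; everything else is a formal consequence of results already established.
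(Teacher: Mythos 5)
Your proposal is correct and follows essentially the same route as the paper: reduce to showing $F_n^*Z_n(u)/[S^1]=0$ via Proposition \ref{PropMVSC} and Corollary \ref{CorGenwgt2}, then observe that the composite $G_n(\gamma,t,p)=E_n(F_n(\gamma,t),p)$ factors through $\C^0(S^{n+1},X)\times(S^1\wedge S^n)$ and agrees with $\ev_{n+1}$ under the identification $S^1\wedge S^n\approx S^{n+1}$, with $[S^1]\times[S^n]\mapsto\pm[S^{n+1}]$. The only cosmetic difference is that the paper sidesteps your chain-level cycle $w$ by evaluating Kronecker pairings against arbitrary classes $\sigma\in H_{k-n-1}(\C^0(S^{n+1},X);\K)$ and invoking non-degeneracy over the field $\K$, which avoids the Eilenberg--Zilber bookkeeping you flag as the main obstacle.
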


\begin{proof}
By Corollary \ref{CorGenwgt2} and Proposition \ref{PropMVSC}, it suffices to show that $F_n^*Z_n(u)/[S^1]=0$ under the given assumptions. For $\sigma \in H_{k-n-1}(\C^0(S^{n+1},X);\K)$ we compute that 
$$\left<F_n^*Z_n(u)/[S^1],\sigma \right>= \left<u,(G_n)_*(\sigma \times [S^1] \times [S^n]) \right>,  $$
where $G_n: \C^0(S^{n+1},X) \times S^1 \times S^n \to X$, $G_n(\gamma,t,p)=E_n(F_n(\gamma,t),p)$. Again let $1_n= (1,0,\dots,0) \in S^{n+1}$. By construction of $F_n$, it holds that $F_n(\gamma,0) = c_n(\gamma(1_n))$ for all $\gamma \in \C^0(S^{n+1},X)$. Thus, it follows that
\begin{equation}
\label{EqGn1}
G_n(\gamma,0,p) = E_n(F_n(\gamma,0),p)=\gamma(1_n) \qquad \forall \gamma \in \C^0(S^{n+1},X), \ p \in S^n.
\end{equation}
Moreover, it holds by construction of $F_n$  that $(F_n(\gamma,t))(1_n)=\gamma(1_n)$ for all $\gamma \in \C^0(S^{n+1},X)$, $t \in S^1$, and consequently,
\begin{equation}
\label{EqGn2}
G_n(\gamma,t,1_n)= \gamma(1_n) \qquad \forall \gamma \in \C^0(S^{n+1},X), \ t \in S^1.
\end{equation}
Equations \eqref{EqGn1} and \eqref{EqGn2} imply that $G_n$ induces a map $\tilde{G}_n:\C^0(S^{n+1},X) \times (S^1 \wedge S^n) \to X$. One checks without difficulties that there is a homeomorphism $\varphi: S^1 \wedge S^n \to S^{n+1}$, for which  $$\tilde{G}_n = \ev_{n+1} \circ (\id_{\C^0(S^{n+1},X)} \times \varphi).$$ If $q:S^1 \times S^n \to S^{n+1}$ denotes the composition of the projection $S^1 \times S^n\to S^1 \wedge S^n$ with $\varphi$, one observes that $q_*([S^1] \times [S^n]) = \pm [S^{n+1}] \in H_{n+1}(S^{n+1};\K)$. Inserting these considerations into the above computations yields 
$$\left<F_n^*Z_n(u)/[S^1],\sigma \right>= \pm \left<\ev_{n+1}^*u/[S^{n+1}],\sigma \right>. $$
Hence, $F_n^*Z_n(u)= \pm\ev_{n+1}^*u/[S^{n+1}]$, which shows the claim. 
\end{proof}

\begin{cor}
\label{CorZnwgt}
Let $u \in H^k(X;\QQ)$ with $Z_n(u) \neq 0 \in H^{k-n}(S_nX;\QQ)$, where $k \geq n+2$. If $f^*u=0$ for every continuous $f:P \times  S^{n+1} \to X$, where $P$ is any closed oriented $(k-n-1)$-dimensional manifold, then $\wgt_n(Z_n(u))\geq 2$.
\end{cor}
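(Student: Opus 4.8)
The plan is to reduce Corollary \ref{CorZnwgt} to Proposition \ref{Propwgtn2}, whose hypothesis is the single vanishing condition $\ev_{n+1}^*u/[S^{n+1}]=0 \in H^{k-n-1}(\C^0(S^{n+1},X);\QQ)$. Since we are working over the field $\QQ$, it suffices to check that this class pairs trivially with every homology class in $H_{k-n-1}(\C^0(S^{n+1},X);\QQ)$, and by Thom's realisability theorem $H_{*}(\,\cdot\,;\QQ)$ is generated by classes of the form $f_*[P]$ with $P$ a closed oriented manifold and $f$ continuous. So the whole argument comes down to evaluating $\ev_{n+1}^*u/[S^{n+1}]$ against $f_*[P]$ for an arbitrary continuous $f:P \to \C^0(S^{n+1},X)$ with $P$ closed oriented of dimension $k-n-1$.

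The key computation mimics the proof of Proposition \ref{Propsigmapull}: let $\varphi \in C^k(X;\QQ)$ represent $u$, let $\sigma_P$ be a fundamental cycle of $P$, and let $\widetilde{f}:P \times S^{n+1} \to X$ be the adjoint of $f$, so that $\ev_{n+1}\circ(f \times \id_{S^{n+1}})=\widetilde f$. Then, by the defining property of the slant product and naturality under pullbacks,
\begin{align*}
\langle \ev_{n+1}^*\varphi/\sigma_{S^{n+1}},\, f_*\sigma_P\rangle
&= \langle \varphi,\, (\ev_{n+1})_*(f\times\id_{S^{n+1}})_*(\sigma_P \times \sigma_{S^{n+1}})\rangle \\
&= \langle \widetilde{f}^*\varphi,\, \sigma_P \times \sigma_{S^{n+1}}\rangle .
\end{align*}
Passing to (co)homology, the right-hand side equals $\langle \widetilde f^*u,\,[P \times S^{n+1}]\rangle$, which vanishes by the hypothesis that $f^*u=0$ — applied to $f=\widetilde f$ — for every continuous map $P\times S^{n+1}\to X$ with $P$ closed oriented of dimension $k-n-1$. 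Non-degeneracy of the Kronecker pairing over $\QQ$, together with realisability of $H_{k-n-1}(\C^0(S^{n+1},X);\QQ)$, then gives $\ev_{n+1}^*u/[S^{n+1}]=0$. Feeding this into Proposition \ref{Propwgtn2} (whose other hypothesis, $Z_n(u)\neq 0$, is assumed) yields $\wgt_n(Z_n(u))\geq 2$.

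I do not expect a serious obstacle here: the corollary is a routine specialisation of Proposition \ref{Propwgtn2} along exactly the same lines by which Proposition \ref{Propsigmapull} specialises Corollary \ref{Corslant}. The only points requiring a little care are bookkeeping ones — matching the degree of $P$ (namely $k-n-1$, so that $\dim(P\times S^{n+1})=k$) with the cohomological degree of $u$, and confirming that the adjunction $\C^0(P,\C^0(S^{n+1},X))\cong \C^0(P\times S^{n+1},X)$ is a bijection on continuous maps, which holds since $P$ and $S^{n+1}$ are compact Hausdorff. One should also note, as in the remark following Proposition \ref{Propsigmapull}, that if one wanted the statement with coefficients in an arbitrary ring the realisability step would fail in general; over $\QQ$ it is exactly Thom's theorem, so no extra hypothesis is needed.
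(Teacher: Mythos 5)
Your argument is correct and is essentially identical to the paper's proof: both reduce to Proposition \ref{Propwgtn2} by showing $\ev_{n+1}^*u/[S^{n+1}]=0$, pairing against realisable classes $f_*[P]$ and using the adjoint map $\widetilde f$ (the paper's $g$) together with the hypothesis on $u$. No gaps; the extra remarks on degree bookkeeping and the exponential law are fine but not needed beyond what the paper already does.
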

\begin{proof}
To show that $\ev_{n+1}^*u/[S^{n+1}]$ vanishes and apply Proposition \ref{Propwgtn2}, it suffices to consider its Kronecker pairings with homology classes of the form $f_*[P]$, where $f:P \to \C^0(S^{n+1},X)$ is continuous and $P$ is a closed oriented $(k-n-1)$-manifold. We compute that
$$\left<\ev_{n+1}^*u/[S^{n+1}],f_*[P]\right>=\left<g^*u,[P \times S^{n+1}]\right>, $$
where $g:P \times S^{n+1} \to X$, $g(x,p)=\ev_{n+1}(f(x),p)$. Hence, by assumption on $u$, it follows that $\left<\ev_{n+1}^*u/[S^{n+1}],f_*[P]\right>=0$ and since $P$ and $f$ were chosen arbitrarily, we derive that $\ev_{n+1}^*u/[S^{n+1}]=0$. The claim follows from Proposition \ref{Propwgtn2}.
\end{proof}

The case $k=n+2$ in the previous corollary takes the following concise form.

\begin{cor}
Let $u \in H^{n+2}(X;\QQ)$ with $Z_n(u) \neq 0 \in H^2(S_nX;\QQ)$. If $f^*u=0$ for every continuous $f:S^1 \times S^{n+1} \to X$, then $\wgt_n(Z_n(u))\geq 2$. 
\end{cor}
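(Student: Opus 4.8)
The plan is to deduce this corollary directly from Corollary~\ref{CorZnwgt} by specializing to the case $k = n+2$. In that case the closed oriented manifold $P$ appearing in Corollary~\ref{CorZnwgt} has dimension $k-n-1 = 1$, so $P$ is a disjoint union of circles. Thus the hypothesis of Corollary~\ref{CorZnwgt}, namely that $f^*u = 0$ for every continuous $f : P \times S^{n+1} \to X$ with $P$ a closed oriented $1$-manifold, reduces to checking the condition only on products $(S^1 \sqcup \dots \sqcup S^1) \times S^{n+1}$. Since cohomology sends disjoint unions to products, $H^*\big((S^1 \sqcup \dots \sqcup S^1) \times S^{n+1}; \QQ\big)$ splits as a direct sum over the components, and the restriction of $f^*u$ to each component is the pullback along the corresponding map $S^1 \times S^{n+1} \to X$. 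Hence $f^*u = 0$ for all such $f$ if and only if $f^*u = 0$ for every continuous $f : S^1 \times S^{n+1} \to X$, which is precisely the hypothesis of the present corollary.

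Given this reduction, the argument is immediate: assume $u \in H^{n+2}(X;\QQ)$ satisfies $Z_n(u) \neq 0 \in H^2(S_nX;\QQ)$ and $f^*u = 0$ for every continuous $f : S^1 \times S^{n+1} \to X$. By the observation above, $u$ then satisfies the hypothesis of Corollary~\ref{CorZnwgt} with $k = n+2$, and since also $Z_n(u) \neq 0$, that corollary yields $\wgt_n(Z_n(u)) \geq 2$, which is the claim. One should note here that $Z_n(u) \in H^{k-n}(S_nX;\QQ) = H^2(S_nX;\QQ)$, so the degree statement $\deg Z_n(u) = 2$ is consistent with the requirement $\deg \sigma \geq 2$ implicit in the definition of $E_n(X;\QQ)$ and with the bound $\wgt_n(Z_n(u)) \leq \deg Z_n(u) = 2$ from Theorem~\ref{Thmwgtprops}(2), so in fact $\wgt_n(Z_n(u)) = 2$ under these hypotheses.

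I do not expect any genuine obstacle here, as this corollary is a pure specialization of the preceding one. The only point requiring a word of care is the disjoint-union argument justifying that it suffices to test on connected $P = S^1$ rather than on arbitrary closed oriented $1$-manifolds; this is routine given the additivity of singular cohomology over path components, but it is the one step worth spelling out explicitly rather than leaving silent. Everything else follows formally from Corollary~\ref{CorZnwgt}.
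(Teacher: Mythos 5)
Your proof is correct and matches the paper's (implicit) argument: the paper presents this corollary as the immediate specialization of Corollary \ref{CorZnwgt} to $k=n+2$, exactly as you do, and your disjoint-union remark correctly justifies reducing from arbitrary closed oriented $1$-manifolds $P$ to the connected case $P=S^1$. The concluding observation that $\wgt_n(Z_n(u))=2$ on the nose, via the degree bound from Theorem \ref{Thmwgtprops}(2), is a valid bonus consistent with the analogous statement in Corollary \ref{CorAtor}.
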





\section{Topological complexity and manifolds dominated by products}
\label{SectionTC}
Before applying the constructions from the previous sections to arguments involving $\SC_n$ for $n >0$, we want to derive new estimates for the topological complexity of certain closed manifolds. The first one is a straightforward consequence of the results from the previous section. All manifolds occuring in this section are assumed to be smooth and connected. 

Given any cohomology class $u \in H^*(X;R)$, where $X$ is a topological space and $R$ is a commutative ring, we again write $\bar{u}:= 1 \times u - u \times 1 \in H^*(X \times X;R)$.

\begin{theorem}
Let $M$ be a closed oriented manifold with $\dim M>2$. If $H^2(M;\QQ)$ contains an atoroidal class, then $\TC(M) \geq 6$. 
\end{theorem}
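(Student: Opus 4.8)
The plan is to exhibit a cohomology class of $M \times M$ whose $\TC$-weight is at least $5$; the bound $\TC(M) \geq 6$ then follows immediately from Theorem~\ref{Thmwgtprops}(1). I would start from a nonzero atoroidal class $u \in H^2(M;\QQ)$, which exists by hypothesis, and set $\bar u = 1 \times u - u \times 1 \in H^2(M \times M;\QQ)$. Corollary~\ref{CorAtor} gives $\wgtTC(\bar u) = 2$. Next I would observe that $\bar u^2 = 1 \times u^2 - 2(u \times u) + u^2 \times 1$ is nonzero: over field coefficients the middle summand $-2(u \times u)$ is a nonzero element of the Künneth summand $H^2(M;\QQ) \otimes H^2(M;\QQ)$, and the other two summands lie in different Künneth summands. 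Hence Theorem~\ref{Thmwgtcup} yields $\wgtTC(\bar u^2) \geq \wgtTC(\bar u) + \wgtTC(\bar u) = 4$.

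To gain the final unit of weight I would multiply by one more zero-divisor. Write $n = \dim M > 2$. Since $M$ is closed and oriented, Poincar\'e duality over $\QQ$ provides $v \in H^{n-2}(M;\QQ)$ with $u \cup v = \omega$ a generator of $H^n(M;\QQ) \cong \QQ$; in particular $v \neq 0$, and because $n - 2 \geq 1$ the class $\bar v = 1 \times v - v \times 1$ is nonzero in $H^{n-2}(M \times M;\QQ)$. As $\Delta_M^*\bar v = v - v = 0$ for the diagonal $\Delta_M \colon M \to M \times M$, the class $\bar v$ is a zero-divisor, so $\pi^*\bar v = 0$ and therefore $\wgtTC(\bar v) \geq 1$ by Theorem~\ref{Thmwgtprops}(4). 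Assuming $\bar u^2 \cup \bar v \neq 0$, Theorem~\ref{Thmwgtcup} gives
\[
\wgtTC(\bar u^2 \cup \bar v) \;\geq\; \wgtTC(\bar u^2) + \wgtTC(\bar v) \;\geq\; 5,
\]
and then Theorem~\ref{Thmwgtprops}(1) gives $\TC(M) = \secat(\pi) \geq 6$, which is the claim.

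The main obstacle is precisely the nonvanishing of $\bar u^2 \cup \bar v$ in $H^{n+2}(M \times M;\QQ)$. I would expand the product and use that $u^2 v = 0$, since its degree $n+2$ exceeds $n$, obtaining $\bar u^2 \cup \bar v = -\,v \times u^2 - 2(u \times \omega) + 2(\omega \times u) + u^2 \times v$. When $n \neq 4$, the term $u \times \omega$ is the only contribution in the Künneth summand $H^2(M;\QQ) \otimes H^n(M;\QQ)$, and it is nonzero because $u \neq 0$ and $\omega \neq 0$; hence $\bar u^2 \cup \bar v \neq 0$. The one genuinely delicate case is $n = 4$: then $v \in H^2(M;\QQ)$, so both $-\,v \times u^2$ and $-2(u \times \omega)$ lie in $H^2(M;\QQ) \otimes H^4(M;\QQ)$. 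Writing $u^2 = \lambda\omega$ with $\lambda \in \QQ$, that component equals $-(2u + \lambda v) \times \omega$, and $2u + \lambda v \neq 0$: otherwise cupping the relation $2u + \lambda v = 0$ with $u$ gives $2u^2 + \lambda\omega = 3\lambda\omega = 0$, forcing $\lambda = 0$ and then $u = 0$, a contradiction. So $\bar u^2 \cup \bar v \neq 0$ in every dimension.

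Finally I would point out why this slightly indirect argument is needed: one cannot just use $\bar u^3$, since $u^2$ may vanish in $H^4(M;\QQ)$, in which case $\bar u^3 = 0$. Cupping with the Poincar\'e-dual partner $\bar v$ of $u$ is exactly what guarantees a nonzero product of the required weight, regardless of whether $u^2$ vanishes.
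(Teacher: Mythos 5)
Your proof is correct and follows essentially the same route as the paper: $\wgtTC(\bar u)=2$ from atoroidality, $\wgtTC(\bar v)\geq 1$ for a Poincar\'e-dual partner $v$ of $u$, and superadditivity applied to $\bar u^2\cup\bar v\neq 0$ to get weight $\geq 5$ and hence $\TC(M)\geq 6$. Your treatment of the nonvanishing of $\bar u^2\cup\bar v$ is in fact more careful than the paper's, which only points to the summand $-2u\times uv$ and does not address the possible interaction with $-v\times u^2$ in the same K\"unneth component when $\dim M=4$; your argument ruling out the cancellation $2u+\lambda v=0$ closes that small gap.
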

\begin{proof}
Let $n = \dim M$, let $u \in H^2(M;\QQ)$ be atoroidal. One computes that $\bar{u}^2\neq 0 \in H^4(M\times M;\QQ)$. Since $n>2$, there exists $v \in H^{n-2}(M;\QQ)$, such that $u\cdot v$ is a generator of $H^n(M;\QQ)$, where $\cdot$ denotes the cup product. One further computes that $\bar{u}^2 \cdot \bar{v}\neq 0$, since it contains the non-vanishing summand $-2u \times uv$. Theorem \ref{Thmwgtcup} and Corollary \ref{CorAtor} yield
$$\wgtTC\left(\bar{u}^2 \cdot \bar{v} \right) \geq 2 \wgtTC(\bar{u}) + \wgtTC(\bar{v}) \geq 5.$$
Hence, $\TC(M)\geq 6$ by part (1) of Theorem \ref{Thmwgtprops}.
\end{proof}

Let $M$ and $N$ be closed, oriented manifolds with $\dim M =\dim N$. We recall that $M$ \emph{dominates $N$}, and write $M \geq N$, if there exists a smooth map $M \to N$ of non-zero degree. See \cite{delaHarpe} for an overview of constructions and results involving dominated manifolds.

\begin{prop}
\label{PropDom}
Let $M$ be a closed oriented manifold of positive dimension. If $\TC(M) \leq 3$, then $M$ will be a rational homology sphere or it will be dominated by a manifold of the form $P \times S^1$, where $P$ is a closed oriented manifold of dimension $\dim M-1$.
\end{prop}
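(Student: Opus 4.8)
The plan is to argue by contrapositive: assume $M$ is neither a rational homology sphere nor dominated by any product $P \times S^1$ with $\dim P = \dim M - 1$, and deduce $\TC(M) \geq 4$. Since $M$ fails to be a rational homology sphere, there is some degree $k$ with $0 < k < \dim M$ and a nonzero class $u \in H^k(M;\QQ)$; by Poincaré duality we may choose $v \in H^{\dim M - k}(M;\QQ)$ with $u \cup v$ a generator of $H^{\dim M}(M;\QQ)$, so in particular $u \cup v \neq 0$. The key point will be that the hypothesis ``$M$ is not dominated by $P \times S^1$'' forces $u$ to be \emph{detected by no product}, i.e. $f^*u = 0$ for every continuous $f : P \times S^1 \to M$ with $P$ a closed oriented $(k-1)$-manifold. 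Indeed, if $f^*u \neq 0$ for some such $f$, then $\langle f^*(u \cup v), [P \times S^1] \rangle = \pm\langle f^*u \cup f^*v, [P\times S^1]\rangle$; after replacing $v$ by a suitable multiple and using that $f^*u \cup (\,\cdot\,)$ pairs nontrivially with the fundamental class, one produces a map $P \times S^1 \to M$ of nonzero degree — contradicting the domination hypothesis. (This is the step that requires care: one must check that nonvanishing of $f^*u$ really can be upgraded to nonvanishing of the top-degree pairing, using that $H^{\dim M - 1}(P\times S^1;\QQ)$ surjects onto $H^{\dim M - k}$ of the relevant factor via the Künneth formula, so that $f^*v$ can be matched against $f^*u$.)

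Granting this, Proposition \ref{Propsigmapull} applies to the class $u \in H^k(M;\QQ)$ (note $k \geq 2$ since $M$ is not a rational homology sphere and $H^1$ being nonzero would again produce a degree-one map from $S^1 \times (\text{point-like factor})$... more precisely if $k=1$ one handles it directly, as $H_0(LM;\QQ)$ is realisable and the pairing argument degenerates favorably): we get $\wgtTC(\bar u) \geq 2$, where $\bar u = 1 \times u - u \times 1$. Simultaneously, since $u \cup v \neq 0$, the standard computation (as in the proof of the preceding theorem) shows $\bar u \cup \bar v \neq 0 \in H^{\dim M}(M \times M;\QQ)$, because it contains the nonvanishing summand $\pm\, u \times v$ coming from the Künneth decomposition. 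By Theorem \ref{Thmwgtcup},
$$\wgtTC(\bar u \cup \bar v) \geq \wgtTC(\bar u) + \wgtTC(\bar v) \geq 2 + 1 = 3,$$
using $\wgtTC(\bar v) \geq 1$ (which holds for any nonzero zero-divisor). Then part (1) of Theorem \ref{Thmwgtprops} gives $\TC(M) \geq \wgtTC(\bar u \cup \bar v) + 1 \geq 4$, completing the contrapositive.

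**Main obstacle.** The delicate point is the implication ``$f^*u \neq 0$ for some $f : P \times S^1 \to M$'' $\Longrightarrow$ ``$M$ is dominated by a product $P' \times S^1$.'' One must pass from a nonzero pullback in degree $k$ to a nonzero pullback of the top class, and then realise the resulting homology obstruction geometrically: the natural move is to take $P$ itself (or a modification of it, e.g. replace $P$ by $P \times T^{k-1-\dim P}$ if dimensions don't match, after first reducing to the case where $P$ can be taken of dimension exactly $\dim M - 2$ by crossing with tori and using that tori admit degree-one-ish maps compatibly) so that $P \times S^1$ has dimension $\dim M$ and the map $f$, possibly cup-multiplied against a dual class, has nonzero degree. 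Getting the dimension bookkeeping exactly right — ensuring the dominating manifold is genuinely of the form (closed oriented $(\dim M - 1)$-manifold) $\times\, S^1$ rather than some more complicated product — is where the real work lies; the rest is assembling already-established weight inequalities. I would also double-check the edge case $\dim M = 1$ (where $M = S^1$ is a rational homology sphere, so there is nothing to prove) and small-dimensional cases to make sure the degree/Poincaré-duality argument does not collapse.
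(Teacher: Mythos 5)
Your overall skeleton (a product of two zero-divisors $\bar{u}\cdot\bar{v}$, one of $\wgtTC\geq 2$ via Proposition \ref{Propsigmapull} and one of $\wgtTC\geq 1$, giving $\TC(M)\geq 4$) is the right one, but you have assigned the weight-two role to the wrong class, and the lemma you then need is exactly the step you flag as the ``main obstacle'' --- and it does not go through. To apply Proposition \ref{Propsigmapull} to a class $u\in H^k(M;\QQ)$ of \emph{intermediate} degree $k<n=\dim M$, you must show $f^*u=0$ for all $f:P\times S^1\to M$ with $\dim P=k-1$, i.e.\ for maps from $k$-dimensional products. The hypothesis ``$M$ is not dominated by $P\times S^1$'' only concerns \emph{$n$-dimensional} products, so it says nothing about such $f$; your attempted bridge is dimensionally inconsistent (you pair the degree-$n$ class $f^*(u\cup v)$ against the fundamental class of the $k$-dimensional manifold $P\times S^1$, which vanishes for degree reasons), and the ``cross with tori'' repair does not preserve the relevant nonvanishing. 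Indeed the implication you need is false in spirit: a manifold can carry an intermediate-degree class detected by a low-dimensional product (e.g.\ a nontrivial $H^2$-class pulled back nontrivially along some $T^2\to M$) while not being dominated by any product at all.

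The fix is to swap the roles of the two classes, which is what the paper does. Take $\sigma\in H^n(M;\QQ)$ a generator of the top cohomology. For a continuous $f:P\times S^1\to M$ with $\dim P=n-1$, the source and target are closed oriented $n$-manifolds, so $f^*\sigma\neq 0$ is \emph{equivalent} to $\deg f\neq 0$, i.e.\ to $P\times S^1$ dominating $M$. Hence non-domination gives, tautologically, $f^*\sigma=0$ for all such $f$, and Proposition \ref{Propsigmapull} (with $k=n\geq 2$; the case $n=1$ is vacuous since $S^1$ is a rational homology sphere) yields $\wgtTC(\bar\sigma)\geq 2$. Then, since $M$ is not a rational homology sphere, pick any nonzero $\nu\in H^i(M;\QQ)$ with $0<i<n$; the product $\bar\sigma\cdot\bar\nu$ contains the K\"unneth summand $\pm\,\nu\times\sigma\neq 0$, so $\wgtTC(\bar\sigma\cdot\bar\nu)\geq 2+1=3$ and $\TC(M)\geq 4$. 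Note that with this arrangement the intermediate-degree class $\nu$ only needs weight $\geq 1$ (automatic for a nonzero zero-divisor), so none of the degree-$1$ edge cases or Poincar\'e-duality matching you worry about arise.
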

\begin{proof}
Let $n=\dim M$ and $\sigma \in H^n(M;\QQ)$ with $\sigma \neq 0$. Assume that $M$ was not dominated by any manifold of the above form. Then $f^*\sigma=0$ for all closed $(n-1)$-dimensional manifolds $P$ and continuous maps $f: S^1 \times P \to M$. Thus, $\wgtTC(\bar\sigma) \geq 2$ by Proposition \ref{Propsigmapull}. Assume also that $M$ is not a rational homology sphere, hence there exist $i \in \{1,2, \dots,n-1\}$ and $\nu \in H^i(M;\QQ)$ with $\nu \neq 0$. Then $\bar\sigma \cdot \bar\nu \neq 0$, since it contains the non-vanishing summand $\pm \nu \times \sigma$. Hence, $\wgtTC(\bar\sigma \cdot \bar\nu) \geq \wgtTC(\bar\sigma) + \wgtTC(\bar\sigma)=3$, which implies that $\TC(M) \geq 4$. 
\end{proof}

In the even-dimensional case, one can make the following stronger statement. 

\begin{prop}
\label{PropDomEven}
Let $M$ be an even-dimensional closed oriented manifold. If $\TC(M) \leq 4$, then $M$ will be dominated by a manifold of the form $P \times S^1$, where $P$ is a closed oriented manifold of dimension $\dim M-1$.
\end{prop}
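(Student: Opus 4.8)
The plan is to argue by contradiction, refining the proof of Proposition~\ref{PropDom} by exploiting the even-dimensionality of $M$ to \emph{square} the relevant zero-divisor. Write $n=\dim M$ (we may assume $n\ge 2$). Fix a nonzero class $\sigma\in H^n(M;\QQ)\cong\QQ$ and suppose, for contradiction, that $M$ is not dominated by any manifold of the form $P\times S^1$ with $P$ a closed oriented $(n-1)$-manifold. The first step is to observe that, since $H^n(M;\QQ)$ is one-dimensional and $\langle\sigma,[M]\rangle\neq 0$, a continuous map $f\colon P\times S^1\to M$ from a closed oriented $n$-manifold satisfies $f^*\sigma=0$ precisely when $\deg f=0$; hence our assumption says exactly that $f^*\sigma=0$ for every continuous $f\colon P\times S^1\to M$ with $P$ closed oriented of dimension $n-1$. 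Proposition~\ref{Propsigmapull} then yields $\wgtTC(\bar\sigma)\ge 2$.

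The second and crucial step is to compute $\bar\sigma^2=\bar\sigma\cup\bar\sigma\in H^{2n}(M\times M;\QQ)$. Expanding $\bar\sigma=1\times\sigma-\sigma\times 1$, using $\sigma^2=0$ (since $2n>\dim M$) together with the Koszul sign rule for cup products of cross products, one finds $\bar\sigma^2=-\big((-1)^n+1\big)\,\sigma\times\sigma$. This is where even-dimensionality is essential: for $n$ even the coefficient is $-2$, so $\bar\sigma^2=-2\,\sigma\times\sigma$, whereas for $n$ odd it vanishes --- which is precisely why Proposition~\ref{PropDom} could only produce the weaker bound $\TC(M)\ge 4$. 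By the K\"unneth theorem $H^{2n}(M\times M;\QQ)\cong H^n(M;\QQ)\otimes H^n(M;\QQ)\cong\QQ$ with $\sigma\times\sigma$ a generator, so $\bar\sigma^2\neq 0$ over $\QQ$.

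Finally, I would invoke the weight machinery already established: superadditivity of sectional category weight (Theorem~\ref{Thmwgtcup}, applied to $\pi$ with $R=\QQ$) gives $\wgtTC(\bar\sigma^2)\ge 2\,\wgtTC(\bar\sigma)\ge 4$, and then part~(1) of Theorem~\ref{Thmwgtprops} yields $\TC(M)\ge\wgtTC(\bar\sigma^2)+1\ge 5$, contradicting $\TC(M)\le 4$. Hence $M$ is dominated by some $P\times S^1$ with $P$ closed oriented of dimension $n-1$, as claimed. The only genuinely delicate points are the translation of ``$M$ is not dominated by any $P\times S^1$'' into the vanishing hypothesis of Proposition~\ref{Propsigmapull} (which reduces to Poincar\'e duality and the one-dimensionality of $H^n(M;\QQ)$, with a little care regarding connectedness conventions if $P$ is permitted to be disconnected) and the verification that $\bar\sigma^2$ survives rationally; everything else is a formal consequence of the weight estimates from Section~\ref{SectionWeights}.
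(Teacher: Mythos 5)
Your proof is correct and follows exactly the paper's argument: deduce $\wgtTC(\bar\sigma)\ge 2$ from non-domination via Proposition~\ref{Propsigmapull}, compute $\bar\sigma^2=-2\,\sigma\times\sigma\neq 0$ using the even degree of $\sigma$, and conclude $\TC(M)\ge 5$ from superadditivity of weights. The extra details you supply (the sign computation and the translation of the domination hypothesis into the vanishing hypothesis) are accurate and merely make explicit what the paper leaves to the reader.
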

\begin{proof}
Let $n=\dim M$ and $\sigma \in H^n(M;\QQ)$ with $\sigma \neq 0$ and assume that $M$ was not dominated by a manifold of the form $P \times S^1$. Then again $\wgt(\bar\sigma)\geq 2$. Since $\sigma$ is of even degree, one computes that $\bar\sigma^2 = -2 \sigma \times \sigma \neq 0$, hence $\wgt(\bar\sigma^2) \geq 4$, which implies that $\TC(M) \geq 5$. 
\end{proof}

In the case of highly connected manifolds, the domination condition from Proposition \ref{PropDomEven} suffices to determine the value of $\TC$.

\begin{cor}
\label{CorHighly}
 Let $n \in \NN$ with $n \geq 2$ and let $M$ be a closed, oriented, $(n-1)$-connected $2n$-dimensional manifold. If there is no closed oriented $(2n-1)$-dimensional manifold $P$, such that $M$ is dominated by $P \times S^1$, then $\TC(M)=5$.
\end{cor}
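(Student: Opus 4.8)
The plan is to combine the upper bound for $\TC$ of a $2n$-dimensional manifold with the lower bound coming from Proposition \ref{PropDomEven}. For the upper bound, I would invoke the standard dimension-connectivity estimate for topological complexity: if $M$ is $(n-1)$-connected and $\dim M = 2n$, then $\TC(M) \leq \frac{2\cdot 2n}{n+1}+1 < 5$, hence $\TC(M) \leq 4$. (This is the analogue for $\TC$ of the Ganea-type bound; it is a well-known consequence of the general estimate $\TC(M)\le \frac{2\dim M}{\operatorname{conn}(M)+1}+1$, and since the excerpt treats such facts as standard background I would cite it rather than reprove it.) Alternatively one can route through $\TC(M)\le 2\operatorname{cat}(M)-1$ together with the classical bound $\operatorname{cat}(M)\le \frac{\dim M}{\operatorname{conn}(M)+1}+1 \le 3$ for $(n-1)$-connected $2n$-manifolds, which also gives $\TC(M)\le 5$; a slightly sharper version of either estimate is needed to land at $\le 4$, so I would state the sharpest form available and cite it.

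The lower bound is the content of Proposition \ref{PropDomEven}: under the hypothesis that no closed oriented $(2n-1)$-manifold $P$ has $M$ dominated by $P\times S^1$, that proposition gives $\TC(M)\ge 5$. So the proof is simply: the hypothesis of the corollary is exactly the hypothesis of Proposition \ref{PropDomEven}, hence $\TC(M)\ge 5$; combined with the upper bound $\TC(M)\le 5$ from the connectivity estimate, we conclude $\TC(M)=5$.

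One subtlety to check is that the corollary asserts $\TC(M)=5$, whereas Proposition \ref{PropDomEven} only directly rules out $\TC(M)\le 4$, i.e. gives $\TC(M)\ge 5$. So the real work is making sure the upper bound is $\le 5$, not merely $\le$ something larger; for $(n-1)$-connected $2n$-manifolds the cup-length is at most $2$ (the only products of positive-degree classes that can be nonzero pair a degree-$i$ class with a degree-$(2n-i)$ class, and with $n\le i\le n$ forced... more precisely $H^j(M;\QQ)=0$ for $0<j<n$, so any nonzero product of $\ge 3$ positive-degree classes would need total degree $\ge 3n > 2n$), so $\operatorname{cat}(M)\le 3$ by the cohomological-to-dimension argument, and then $\TC(M)\le 2\operatorname{cat}(M)-1 = 5$. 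That is the cleanest self-contained route and the one I would write out: first establish $\operatorname{cat}(M)\le 3$ from connectivity plus the dimension bound $\operatorname{cat}(M)\le \dim M/(\operatorname{conn}(M)+1)+1$, then apply $\TC\le 2\operatorname{cat}-1$.

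The main obstacle is purely bookkeeping: pinning down the precise form of the connectivity estimate that yields $\TC(M)\le 5$ (as opposed to a weaker bound) and citing it correctly, since the excerpt does not restate these classical inequalities. Once the upper bound $\TC(M)\le 5$ is in hand, the corollary follows immediately by squeezing against Proposition \ref{PropDomEven}. I would write the proof as: ``By Proposition \ref{PropDomEven}, the hypothesis implies $\TC(M)\ge 5$. Since $M$ is $(n-1)$-connected of dimension $2n$, its rational cohomology is concentrated in degrees $0$, $n,\dots,2n$, so $\operatorname{cl}_{\QQ}(M)\le 2$ and $\operatorname{cat}(M)\le 3$; hence $\TC(M)\le 2\operatorname{cat}(M)-1 = 5$. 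Therefore $\TC(M)=5$.''
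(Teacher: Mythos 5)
Your overall strategy coincides with the paper's: the lower bound $\TC(M)\ge 5$ is exactly Proposition \ref{PropDomEven}, and the upper bound comes from a standard dimension--connectivity estimate. The paper applies the bound for $\TC$ directly, namely $\TC(M)\le \frac{2\dim M+1}{\operatorname{conn}(M)+1}+1=\frac{4n+1}{n}+1=5+\frac1n<6$, and concludes $\TC(M)\le 5$ by integrality; your alternative route via $\cat(M)\le \frac{\dim M}{\operatorname{conn}(M)+1}+1=3$ together with $\TC(M)\le 2\cat(M)-1$ is an equally valid way to reach $\TC(M)\le 5$.

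However, two steps in your write-up are wrong and must be removed or corrected. First, the claim $\TC(M)\le \frac{2\cdot 2n}{n+1}+1<5$, ``hence $\TC(M)\le 4$'', is false: the denominator in the connectivity bound is $\operatorname{conn}(M)+1=n$, not $n+1$, and the correct evaluation gives $5+\frac1n$ (or exactly $5$ in the version without the $+1$ in the numerator), not something below $5$. If your claimed bound were true it would contradict the very statement you are proving, since the hypothesis forces $\TC(M)\ge 5$. Second, in your final paragraph you deduce $\cat(M)\le 3$ from $\cl_{\QQ}(M)\le 2$; this is backwards, since cup length bounds $\cat$ from \emph{below}, not above. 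The correct justification for $\cat(M)\le 3$ is the dimension--connectivity bound you quote earlier, $\cat(M)\le \dim M/(\operatorname{conn}(M)+1)+1=2n/n+1=3$. With that fix, the chain $\TC(M)\le 2\cat(M)-1=5$ is sound, and the corollary follows by squeezing against Proposition \ref{PropDomEven}, exactly as in the paper.
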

\begin{proof}
If $M$ is an $(n-1)$-connected $2n$-manifold, then the standard upper bound for $\TC$ by dimension and connectivity yields $\TC(M) \leq \frac{4n+1}{n}+1= 5+ \frac1n$, i.e. $\TC(M) \leq 5$, since $\TC(M)$ is integer-valued. The other inequality is an immediate consequence of Proposition \ref{PropDomEven}.
\end{proof}

Manifolds that are dominated by product manifolds have been studied by D. Kotschick and C. L\"oh in \cite{KotLoeh}. Combining our previous results with their studies yields a more tangible sufficient condition on a closed manifold for its topological complexity to be four or bigger. 

\begin{theorem}
Let $M$ be a closed oriented connected manifold which is rationally essential, i.e. for which $(c_M)_*([M])\neq 0\in H_*(B\pi_1(M);\QQ)$, where $c_M: M \to B\pi_1(M)$ denotes a classifying map of the universal covering of $M$. Assume that $\pi_1(M)$ is hyperbolic, but not virtually cyclic. 
\begin{enumerate}[a)]
\item If $M$ is not a rational homology sphere, then $\TC(M) \geq 4$.
\item If $M$ is even-dimensional, then $\TC(M) \geq 5$. 
\end{enumerate}
 \end{theorem}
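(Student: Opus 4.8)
The strategy is to feed Propositions \ref{PropDom} and \ref{PropDomEven} into the non-domination results of Kotschick and L\"oh from \cite{KotLoeh}. Recall from \cite{KotLoeh} that a Gromov-hyperbolic group which is not virtually cyclic is \emph{not presentable by products}, and that if a closed oriented connected manifold $M$ is rationally essential and $\pi_1(M)$ is not presentable by products, then $M$ is not dominated by any nontrivial direct product $P_1 \times P_2$ of closed oriented manifolds, i.e.\ with $\dim P_1 >0$ and $\dim P_2 >0$. Since $\pi_1(M)$ is infinite and not cyclic, $M$ is neither simply connected nor homeomorphic to $S^1$, so $\dim M \geq 2$; hence in any factorization $P \times S^1$ with $\dim P = \dim M - 1$ occurring below, the factor $P$ is positive-dimensional and $P \times S^1$ is a genuine nontrivial product. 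Consequently, the hypotheses of the theorem guarantee that $M$ is not dominated by a manifold of the form $P \times S^1$ with $\dim P = \dim M - 1$.

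For part a), suppose toward a contradiction that $\TC(M) \leq 3$. By Proposition \ref{PropDom}, either $M$ is a rational homology sphere, or $M$ is dominated by $P \times S^1$ for some closed oriented $P$ with $\dim P = \dim M - 1 \geq 1$. The second case is impossible by the discussion above, so $M$ must be a rational homology sphere, contradicting the hypothesis of part a). Hence $\TC(M) \geq 4$. For part b), suppose toward a contradiction that $\TC(M) \leq 4$. As $M$ is even-dimensional, Proposition \ref{PropDomEven} applies and shows that $M$ is dominated by a manifold $P \times S^1$ with $\dim P = \dim M - 1 \geq 1$, again contradicting the discussion above. Hence $\TC(M) \geq 5$.

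The proof is essentially a bookkeeping argument once the input from \cite{KotLoeh} is in place; the only point demanding care is checking that the hypotheses of their non-domination theorem are indeed satisfied --- namely that rational essentialness is used exactly as stated there, that non-virtually-cyclic hyperbolic groups belong to the class of groups not presentable by products, and that $\dim M \geq 2$ so that $P \times S^1$ counts as a nontrivial product in their sense. I expect no genuine obstacle beyond matching conventions between \cite{KotLoeh} and the formulations of Propositions \ref{PropDom} and \ref{PropDomEven}.
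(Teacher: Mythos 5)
Your proposal is correct and follows exactly the paper's route: the paper's proof is the one-line combination of Propositions \ref{PropDom} and \ref{PropDomEven} with Theorems 1.4 and 1.5 of \cite{KotLoeh}, and your write-up simply fills in the (correct) details of why those theorems apply, including the check that $\dim M \geq 2$ so that $P \times S^1$ is a nontrivial product.
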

\begin{proof}
This follows from combining Propositions \ref{PropDom} and \ref{PropDomEven} with Theorems 1.4 and 1.5 of \cite{KotLoeh}.
\end{proof}

We conclude this section by studying the topological complexity of closed three-dimensional manifolds (3-manifolds). The LS category of closed 3-manifolds has been fully determined by J. C. G\'{o}mez-Larra\~{n}aga and F. Gonz\'{a}lez-Acu\~{n}a in \cite{GLGA}, but to the author's best knowledge a systematic study of the topological complexity of closed 3-manifolds has not been conducted anywhere in the literature. 

\begin{theorem}
\label{Theorem3dim}
Let $M$ be a closed oriented three-dimensional manifold that is not a rational homology $3$-sphere. If $M$ is not dominated by any closed oriented product manifold, then $5 \leq \TC(M) \leq 7$.
\end{theorem}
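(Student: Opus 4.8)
The plan is to establish the two inequalities separately, using the general topological-complexity bounds for closed manifolds together with the new weight estimates of Section~\ref{SectionGeomweight}.

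For the \emph{upper bound} $\TC(M)\le 7$, I would invoke the standard dimension–connectivity upper bound $\TC(M)\le 2\dim M +1$ (valid for any path-connected CW complex, see \cite{FarberTC}), which for $\dim M=3$ gives $\TC(M)\le 7$ immediately. Nothing finer is needed here, since a closed orientable $3$-manifold need not be simply connected, so the connectivity-improved bound does not apply in general.

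For the \emph{lower bound} $\TC(M)\ge 5$, I would argue exactly as in the proof of Proposition~\ref{PropDom}, but push the cup-length estimate one term further using the hypothesis that $M$ is not a rational homology $3$-sphere. Let $\sigma\in H^3(M;\QQ)$ be a generator (nonzero since $M$ is closed oriented), and write $\bar\sigma = 1\times\sigma - \sigma\times 1 \in H^3(M\times M;\QQ)$. Since $M$ is not dominated by any manifold of the form $P\times S^1$ with $P$ closed oriented of dimension $2$ — which follows from the hypothesis that $M$ is dominated by no closed oriented product manifold at all — Proposition~\ref{Propsigmapull} gives $\wgtTC(\bar\sigma)\ge 2$. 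Because $M$ is not a rational homology $3$-sphere, there is some $i\in\{1,2\}$ and a nonzero class $\nu\in H^i(M;\QQ)$; by Poincar\'e duality we may choose $\nu$ so that $\nu$ pairs nontrivially with a dual class, but for the argument it suffices that $\nu\neq 0$. Then $\bar\sigma\cdot\bar\nu = (1\times\sigma - \sigma\times 1)(1\times\nu - \nu\times 1)$ contains the summand $\pm\,\nu\times\sigma \in H^{i}(M;\QQ)\otimes H^3(M;\QQ)$, which is nonzero and not cancelled by any other summand (the other summands land in different bidegrees, namely $(0,*)$, $(3,i)$ and $(3+i,0)$ modulo the $(i,3)$ term, since $\sigma\cdot\nu = 0$ for degree reasons as $i+3>3$). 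Hence $\bar\sigma\cdot\bar\nu\neq 0$, and by superadditivity of $\TC$-weight (Theorem~\ref{Thmwgtcup}) together with Corollary~\ref{Corslant}-type reasoning,
$$\wgtTC(\bar\sigma\cdot\bar\nu)\;\ge\;\wgtTC(\bar\sigma)+\wgtTC(\bar\nu)\;\ge\;2+2\;=\;4,$$
where $\wgtTC(\bar\nu)\ge 2$ follows from Proposition~\ref{Propsigmapull} applied to $\nu$ (any map $f\colon Q\times S^1\to M$ with $Q$ closed oriented of dimension $i-1$ satisfies $f^*\nu=0$: for $i=1$ this is because $H^1$ is detected on loops but a product domination $S^1\times\mathrm{pt}\to M$ of nonzero degree would contradict the hypothesis; for $i=2$ a map from $Q\times S^1$ with $\dim Q=1$, i.e.\ from a $2$-torus, cannot detect $\nu$ since otherwise $M\geq T^2$, again excluded). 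By part~(1) of Theorem~\ref{Thmwgtprops}, $\TC(M)\ge\wgtTC(\bar\sigma\cdot\bar\nu)+1\ge 5$.

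The main obstacle I anticipate is the careful bookkeeping in the lower-bound step: one must verify that the relevant product classes $\bar\sigma$, $\bar\nu$ are indeed nonzero and that their cup product does not vanish by a K\"unneth cancellation, and — more delicately — that the non-domination hypothesis really rules out all the auxiliary maps $f\colon Q\times S^1\to M$ needed to invoke Proposition~\ref{Propsigmapull} for \emph{both} $\sigma$ (degree $3$) and $\nu$ (degree $1$ or $2$). For the degree-$1$ case one should note that a class in $H^1(M;\QQ)$ pulled back along $S^1\hookrightarrow M$ representing a nontrivial homology class would exhibit $M$ as rationally dominated in a way contradicting the product-domination hypothesis, or alternatively one treats $H^1$ directly via the realisability of $H_1$; the argument is elementary but needs to be spelled out. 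The upper bound is routine and poses no difficulty.
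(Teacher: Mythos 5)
Your upper bound and your treatment of the top class are fine and agree with the paper: for the generator $\sigma\in H^3(M;\QQ)$, a map $f\colon P\times S^1\to M$ from a closed oriented $3$-dimensional product satisfies $f^*\sigma=\deg(f)\cdot(\text{generator})$, so non-domination by products gives $f^*\sigma=0$ for all such $f$ and hence $\wgtTC(\bar\sigma)\geq 2$ by Proposition~\ref{Propsigmapull}. The gap is in the claim $\wgtTC(\bar\nu)\geq 2$ for the auxiliary class $\nu\in H^i(M;\QQ)$, $i\in\{1,2\}$. For $i=1$ this is impossible: by Theorem~\ref{Thmwgtprops}(2) one has $\wgtTC(\bar\nu)\leq\deg\bar\nu=1$, and indeed the hypothesis of Proposition~\ref{Propsigmapull} (which moreover requires $k\geq 2$) fails, since a nonzero class in $H^1(M;\QQ)$ is always detected by some loop $S^1=\{\mathrm{pt}\}\times S^1\to M$ by realisability of $H_1$. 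For $i=2$ your justification confuses two different notions: a map $T^2\to M$ with $f^*\nu\neq 0$ is a map between manifolds of different dimensions, so it is not a domination of $M$ and is in no way excluded by the hypothesis that $M$ is not dominated by a closed oriented \emph{three-dimensional} product. With only $\wgtTC(\bar\sigma)\geq 2$ and $\wgtTC(\bar\nu)\geq 1$ your two-factor product yields $\wgtTC(\bar\sigma\cdot\bar\nu)\geq 3$ and hence only $\TC(M)\geq 4$, which is exactly the conclusion of Proposition~\ref{PropDom} and falls short of the claimed $5$.

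The missing idea is to use a \emph{three}-factor product and to extract the extra unit of weight from Poincar\'e duality rather than from a second weight-two class. Since $M$ is not a rational homology $3$-sphere, duality provides $\sigma_1\in H^1(M;\QQ)$ and $\sigma_2\in H^2(M;\QQ)$ with $\sigma_1\cdot\sigma_2=\sigma$. A direct K\"unneth computation gives $\bar\sigma\cdot\bar\sigma_1\cdot\bar\sigma_2=-2\,\sigma\times\sigma\neq 0$, and Theorem~\ref{Thmwgtcup} then yields
$$\wgtTC(\bar\sigma\cdot\bar\sigma_1\cdot\bar\sigma_2)\;\geq\;\wgtTC(\bar\sigma)+\wgtTC(\bar\sigma_1)+\wgtTC(\bar\sigma_2)\;\geq\;2+1+1\;=\;4,$$
where the weights of $\bar\sigma_1,\bar\sigma_2$ are only the trivial bound $\geq 1$ valid for any nonzero zero-divisor. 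This gives $\TC(M)\geq 5$ without ever needing a second class of weight two.
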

\begin{proof}
By the dimensional upper bound for topological complexity, $\TC(M) \leq 7$ for any three-dimensional manifold. Let $c \in H^3(M;\QQ)$ be a generator. Any closed oriented three-dimensional product manifold has the form $\Sigma \times S^1$, where $\Sigma$ is a closed oriented surface. Thus, if $M$ is not dominated by a product manifold, then $f^*u=0$ for all continuous $f: \Sigma \times S^1 \to M$, where $\Sigma$ is any closed oriented surface. Proposition \ref{Propsigmapull} then implies that $\wgtTC(\bar{c}) \geq 2$.

Let $\sigma_1 \in H^1(M;\QQ)$ and $\sigma_2 \in H^2(M;\QQ)$ such that $\sigma_1 \cdot \sigma_2 =c$. Since $M$ is by assumption not a rational homology sphere, such classes exist by Poincar\'e duality. A straightforward computation shows that $\bar{c} \cdot \bar\sigma_1 \cdot \bar\sigma_2=-2 c \times c \neq 0$. By Theorem \ref{Thmwgtcup}, this yields 
$$\wgtTC(\bar{c} \cdot \bar\sigma_1 \cdot \bar\sigma_2) \geq \wgtTC(\bar{c})+\wgtTC(\bar\sigma_1)+\wgtTC(\bar\sigma_2) \geq 4,$$
so that $\TC(M) \geq 5$. 
\end{proof}

\begin{remark}
In \cite[Theorem 1]{KotNeo}, Kotschick and C. Neofytidis have worked out geometric conditions on a closed oriented 3-manifold $M$ for being dominated by a product manifold. Thus, the assumptions of Theorem \ref{Theorem3dim} might be replaced by more explicit ones.

Moreover, they show that the total space of a circle bundle over a closed oriented surface of positive genus with non-zero Euler number satisfies these conditions, see \cite[Lemma 1]{KotNeo}.
\end{remark}

\section{$\SC_1$ and closed geodesics}
\label{SectionGeod}

As a geometric application of spherical complexities we want to apply the results of the previous sections to the case $n=1$ and to energy functionals of Riemann/Finsler manifolds as outlined in subsection \ref{SubFinsler} to derive new existence results for closed geodesics. As a starting point for a reader who wants to learn about closed geodesics, the author recommends the lucid survey \cite{OanceaGeod} summarizing many important results for the Riemannian case.

We will first outline a general strategy of employing classes of spherical complexity weight two in order to show the existence of two geometrically (positively) distinct closed geodesics in certain cases. Again we assume that all manifolds occuring in this section are smooth and connected. 


\begin{lemma}
\label{LemmaJones}
Let $R$ be a commutative ring. If $X$ is simply connected, then $$Z_1:H^k(X;R) \to H^{k-1}(LX;R)$$ will be injective for $k \geq 1$.
\end{lemma}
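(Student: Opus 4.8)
The plan is to exploit the fact that $Z_1(u) = E_1^*u/[S^1]$ is essentially the "transgression-like" map appearing in the analysis of the free loop space fibration $e_0 : LX \to X$, and to produce a one-sided inverse. More precisely, I would use the classical fact that for $X$ simply connected the evaluation map $E_1 : LX \times S^1 \to X$ admits, after restriction, a section-like structure: consider the inclusion $j : \Omega X \hookrightarrow LX$ of the based loop space together with the standard map $\ev_0 : \Omega X \times S^1 \to X$, $\ev_0(\gamma,t) = \gamma(t)$, which is the adjoint of $\id_{\Omega X}$. The composite $Z_1(u)$ restricted along $j$ recovers (up to sign) the cohomology suspension $\sigma^* : H^k(X;R) \to H^{k-1}(\Omega X;R)$, so one route is to argue that $\sigma^*$ is injective for simply connected $X$ — but that is false in general, so this naive approach does not work directly.

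Instead I would argue as follows. First I would observe that $c_1 : X \to LX$ (inclusion of constants) is a section of $e_0 : LX \to X$, so $H^*(LX;R) \cong H^*(X;R) \oplus \ker c_1^*$ as $H^*(X;R)$-modules, and the interesting part of $Z_1(u)$ lands in $\ker c_1^*$ by Lemma \ref{LemmaZn} (with $n=1$, noting the proof there only needs $k > n$ and goes through over any ring once one works with the $R$-module structure — for general $R$ one checks the degenerate-chain argument directly on cochains rather than invoking non-degeneracy of the Kronecker pairing). The key structural input is the description of $H^*(LX;R)$ as computed, for $X$ simply connected, via the Serre spectral sequence of $\Omega X \to LX \to X$, or better, the algebraic model: there is a natural short exact sequence relating $H^*(LX)$ to $H^*(X)$ and $H^*(\Omega X)$, and the map $Z_1$ is precisely a splitting of one of its structure maps. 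Concretely, I would use the cofiber sequence $LX \to LX \times S^1 \to LX \times S^1 / LX \times \{*\} \simeq \Sigma(LX_+) \wedge S^1_{\text{part}}$, or more simply the Künneth decomposition $H^{k-1}(LX \times S^1;R) \cong H^{k-1}(LX;R) \oplus H^{k-2}(LX;R)$ together with the slant product picking out the second summand of $E_1^*u$.

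The cleanest path: I would show $Z_1$ is split injective by exhibiting a left inverse built from the fibration $e_1 : LX \to X$ at the "other end" is not available, so instead I would use that the map $E_1 : LX \times S^1 \to X$ together with $c_1 \times \id : X \times S^1 \to LX \times S^1$ satisfies $E_1 \circ (c_1 \times \id) = \pr_X$, hence on the summand $H^{k-2}(LX) \otimes H^1(S^1)$ of $H^{k-1}(LX \times S^1)$, the component of $E_1^*u$ restricts compatibly. The honest argument, which I expect to be the intended one, uses a \emph{model} (e.g. the Chen / Cohen–Jones description, or the cobar construction) showing that the sequence $0 \to H^*(X;R) \xrightarrow{e_0^*} H^*(LX;R) \to \text{(something)} \to 0$ is split by a map into which $Z_1$ fits; but to keep things elementary I would instead verify injectivity on homology dually: $Z_1(u) = 0$ iff $\langle E_1^*u, c \times [S^1]\rangle = 0$ for all $c \in H_{k-1}(LX;R)$, and taking $c = (c_1)_* z$ for $z \in H_{k-1}(X;R)$ gives $\langle E_1^*u, (c_1)_*z \times [S^1]\rangle = \langle u, (E_1 \circ (c_1 \times \id))_*(z \times [S^1])\rangle$ — but this composite is $\pr_X$, so the pushforward of $z \times [S^1]$ is a degenerate class and this pairing vanishes identically, which shows nothing.

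So the real content must come from classes $c$ not in the image of $(c_1)_*$: I would take $c$ of the form $(\ev\text{-adjoint})$ coming from $S^{k-1} \to LX$ and use that $X$ simply connected forces $LX$ to be $0$-connected with $\pi_1(LX)$ abelian, and more importantly that the \emph{lowest-degree} part of $Z_1$ is detected by $\ev_0^* : H^k(X) \to H^{k-1}(\Omega X)$ composed with the Hurewicz-type identification — and here I would invoke that for simply connected $X$, pairing $Z_1(u)$ against the image of $\pi_{k-1}(LX) \to H_{k-1}(LX;R)$ recovers the image of $u$ under $\pi_k(X) \otimes R \to H_k(X;R) \to R$ via adjunction $\pi_{k-1}(LX) \supseteq \pi_{k-1}(\Omega X) \cong \pi_k(X)$. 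This gives injectivity on the subgroup of $H^k(X;R)$ detected by spherical homology classes, but not in general. \emph{The main obstacle} is therefore precisely this: a clean, ring-independent proof that $Z_1$ is injective for all simply connected $X$ seems to require either the Eilenberg–Moore / cobar model for $H^*(LX;R)$ identifying $Z_1$ with an explicit split monomorphism, or a dimension/filtration argument using the Serre spectral sequence of $\Omega X \to LX \to X$ where $Z_1(u)$ is shown to have a nonzero image in the $E_\infty$-column $E_\infty^{0,k-1}$ corresponding to $H^{k-1}(\Omega X)$ with the transgression understood. I would carry out the Serre spectral sequence version: identify $Z_1(u)$ modulo higher filtration with $(-1)^?\,\tau^{-1}$-type class in $E_2^{0,k-1} = H^{k-1}(\Omega X;R)$, show it equals the (well-defined because $X$ is simply connected, so the system of local coefficients is trivial) cohomology suspension of $u$, and then use that in the \emph{loop space} spectral sequence — as opposed to a general fibration — this suspension is injective because $LX \to X$ has a section $c_1$, forcing all differentials \emph{out of} the base column to vanish, so $e_0^*$ is injective with a complementary summand on which $Z_1$ is the identity-type map; filtration-degree bookkeeping then upgrades "injective modulo higher filtration" to "injective". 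I expect the write-up to cite the relevant spectral sequence collapse along the base edge (a consequence of the existence of the section $c_1$) as the one genuinely non-formal ingredient.
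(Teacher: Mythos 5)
Your proposal does not reach a correct proof, and the one argument you ultimately commit to (the Serre spectral sequence of $\Omega X \to LX \xrightarrow{e_0} X$) fails at its key step. The leading term of $Z_1(u)$ in the Serre filtration, i.e.\ its image under the fiber-column edge map $H^{k-1}(LX;R) \to E_\infty^{0,k-1} \subset H^{k-1}(\Omega X;R)$, is the restriction $j^*Z_1(u) = \ev_0^*u/[S^1] = \sigma^*(u)$, the cohomology suspension — exactly the map you correctly dismissed in your first paragraph because it kills decomposables. The existence of the section $c_1$ of $e_0$ makes $e_0^*$ split injective, which kills the differentials \emph{landing in} the base row $E_r^{*,0}$ (in particular the transgression $d_{q+1}\colon E_{q+1}^{0,q}\to E_{q+1}^{q+1,0}$); it does \emph{not} kill the differentials $d_r\colon E_r^{0,q}\to E_r^{r,q-r+1}$ for $2\le r\le q$, and it does not make $\sigma^*$ injective. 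So for a decomposable class such as $u=x^2$ the leading term $\sigma^*(u)$ vanishes, $Z_1(u)$ sits in filtration $\ge 1$, and your ``bookkeeping'' produces no information — and $u=x^2$ is precisely the case the paper needs in Theorem \ref{TheoremGeodPosCurv}. A further warning that no such formal argument can work uniformly in $R$: the Künneth decomposition of $E_1^*(uv)$ gives the derivation property $Z_1(uv)=e_0^*(u)\cup Z_1(v)\pm Z_1(u)\cup e_0^*(v)$, whence $Z_1(x^2)=2\,e_0^*(x)\cup Z_1(x)$ for $x$ of even degree; this vanishes identically over $\ZZ/2$, so any proof must engage seriously with the coefficient ring rather than with filtration formalities.

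The paper's proof takes the route you named but declined to carry out: it invokes Jones' theorem identifying $H^*(LX;R)$ with the Hochschild homology $HH_{-*}(C^{-*}(X;R))$ for simply connected $X$, under which $Z_1(u)=\ev_1^*u/[S^1]$ corresponds to the explicit Hochschild cycle $1\otimes x$ for a representing cocycle $x$; injectivity is then reduced to the nonvanishing of the class $[1\otimes x]$ in Hochschild homology. If you want to salvage your approach, you would need either this algebraic model or a genuinely different mechanism for detecting $Z_1(u)$ in higher Serre filtration when $\sigma^*(u)=0$ (for instance, the multiplicative identity above reduces the decomposable case to lower-degree classes over $\QQ$, where the factor $2$ is invertible); the section $c_1$ alone cannot do it.
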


\begin{proof}
We consider $C^{-*}(X;R)$, i.e. $C^*(X;R)$ with inverted grading, equipped with the singular differential and the cup product, as a differential graded algebra. In \cite{Jones}, following ideas of Anderson and Bott-Segal (see \cite{BottSegal}), J.D.S. Jones constructed a chain map 
$$\Phi: CH_*(C^{-*}(X;R)) \to C^*(LX;R), $$
where again $LX=\C^0(S^1,X)$ and where $CH_*(C^{-*}(X;R))$ is the Hochschild chain complex of $C^{-*}(X;R)$, that induces an isomorphism  $\Phi_*: HH_{-*}(C^{-*}(X;R)) \to H^*(LX;R)=H^*(S_1X;R)$ in (co)homology if $X$ is simply connected, see also \cite{LodayFree} or \cite[Section 3]{CohenJones} for descriptions of $\Phi$.

Let $u \in H^k(X;R)$ with $u \neq 0$ and $k\geq 1$ and let $x \in C^k(X;R)$ be a representative of $u$. We consider $1 \otimes x \in C^0(X;R)\otimes_R C^k(X;R) \subset CH_*(C^{-*}(X;R))$. By definition of Hochschild homology, one checks   that $1 \otimes x$ is a Hochschild cycle whose homology class is non-vanishing. Studying Jones' construction, we observe that by definition of $\Phi$ it holds that
$$\Phi_*([1 \otimes x]) = \ev_1^*u/[S^1] = Z_1(u).$$
Hence, $Z_1(u)\neq 0$ if $X$ is simply connected, which shows the claim.
\end{proof}

The following statement combines results from the previous sections in the case $n=1$. Given a Riemannian manifold $M$ with energy functional $E: \Lambda M \to \RR$ and $x \in H^*(\Lambda M;R)$ we let
$$\ccr(x):= \sup \{\lambda \in \RR \; | \; x \in \ker [i_\lambda^*:H^*(\Lambda M;R) \to H^*(E^{<\lambda};R)] \}.$$

\begin{prop}
\label{PropSClambda3}
Let $M$ be a simply connected $n$-dimensional closed oriented Riemannian manifold, where $n \geq 3$. Let $k \geq 3$ and $u \in H^k(M;\K)$ with $u \neq 0$, where $\K$ is a field. If $f^*u=0$ for every continuous map $f:P \times S^2 \to M$, where $P$ is any closed oriented $(k-2)$-dimensional manifold, then $\SC_{1,M}(E^\lambda) \geq 3$ for every $\lambda \in \RR$ with $\lambda \geq \ccr(Z_1(u))$. 
\end{prop}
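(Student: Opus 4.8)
The plan is to produce a single class in $H^{k-1}(S_1M;\K)$ of spherical complexity weight at least two whose restriction to $E^\lambda$ is nonzero, and then to read off the lower bound from Theorem~\ref{Thmwgtprops}. The class will be $Z_1(u) = \ev_1^*u/[S^1]$. Since $M$ is simply connected, Lemma~\ref{LemmaJones} shows that $Z_1 : H^k(M;\K) \to H^{k-1}(LM;\K) = H^{k-1}(S_1M;\K)$ is injective, so $Z_1(u) \neq 0$, and $\deg Z_1(u) = k-1 \geq 2$ since $k \geq 3$.

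Second, I would verify the hypothesis of Proposition~\ref{Propwgtn2} with $n=1$, namely that $\ev_2^*u/[S^2] = 0 \in H^{k-2}(\C^0(S^2,M);\K)$. As in the proof of Corollary~\ref{CorZnwgt}, it is enough to check that this class pairs trivially with every homology class $f_*[P]$ coming from a continuous map $f : P \to \C^0(S^2,M)$ with $P$ a closed oriented $(k-2)$-manifold: if $g : P \times S^2 \to M$ is the adjoint of $f$, the slant-product identities give $\langle \ev_2^*u/[S^2], f_*[P]\rangle = \langle g^*u, [P \times S^2]\rangle = 0$ by the hypothesis on $u$; since $H_{k-2}(\C^0(S^2,M);\K)$ is spanned by such realisable classes (Thom's theorem, for $\K = \QQ$; see the remark below on the coefficient field), we get $\ev_2^*u/[S^2] = 0$ and hence $\wgt_1(Z_1(u)) \geq 2$ by Proposition~\ref{Propwgtn2}.

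Third, I would show $i_{E^\lambda}^*(Z_1(u)) \neq 0$ in $H^{k-1}(E^\lambda;\K)$ for every $\lambda \geq \ccr(Z_1(u))$, where $i_{E^\lambda}$ is the composite $E^\lambda \hookrightarrow \Lambda M \hookrightarrow S_1M$, the second map being a homotopy equivalence since $M$ is simply connected. For $\lambda > \ccr(Z_1(u))$ this is immediate from the definition of $\ccr$: $Z_1(u)$ already has nonzero image in $H^*(E^{<\lambda};\K)$, and the restriction $H^*(\Lambda M;\K) \to H^*(E^{<\lambda};\K)$ factors through $H^*(E^\lambda;\K)$ because $E^{<\lambda}\subseteq E^\lambda$. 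For the borderline value $\lambda = \ccr(Z_1(u))$ I would use that $E$ satisfies the Palais--Smale condition (as recalled in Section~\ref{SectionSC1}): choosing $\lambda' > \lambda$ with no critical value of $E$ in $(\lambda,\lambda']$, the standard deformation lemma provides a deformation retraction of $E^{<\lambda'}$ onto $E^\lambda$ compatible with the inclusions into $\Lambda M$, so that $Z_1(u)$, which survives in $E^{<\lambda'}$, survives in $E^\lambda$ as well. Now part~(3) of Theorem~\ref{Thmwgtprops}, applied to $i_{E^\lambda}$ and the fibration $r_1$, gives $\wgt_{i_{E^\lambda}^*r_1}(i_{E^\lambda}^*Z_1(u)) \geq \wgt_1(Z_1(u)) \geq 2$, and part~(1) of Theorem~\ref{Thmwgtprops} then yields $\SC_{1,M}(E^\lambda) = \secat(i_{E^\lambda}^*r_1) \geq 3$. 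Equivalently, $Z_1(u) \in E_1(M;\K)$ by the computation in the second step, and the bound drops out of Theorem~\ref{ThmScnEnrel}.

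The point where I expect the actual work is the borderline case $\lambda = \ccr(Z_1(u))$: the quantity $\ccr$ is phrased with the \emph{open} sublevel sets, whereas the statement concerns the \emph{closed} one, and bridging them requires the Palais--Smale deformation together with the tacit fact that the critical values of $E$ do not accumulate at $\lambda$ from above --- if they did, $E$ would possess infinitely many geometrically distinct closed geodesics and the statement would be vacuous in the situations of interest. A minor additional caveat concerns the coefficient field: the realisability input in the second step is Thom's theorem, which is available for $\K = \QQ$ (and, dropping orientations, for $\K = \ZZ_2$); this suffices for the applications in Section~\ref{SectionGeod}.
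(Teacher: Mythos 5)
Your proof is correct and follows essentially the same route as the paper's: Lemma \ref{LemmaJones} to get $Z_1(u)\neq 0$, the realisability/slant-product computation (Corollary \ref{CorZnwgt} via Proposition \ref{Propwgtn2}) to get $\wgt_1(Z_1(u))\geq 2$, non-vanishing of the restriction to $E^\lambda$, and then Theorem \ref{ThmScnEnrel}. The two caveats you flag --- the Palais--Smale deformation needed at the borderline value $\lambda=\ccr(Z_1(u))$, and the fact that the Thom realisability input restricts the coefficient field to $\K=\QQ$ (or $\ZZ_2$) --- are both points the paper's one-line proof passes over silently, so your extra care there is warranted rather than a deviation.
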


\begin{proof}
Since $M$ is simply connected, it follows from Lemma \ref{LemmaJones} that $Z_1(u) \neq 0$. By assumption on $u$,  and Corollary \ref{CorZnwgt}, it follows that $\wgt_1(Z_1(u))\geq 2$. For $\lambda \in \RR$ we let $i_\lambda: E^\lambda \hookrightarrow \Lambda M$ be the inclusion of the corresponding sublevel set. If $\lambda \geq \ccr(Z_1(u))$, then $i_\lambda^*Z_1(u) \neq 0 \in H^*(E^\lambda;\K)$. Thus, it follows from Theorem \ref{ThmScnEnrel} that $\SC_{1,M}(E^\lambda) \geq 3$. 
\end{proof}

It follows from Theorem \ref{TheoremLSFinsler} that under the assumptions of Proposition \ref{PropSClambda3}, there will be two $O(2)$-orbits of non-constant closed geodesics in $E^\lambda$. A priori, both  might be orbits of iterates of the same prime closed geodesic, but some additional arguments can help in excluding this case. 

\begin{strategy}
Let $M$ be a simply connected $n$-dimensional closed oriented Finsler manifold with energy functional $E:\Lambda M \to \RR$, where $n \geq 3$. 
\begin{enumerate}
\item Find $u \in H^k(M;\QQ)$, where $k \geq 3$, such that $f^*u=0$ for all continuous $f:P \times S^2 \to M$, where $P$ is an arbitrary $(n-2)$-dimensional closed oriented manifold. 
\item Find $\lambda \in \RR$ with the following properties:
\begin{enumerate}[(i)]
\item $ \lambda \geq \ccr(Z_1(u))$,
\item there exists a  non-constant prime closed geodesic with $E(\gamma) \leq \lambda$, but $E(\gamma^m)> \lambda$ for all $m >1$.
\end{enumerate}
\end{enumerate}
If such $u$ and $\lambda$ have been found, then Proposition \ref{PropSClambda3} and Theorem \ref{TheoremLSFinsler} will imply that $E^\lambda$ contains two $O(2)$-orbits of closed geodesics in the reversible case and $SO(2)$-orbits of closed geodesics in the non-reversible case. By property (ii) of $\lambda$ one of them can be assumed to be a prime closed geodesic whose iterates all lie outside of $E^\lambda$, hence the second orbit  has to contain a closed geodesic geometrically distinct from the first one in the reversible case and positively distinct in the non-reversible case. 
\end{strategy}

The upcoming Theorem \ref{TheoremGeodPosCurv} provides an example of this strategy being employed. We will require an assumption on the structure of the cohomology ring of $M$ and a pinching condition on the flag curvature of the Finsler metric. The author recommends H.-B. Rademacher's article \cite{RadeNonrev} as a general reference on Finsler manifolds of positive flag curvature.

\begin{definition}[{\cite{RadeNonrev},\cite{RadeSphere}}]
Let $M$ be a closed manifold and let $F:TM \to \RR$ be a Finsler metric on $M$. The \emph{reversibility of $F$} is the number 
$$\lambda := \sup \{F(x,-v) \ | \ (x,v) \in TM,\ F(x,v)=1\} \geq 1.$$
Note that $F$ is reversible if and only if $\lambda =1$.
\end{definition}

The Riemannian case of the following theorem follows from the stronger statement \cite[Theorem D]{BTZExistence} of Ballmann, Thorbergsson and Ziller. Moreover, Duan, Long and Wang have shown in \cite{DuanLongWang} that given a compact and simply connected manifold, any Finsler metric that is non-reversible and bumpy, i.e. its energy is a Morse-Bott function and has isolated critical $SO(2)$-orbits, has two positively distinct closed geodesics. 

An existence result of more than two positively distinct closed geodesics for Finsler metrics on spheres under a pinching condition stricter than ours has been shown by Wang in \cite{WangMultiple}.

\begin{theorem}
\label{TheoremGeodPosCurv}
Let $n \geq 3$ and let $M$ be a $2n$-dimensional closed oriented manifold. Assume that there exists a cohomology class $x \in H^{2k}(M;\QQ)$ with $x^2 \neq 0$, where $1 \leq k \leq \frac{n-1}2$. Let $F$ be a Finsler metric on $M$ of reversibility $\lambda$. If the flag curvature $K$ of $F$ satisfies 
$$\frac{1}{4}\Big(\frac{\lambda}{1+\lambda}\Big)^2 < \delta \leq K \leq 1,$$ 
then $(M,F)$ will have two positively distinct closed geodesics of length at most $\frac{\pi}{\sqrt{\delta}}$. 

If in addition $F$ is reversible, e.g. if $F$ is derived from a Riemannian metric whose sectional curvature satisfies $\frac{1}{16} <\delta \leq K \leq 1$, then $(M,F)$ will have two geometrically distinct closed geodesics of length at most $\frac{\pi}{\sqrt{\delta}}$.
\end{theorem}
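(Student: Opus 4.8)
The plan is to run the ``Strategy for finding two closed geodesics'' stated above, with the two-sided curvature bound controlling its two energy-level conditions. As a preliminary: since $M$ is closed, even-dimensional, oriented and carries a Finsler metric of positive flag curvature ($0<\delta\le K$), a Finsler analogue of Synge's theorem shows that $M$ is simply connected, which is the standing hypothesis of Lemma \ref{LemmaJones} and Proposition \ref{PropSClambda3}. Write $E:=E_{F,1}\colon\Lambda_1M\to\RR$ for the restricted energy functional and set $\lambda_0:=\pi^2/\delta$, the energy of a loop of length $\pi/\sqrt\delta$. Everything will follow from the single inequality
\[
\SC_{1,M}(E^{\lambda_0})\ \ge\ 3 ,
\]
the deduction of the theorem from it being the final step below.

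To establish this inequality I would, following Proposition \ref{PropSClambda3}, construct from $x$ a cohomology class $u$ of $M$ of degree $\ge 3$ whose loop-space image $Z_1(u)\in H^*(S_1M;\QQ)$ has sectional category weight $\ge 2$ with respect to $r_1$. The hypotheses $x^2\ne0$ and $1\le k\le\tfrac{n-1}2$ (so $\deg(x^2)=4k\le 2n-2<\dim M$) are what make such a class available: $Z_1$ is injective by Lemma \ref{LemmaJones}, so the loop-space classes built from $x$ are nonzero, and one then checks the geometric weight-two criterion of Section \ref{SectionGeomweight} (vanishing of the chosen class pulled back over all products $P\times S^2$, equivalently a Mayer--Vietoris obstruction for the fiberwise join $r_{1,2}$), if necessary passing to a cup product of two weight-one classes and using the superadditivity of Theorem \ref{Thmwgtcup}. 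Proposition \ref{PropSClambda3}, resp.\ Theorem \ref{ThmScnEnrel}, then gives $\SC_{1,M}(E^\lambda)\ge 3$ for every $\lambda\ge\ccr(Z_1(u))$, so it would remain to verify $\lambda_0\ge\ccr(Z_1(u))$; this is where the lower bound $K\ge\delta$ enters, forcing the sublevel set $\Lambda_1M^{<\pi^2/\delta}$ to be large (via the Bonnet--Myers diameter bound it carries the standard family of short geodesic loops) and hence to already detect $Z_1(u)$ --- and since $\delta\le 1$ one has $\lambda_0\ge\pi^2$, which provides the needed slack.

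Granting $\SC_{1,M}(E^{\lambda_0})\ge 3$, I would conclude as follows. By Theorem \ref{TheoremLSFinsler}(b) (resp.\ (a) in the non-reversible case), $E^{\lambda_0}$ contains at least $\SC_{1,M}(E^{\lambda_0})-1\ge 2$ distinct $O(2)$-orbits (resp.\ $SO(2)$-orbits) of non-constant closed geodesics, each of energy $\le\lambda_0$ and hence of length $\le\pi/\sqrt\delta$. From $K\le 1$ and $\pi_1(M)=1$, Rademacher's length estimate gives $L(\gamma)\ge\pi\big(1+\lambda^{-1}\big)$ for every non-constant closed geodesic $\gamma$, so $E(\gamma^m)=m^2E(\gamma)\ge4\pi^2\big(\tfrac{1+\lambda}{\lambda}\big)^2$ for all $m\ge 2$, and the pinching condition $\tfrac14\big(\tfrac{\lambda}{1+\lambda}\big)^2<\delta$ is exactly the assertion that this exceeds $\lambda_0=\pi^2/\delta$. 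Consequently no iterate of a non-constant closed geodesic lies in $E^{\lambda_0}$, so all closed geodesics found above are prime and, lying in pairwise distinct $O(2)$-orbits (resp.\ $SO(2)$-orbits), are pairwise geometrically (resp.\ positively) distinct. The reversible statement is the case $\lambda=1$, for which $\tfrac14\big(\tfrac{\lambda}{1+\lambda}\big)^2=\tfrac1{16}$, and it contains the Riemannian case.

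The main obstacle will be the coupling inside the second paragraph: choosing $u$ (built from $x$ using $x^2\ne0$) so that $Z_1(u)$ is simultaneously of sectional category weight $\ge 2$ --- which caps how large $u$ may be --- and already detected in the sublevel set $E^{\pi^2/\delta}$ --- which forces $u$ to come from low-degree, hence early-appearing, cohomology. Reconciling these is precisely where ``$x^2\ne0$'', ``$1\le k\le\tfrac{n-1}2$'' and ``$\delta\le K\le1$'' must be used together; the sharp estimate for the energy level at which $Z_1(u)$ becomes nonzero is the delicate geometric point, and it is there that the results of Rademacher and of Ballmann--Thorbergsson--Ziller on positively curved manifolds are brought to bear.
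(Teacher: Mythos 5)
Your overall architecture matches the paper's: Synge/Auslander gives simple connectivity, one produces a weight-two class on the loop space from $x^2$, deduces $\SC_{1,M}(E^{\pi^2/\delta})\ge 3$, gets two $G$-orbits of closed geodesics from Theorem \ref{TheoremLSFinsler}, and rules out iterates exactly as you do (your computation that $4\pi^2\big(\tfrac{1+\lambda}{\lambda}\big)^2>\pi^2/\delta$ is precisely the pinching condition is the paper's argument verbatim). However, the step you defer --- verifying $\ccr(Z_1(u))\le\pi^2/\delta$ --- is the mathematical heart of the theorem, and the mechanism you sketch for it (Bonnet--Myers plus a ``standard family of short geodesic loops'' filling out the sublevel set) is not the right one and would not obviously detect the specific degree-$(4k-1)$ class $Z_1(x^2)$. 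The paper's argument is different: the minimax principle attached to a nonzero class produces a non-constant closed geodesic $\gamma_0$ at the level $\ccr(u)$ with $\ind(\gamma_0)\le\deg u=4k-1$; the hypothesis $k\le\tfrac{n-1}{2}$ gives $\ind(\gamma_0)<2n-1=\dim M-1$, which is exactly the threshold needed to invoke Rademacher's index--length estimate (\cite[Lemma 3]{RadeSphere}, generalizing Ballmann--Thorbergsson--Ziller) and conclude $L(\gamma_0)\le\pi/\sqrt{\delta}$, hence $\ccr(u)=E(\gamma_0)\le\pi^2/\delta$. You name the right references but not the actual chain of reasoning, and in particular you misattribute the role of $k\le\tfrac{n-1}{2}$ (it is an index bound, not a statement that $\deg(x^2)<\dim M$).

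A second, smaller gap: you leave open how to certify $\wgt_1(Z_1(x^2))\ge 2$, suggesting ``if necessary'' a cup product of two weight-one classes. The paper does not do that (and it would wreck the single-minimax-level argument above); instead it applies Corollary \ref{CorZnwgt} directly to $u=x^2$, using the elementary ring computation that for any $f:P\times S^2\to M$ with $\dim P=4k-2$ one has $f^*(x^2)=(f^*x)^2=0$, because $H^*(P\times S^2;\QQ)$ has no nonzero squares of classes of degree $2k$ landing in degree $4k$. This is the precise place where $x^2\ne 0$ and the product structure of $P\times S^2$ interact, and it should be made explicit.
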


\begin{proof}
 Since $M$ is even-dimensional, oriented and has a metric of positive sectional curvature, $M$ is simply connected by Synge's theorem, which was generalized to Finsler manifolds by L. Auslander in \cite{Auslander}. This in particular	 implies that $u :=Z_1(x^2)\in H^{4k-1}(\Lambda M;\QQ)$ satisfies $u \neq 0$ by Lemma \ref{LemmaJones}. To show that $\wgt_1(u) \geq 2$, it suffices by Corollary \ref{CorZnwgt} to show that $f^*(x^2)=0$ for each continuous map $f:S^2 \times P \to X$, where $P$ is an arbitrary closed oriented $(4k-2)$-dimensional manifold. But since $H^*(S^2 \times P;\QQ) \cong H^*(S^2;\QQ) \otimes H^*(P;\QQ)$ are isomorphic as rings, $H^*(S^2 \times P;\QQ)$ contains no non-trivial squares in degree $4k$, which implies that $f^*(x^2)= (f^*x)^2=0$ for each such $f$. Hence, $\wgt_1(u)\geq 2$. 

Let $E: \Lambda M \to \RR$ be the energy functional and $L: \Lambda M \to \RR$ be the length functional of $F$. Since $u \neq 0$, there exists a non-constant closed geodesic $\gamma_0$ with $E(\gamma_0)=\ccr(u)$ and 
$$\ind(\gamma_0) \leq \deg u=4k-1.$$ 
(See e.g. \cite{RadeHabil} or \cite{KlinGeod} for details.) To apply the strategy outlined above, we want to derive estimates on $\ccr(u)$ from the given curvature bounds. 

Let $\delta> \frac{1}{4}(\frac{\lambda}{\lambda+1})^2$ be such that the flag curvature of $F$ satisfies $K \geq \delta$. Since $2k \leq n-1$ by assumption, it holds that $\ind(\gamma_0) < 2n-1$. Thus, we can apply \cite[Lemma 3]{RadeSphere} (see also \cite[Remark 8.5]{RadeNonrev}), which generalizes \cite[(1.8)]{BTZclosed} from Riemannian to Finsler metrics, and obtain that $L(\gamma_0) \leq \pi/\sqrt{\delta}$ and thus $E(\gamma_0)=(L(\gamma_0))^2 \leq \pi^2/\delta$ which yields
\begin{equation}
\label{Eqccrup}
\ccr(u) \leq \frac{\pi^2}{\delta} =:a. 
\end{equation}
Let $\iota_a: E^{a} \hookrightarrow \Lambda M$ denote the inclusion of the closed sublevel set. Put $G= SO(2)$ if $F$ is non-reversible and $G=O(2)$ if $F$ is reversible. By the previous computation it holds that $\iota_a^*u \neq 0 \in H^{2k-1}(E^{a};\QQ)$, so it follows from Theorem \ref{ThmScnEnrel} that $\SC_{1,M}(E^a) \geq 3$. By Theorem \ref{TheoremLSFinsler}, this implies that $E^a$ contains two $G$-orbits of closed geodesics. A priori, these two $G$-orbits of closed geodesics might be orbits of iterates of the same prime closed geodesic. We want to exclude this case by showing that $E^a$ contains no iterated closed geodesics under the given flag curvature assumptions.
 
Since the flag curvature of $F$ is positive and bounded from above by one, it follows from \cite[Theorem 10.3]{RadeNonrev} that under the given upper bound on the flag curvature of $F$ every non-constant closed geodesic of $F$ satisfies $L(\gamma) \geq \pi(1+\lambda^{-1})$, hence
\begin{equation}
\label{EqccFlow}
E(\gamma) \geq \pi^2\Big(1+ \lambda^{-1}\Big)^2=\pi^2 \Big(\frac{1+\lambda}\lambda \Big)^2.
\end{equation}
(In the Riemannian case, this is a direct consequence of Klingenberg's injectivity radius estimate, see \cite[Theorem 6.5.1]{Petersen}.) Its iterates thus satisfy $$E(\gamma^m) \geq E(\gamma^2) \geq 4\pi^2\Big(\frac{1+\lambda}\lambda\Big)^2$$ for all $m \geq 2$. But by assumption on $\delta$, 
$$a = \frac{\pi^2}{\delta} < 4 \pi^2\Big(\frac{1+\lambda}\lambda\Big)^2.$$
Comparing the previous two inequalities, we observe that $E^a$ cannot contain any iterated closed geodesic of $F$, hence every closed geodesic contained in $E^a$ is prime. This shows that $E^a$ contains two positively distinct prime closed geodesics if $F$ is non-reversible and two geometrically distinct prime closed geodesics if $F$ is reversible. Moreover, the length of each of these geodesics satisfies $L(\gamma) \leq \sqrt{E(\gamma)} \leq \sqrt{a}=\pi/\sqrt{\delta}$.
\end{proof}

\begin{remark}
\begin{enumerate}
\item The topological assumption of Theorem \ref{TheoremGeodPosCurv} is satisfied by all closed oriented manifolds having the rational homotopy type of a simply connected closed symplectic manifold of dimension six or bigger whose symplectic form represents a rational cohomology class. In this case, the cohomology class $x$ in the statement of the theorem may be taken as that very class in the case $k=1$. 
\item That assumption is further satisfied by any closed oriented manifold $M$ with $H^*(M;\QQ) \cong \QQ[x]/(x^{d+1}),$ where $d \geq 3$ and $x$ has even degree. This in particular applies to manifolds having the rational homotopy type of $\CP^n$ or $\mathbb{H}P^n$ with $n \geq 3$. 
\end{enumerate}
\end{remark}

 \bibliography{TCHH}
 \bibliographystyle{amsalpha}

\end{document}